\newtheorem{theorem}{Theorem}[section]
\newtheorem{prop}[theorem]{Proposition}
\newtheorem{lemma}[theorem]{Lemma}
\newtheorem{cor}[theorem]{Corollary}
\theoremstyle{definition}
\newtheorem{example}[theorem]{Example}
\newtheorem{defn}[theorem]{Definition}
\newtheorem{rmk}[theorem]{Remark}
\newtheorem{rmks}[theorem]{Remarks}
\newtheorem{notn}[theorem]{Notation}
\newcommand{\qnum}[2]{[#1]_{#2}}
\DeclareMathOperator{\ch}{char}
\DeclareMathOperator{\Spec}{Spec}
\DeclareMathOperator{\aut}{Aut}
\DeclareMathOperator{\Endo}{End}
\DeclareMathOperator{\Jac}{Jac}
\DeclareMathOperator{\Pz}{PZ}
\newcommand{\Z}{{\mathbb Z}}
\newcommand{\C}{{\mathbb C}}
\newcommand{\N}{{\mathbb N}}
\newcommand{\Q}{{\mathbb Q}}
\newcommand{\K}{{\mathbb K}}
\begin{document}

\title{Connected quantized Weyl algebras and quantum cluster algebras}

\author{Christopher D. Fish}
\address{School of Mathematics and Statistics\\
University of Sheffield\\
Hicks Building\\
Sheffield S3~7RH\\
UK}
\email{christopher.fish@cantab.net}
\author{David A. Jordan}
\address{School of Mathematics and Statistics\\
University of Sheffield\\
Hicks Building\\
Sheffield S3~7RH\\
UK}
\email{d.a.jordan@sheffield.ac.uk}

\date{\today}

\subjclass[2010]{Primary 16S36; Secondary 13F60, 16D30, 16N60, 16W20, 16W25, 16U20, 17B63}

\keywords{skew polynomial ring, quantized Weyl algebra, quantum cluster algebra, Poisson algebra}

\thanks{Some of the results in this paper appear in the University of Sheffield PhD thesis of the first author, supported by the Engineering and Physical Sciences Research Council of the UK. We thank Vladimir Dotsenko for his helpful comments at an early stage of this project, the referee for many helpful suggestions of improvements in the exposition and Bach V Nguyen for pointing out some errors in our original manuscript.}

\begin{abstract}
For an algebraically closed field $\K$, we investigate a class of noncommutative $\K$-algebras called \emph{connected quantized Weyl algebras}. Such an algebra has a  PBW basis for a set of generators $\{x_1,\dots,x_n\}$ such that each pair satisfies a relation of the form $x_ix_j=q_{ij}x_jx_i+r_{ij}$, where $q_{ij}\in \K^*$ and $r_{ij}\in \K$, with, in some sense, sufficiently many pairs for which $r_{ij}\neq 0$.  For such an algebra it turns out that there is a single parameter $q$ such that each $q_{ij}=q^{\pm1}$.
Assuming that $q\neq \pm1$, we classify connected quantized Weyl algebras, showing that there are two types \emph{linear} and \emph{cyclic}. When $q$ is not a root of unity we determine the prime spectra for each type. The linear case is the easier, although the result depends on the parity of $n$, and all prime ideals are completely prime. In the cyclic case, which can only occur if $n$ is odd, there are prime ideals for which the factors have arbitrarily large Goldie rank.

We apply connected quantized Weyl algebras to obtain presentations of two classes of quantum cluster algebras. Let $n\geq 3$ be an odd integer. We present the quantum cluster algebra of a Dynkin quiver of type $A_{n-1}$ as a factor of a linear connected quantized Weyl algebra by an ideal generated by a central element.  We also consider the quiver $P_{n+1}^{(1)}$ identified by Fordy and Marsh in their analysis of periodic quiver mutation. When $n$ is odd, we show that the quantum cluster algebra of this quiver is generated by a cyclic connected quantized Weyl algebra in $n$ variables and one further generator. We also present it as the factor of an iterated skew polynomial algebra in $n+2$ variables by an ideal generated by a central element. For both classes, the quantum cluster algebras are simple noetherian.

We establish Poisson analogues of the results on prime ideals and quantum cluster algebras. We determine the Poisson prime spectra for the semiclassical limits of the linear and cyclic connected quantized Weyl algebras and show that, when $n$ is odd, the cluster algebras of $A_{n-1}$ and $P_{n+1}^{(1)}$ are simple Poisson algebras that can each be presented as a Poisson factor of a polynomial algebra, with an appropriate Poisson bracket, by a principal ideal generated by a Poisson central element.
\end{abstract}

\maketitle

\section{Introduction}
This paper is mostly in the context of noncommutative ring theory, in particular skew polynomial rings,  classification of  prime ideals and applications to quantum cluster algebras. The original motivation  can be traced back to the classification of mutation periodic quivers by Fordy and Marsh \cite{fordymarsh} and to a Poisson algebra $P$ introduced by Fordy \cite{fordy} in a further study of some such quivers.

The Poisson algebra $P$ is a polynomial algebra in an odd number of indeterminates $x_1,\dots, x_n$ and it may be helpful to think of these arranged cyclically, so that $x_1$ is adjacent to $x_n$ as well as to $x_2$.  Up to a factor of $2$, the Poisson bracket is such that
\begin{equation}
\{x_i,x_{i+1}\}=x_ix_{i+1}-1,\quad 1\leq i\leq n, \label{padj}
\end{equation} where $x_{n+1}$ should be interpreted as $x_1$, and, for $1\leq i,j\leq n$ with $j>i+1$,
\begin{equation}\{x_i,x_j\}=\begin{cases}
x_ix_j&\text{ if }j-i\text{ is odd,}\\
-x_ix_j&\text{ if }j-i\text{ is even.}
\end{cases}\label{pnonadj}
\end{equation}

In the sense of \cite[Chapter III.5]{BGl}, this algebra is quantized by the algebra $C_n^q$ generated by $x_1,\dots, x_n$ subject to the relations
\begin{equation}
x_ix_{i+1}-qx_{i+1}x_i=1-q, \quad 1\leq i\leq n, \label{nonpadj}
\end{equation} where $x_{n+1}$ should again be interpreted as $x_1$, and, for $1\leq i,j\leq n$ with $j>i+1$,
\begin{equation}x_ix_j=\begin{cases}
qx_jx_i&\text{ if }j-i\text{ is odd,}\\
q^{-1}x_jx_i&\text{ if }j-i\text{ is even.}
\end{cases}\label{nonpnonadj}
\end{equation}

We shall interpret the relation \eqref{nonpadj} as the defining relation for the quantized Weyl algebra $A_1^q$ generated by $x_i$ and $x_{i+1}$. This is more commonly written
\begin{equation}
x_ix_{i+1}-qx_{i+1}x_i=1,\quad 1\leq i\leq n, \label{usualqw}
\end{equation}
but, unless $q=1$, the two are isomorphic (by a change of variable) and \eqref{usualqw} is less satisfactory both from the point of view of quantization, as it gives a noncommutative algebra on setting $q=1$, and from the point of view of symmetry, as it is equivalent  not to
$x_{i+1}x_i-q^{-1}x_ix_{i+1}=1$ but to $x_{i+1}x_i-q^{-1}x_ix_{i+1}=-q^{-1}$.

It is possible to construct $C_n^q$ as an iterated skew polynomial algebra in $x_1, x_2,\dots, x_n$ over the base field $\K$. If $1\leq m<n$ the intermediate iterated skew polynomial algebra in $x_1, x_2,\dots, x_m$ will be denoted $L_m^q$. When $n$ is odd, $L_n^q$ and $C_n^q$ both exist and are skew polynomial rings over $L_{n-1}^q$.   As iterated skew polynomial algebras over the base field, $L_n^q$ and, if $n$ is odd, $C_n^q$ are noetherian domains with PBW-bases.

Motivated by the algebras $L_n^q$ and $C_n^q$, we define a {\emph connected quantized Weyl algebra} to be a $\K$-algebra with a finite set $\{x_1,\dots,x_n\}$ of generators such that \begin{itemize}
\item
  each pair of generators satisfies a relation of the form
\[x_ix_j=q_{ij}x_jx_i+r_{ij},\]
where $q_{ij}\in \K^*$ and $r_{ij}\in \K$,
\item  the standard monomials $x_1^{a_1}x_2^{a_2}\dots x_n^{a_n}$ form a PBW basis,
\item  the graph  $G$ with vertices $x_1,x_2,\dots,x_n$, in which there is an edge between $x_i$ and $x_j$ if and only if $r_{ij}\neq 0$,  is connected.  \end{itemize}

    In Section 2 we shall see that, provided at least one $q_{ij}\neq \pm 1$,
$L_n^q$ and $C_n^q$ are the only connected quantized Weyl algebras.

In Sections 3 and 4, using a deleting derivations algorithm similar to that applied to quantum matrices by Cauchon \cite{cauchon}, we determine, when $q$ is not a root of unity, the prime spectra of $L_n^q$ and, when $n$ is odd,  $C_n^q$. In the case of $L_n^q$,  the hypotheses of
\cite[Theorem 2.3]{goodletzcompprime} are satisfied so all prime ideals are completely prime but we shall see that $C_n^q$ does have prime ideals that are not completely prime. In $L_n^q$ there is a sequence of elements $z_1,\dots, z_n$, defined by the formula $z_j=z_{j-1}x_j-z_{j-2}$, where $z_0=1$ and $z_{-1}=0$, such that $z_n$ is central if $n$ is odd and normal, but not central, if $n$ is even. In the odd case, the non-zero prime ideals of $L_n^q$ are the ideals of the form $(z_n-\lambda)L_n^q$, $\lambda\in \K$, and in the even case they are $z_nL_n^q$ and the ideals of the form $z_nL_n^q+(z_{n-1}-\lambda)L_n^q$, $0\neq\lambda\in \K$. There is a similar sequence in $C_n^q$ but with $z_n$ replaced by a central element $\Omega$. The prime ideals of $C_n^q$ are the ideals of the form $(\Omega-\lambda)C_n^q$, $\lambda\in \K$, and, for each positive integer $m$, two prime ideals $F_{m,1}$ and $F_{m,-1}$, such that the factors $C_n^q/F_{m,\pm 1}$ have Goldie rank $m$.    Thus the prime spectrum of $L_n^q$ is akin to those of $U(sl_2)$ and $U_q(sl_2)$ but, unless $n=3$, the exceptional maximal ideals are not annihilators of finite-dimensional simple modules.

Section 5 determines the  $\K$-automorphism groups of $L_n^q$ and $C_n^q$ when $q\neq\pm 1$. Whereas $\aut_\K(L_n^q)$ is isomorphic to the multiplicative group $\K^*$, with each $\lambda\in \K^*$ corresponding to an automorphism with $x_i\mapsto
\lambda^{(-1)^i}x_i$, $\aut_\K(C_n^q)$ is cyclic of order $2n$ generated by the product of the $\K$-automorphism of order $n$ such that each $x_i\mapsto x_{i+1}$, where $x_{n+1}=x_1$, and the automorphism of order $2$ such that each $x_i\mapsto -x_i$.

In Section 6 we apply connected quantized Weyl algebras to quantum cluster algebras. Useful references for such algebras include \cite{BerFomZel,goodyak,GrabLaun,Keller}.
Although there are several papers, for example \cite{goodyak,GrabLaun}, showing that given noncommutative algebras have quantum cluster algebra structures, there are not many in which a quantum cluster algebra is determined given a particular quiver. For two classes of quivers, we relate the quantum cluster algebra to connected quantized Weyl algebras and obtain presentations in terms of generators and relations. For an even positive integer $m$ we present the quantum cluster algebra for a Dynkin quiver of type $A_m$ as a factor of the linear connected quantized Weyl algebra $L_{m+1}^{q}$.
For an odd integer $n\geq 3$ we present the quantum cluster algebra for the periodic quiver denoted $P_{n+1}^{(1)}$ in \cite{fordymarsh} as
an extension of the cyclic connected quantized Weyl algebra $C_{n}^{q^2}$, requiring one further generator, and also as a factor, by the principal ideal generated by a central element, of an iterated skew polynomial ring in $n+2$ variables over the base field $\K$.

Sections 7 and 8 present Poisson analogues of earlier results. In Section 7 the Poisson prime spectra of the semiclassical limits of $L_n^q$ and $C_n^q$ are determined and in Section 8 the cluster algebras of $A_{n-1}$ and $P_{n+1}^{(1)}$ are presented as factors, by  Poisson ideals generated by a Poisson central element, of polynomial algebras with appropriate Poisson brackets.

\section{Connected quantized Weyl algebras}\label{prelims}
Throughout  $\K$ will denote an algebraically closed field and $q\in \K^*$. We make a fixed choice of one of the square roots of $q$ in $\K$ and denote it by $q^\frac{1}{2}$.

\begin{notn}\label{Aoneq}The (co-ordinate ring of the)
 quantum plane, $R_q$, is the $\K$-algebra generated by $x$ and $y$ subject to the relation
$xy=qyx$. There is symmetry, up to the transposition of $q$ and $q^{-1}$, as the relation can be rewritten $yx=q^{-1}xy$. It is well-known that $R_q$ is the skew polynomial ring $\K[y][x;\alpha_q]$ where $\alpha_q$ is the $\K$-automorphism of $\K[y]$ such that $\alpha_q(y)=qy$. As such, it has a PBW basis $\{y^ix^j:i,j\geq 0\}$. By symmetry, $R_q=\K[x][y;\beta_{q^{-1}}]$ where $\beta_{q^{-1}}$ is the $\K$-automorphism of $\K[x]$ such that $\beta_{q^{-1}}(x)=q^{-1}x$ and has a PBW basis $\{x^iy^j:i,j\geq 0\}$.

By the first quantized Weyl algebra $A_1^{q}(\K)$, we mean the $\K$-algebra generated by $x$ and $y$ subject to the relation $
xy-qyx=1-q.$ With $\alpha_q$ and $\beta_{q^{-1}}$ as in the above discussion of the quantum plane, $A_1^{q}(\K)$ has presentations as the skew polynomial ring $\K[y][x;\alpha_q, \delta]$, where $\delta(y)=1-q$, and as the skew polynomial ring $\K[x][y;\beta_{q^{-1}}, \delta^\prime]$, where  $\delta^\prime(y)=1-q^{-1}$.  As such, it has PBW bases $\{y^ix^j:i,j\geq 0\}$ and $\{x^iy^j:i,j\geq 0\}$.
\end{notn}

\begin{rmk}\label{semiclass}
The reader may be more familiar with the relation $xy-qyx=1$ here. Unless $q=1$, the two give isomorphic algebras. Indeed, if $r\in \K$,
and $xy-qyx=1$ and $x^\prime=rx$ then $x^\prime y-qyx^\prime=r$ so, by changing generators, the right hand side of the relation $xy-qyx=1$ can be replaced by any non-zero scalar $r$. There are two advantages in choosing $1-q$ in this role. The first is that the relation $xy-qyx=1-q$ can be rewritten $yx-q^{-1}xy=1-q^{-1}$, giving symmetry, up to the transposition of $q$ and $q^{-1}$.  The second is that setting $q=1$ yields the commutative algebra $\K[x,y]$, giving rise to a Poisson bracket on $\K[x,y]$ with $\{x,y\}=xy-1$. This Poisson bracket arises from the quantization procedure outlined in \cite[III.5.4]{BGl} with $A=\K[y,Q^{\pm 1}][x;\alpha,\rho]$ and $h=Q-1$, where $\alpha$ is the $\K$-automorphism of $\K[y,Q^{\pm 1}]$ such that $\alpha(y)=Qy$ and $\alpha(Q)=Q$, and $\rho$ is the $\alpha$-derivation of $\K[y]$ such that $\rho(y)=1-Q$ and $\delta(Q)=0$, whence $Q$ and $h$ are central, $xy-Qyx=(1-Q)$, $xy-yx=h(xy-1)$, $A/hA\simeq \K[x,y]$ and, if $q\neq 1$, $A/(Q-q)A\simeq A_1^{q}(\K)$.
\end{rmk}

 Loosely speaking, a connected quantized Weyl algebra is a finitely generated $\K$-algebra with a PBW basis in which any two generators satisfy a quantum plane relation or a quantized Weyl relation and there are sufficiently many of the latter. The formal definition is as follows.

\begin{defn}\label{cqwa}
By a \emph{connected quantized Weyl algebra} over $\K$, we shall mean a $\K$-algebra with generators $x_1,x_2,\dots,x_n$, $n\geq 2$, satisfying the following properties:
\begin{enumerate}
\item there are scalars $q_{ij}\in \K^*$ and $r_{ij}\in \K$, $1\leq i\neq j\leq n$, with each $q_{ji}=q_{ij}^{-1}$ and each $r_{ji}=-q_{ij}^{-1}r_{ij}$, such that the relation
\[x_ix_j=q_{ij}x_jx_i+r_{ij}\] holds;
\item the standard monomials $x_1^{a_1}x_2^{a_2}\dots x_n^{a_n}$ form a PBW basis for $R$;
\item the graph with vertices $x_1,x_2,\dots,x_n$, in which there is an edge between $x_i$ and $x_j$ if and only if $r_{ij}\neq 0$, is connected.
      \end{enumerate}
\end{defn}

\begin{rmk} The conditions on $q_{ji}$ and $r_{ji}$ ensure that the relations $x_ix_j=q_{ij}x_jx_i+r_{ij}$ and $x_jx_i=q_{ji}x_ix_j+r_{ji}$
are equivalent. The relations $x_ix_j=q_{ij}x_jx_i+r_{ij}$ ensure that the standard monomials $x_1^{a_1}x_2^{a_2}\dots x_n^{a_n}$ span $R$. The PBW condition therefore ensures that the relations $x_ix_j=q_{ij}x_jx_i+r_{ij}$ form a complete set of defining relations for $R$.

When $n=2$, up to isomorphism, the quantized Weyl algebra  $A_i^q$ is the only connected quantized Weyl algebra over $\K$. We can take $q_{12}=q$ and $r_{12}=1-q$ or, if $q=1$, $r_{12}=1$.
\end{rmk}

\begin{rmk}
We can orientate and label the graph in the definition to carry more information. As shown in Figure~\ref{edge}, we  orientate an edge representing a relation
$x_ix_j=q_{ij}x_jx_i+r_{ij}$ with $r_{ij}\neq 0$ from $x_i$ to $x_j$ with label $q_{ij}$ (or from $x_j$ to $x_i$ with label $q_{ji}$, but not both).

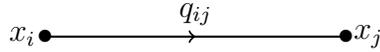
\begin{figure}[h]
\begin{center}
\begin{tikzpicture}[]
\begin{scope}[thick,decoration={
    markings,
    mark=at position 0.5 with {\arrow{>}}}
    ]
\draw[postaction={decorate}](180:2)--(0:2);
\end{scope}
\draw (0:2.3) node{$x_j$};
\draw (180:2.3) node{$x_i$};
\draw (0:2) node{$\bullet$};
\draw (180:2) node{$\bullet$};
\draw (90:0.3) node{$q_{ij}$};
\end{tikzpicture}
\end{center}
\caption{orientation of edges}
\label{edge}
\end{figure}
\end{rmk}

\begin{example}\label{Lnq}
Let $n\geqslant 1$ and let $q\in \K^*\backslash\{1\}$. Let $L_n^q$ denote the $\K$-algebra generated by $x_1, x_2,\dots, x_n$ subject to the relations
\begin{alignat}{3}\label{Lpres1}
x_ix_{i+1}-qx_{i+1}x_i&=1-q,&\quad&1\leqslant i\leqslant n-1,\\
\label{Lpres2}
x_ix_j-qx_jx_i&=0,&\quad& i\geqslant 1,\,i+1<j\leqslant n,\, j-i\text{ odd},\\
\label{Lpres3}
x_ix_j-q^{-1}x_jx_i&=0,&\quad& i\geqslant 1,\,i+1<j\leqslant n,\, j-i\text{ even}.
\end{alignat}

In particular $L_1^q=\K[x_1]$ and $L_2^q=A_1^q$. Using \cite[Proposition 1]{fddaj}, one can show, inductively, that, for $n\geq 2$, $L_n^q$ is the skew polynomial ring $L_{n-1}^q[x_n;\tau_n,\delta_n]$ for a $\K$-automorphism $\tau_n$ and a $\tau_n$-derivation $\delta_n$ of $L_{n-1}^q$  such that
 \begin{alignat*}{2}
\tau_n(x_j)&=q^{(-1)^{n-j}}x_j,  &\quad 1\leqslant j\leqslant n-1,\\
\delta_n(x_j)&=0,  &\quad 1\leqslant j\leqslant n-2,\\
\delta_n(x_{n-1})&=1-q^{-1}.&
\end{alignat*}
Informally, it suffices to show that $\tau_n$ and $\delta_n$ respect the defining relations of $L_{n-1}^q$. More formally, one can write $L_{n-1}^q$ as a factor $F/I$ of the free algebra over $\K$ on $n-1$ generators and show that $\tau_n$ and $\delta_n$ are induced by an
appropriate automorphism $\Gamma_n$ and $\Gamma_n$-derivation  $\Delta_n$ of $F$ such that $\Gamma_n(I)=I$ and $\Delta_n(I)\subseteq I$.
\begin{figure}[h]
\begin{center}
\begin{tikzpicture}[]
\begin{scope}[thick,decoration={
    markings,
    mark=at position 0.5 with {\arrow{>}}}
    ]
\draw[postaction={decorate}](-2,0)--(-1,0);
\draw[postaction={decorate}](-1,0)--(-0,0);
\draw[postaction={decorate}](0,0)--(1,0);
\draw[postaction={decorate}](1,0)--(2,0);

\end{scope}
\draw (-2,-0.3) node{$x_1$};
\draw (-1,-0.3) node{$x_2$};
\draw (0,-0.3) node{$x_3$};
\draw (1,-0.3) node{$x_4$};
\draw (2,-0.3) node{$x_5$};

\draw (-1.5,0.3) node{$q$};
\draw (-0.5,0.3) node{$q$};
\draw (0.5,0.3) node{$q$};
\draw (1.5,0.3) node{$q$};

\draw (-2,0) node{$\bullet$};
\draw (-1,0) node{$\bullet$};
\draw (0,0) node{$\bullet$};
\draw (1,0) node{$\bullet$};
\draw (2,0) node{$\bullet$};

\end{tikzpicture}
\end{center}
\caption{$L_5^q$}
\label{L5}
\end{figure}

As $L_n^q$ is an iterated skew polynomial ring over $\K$, it has a PBW-basis $\{x_1^{a_1}x_2^{a_2}\dots x_n^{a_n}:(a_1,a_2,\dots,a_n)\in \N_0^n\}$, where $\N_0:=\N\cup\{0\}$. Provided $n\geq 2$, $L_n^q$ clearly satisfies the other conditions for a connected quantized Weyl algebra, the relevant graph for the presentation in
\eqref{Lpres1}-\eqref{Lpres3} being
the path graph $P_n$, and we may refer to it as the \emph{linear}
connected quantized Weyl algebra $L_n^q$. Figure~\ref{L5} shows the graph for $L_5^q$  as presented in \eqref{Lpres1}-\eqref{Lpres3}.
\end{example}

In the next example, the two ends of the graph are joined up and each vertex is linked to two others, resulting in the cycle graph, or $n$-gon, $C_n$.
\begin{example}\label{Cnq}
Let $n\geqslant 1$ be odd and let $q\in \K^*$. Let $C_n^q$ denote the $\K$-algebra generated by $x_1, x_2,\dots, x_n$ subject to the relations
\begin{alignat}{3} \label{Cpres1}
x_ix_{i+1}-qx_{i+1}x_i&= 1-q,&\quad& 1\leqslant i\leqslant n-1,\\
\label{Cpres2}
x_nx_1-qx_1x_n &= 1-q,&&\\
\label{Cpres3}
x_ix_j-qx_jx_i&=0, &\quad& i\geqslant 1,\; i+1<j\leqslant n,\; j-i\text{ odd},\\
\label{Cpres4}
x_ix_j-q^{-1}x_jx_i&=0, &\quad& i\geqslant 1,\; i+1<j< n,\; j-i\text{ even}.
\end{alignat}
In comparison with the odd case of $L_n^q$, the relation $x_1x_n-q^{-1}x_nx_1=0$ is replaced by the quantized Weyl relation
$x_nx_1-qx_1x_n = 1-q$.
As in Example~\ref{Lnq}, one can show that $C_n^q$ is the skew polynomial ring $L_{n-1}^q[x_n;\tau_n,\partial_n]$ for the same $\K$-automorphism $\tau_n$ of $L_{n-1}^q$ as for $L_n^q$
   and the $\tau_n$-derivation $\partial_n$ of $L_{n-1}^q$ such that
 \begin{alignat*}{2}
\partial_n(x_1)&=1-q,&\\
\partial_n(x_j)&=0, &\quad 2\leqslant j\leqslant n-2,\\
\partial_n(x_{n-1})&=1-q^{-1}.&
\end{alignat*}

As for $L_n^q$, the algebra  $C_n^q$  satisfies the conditions for a connected quantized Weyl algebra, the relevant graph for the presentation in \eqref{Cpres1}-\eqref{Cpres4} being
the cycle graph $C_n$. We may refer to it as the \emph{cyclic}
connected quantized Weyl algebra $C_n^q$. Figure~\ref{C5} shows the graph for $C_5^q$  as presented in \eqref{Cpres1}-\eqref{Cpres4}.

\begin{figure}[h]
\begin{center}
\begin{tikzpicture}[]
\begin{scope}[thick,decoration={
    markings,
    mark=at position 0.5 with {\arrow{>}}}
    ]
\draw[postaction={decorate}](90:1.4)--(18:1.4);
\draw[postaction={decorate}](18:1.4)--(306:1.4);
\draw[postaction={decorate}](306:1.4)--(234:1.4);
\draw[postaction={decorate}](234:1.4)--(162:1.4);
\draw[postaction={decorate}](162:1.4)--(90:1.4);
\end{scope}
\draw (90:1.7) node{$x_1$};
\draw (18:1.7) node{$x_2$};
\draw (306:1.7) node{$x_3$};
\draw (234:1.7) node{$x_4$};
\draw (162:1.7) node{$x_5$};

\draw (90:1.4) node{$\bullet$};
\draw (18:1.4) node{$\bullet$};
\draw (306:1.4) node{$\bullet$};
\draw (234:1.4) node{$\bullet$};
\draw (162:1.4) node{$\bullet$};

\draw (54:1.4) node{$q$};
\draw (126:1.4) node{$q$};
\draw (198:1.4) node{$q$};
\draw (270:1.4) node{$q$};
\draw (342:1.4) node{$q$};

\end{tikzpicture}
\end{center}
\caption{$C_5^q$}
\label{C5}
\end{figure}
\end{example}

\begin{rmk}
Cyclic connected quantized Weyl algebras with $n=3$ are related to the quantized enveloping algebra $U_q(sl_2)$. Provided $q\neq \pm 1$, localization of  $C_3^{q^2}$ at $\{x_1^i\}_{i\geq 1}$, which is a right and left Ore set by \cite[Lemma 1.4]{g}, gives $U_q(sl_2)$ in the equitable presentation \cite{teretal}.
\end{rmk}

\begin{example}\label{Weyl}
For a $\K$-algebra $R$ satisfying (i) and (ii) of Definition~\ref{cqwa}, neither the graph in (iii) nor its connectedness are invariants of the algebra. For example, consider the Weyl algebra $A_2$, generated by $x_1,x_2,y_1,y_2$ with relations $x_iy_i-y_ix_i=1$, $i=1,2$, and commutation relations for other pairs of generators. The graph for these generators has two connected components
each with two vertices and a single edge. However taking $x_1, y_1, x_1+x_2, y_1+y_2$ as generators gives a square and taking
$x_1, y_1, x_1+x_2, y_2$ gives the path graph $P_4$. Thus $A_2$ is a connected quantized Weyl algebra although this is not apparent from its usual presentation.

A similar situation exists for higher Weyl algebras $A_n$, $n\geq 3$, with an increasing variety of possible graphs for different sets of generators. For example, $A_3$ has the hexagonal graph $C_6$ for the generators $x_1,y_1,x_1+x_2,y_2,x_2+x_3,y_1+y_3$, the path graph $P_6$ for $x_1,y_1,x_2,y_2,x_2+x_3,y_1+y_3$
and the complete bipartite graph $K_{3,3}$ for $x_1,x_1+x_2,x_1+x_2+x_3,y_1,y_1+y_2,y_1+y_2+y_3$.

Whenever we refer to the graph for $L_n^q$ or $C_n^q$ we shall mean the graph for the presentation in \eqref{Lpres1}-\eqref{Lpres3} or \eqref{Cpres1}-\eqref{Cpres4} as appropriate.
\end{example}

\begin{example}\label{polyWeyl}
Consider the $\K$-algebra $R$ generated by $x_1, x_2$ and $x_3$ subject to the relations
\begin{eqnarray*}
x_1x_2-x_2x_1&=&1,\\
x_2x_3-x_3x_2&=&1,\\
x_3x_1-x_1x_3&=&1.
\end{eqnarray*}
This can be obtained from $C_3^q$ by first changing generators to replace the scalar terms $1-q$  in the relations by $1$ and then setting $q=1$.
Writing $x, y$ and $z$ for $x_1, x_2$ and $x_3$ respectively, $R$ is a skew polynomial ring $A_1[z;\delta]$, where $\delta$ is the derivation of the first Weyl algebra $A_1$ with $\delta(x)=1$ and $\delta(y)=-1$. As such it is a connected quantized Weyl algebra with the same graph as $C_3^q$. All derivations of $A_1$ are known to be inner \cite[Lemma 4.6.8]{dix} and $\delta$ is the inner derivation induced by $-(x+y)$. Setting $t=x+y+z$, which is central, $R$ is a polynomial ring $A_1[t]$. Relative to the generators $x, y, t$, the graph is not connected, having connected components $\{x,y\}$ and $\{t\}$.
A similar construction reveals the polynomial algebra $A_k[t]$ over the $k$th Weyl algebra $A_k$ to be a connected quantized Weyl algebra with the same cyclic graph as $C_{2k+1}^q$. We shall see that the algebra $C_{2k+1}^q$ of Example~\ref{Cnq} has a distinguished central element $\Omega$ which, loosely speaking, quantizes $t$
so that the quotient $C_{2k+1}^q/\Omega C_{2k+1}^q$ quantizes $A_k$.
\end{example}

The next result tells us that, for an algebra $R$ satisfying Condition (i) of Definition~\ref{cqwa}, Condition (ii) is independent of the ordering of the generators and that every algebra satisfying both is an iterated skew polynomial extension of $\K$.
\begin{lemma}\label{PBWindord}
Let $n\geq 2$ and, for $1\leq i\neq j\leqslant n$, let $q_{ij}\in \K^*$ and $r_{ij}\in \K$ be such that, for $i\neq j$, $q_{ji}=q_{ij}^{-1}$ and
  $r_{ji}=-q_{ij}^{-1}r_{ij}$. Let $R$ be the $\K$-algebra generated by $x_1,x_2,\dots,x_n$ subject to the $n(n-1)/2$ relations
\begin{equation}
x_jx_i-q_{ij}x_ix_j=r_{ij},\quad 1\leqslant i<j\leqslant n.\label{qijrij}
\end{equation}
The following are equivalent:
\begin{enumerate}
\item The standard monomials
 $x_1^{a_1}x_2^{a_2}\dots x_n^{a_n}$ form a $\K$-basis for $R$;
\item whenever $i, j, k$ are distinct, $r_{ij}\neq 0$ implies $q_{ik}=q_{kj}=q_{jk}^{-1}$;
\item $R$ is an iterated skew polynomial algebra over $\K$ in $x_{1}, x_{2},\dots,x_{n}$;
\item for any permutation $\sigma\in S_n$, the standard monomials
 $x_{\sigma(1)}^{a_1}x_{\sigma(2)}^{a_2}\dots x_{\sigma(n)}^{a_n}$ form a $\K$-basis for $R$;
\item for any permutation $\sigma\in S_n$, $R$ is an iterated skew polynomial algebra over $\K$ in $x_{\sigma(1)}, x_{\sigma(2)},\dots,x_{\sigma(n)}$.

\end{enumerate}
\end{lemma}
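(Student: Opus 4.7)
The plan is to close the cycle $\mathrm{(iii)}\Rightarrow\mathrm{(i)}\Rightarrow\mathrm{(ii)}\Rightarrow\mathrm{(v)}\Rightarrow\mathrm{(iv)}\Rightarrow\mathrm{(i)}$. The implications $\mathrm{(v)}\Rightarrow\mathrm{(iv)}$, $\mathrm{(v)}\Rightarrow\mathrm{(iii)}$ and $\mathrm{(iii)}\Rightarrow\mathrm{(i)}$ are just the PBW property built into an iterated skew polynomial algebra over $\K$, and $\mathrm{(iv)}\Rightarrow\mathrm{(i)}$ is the special case $\sigma=\mathrm{id}$. All of the substantive work lies in $\mathrm{(i)}\Rightarrow\mathrm{(ii)}$ and $\mathrm{(ii)}\Rightarrow\mathrm{(v)}$, after which the equivalence of all five conditions is automatic. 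Throughout, observe that the given relations \eqref{qijrij} already guarantee that the standard monomials span $R$, so in $\mathrm{(i)}$ the issue is only linear independence.

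For $\mathrm{(i)}\Rightarrow\mathrm{(ii)}$, fix three distinct indices with $i<j<k$ and reduce the element $x_kx_jx_i\in R$ to standard form along two different routes using \eqref{qijrij}: one starting by rewriting the non-standard pair $x_kx_j$, the other by rewriting $x_jx_i$. In each case three successive applications of the relations give an expression for $x_kx_jx_i$ as a $\K$-linear combination of the standard monomials $x_ix_jx_k$, $x_i$, $x_j$ and $x_k$. Condition $\mathrm{(i)}$ forces the two expressions to have equal coefficients, and comparing the coefficients of $x_k$, $x_j$ and $x_i$ in turn yields
\[r_{ij}(q_{ik}q_{jk}-1)=0,\qquad r_{ik}(q_{jk}-q_{ij})=0,\qquad r_{jk}(q_{ij}q_{ik}-1)=0.\]
Using $q_{ab}q_{ba}=1$ throughout, these three identities are precisely the three instances of $\mathrm{(ii)}$ for the triple $\{i,j,k\}$, one for each unordered pair that could carry a nonzero $r$.

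For $\mathrm{(ii)}\Rightarrow\mathrm{(iii)}$ I induct on $n$. The case $n=2$ is the tautology $R=\K[x_1][x_2;\tau_2,\delta_2]$ with $\tau_2(x_1)=q_{12}x_1$ and $\delta_2(x_1)=r_{12}$. For the inductive step, let $\widetilde R_{n-1}$ be the algebra with generators $x_1,\dots,x_{n-1}$ and the restriction of the relations \eqref{qijrij}; by induction it is an iterated skew polynomial algebra with PBW basis. Set $\tau_n(x_i)=q_{in}x_i$ and $\delta_n(x_i)=r_{in}$ on generators. Extending them to a $\K$-algebra endomorphism and a $\tau_n$-derivation of $\widetilde R_{n-1}$ amounts to showing that they send each defining relation $x_jx_i-q_{ij}x_ix_j-r_{ij}$ with $1\le i<j\le n-1$ to zero in $\widetilde R_{n-1}$. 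Direct expansion produces exactly the residuals $r_{ij}(q_{in}q_{jn}-1)$, $r_{in}(q_{jn}-q_{ij})$ and $r_{jn}(q_{ij}q_{in}-1)$ arising from the triple $\{i,j,n\}$, all of which vanish by $\mathrm{(ii)}$. The inverse assignment $x_i\mapsto q_{in}^{-1}x_i$ is handled identically, so $\tau_n$ is an automorphism. The universal property now gives a surjection $\widetilde R_{n-1}[x_n;\tau_n,\delta_n]\twoheadrightarrow R$; since the standard monomials span $R$ and are a basis on the left, this surjection is an isomorphism and $\mathrm{(iii)}$ holds.

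Finally, $\mathrm{(ii)}\Rightarrow\mathrm{(v)}$ is essentially free: condition $\mathrm{(ii)}$ makes no reference to any ordering of $x_1,\dots,x_n$, so for every $\sigma\in S_n$ the construction above, carried out with the generators listed as $x_{\sigma(1)},\dots,x_{\sigma(n)}$, yields an iterated skew polynomial presentation of $R$ in that order. The main obstacle is simply keeping the coefficients straight in the two reductions of $x_kx_jx_i$; the striking feature, and the reason $\mathrm{(ii)}$ emerges as a single clean characterisation, is that the very same three constants $r_{ij}(q_{in}q_{jn}-1)$, $r_{in}(q_{jn}-q_{ij})$, $r_{jn}(q_{ij}q_{in}-1)$ govern both the PBW check in $\mathrm{(i)}\Rightarrow\mathrm{(ii)}$ and the well-definedness of $\tau_n$ and $\delta_n$ in $\mathrm{(ii)}\Rightarrow\mathrm{(iii)}$.
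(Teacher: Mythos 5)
Your proposal follows the paper's route almost exactly: the paper likewise reduces everything to the equivalence of (i), (ii), (iii), disposing of (iv) and (v) by the permutation-invariance of (ii); its proof of (i)$\Rightarrow$(ii) is the same two-fold reduction of $x_kx_jx_i$ (written there as $(x_wx_v)x_u$ versus $x_w(x_vx_u)$), yielding exactly your three scalar identities; and its proof of (ii)$\Rightarrow$(iii) constructs the same automorphism and derivation, $\alpha_k(x_i)=q_{ik}x_i$ and $\delta_k(x_i)=r_{ik}$, with the same compatibility check $r_{jk}(1-q_{ij}q_{ik})x_i+r_{ik}(q_{jk}-q_{ij})x_j=0$.

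The one step that is not sound as written is your closing identification of $R$ with $\widetilde R_{n-1}[x_n;\tau_n,\delta_n]$. You produce a surjection $\widetilde R_{n-1}[x_n;\tau_n,\delta_n]\twoheadrightarrow R$ and declare it injective ``since the standard monomials span $R$ and are a basis on the left''. That inference points the wrong way: a surjection from a space with a basis onto a space merely spanned by the images of that basis need not be injective --- the images could satisfy linear relations in $R$, and ruling this out is exactly what is at stake. The repair is immediate: since $x_1,\dots,x_n$ in the Ore extension satisfy all the relations \eqref{qijrij}, the presentation of $R$ gives a homomorphism $R\to\widetilde R_{n-1}[x_n;\tau_n,\delta_n]$ fixing the generators; this map carries the spanning set of standard monomials of $R$ bijectively onto the PBW basis of the Ore extension, so it is injective as well as surjective (equivalently, it and your surjection are mutually inverse). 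This is precisely the point the paper outsources to \cite[Proposition 1]{fddaj}. Apart from this localized slip, your argument is correct and coincides with the paper's proof.
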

\begin{proof}  As (ii) is invariant under permutation and (iv) and (v) are obtained from (i) and (iii) respectively by permutation, it suffices to prove the equivalence of (i), (ii) and (iii).

(i)$\Rightarrow$(ii). Suppose that
 $\{x_1^{a_1}x_2^{a_2}\dots x_n^{a_n}\}$ is a $\K$-basis for $R$ and let $1\leq u<v<w\leq n$.  The element $x_wx_vx_u$ can be written in terms of this basis  by computing either $(x_wx_v)x_u$ or $x_w(x_vx_u)$. Equating the resulting two expressions and cancelling the term in $x_ux_vx_w$ that appears in both gives
 \[q_{vw}q_{uw}r_{uv}x_w+q_{vw}r_{uw}x_v+r_{vw}x_u=r_{uv}x_w+q_{uv}r_{uw}x_v+q_{uv}q_{uw}r_{vw}x_u.\]
If $r_{uv}\neq 0$ or, equivalently, $r_{vu}\neq 0$  then comparing coefficents of $x_w$ gives $q_{vw}=q_{wu}$.
If $r_{uw}\neq 0$, or, equivalently, $r_{wu}\neq 0$, then comparing coefficents of $x_v$ gives $q_{vw}=q_{uv}$.
If $r_{vw}\neq 0$, or, equivalently, $r_{wv}\neq 0$, then comparing coefficents of $x_u$ gives $q_{uv}=q_{wu}$.
This covers all six possibilities for the relative order of $i,j,k$ where $\{i,j,k\}=\{u,v,w\}$ in (ii).

(ii)$\Rightarrow$(iii). Suppose that (ii) holds. For $1\leqslant m\leqslant n$, let $R_m$ be the subalgebra of $R$ generated by $x_1, x_2,\dots,x_m$. Let $i<j<k$. Consider
the relation
\[x_jx_i-q_{ij}x_ix_j=r_{ij}\]
and let $x_i^\prime =q_{ik}x_i$ and $x_j^\prime=q_{jk}x_j$.  Then
\[x_j^\prime x_i^\prime-q_{ij}x_i^\prime x_j^\prime=r_{ij}.\] This is trivial if $r_{ij}=0$ and follows from (ii) otherwise.
This gives rise, inductively, to a $\K$-automorphism $\alpha_k$ of $R_{k-1}$ with $\alpha_k(x_i)=q_{ik}x_i$ for $1\leq i<k$.
To see that there is a $\alpha_k$-derivation $\delta_k$ of $R_{k-1}$ with $\delta_k(x_i)=r_{ik}$ for  $1\leq i<k$, we need to check that
\[r_{jk}x_i+\alpha(x_j)r_{ik}-q_{ij}r_{ik}x_j-q_{ij}\alpha(x_i)r_{jk}=0,\]
that is,
\[r_{jk}(1-q_{ij}q_{ik})x_i+r_{ik}(q_{jk}-q_{ij})x_j=0,\]
which is immediate from (ii).
Using \cite[Proposition 1]{fddaj}, it now follows, inductively, that $R_k=R_{k-1}[x_k;\alpha_k,\delta_k]$ and hence  that $R$ is an iterated skew polynomial algebra in $x_{1}, x_{2},\dots,x_{n}$ over $\K$.

(iii)$\Rightarrow$(i). This is immediate from the fact that, as a left $R$-module, any skew polynomial ring $R[x;\alpha,\delta]$ is free
as a left $R$-module with basis $1, x, x^2,\dots$.
\end{proof}

\begin{cor}\label{singleq}
Let $R$ be a connected quantized Weyl algebra generated by $x_1,x_2,\dots,x_n$ with parameters $q_{ij}$ and $r_{ij}$.
There exists $q\in \K^*$ such that
$\{q_{ij}:1\leq i\neq j\leq n\}=\{q,q^{-1}\}$.
\end{cor}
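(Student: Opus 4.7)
The plan is to apply condition (ii) of Lemma~\ref{PBWindord} systematically, exploiting the connectedness of the graph $G$ underlying the relations. Since $G$ has $n\geq 2$ vertices and is connected, it contains at least one edge; I would fix such an edge $\{a,b\}$ and set $q:=q_{ab}$. The goal is then to show $q_{ij}\in\{q,q^{-1}\}$ for every ordered pair $i\neq j$, after which the equality of sets follows from the fact that both $q_{ab}=q$ and $q_{ba}=q^{-1}$ occur.

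First I would show that any two edges of $G$ sharing a common vertex produce the same unordered value $\{q_{ij},q_{ji}\}=\{q,q^{-1}\}$. Given edges $\{a,b\}$ and $\{b,c\}$, I would apply condition (ii) of Lemma~\ref{PBWindord} to the edge $\{a,b\}$ with external vertex $c$ to get $q_{ac}=q_{cb}$, and then to the edge $\{b,c\}$ with external vertex $a$ to get $q_{ba}=q_{ac}$; combining these yields $q_{ab}=q_{bc}$. Because $G$ is connected, any edge can be joined to $\{a,b\}$ by a chain of edges in which consecutive terms share a vertex, so iterating the previous observation gives $q_{ij}\in\{q,q^{-1}\}$ for every edge $\{i,j\}$ of $G$.

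For a pair $(i,j)$ with $r_{ij}=0$, that is, a non-edge of $G$, I would induct on the graph distance $m:=d_G(i,j)\geq 2$, the base case $m=1$ being the edges handled above. Let $v$ be the penultimate vertex of a shortest path from $i$ to $j$, so that $\{v,j\}$ is an edge and $d_G(i,v)\leq m-1$. Applying condition (ii) to the edge $\{v,j\}$ with external vertex $i$ (legitimate since $m\geq 2$ forces $i\neq v$) gives $q_{vi}=q_{ij}$, i.e.\ $q_{ij}=q_{iv}^{-1}$. By the inductive hypothesis $q_{iv}\in\{q,q^{-1}\}$, and since this set is closed under inversion, $q_{ij}\in\{q,q^{-1}\}$.

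The main obstacle is the non-edge case: a priori condition (ii) constrains $q_{ij}$ only in terms of other $q_{**}$ values that need not themselves correspond to edges, so one has to find a relation that decreases some measure of complexity. The key observation is that the final edge of a shortest path in $G$ directly relates $q_{ij}$ to $q_{iv}$ for a pair $\{i,v\}$ of strictly smaller graph distance, and once this reduction is in place, induction on distance together with the inversion-closure of $\{q,q^{-1}\}$ closes the argument.
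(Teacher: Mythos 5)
Your proof is correct. Both you and the paper drive everything from Lemma~\ref{PBWindord}(ii) together with connectedness, but the bookkeeping differs. The paper renumbers the generators along a spanning tree so that each initial segment $\{x_1,\dots,x_m\}$ spans a connected subgraph and inducts on $m$: the new vertex $i=m+1$ is joined by an edge to some $j\leq m$, and one application of (ii) per third vertex $\ell$ gives $q_{i\ell}=q_{\ell j}$, while a second application, using an edge of the old part at $j$, pins down $q_{ij}$ itself; the inductive hypothesis on $Q_m$ then places all these values in $\{q,q^{-1}\}$. You instead argue directly on pairs: two applications of (ii) show that edges sharing a vertex carry the same parameter, which propagates along edge-chains to every edge of the connected graph, and for a non-edge $\{i,j\}$ you apply (ii) once to the last edge of a shortest path and induct on graph distance, using that $\{q,q^{-1}\}$ is closed under inversion. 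Your two-stage version avoids the spanning-tree renumbering and reads cleanly as a standalone statement about the parameters; the paper's ordering is not wasted effort, though, since exactly the same renumbering is reused immediately afterwards in the proof of Proposition~\ref{LorC}, where building $R$ up one generator at a time is essential.
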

\begin{proof}
As $R$ is connected and every finite connected graph has a spanning tree, we can renumber the generators so that, for $1\leq m\leq n$, the subgraph corresponding to the subalgebra $R_m$ generated by $x_1,\dots,x_m$ is connected. By Lemma~\ref{PBWindord}, $R_m$ is a connected quantized Weyl algebra
for $m\geq 2$.
For $2\leq m\leq n$, let $Q_m=\{q_{ij}:1\leq i\neq j\leq m\}$. Then $Q_2=\{q,q^{-1}\}$ where $q=q_{12}$.
 Let $2\leq m< n$, suppose that $Q_m=\{q,q^{-1}\}$ and let $i=m+1$. There exist $j,k$ such that $1\leq k<j\leq m$, $r_{ij}\neq 0$ and
 $r_{jk}\neq 0$. By Lemma~\ref{PBWindord}(ii), $q_{ij}=q_{ki}=q_{jk}\in \{q,q^{-1}\}$ and $q_{i\ell}=q_{\ell j}\in
 \{q,q^{-1}\}$ for all $\ell\in\{1,2,\dots,m\}\backslash\{j\}$. It follows that $Q_{m+1}=\{q,q^{-1}\}$ and, by induction, that $Q_{n}=\{q,q^{-1}\}$ as required.
\end{proof}

\begin{cor}\label{nothree}
Let $R$ be a connected quantized Weyl algebra generated by $x_1,x_2,\dots,x_n$ with single parameter $q$ as in Corollary~\ref{singleq}.
Suppose that $q\neq\pm1$. If $r_{ij}\neq 0$ and $r_{jk}\neq 0$ then $r_{j\ell}=0$ for $\ell\in \{1,2,\dots,n\}\backslash \{i,j,k\}$. In other words, in the graph associated with the given presentation of $R$, the maximum degree of a vertex cannot exceed two.
\end{cor}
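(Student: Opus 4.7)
My plan is to argue by contradiction: suppose that in addition to $r_{ij}\neq 0$ and $r_{jk}\neq 0$ there is some $\ell\in\{1,\dots,n\}\setminus\{i,j,k\}$ with $r_{j\ell}\neq 0$, and deduce that some parameter in $\{q,q^{-1}\}$ coincides with its own inverse, forcing $q^{2}=1$ and contradicting $q\neq\pm 1$. The only tool I need is Lemma~\ref{PBWindord}(ii), which asserts that whenever $r_{ab}\neq 0$ and $c$ is a third distinct index, $q_{ac}=q_{cb}$; together with Corollary~\ref{singleq}, every $q_{ab}$ lies in $\{q,q^{-1}\}$, so the hypothesis $q\neq\pm 1$ amounts to $q_{ab}\neq q_{ab}^{-1}$ for every pair.

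Set $a:=q_{ij}$. Working first on the triple $\{i,j,k\}$, Lemma~\ref{PBWindord}(ii) applied to $r_{ij}\neq 0$ with third vertex $k$ gives $q_{ik}=q_{kj}$, while the same lemma applied to $r_{jk}\neq 0$ (reading its roles as $(j,k,i)$) gives $q_{ji}=q_{ik}$. Since $q_{ji}=a^{-1}$, these chain together to force $q_{ik}=q_{kj}=a^{-1}$, so $q_{jk}=a$. The identical run on the triple $\{i,j,\ell\}$, using $r_{ij}\neq 0$ and $r_{j\ell}\neq 0$, yields $q_{j\ell}=a$ by exactly the same bookkeeping.

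The contradiction now appears by computing $q_{k\ell}$ in two ways on the triple $\{j,k,\ell\}$: applying Lemma~\ref{PBWindord}(ii) to $r_{j\ell}\neq 0$ with third vertex $k$ gives $q_{k\ell}=q_{jk}=a$, whereas applying it to $r_{jk}\neq 0$ with third vertex $\ell$ gives $q_{\ell k}=q_{j\ell}=a$, hence $q_{k\ell}=a^{-1}$. The clash $a=a^{-1}$ forces $a^{2}=1$, impossible since $a\in\{q,q^{-1}\}$ and $q\neq\pm 1$. The reformulation in terms of the vertex degree of the graph is immediate, as this argument applied to any three edges incident to a common vertex produces the same contradiction. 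There is essentially no obstacle beyond the combinatorial bookkeeping; the only point needing any care is to select the two applications to $\{j,k,\ell\}$ so that $q_{k\ell}$ is read off in two mutually inconsistent ways, and it is precisely there that the hypothesis $q\neq\pm 1$ bites.
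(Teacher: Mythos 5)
Your argument is correct and is essentially the paper's own proof: repeated use of Lemma~\ref{PBWindord}(ii) on the triangles through the vertex $j$ forces two incompatible values of a single quasi-commutation parameter, which contradicts $q\neq\pm1$. The only difference is cosmetic — the paper reads the clash off $q_{j\ell}$ (getting $q$ and $q^{-1}$ directly), whereas you read it off $q_{k\ell}$ after first pinning down $q_{jk}$ and $q_{j\ell}$.
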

\begin{proof}
Suppose that $r_{ij}\neq 0$, $r_{jk}\neq 0$ and $r_{j\ell}\neq 0$. Without loss of generality, assume that $q_{ij}=q$. By Lemma~\ref{PBWindord}(ii) applied to $i, j$ and either $\ell$ or $k$, $q_{j\ell}=q_{jk}=q$ whereas, by Lemma~\ref{PBWindord}(ii) applied to $k, j$ and $\ell$, $q_{j\ell}=q_{kj}=q^{-1}$. As $q\neq\pm1$, this is a contradiction.
\end{proof}

The next  result is  readily checked  from the defining relations for $L_n^q$ and $C_n^q$.

\begin{prop}\label{someautos} Let $n\geq 1$ and let $q\in \K^*\backslash\{1\}$.
\begin{enumerate}
\item Let $\nu\in \K^*$. There is a $\K$-automorphism $\iota_\nu$ of $L_n^q$ such that $\iota_\nu(x_i)=\nu^{(-1)^i}x_i$ for $1\leq i\leq n$.
\item If $n$ is odd then there is a $\K$-automorphism of $\iota$ of $C_n^q$ such that $\iota(x_i)=-x_i$ for $1\leq i\leq n$.
\item There is an injective $\K$-homomorphism $\theta:L_{n-1}^q\rightarrow L_{n}^q$ such that $\theta(x_i)=x_{i+1}$ for $1\leq i\leq n-1$.
\item If $n$ is odd then there is a $\K$-automorphism $\theta$ of $C_n^q$ such that $\theta(x_i)=x_{i+1}$ for $1\leq i\leq n$, where subscripts are interpreted modulo $n$
in $\{1,2,\dots,n\}$.
\item There is a $\K$-isomorphism from  $L_n^q$ to $L_n^{q^{-1}}$ such that $x_i\mapsto x_{n-i+1}$ for $1\leq i\leq n$.
\item If $n$ is odd then there is a $\K$-isomorphism from  $C_n^q$ to $C_n^{q^{-1}}$ such that $x_i\mapsto x_{n-i+1}$ for $1\leq i\leq n$.
\end{enumerate}
\end{prop}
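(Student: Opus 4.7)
My plan for all six parts is the same: specify each map on generators, verify that the images satisfy the defining relations of the source algebra (so the map extends to a well-defined $\K$-algebra homomorphism by the universal property of the presentation), and then use the PBW bases from Examples~\ref{Lnq} and \ref{Cnq} to read off injectivity or bijectivity. None of the checks look deep; the only real subtlety I expect is parity bookkeeping in parts (iv) and (vi).

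For (i), I would note that $x_ix_j$ and $x_jx_i$ both scale by $\nu^{(-1)^i+(-1)^j}$: this is $1$ when $j-i$ is odd (preserving \eqref{Lpres1} and \eqref{Lpres2}) and $\nu^{\pm 2}$ when $j-i$ is even (harmless since the right-hand side of \eqref{Lpres3} vanishes). An inverse is provided by $\iota_{\nu^{-1}}$. Part (ii) is even shorter: every defining relation of $C_n^q$ is bilinear in the generators with constant term $0$ or $1-q$, so $x_i\mapsto -x_i$ fixes every relation, and the map is an involution.

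For (iii), I would observe that, under the reindexing $i\mapsto i+1$, the defining relations of $L_{n-1}^q$ sit inside those of $L_n^q$, giving a well-defined homomorphism; injectivity then follows because $\theta$ maps the PBW basis of $L_{n-1}^q$ into a subset of the PBW basis of $L_n^q$. For (iv), each of the quantized Weyl relations \eqref{Cpres1}--\eqref{Cpres2} shifts cyclically to another one, and I would check the commutation relations \eqref{Cpres3}--\eqref{Cpres4} separately: the key point is that shifting an index past the join $x_n\leftrightarrow x_1$ must preserve the parity of the cyclic distance and hence the correct power of $q$, and this is precisely what the assumption $n$ odd buys. Once $\theta$ is shown to be an endomorphism, bijectivity follows from $\theta^n=\id$.

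For (v) and (vi), the reversal $x_i\mapsto x_{n-i+1}$ turns each relation $x_ix_j-qx_jx_i=r$ into its equivalent form $x_jx_i-q^{-1}x_ix_j=-q^{-1}r$, which accounts for the target algebra being $L_n^{q^{-1}}$ (resp.\ $C_n^{q^{-1}}$) rather than $L_n^q$ (resp.\ $C_n^q$). Since $j-i$ and $(n-i+1)-(n-j+1)$ have the same absolute value, the odd/even dichotomy in the commutation relations matches up automatically. Bijectivity is clear since the map is involutive on generators; the main (mild) obstacle will be, as in (iv), handling the wrap-around relation in (vi), which once again relies on $n$ being odd.
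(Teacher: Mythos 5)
Your plan is correct and coincides with the paper's treatment: the paper gives no written proof, stating only that the result "is readily checked from the defining relations," and your generator-by-generator verification of the relations followed by the obvious inverses (or, for (iii), injectivity via the PBW basis) is exactly that routine check, including the correct identification of where $n$ odd is needed for the parity bookkeeping at the wrap-around in (iv) and (vi).
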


\begin{rmk}\label{restriction}
When $n$ is odd and $j<n$ then $L_{j-1}^q$ and $L_{j}^q$ are subalgebras of $C_n^q$ and the injective $\K$-homomorphism $\theta:L_{j-1}^q\rightarrow L_{j}^q$ in Proposition~\ref{someautos}(iii) is the restriction to $L_{j-1}^q$ of the automorphism $\theta$ of $C_n^q$ in Proposition~\ref{someautos}(iv).
\end{rmk}

\begin{prop}\label{LorC}
Let $n\geq 2$ and let $R$ be a connected quantized Weyl algebra generated by $x_1,x_2,\dots,x_n$ with single parameter $q$ as in Corollary~\ref{singleq}.
Suppose that $q\neq\pm1$. Then $R\simeq L_n^q$ or $R\simeq C_n^q$.
\end{prop}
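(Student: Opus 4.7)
The plan is to first analyze the shape of the graph $G$ underlying the given presentation of $R$. Corollary~\ref{nothree} forces every vertex of $G$ to have degree at most two, and since $G$ is connected on $n$ vertices, this means $G$ is either a path or a cycle. After relabelling the generators, we may assume the edges of $G$ are $\{x_i,x_{i+1}\}$ for $1\le i\le n-1$ (the path case), together with the extra edge $\{x_n,x_1\}$ in the cycle case.

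Next I would pin down all $q_{ij}$ in terms of $q:=q_{12}$ by repeatedly invoking Lemma~\ref{PBWindord}(ii), which asserts that $q_{ik}=q_{kj}$ whenever $r_{ij}\neq 0$ and $k\notin\{i,j\}$. Walking along the path, this gives $q_{i,i+1}=q$ for every $i$ and, more generally, $q_{ij}=q$ if $j-i$ is odd and $q^{-1}$ if $j-i$ is even, matching the defining relations \eqref{Lpres1}--\eqref{Lpres3} of $L_n^q$. In the cycle case the same argument computes $q_{1,k}$ by iterating the rule $q_{1,k+1}=q_{1,k}^{-1}$ (valid for $2\le k\le n-1$), yielding $q_{1,n}=q^{(-1)^n}$; but Lemma~\ref{PBWindord}(ii) applied to the edge $\{x_1,x_2\}$ with auxiliary vertex $x_n$, combined with its application to the edge $\{x_n,x_1\}$ with auxiliary vertex $x_2$, independently gives $q_{1,n}=q^{-1}$. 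Comparing the two forces $(-1)^n=-1$; since $q\ne\pm 1$ there is no escape, so $n$ must be odd, and the full list of $q_{ij}$ then matches those of $C_n^q$ via the same inductive walk, now around the cycle.

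Finally, rescaling generators via $x_i\mapsto\lambda_ix_i$ with $\lambda_i\in\K^*$ leaves each $q_{ij}$ unchanged but multiplies $r_{i,i+1}$ by $\lambda_i\lambda_{i+1}$. In the path case the system $\lambda_i\lambda_{i+1}=r_{i,i+1}/(1-q)$ consists of $n-1$ equations in $n$ unknowns and is solved by choosing $\lambda_1$ freely and determining $\lambda_2,\dots,\lambda_n$ inductively. In the cycle case (with $n$ odd) there is one additional equation from the edge $\{x_n,x_1\}$; eliminating $\lambda_2,\dots,\lambda_n$ using the first $n-1$ equations transforms the last equation into $\lambda_1^2=c$ for some explicit $c\in\K^*$ (the exponent is $2$ rather than $0$ precisely because $n$ is odd), which has a solution because $\K$ is algebraically closed. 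After rescaling, the presentation of $R$ agrees verbatim with that of $L_n^q$ or $C_n^q$, and the universal property of the latter, combined with a comparison of PBW bases via Lemma~\ref{PBWindord}, gives the required isomorphism.

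The main obstacle is the cycle case: both the parity constraint that forces $n$ to be odd and the invocation of algebraic closedness to extract a square root in the rescaling step are essential, and together they pin down the structure of $R$ exactly.
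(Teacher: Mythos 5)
Your proposal is correct and follows essentially the same route as the paper: Corollary~\ref{nothree} to bound vertex degrees, Lemma~\ref{PBWindord}(ii) to propagate the $q_{ij}$ along the graph (with the parity clash forcing $n$ odd in the cycle case), and a rescaling of generators that in the cycle case reduces to extracting a square root in the algebraically closed field $\K$. The only difference is organizational — you argue globally (classify the graph as path or cycle, then solve the whole rescaling system at once, ending with $\lambda_1^2=c$) whereas the paper runs an induction on $n$, normalizing $R_{n-1}$ first and then adjoining $x_n$ — but the mathematical content is the same.
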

\begin{proof}
  As in the proof of Corollary~\ref{singleq}, we can renumber the generators so that, for $2\leq i\leq n$,  the algebra $R_i$ generated by $x_1,\dots,x_i$ is a connected quantized Weyl algebra.
  In view of Proposition~\ref{someautos}(v,vi), we can assume that $q_{12}=q$. We shall show, by induction on $n$, that there exist $\mu_1,\dots,\mu_n\in \K^*$ such that if $x_i^\prime=\mu_i x_i$ then when the defining relations of $R$ are written in terms of the generators $x_i^\prime$ they become those of $L_n^q$ or $C_n^q$. This is true when $n=2$, take $\mu_1=1$ and $\mu_2=(1-q)r_{12}^{-1}$. By induction we may assume that  $R_{n-1}$ has a presentation for which the graph is the same as for $L_{n-1}^q$ or, if $n$ is even, $C_{n-1}^q$ and that, for $1\leq i<j\leq n-1$, $q_{ij}=q^{(-1)^{j-i+1}}$ and, for $1\leq i\leq n-2$,  $r_{i(i+1)}=1-q$.
     All vertices in the graph for $C_{n-1}^q$ and vertices $x_2,\dots,x_{n-1}$ in the graph representing $L_{n-1}^q$ have vertex degree $2$ so, by Corollary~\ref{nothree} and as $R$ is connected, the graph for $R_{n-1}$ must be the same as for $L_{n-1}^q$,  $r_{in}=0$
for $1<i<n-1$ and  $r_{n1}\neq 0$ or $r_{n-1,n}\neq 0$ or both.

     First suppose that $r_{(n-1)n}\neq 0$.
 As $r_{(n-1)n}\neq 0$, it follows from Lemma~\ref{PBWindord}(ii) that, for $1\leq i<n$,  $q_{in}=q_{i(n-1)}^{-1}=q^{(-1)^{n-i+1}}$. Thus   $q_{ij}=q^{(-1)^{j-i+1}}$ for $1\leq i<j\leq n$.
Now suppose also that $r_{n1}=0$. Let $x_i^\prime=x_i$ for $1\leq i\leq n-1$ and let $x_n^\prime=(1-q)r_{(n-1)n}^{-1}x_n$. Then $x_i^\prime,\dots, x_n^\prime$ generate $R$ subject to the $q$-commutation relations  $x_i^\prime x_j^\prime=q^{(-1)^{j-i+1}}x_jx_i$, $i<j-1$, and the quantized Weyl relations \[x_i^\prime x_{i+1}^\prime-q x_{i+1}^\prime x_i^\prime=1-q,\quad 1\leq i\leq n-1.\] Thus $R\simeq L_n^q$. Similar calculations with the generators ordered as $x_n,x_1,\dots,x_{n-1}$ give the same conclusion when $r_{n1}\neq 0$ and $r_{(n-1)n}=0$.

It remains to consider the case where $r_{1n}\neq 0\neq r_{(n-1)n}$. In this case,
by Lemma~\ref{PBWindord}(ii),  $q=q_{12}=q_{2n}=
q^{(-1)^{n-1}}$ so, as $q\neq \pm1$, $n$ must be odd.
The same change of generators as above gives the same relations between the $x_i^\prime$ but with the $q$-commutation relation between
$x_1^\prime$ and $x_n^\prime$ replaced by \[x_n^\prime x_1^\prime-qx_1^\prime x_n^\prime=(1-q)\lambda,\]
where $\lambda=r_{(n-1)n}^{-1}r_{n1}$. Let $\rho\in \K$ be such that $\rho^2=\lambda^{-1}$ and let
$x_i^{\prime\prime}=\rho^{(-1)^{i-1}}x_i^\prime$ for $1\leq i\leq n$.  Then $x_1^{\prime\prime},\dots, x_n^{\prime\prime}$ generate $R$ subject to the $q$-commutation relations
$x_i^{\prime\prime}x_j^{\prime\prime}=q^{(-1)^{j-i+1}}x_j^{\prime\prime}x_i^{\prime\prime}$, when $i<j-1$ unless $i=1$ and $j=n$, and the quantized Weyl relations
\[x_i^{\prime\prime} x_{i+1}^{\prime\prime}-q x_{i+1}^{\prime\prime}x_i^{\prime\prime}=\rho\rho^{-1}(1-q)=1-q,\quad 1\leq i\leq n-1,\]
and
\[x_n^{\prime\prime} x_1^{\prime\prime}-qx_1^{\prime\prime} x_n^{\prime\prime}=\rho^2(1-q)\lambda=1-q.\]
Thus $R\simeq C_n^q$ in this case.

\end{proof}

\begin{rmk}
Proposition~\ref{LorC} is false when $q=\pm 1$. If $q=\pm 1$ and  $q_{ij}$ is always $q$ when $i\neq j$, then, by Lemma~\ref{PBWindord}, one can take any connected graph $G$ on $x_1,x_2,\dots,x_n$ and construct a connected quantized Weyl algebra using the relations $x_ix_j-qx_jx_i=r_{ij}$, $1\leq i<j\leq n$,
where $r_{ij}=1$ if there is an edge between $x_i$ and $x_j$ in $G$ and  $r_{ij}=0$ if there is no such edge. In \cite{Wn} the authors consider the $\K$-algebra $W_n$ constructed in this way when $q=-1$ and $G$ is the complete graph.
\end{rmk}

Many ring theoretic properties of $C_n^q$ and $L_n^q$ follow from the fact that they are iterated skew polynomial extensions of the field $\K$.
\begin{prop}\label{ringprops}
\noindent \rm{(i)} The connected quantized Weyl algebras $C_n^q$ and $L_n^q$ are right and left noetherian  domains with $\K^*$ as their group of units.

\noindent \rm{(ii)} If $M$ is a simple module over either $C_n^q$ or $L_n^q$ then $\Endo M=\K$. If  $R$ is a prime factor ring of either $C_n^q$ or $L_n^q$
for which the centre $Z(R)$ of $R$ is not $\K$ then $R$ is not primitive.

\noindent \rm{(iii)} If  $R$ is a prime factor of either $C_n^q$ or $L_n^q$ then the Jacobson radical $\Jac(R)=0$. If, further, the intersection of the non-zero prime ideals of $R$ is non-zero then $R$ is primitive.
\end{prop}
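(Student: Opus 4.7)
The plan is to reduce each of the three statements to standard facts about iterated skew polynomial extensions of $\K$, using the Ore-tower descriptions of $L_n^q$ and $C_n^q$ established in Examples~\ref{Lnq} and~\ref{Cnq}. Throughout, let $R$ denote either $L_n^q$ or $C_n^q$.

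For part (i), I would proceed by induction along the tower. At each step we pass from a noetherian domain $S$ to a skew polynomial extension $S[x;\tau,\delta]$ with $\tau$ a $\K$-automorphism of $S$; this is again noetherian (by the Hilbert basis theorem for Ore extensions) and a domain (since the degree function is additive on products). The same additive-degree observation forces every unit of $S[x;\tau,\delta]$ to have degree $0$ and hence to be a unit of $S$, so iterating the tower leaves exactly $\K^*$.

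For part (ii), the essential input is that an iterated skew polynomial extension of the algebraically closed field $\K$ satisfies a Nullstellensatz: the endomorphism algebra of any simple module is algebraic over $\K$, and so equals $\K$. I would quote this from the treatment in \cite{BGl}, Chapter~II.7. The property passes to every factor algebra, so $\Endo M=\K$ for any simple module $M$ over a prime factor of $R$. Now suppose $R$ is a prime factor with a central element $z\in Z(R)\setminus\K$, and assume, for a contradiction, that $R$ is primitive with a faithful simple module $M$. Then $z$ acts on $M$ as a scalar $\lambda\in\K$, so $z-\lambda$ is a non-zero element of $R$ annihilating $M$, contradicting faithfulness. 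Hence $R$ is not primitive.

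For part (iii), I would invoke the complementary standard result that iterated Ore extensions of a commutative Jacobson ring (and $\K$ is Jacobson) are themselves Jacobson rings, so every prime ideal of $R$ is an intersection of primitive ideals; this again is available in \cite{BGl}. Consequently, in a prime factor of $R$ the zero ideal is an intersection of primitive ideals, giving $\Jac(R)=0$. For the final clause, each primitive ideal of such a factor is prime; if $0$ were not itself primitive, then every primitive ideal would be a non-zero prime, whence the intersection of all primitive ideals would contain the (assumed non-zero) intersection of non-zero primes, contradicting $\Jac(R)=0$. Therefore $0$ is primitive and $R$ is a primitive ring. The only real obstacle is locating the cleanest statements of the Nullstellensatz and the Jacobson property in the literature on iterated Ore extensions; once those are cited, the remainder is a short Schur-lemma argument together with the elementary ideal-intersection observation.
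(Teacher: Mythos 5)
Your proposal is correct and follows essentially the same route as the paper: part (i) via the iterated Ore-extension structure with the degree argument for units, and parts (ii) and (iii) via the Nullstellensatz/Jacobson property for iterated Ore extensions over the algebraically closed field $\K$ (the paper cites the constructible-algebra results \cite[Example 1.6.11, Theorem 9.4.21]{McCR} rather than \cite{BGl}, and gets $\Jac(R)=0$ from ``nil radical, noetherian, prime'' instead of the Jacobson-ring formulation, but these are interchangeable for noetherian rings), followed by the same Schur-type central-element argument and the same intersection argument for primitivity.
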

\begin{proof}
\noindent \rm{(i)}.  $C_n^q$ and $L_n^q$ are right and left noetherian  domains by repeated application of \cite[Theorem 1.2.9(iv) and (i)]{McCR}
or \cite[Theorem 2.6 and Exercise 2O]{GW}.
Although explicit references are elusive, it is well-known and easy to see, using degree, that, for a skew polynomial ring $R[x;\alpha,\delta]$ over a ring $R$ with an automorphism $\alpha$ and $\alpha$-derivation $\delta$, $U(R[x;\alpha,\delta])=U(R)$. Hence   $U(C_n^q)=\K^*=U(L_n^q)$.

\noindent \rm{(ii)} and \rm{(iii)}. By \cite[Example 1.6.11]{McCR}, $C_n^q$ and $L_n^q$ are constructible $\K$-algebras in the sense of \cite[9.4.12]{McCR} so, by \cite[Theorem 9.4.21]{McCR},
they satisfy the Nullstellensatz over $\K$ as stated in \cite[9.1.4]{McCR}. Thus  every factor ring has nil Jacobson radical, and for any simple module $M$,  $\Endo M$ is algebraic over $\K$. Here $\K$ is algebraically closed so $\Endo M=\K$. If $Z(R)\neq \K$
and $\Phi\in Z(R)\backslash \K$ then multiplication by $\Phi$ induces an endomorphism of any simple module $M$. As $\Endo M=\K$, $M$ is annihilated by $\Phi-\mu$ for some $\mu\in \K$, whence $R$ cannot be primitive.

As $R$ is noetherian and prime, the nil ideal $\Jac(R)$ is nilpotent and hence $0$. If the intersection of the non-zero prime ideals of $R$ is non-zero then, as $\Jac(R)=0$, the ideal $0$ must be primitive.
\end{proof}

\begin{prop}\label{Lcompprime}
If $q$ is not a root of unity then  every prime ideal of $L_n^q$ is completely prime.
\end{prop}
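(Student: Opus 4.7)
The plan is to induct on $n$, exploiting the iterated skew polynomial structure $L_n^q = L_{n-1}^q[x_n;\tau_n,\delta_n]$ established in Example~\ref{Lnq}, and appealing to \cite[Theorem 2.3]{goodletzcompprime}. That theorem provides a sufficient condition for every prime ideal of a skew polynomial extension $R[x;\tau,\delta]$ to be completely prime, namely that $R$ is a noetherian $\K$-algebra in which every prime is completely prime, $\delta$ is a locally nilpotent $\tau$-derivation, and the pair $(\tau,\delta)$ is $\lambda$-skew for some $\lambda\in\K^*$ that is not a root of unity.

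The base case $n=1$ is trivial since $L_1^q = \K[x_1]$ is a commutative polynomial ring. For the inductive step, Proposition~\ref{ringprops}(i) provides noetherianity of $L_{n-1}^q$ and the inductive hypothesis gives complete primeness of all its primes. To establish the $\lambda$-skew identity, I would check on generators: both $\tau_n\delta_n$ and $\delta_n\tau_n$ vanish on $x_1,\dots,x_{n-2}$, while on $x_{n-1}$, using $\tau_n(x_{n-1})=q^{-1}x_{n-1}$ and $\delta_n(x_{n-1})=1-q^{-1}$, one computes $\tau_n\delta_n(x_{n-1})=1-q^{-1}$ and $\delta_n\tau_n(x_{n-1})=q^{-1}(1-q^{-1})$. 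Hence $\delta_n\tau_n$ and $q^{-1}\tau_n\delta_n$ agree on the generators of $L_{n-1}^q$; as both are $\tau_n^2$-derivations into $L_{n-1}^q$, they agree throughout, so $(\tau_n,\delta_n)$ is $q^{-1}$-skew, with skewing scalar $q^{-1}$ not a root of unity by hypothesis. Local nilpotency of $\delta_n$ follows from the $q$-Leibniz rule: since $\delta_n(x_j)=0$ for $j\leq n-2$ and $\delta_n(x_{n-1}^a) = \qnum{a}{q^{-1}}(1-q^{-1})x_{n-1}^{a-1}$ for $a\geq 1$, iterated application shows that $\delta_n^{a_{n-1}+1}$ annihilates any PBW monomial $x_1^{a_1}\cdots x_{n-1}^{a_{n-1}}$. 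With all hypotheses confirmed, \cite[Theorem 2.3]{goodletzcompprime} yields that every prime ideal of $L_n^q$ is completely prime.

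The main obstacle is the routine but fiddly bookkeeping required to propagate the $\lambda$-skew identity and the local nilpotency from the generators to the whole of $L_{n-1}^q$, and to confirm that the $q^{-1}$-integers $\qnum{a}{q^{-1}}$ are nonzero for every $a\geq 1$ --- this is precisely where the non-root-of-unity hypothesis on $q$ enters decisively, ensuring both that the skewing scalar $q^{-1}$ meets the requirement of \cite[Theorem 2.3]{goodletzcompprime} and that no unexpected annihilations disrupt the local nilpotency calculation.
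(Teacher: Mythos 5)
Your overall strategy is the paper's: verify the hypotheses of \cite[Theorem 2.3]{goodletzcompprime} for the skew polynomial presentation of $L_n^q$, and your key computation ($\tau_n\delta_n(x_{n-1})=1-q^{-1}=q\,\delta_n\tau_n(x_{n-1})$, with both composites vanishing on $x_1,\dots,x_{n-2}$) is exactly the one the paper records. The gap is in how you quote the cited theorem. You state it as a one-step criterion: $R$ noetherian with all primes completely prime, $\delta$ locally nilpotent and $\lambda$-skew with $\lambda$ not a root of unity, implies all primes of $R[x;\tau,\delta]$ are completely prime. That statement is false as it stands: take $R=\K[y]$, $\tau(y)=-y$ and $\delta=0$ (vacuously $\lambda$-skew for every $\lambda$); then $R[x;\tau]$ is the quantum plane at parameter $-1$, whose factor by the ideal generated by the central elements $x^2-1$ and $y^2-1$ is $M_2(\K)$, so the corresponding maximal ideal is prime but not completely prime. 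The actual Theorem 2.3 is formulated for an iterated skew polynomial algebra over the field and carries, besides local nilpotency and the $q_i$-skew conditions with the $q_i$ not roots of unity, a hypothesis on the scalars by which each $\tau_i$ acts on the earlier variables --- the ``supplementary condition on the group $\Gamma$'' that the paper's proof checks, here $\Gamma=\langle q\rangle$, which avoids nontrivial roots of unity precisely because $q$ is not one. Your invocation drops exactly this hypothesis on the automorphism, which is what excludes examples like the one above.

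In the present application the omission is easily repaired: $\tau_n(x_j)=q^{\pm1}x_j$ for all $j<n$, so every eigenvalue lies in $\langle q\rangle$ and the missing condition holds; with that added, your induction (or, as in the paper, a single application of the theorem to the whole iterated extension over $\K$) goes through. Two smaller remarks: the identity $\delta_n\tau_n=q^{-1}\tau_n\delta_n$ does propagate from generators, since both sides obey the same twisted Leibniz rule, as you indicate; but the nonvanishing of the $q^{-1}$-integers $\qnum{a}{q^{-1}}$ is irrelevant to local nilpotency --- if some of them vanished, $\delta_n$ would only become nilpotent sooner, not later.
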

\begin{proof}
This is a consequence of \cite[Theorem 2.3]{goodletzcompprime}. Conditions (a) and (b) of that result are clearly satisfied. Condition (c) holds because, in Example~\ref{Lnq}, $\tau_i\delta_i(x_j)=0=\delta_i\tau_i(x_j)$ for $1\leq j<i-1$ and $\tau_i\delta_i(x_{i-1})=1-q^{-1}=q\delta_i\tau_i(x_{i-1})$. Both (d) and the supplementary condition on the group $\Gamma$, which here is $\langle q\rangle$, hold because $q$ is not a root of unity.
\end{proof}

\begin{rmk}
We shall see that the analogue of Proposition \ref{Lcompprime} for $C_n^q$ is false. The conditions of \cite[Theorem 2.3]{goodletzcompprime} break down in a rather minimal way.
When the final generator $x_n$ is adjoined, Condition (c) fails because $\tau_n\partial_n(x_j)=0=\delta_n\partial_n(x_j)$ for $1<j<n-1$ and $\tau_n\partial_n(x_{n-1})=1-q^{-1}=q\partial_n\tau_n(x_{n-1})$, whereas $\tau_n\partial_n(x_{1})=1-q=q^{-1}\partial_n\tau_n(x_{1})$.
\end{rmk}

\section{Prime Spectrum of $L_n^q$}
The purpose of this section is to determine the prime spectrum of the linear connected quantized Weyl algebras $L_n^q$, $n\geq 3$.
It is well-known, for example \cite[8.4 and 8.5]{g} or \cite[Example 6.3(iii)]{ambiskew}, that the quantized Weyl algebra $A_1^q$ has a distinguished normal element  $z$ such that, if $q$ is not a root of unity, the localization of  $A_1^q$ at the powers of $z$ is simple. With $A_1^q$ as in \ref{Aoneq}, $z=x_1x_2-1=q(x_2x_1-1)$.
We shall identify a sequence of elements $z_i$, $-1\leq i\leq n$, such that, for $1\leq i\leq n$, $z_i$ is normal in $L_i^q$.

\begin{notn} Let $n\geq 3$. In $L^q_n$, let
$z_{-1} = 0$,  $z_0 = 1$ and, for $1\leq i\leq n$, let
$z_i = z_{i-1}x_i - z_{i-2}$.
\end{notn}

The next lemma gives an alternative expression for $z_i$ in terms of the $\K$-homomorphism $\theta:L_{n-1}^q\rightarrow L_{n}^q$ of Proposition~\ref{someautos}(iii), for which $\theta^2(L_{n-2}^q) \subset L_{n}^q$.

\begin{lemma}\label{altz} For $1\leq i\leq n$,
$z_i = x_1\theta(z_{i-1}) - \theta^2(z_{i-2})$.
\end{lemma}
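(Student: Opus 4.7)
The plan is to prove the identity by induction on $i$, with base cases $i=1$ and $i=2$ verified by direct computation and the inductive step driven purely by the defining recurrence $z_j = z_{j-1}x_j - z_{j-2}$ together with the fact that $\theta$ is a $\K$-algebra homomorphism with $\theta(x_j) = x_{j+1}$.

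For the base cases, $z_1 = z_0 x_1 - z_{-1} = x_1$ and the right-hand side reads $x_1\theta(z_0) - \theta^2(z_{-1}) = x_1\cdot 1 - 0 = x_1$; similarly $z_2 = x_1 x_2 - 1$ and $x_1\theta(z_1) - \theta^2(z_0) = x_1 x_2 - 1$. So both cases are immediate, and I do not need to invoke any commutation relations between $x_1$ and $x_i$.

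For the inductive step, assume $i \geq 3$ and that the identity holds for all smaller positive indices. Using the induction hypothesis to rewrite both $z_{i-1}$ and $z_{i-2}$, I expand
\[z_{i-1}x_i - z_{i-2} = \bigl(x_1\theta(z_{i-2}) - \theta^2(z_{i-3})\bigr)x_i - \bigl(x_1\theta(z_{i-3}) - \theta^2(z_{i-4})\bigr).\]
On the other side, applying $\theta$ to the recurrence $z_{i-1} = z_{i-2}x_{i-1} - z_{i-3}$ gives $\theta(z_{i-1}) = \theta(z_{i-2})x_i - \theta(z_{i-3})$, so
\[x_1\theta(z_{i-1}) - \theta^2(z_{i-2}) = x_1\theta(z_{i-2})x_i - x_1\theta(z_{i-3}) - \theta^2(z_{i-2}).\]
Subtracting, the two sides agree precisely when
\[\theta^2(z_{i-2}) = \theta^2(z_{i-3})x_i - \theta^2(z_{i-4}),\]
and this is exactly the image under $\theta^2$ of the defining recurrence $z_{i-2} = z_{i-3}x_{i-2} - z_{i-4}$, noting that $\theta^2(x_{i-2}) = x_i$ and that $z_{i-2} \in L_{i-2}^q \subseteq L_{n-2}^q$, which is the domain of $\theta^2$.

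The argument is essentially formal manipulation and I expect no real obstacle; the only mild subtlety is ensuring that all terms in the induction step are in the appropriate subalgebra so that $\theta$ and $\theta^2$ can be applied, which is why the base cases $i = 1$ and $i = 2$ must be handled separately (for smaller $i$, the indices $i-4, i-3$ would drop below $-1$ and the corresponding $z_j$ would be undefined).
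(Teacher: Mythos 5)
Your proposal is correct and follows essentially the same route as the paper: induction on $i$ with base cases $i=1,2$, using only the recurrence $z_j=z_{j-1}x_j-z_{j-2}$ and the fact that $\theta$ (hence $\theta^2$) is an algebra homomorphism shifting indices, so that the inductive step reduces to $\theta$ and $\theta^2$ applied to the defining recurrence. The only cosmetic difference is that you compare the two expanded sides and reduce to $\theta^2$ of the recurrence, whereas the paper regroups the expanded expression directly as $x_1\theta(z_{i-2}x_{i-1}-z_{i-3})-\theta^2(z_{i-3}x_{i-2}-z_{i-4})$.
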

\begin{proof} The proof is by induction on $i$.
When $i=1$, $x_1\theta(z_{0})-\theta^2(z_{-1}) = x_1 = z_1$
and, when $i=2$, $x_1\theta(z_1)-\theta^2(z_0) = x_1x_2 - 1 = z_2$.

If $i>2$ and the result holds for $i-1$ and $i-2$, then
\begin{eqnarray*}
z_i&=&z_{i-1}x_i - z_{i-2}\\
&=& x_1\theta(z_{i-2})x_i - \theta^2(z_{i-3})x_i - x_1\theta(z_{i-3}) + \theta^2 (z_{i-4})\\
&=&x_1\theta(z_{i-2}x_{i-1}-z_{i-3})-\theta^2(z_{i-3}x_{i-2}-z_{i-4})\\
&=& x_1\theta(z_{i-1})-\theta^2(z_{i-2}).
\end{eqnarray*}
\end{proof}

We now seek relations between $x_i$ and $z_j$, for $1\leq i,j \leq n$, and  between $z_i$ and $z_j$ when $i\neq j$.

\begin{lemma}\label{normalelementsthm}
Let $1\leq i,j \leq n$.
 Then
\[x_iz_j=\begin{cases}
q^{(-1)^{i-1}}z_jx_i&\text{ if $j$ is odd and }j<i-1,\\
z_jx_i&\text{ if $j$ is even and }j<i-1,\\
 z_{i-1}x_i+(q-1)z_{i-2}&\text{ if $i$ is odd and }j=i-1,\\
q^{-1}z_{i-1}x_i+(1-q^{-1})z_{i-2}&\text{ if $i$ is even and }j=i-1,\\
z_jx_i&\text{ if $j$ is odd  and }j\geq i,\\
q^{(-1)^{i-1}}z_jx_i&\text{ if $j$ is even and }j\geq i.
\end{cases}\]
\end{lemma}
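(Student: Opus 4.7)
The plan is to fix $i$ and induct on $j$, using the recursion $z_j=z_{j-1}x_j-z_{j-2}$ to reduce commutation of $x_i$ past $z_j$ to commutation of $x_i$ past $z_{j-1}$ and $z_{j-2}$ together with the $q$-commutation or quantized Weyl relation between $x_i$ and $x_j$. Before beginning the induction I would verify the base cases $j=0$ and $j=1$ directly: for $j=0$ one has $z_0=1$, and for $j=1$, $z_1=x_1$, and the asserted identities reduce to the defining relations \eqref{Lpres1}--\eqref{Lpres3} between $x_i$ and $x_1$ (with $i=2$ giving the quantized Weyl correction that matches case~(iv)).

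For the inductive step I would partition the argument by the position of $j$ relative to $i$, since the commutation $x_i x_j \leftrightarrow x_j x_i$ has four different shapes in $L_n^q$:
\begin{enumerate}
\item If $j\le i-2$, both $z_{j-1}$ and $z_{j-2}$ fall under cases (i) or (ii) of the inductive hypothesis, $x_i$ and $x_j$ $q$-commute according to the parity of $j-i$, and the scalar that results on moving $x_i$ past $z_{j-1}x_j$ is precisely the one predicted for $z_j$.
\item If $j=i-1$, we use the quantized Weyl relation $x_i x_{i-1}-q^{-1}x_{i-1}x_i=1-q^{-1}$ (or its equivalent form $x_{i-1}x_i-qx_ix_{i-1}=1-q$, whichever matches the desired parity). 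The correction term $(1-q)z_{i-2}$ or $(1-q^{-1})z_{i-2}$ in cases (iii) and (iv) arises from this inhomogeneous term, after combining with $x_iz_{j-2}$ via the inductive hypothesis.
\item If $j=i$, then $x_i$ commutes with $x_j$, and the scalar factors coming out of $x_iz_{i-1}$ (cases (iii)/(iv)) against those from $x_iz_{i-2}$ (cases (i)/(ii)) must be shown to recombine to give $z_i x_i$; the cross terms $(q-1)z_{i-2}$ and $(1-q^{-1})z_{i-2}$ cancel using $z_i=z_{i-1}x_i-z_{i-2}$.
\item If $j=i+1$, the quantized Weyl relation between $x_i$ and $x_{i+1}$ again contributes a scalar term $1-q$, which I would rearrange via $z_j=z_{j-1}x_j-z_{j-2}$ so that the ``excess'' telescopes exactly into the claimed factor $q^{(-1)^{i-1}}$ times $z_j x_i$. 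A sample computation already illustrating this: with $i=3$, $j=4$ one obtains
\begin{align*}
x_3z_4 &= (x_3z_3)x_4-x_3z_2 = z_3x_3x_4-z_2x_3-(q-1)z_1\\
&= qz_3x_4x_3-z_2x_3+(1-q)z_3-(q-1)z_1,
\end{align*}
and regrouping via $qz_4x_3=qz_3x_4x_3-qz_2x_3$ together with $z_3=z_2x_3-z_1$ collapses the remaining scalar terms to $0$.
\item If $j\ge i+2$, only the $q$-commutation relations are used and the analysis is essentially the same as in the first case, with both $z_{j-1}$ and $z_{j-2}$ already in cases (v)/(vi).
\end{enumerate}

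The main obstacle will be the bookkeeping for the transitional values $j=i-1,\, i,\, i+1$, where the inhomogeneous scalars $1-q$ and $1-q^{-1}$ appear and must be shown to telescope correctly through the recursion $z_j=z_{j-1}x_j-z_{j-2}$. Each individual case is a short manipulation, but there are enough subcases (six outputs crossed with parities of $i$ and $j$) that care is needed to ensure every inductive invocation of the lemma uses the right branch. Once the transitional cases are handled, the far-away cases $j\le i-2$ and $j\ge i+2$ follow mechanically from the $q$-commutation relations and the inductive hypothesis.
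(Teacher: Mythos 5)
Your plan is correct and takes essentially the same route as the paper's proof: both argue by induction on $j$ for fixed $i$, with base cases $j=0,1$, using $z_j=z_{j-1}x_j-z_{j-2}$ and separate treatment of $j\le i-2$, $j=i-1$, $j=i$, $j=i+1$ and $j\ge i+2$ (your $i=3$, $j=4$ computation is exactly the paper's even $j=i+1$ case). One small caution: in your step for $j=i+1$ the emerging factor is $q^{(-1)^{i-1}}$ only when $i$ is odd (so $j$ is even); when $i$ is even the factor is $1$, as the six-branch statement of the lemma already records.
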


\begin{proof}
This is a straightforward induction on $j$ using the defining relations and the  equations $z_k=z_{k-1}x_k-z_{k-2}$, and applying the previous two cases at each inductive step. The trivial case $j=0$, where $z_j=1$, can be used along with the case $j=1$ in the initial step. Separate calculations are needed for the cases $j<i-2$, $j=i-1$, $j=i$, $j=i+1$ and $j\geq i+2$. We give details for the cases $j=i-1$, $j=i$ and $j=i+1$ when $j$ is even. The odd cases of these are similar and the cases $j<i-2$ and $j\geq i+2$ are routine.
Suppose that $j$ is even and that the result holds for $j-1$ and $j-2$.
If $j=i-1$  then
\begin{eqnarray*}
x_iz_{i-1}&=&x_iz_{i-2}x_{i-1}-x_iz_{i-3}\\
&=&z_{i-2}(x_{i-1}x_i-(1-q))-z_{i-3}x_i\\
&=&z_{i-1}x_i-(1-q)z_{i-2}.
\end{eqnarray*}

If $j=i$  then
\begin{eqnarray*}
x_iz_i&=&x_iz_{i-1}x_{i}-x_iz_{i-2}\\
&=&q^{-1}z_{i-1}x_i^2+(1-q^{-1})z_{i-2}x_i-z_{i-2}x_i\\
&=&q^{-1}z_{i}x_i
\end{eqnarray*}
and if
$j=i+1$ then
\begin{eqnarray*}
x_iz_{i+1}&=&x_iz_{i}x_{i+1}-x_iz_{i-1}\\
&=&qz_i(x_{i+1}x_i+(1-q))-z_{i-1}x_i+(1-q)z_{i-2}\\
&=&qz_ix_{i+1}x_i+(1-q)z_{i-1}x_i-z_{i-1}x_i\\
&=&q(z_ix_{i+1}x_i-z_{i-1})=qz_{i+1}x_i.
\end{eqnarray*}

\end{proof}

\begin{cor}\label{normalelementsthm2}
For $2\leq i\leq n$,  \[x_iz_{i-1}=
\begin{cases}
qz_{i-1}x_i+(1-q)z_{i}&\text{ if $i$ is odd},\\
z_{i-1}x_i+(q^{-1}-1)z_{i}&\text{ if $i$ is even}.
\end{cases}\]
\end{cor}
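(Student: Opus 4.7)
The plan is to derive the corollary directly from the two $j=i-1$ cases of Lemma~\ref{normalelementsthm} by substituting the recursive identity $z_{i-2}=z_{i-1}x_i-z_i$, which is just a rearrangement of the defining relation $z_i=z_{i-1}x_i-z_{i-2}$.

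More concretely, I would proceed in two parallel case analyses. When $i$ is odd, Lemma~\ref{normalelementsthm} gives
\begin{equation*}
x_iz_{i-1}=z_{i-1}x_i+(q-1)z_{i-2}.
\end{equation*}
Replacing $z_{i-2}$ by $z_{i-1}x_i-z_i$ turns the right-hand side into
\begin{equation*}
z_{i-1}x_i+(q-1)(z_{i-1}x_i-z_i)=qz_{i-1}x_i+(1-q)z_i,
\end{equation*}
which is the odd case of the corollary. When $i$ is even, Lemma~\ref{normalelementsthm} gives
\begin{equation*}
x_iz_{i-1}=q^{-1}z_{i-1}x_i+(1-q^{-1})z_{i-2},
\end{equation*}
and the same substitution yields
\begin{equation*}
q^{-1}z_{i-1}x_i+(1-q^{-1})(z_{i-1}x_i-z_i)=z_{i-1}x_i+(q^{-1}-1)z_i,
\end{equation*}
as required.

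There is essentially no obstacle here; the only thing to watch is the bookkeeping of the coefficients $q-1$ vs.\ $1-q^{-1}$ when collecting the $z_{i-1}x_i$ and $z_i$ terms. The corollary is really just a reformulation of Lemma~\ref{normalelementsthm} in a form better suited to the induction arguments to follow, where expressing $x_iz_{i-1}$ in terms of $z_{i-1}x_i$ and $z_i$ (rather than $z_{i-2}$) is the useful normality-type relation.
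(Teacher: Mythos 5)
Your proof is correct and is essentially identical to the paper's: both take the $j=i-1$ cases of Lemma~\ref{normalelementsthm} and substitute $z_{i-2}=z_{i-1}x_i-z_i$, and your coefficient bookkeeping in both parity cases checks out.
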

\begin{proof} By Lemma~\ref{normalelementsthm},
 \[x_iz_{i-1}=
 \begin{cases}
z_{i-1}x_i+(q-1)z_{i-2}&\text{ if $i$ is odd},\\
q^{-1}z_{i-1}x_i+(1-q^{-1})z_{i-2}&\text{ if $i$ is even}.
\end{cases}\]
The result follows by using the equation $z_{i-2}=z_{i-1}x_i-z_i$ to substitute for $z_{i-2}$ on the right hand side.
\end{proof}

\begin{cor}\label{znnormal}
\begin{enumerate} \item
For $1\leq i\leq n$, $z_nx_i=\rho_i x_iz_n$, where
$\rho_i=
1$ if $n$ is odd and $\rho_i=
q^{(-1)^i}$ if $n$ is even. Consequently, the element $z_n$ is normal in $L_n^q$.

\item For $0\leq i<j\leq n$,
 $z_iz_j=q^{\lambda_{ij}} z_jz_i$, where
$\lambda_{ij}=
0$ if $j$ is odd  or if $j$ and $i$ are both even, and
$\lambda_{ij}=1$ if $j$ is even and $i$ is odd.
\end{enumerate}
\end{cor}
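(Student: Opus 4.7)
First, part (i) is a direct consequence of Lemma~\ref{normalelementsthm} with $j = n$. Since $1 \leq i \leq n$ we have $j = n \geq i$, so only the final two cases of that lemma apply. When $n$ is odd, $x_i z_n = z_n x_i$, giving $\rho_i = 1$; when $n$ is even, $x_i z_n = q^{(-1)^{i-1}} z_n x_i$, which rearranges to $z_n x_i = q^{(-1)^i} x_i z_n$, giving $\rho_i = q^{(-1)^i}$. Normality of $z_n$ in $L_n^q$ is then immediate, since $z_n$ scalar-commutes with each of the generators $x_1, \ldots, x_n$.

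For part (ii), I fix $j$ with $1 \leq j \leq n$ and induct on $i$. The key auxiliary fact is that the identical argument used for (i), applied inside the subalgebra $L_j^q$, yields $z_j x_k = \rho_k^{(j)} x_k z_j$ for $1 \leq k \leq j$, where $\rho_k^{(j)} = 1$ if $j$ is odd and $\rho_k^{(j)} = q^{(-1)^k}$ if $j$ is even; this is just Lemma~\ref{normalelementsthm} with $n$ replaced by $j$. In particular, when $j$ is odd, $\rho_k^{(j)} = 1$ for every $k$, so $z_j$ commutes with every generator of $L_j^q$ and hence with $z_i$ for every $i \leq j$, confirming $\lambda_{ij} = 0$.

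It remains to treat the case where $j$ is even. The base cases $i = 0$ and $i = 1$ are immediate: $z_0 = 1$ is central, and $z_1 z_j = x_1 z_j = q z_j x_1 = q z_j z_1$, matching $\lambda_{1j} = 1$. For $2 \leq i < j$, apply the recursion $z_i = z_{i-1} x_i - z_{i-2}$ to compute
\begin{align*}
z_i z_j &= z_{i-1}\, x_i z_j - z_{i-2} z_j \\
        &= (\rho_i^{(j)})^{-1}\, z_{i-1} z_j\, x_i - z_{i-2} z_j \\
        &= (\rho_i^{(j)})^{-1} q^{\lambda_{i-1, j}}\, z_j z_{i-1} x_i - q^{\lambda_{i-2, j}}\, z_j z_{i-2},
\end{align*}
using the inductive hypothesis on the pairs $(i-1, j)$ and $(i-2, j)$. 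Comparing with the expansion $q^{\lambda_{ij}} z_j z_i = q^{\lambda_{ij}}(z_j z_{i-1} x_i - z_j z_{i-2})$ reduces the step to two exponent identities: $\lambda_{i-2, j} = \lambda_{ij}$, which holds automatically since $\lambda$ depends only on parities and $i, i-2$ share parity; and $\lambda_{ij} - \lambda_{i-1, j} = -(-1)^i$, which is a routine parity check (giving $0 + 1 = 1$ when $i$ is odd and $1 + (-1) = 0$ when $i$ is even). The only real obstacle is the bookkeeping of signs and exponents; all the substantive commutation work is already packaged inside Lemma~\ref{normalelementsthm}.
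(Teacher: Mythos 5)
Your proposal is correct and follows essentially the same route as the paper: part (i) is read off directly from the last two cases of Lemma~\ref{normalelementsthm}, and part (ii) is proved by induction using the recursion $z_i=z_{i-1}x_i-z_{i-2}$ together with the commutation relations between the $x_k$ and $z_j$ from that lemma, which is precisely what the paper's (very terse) proof does. The exponent bookkeeping in your inductive step checks out, so there is nothing to fix.
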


\begin{proof}
(i) is immediate from Lemma~\ref{normalelementsthm} and (ii) follows inductively using the formula $z_i=z_{i-1}x_i-z_{i-2}$.
\end{proof}

\begin{notn}\label{WnTn} Let $n\geq 2$, let $q\in \K^*$, let $\Lambda=(\lambda_{ij})$ be the $n\times n$ antisymmetric matrix over $\K$ such that, for $1\leq i<j\leq n$,
$\lambda_{ij}$ is as specified in Lemma~\ref{znnormal}(ii) and, for $1\leq i,j\leq n$, let $q_{ij}=q^{\lambda_{ij}}$. Thus $q_{ji}=q_{ij}^{-1}$, $q_{ii}=1$ and, for $i<j$, $q_{ij}=
1$ if $j$ is odd  or if $j$ and $i$ are both even, and
$q_{ij}=q$ if $j$ is even and $i$ is odd.

Let $W_n^q$ and $T_n^q$, respectively, denote the co-ordinate ring of quantum $n$-space with generators $z_1,\dots,z_n$ and the co-ordinate ring of the quantum $n$-torus with generators $z_1^{\pm 1},\dots,z_n^{\pm 1}$, subject to the relations
$z_iz_j=q_{ij}z_jz_i$ for $1\leq i,j\leq n$.
We may also refer occasionally to the subalgebras $W_i^q$ and $T_i^q$ generated by $z_1,z_2,\dots,z_i$ or by $z_1^{\pm 1},z_2^{\pm 1},\dots,z_i^{\pm 1}$ as appropriate, where $1<i<n$.
As is well-known, $W_n^q$ is an iterated skew polynomial algebra over $\K$, and $T_n^q$ is an  iterated skew Laurent polynomial algebra
over $\K$.\end{notn}

\begin{lemma}\label{Tsimpleorpolysimple} Suppose that $q$ is not a root of unity.
If $n$ is even then $T_n^q$ is simple and if $n$ is odd then $T_n^q$ is a Laurent polynomial ring $T_{n-1}^q[z_n^{\pm 1}]$ over the simple ring
$T_{n-1}^q$.
\end{lemma}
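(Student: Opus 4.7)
My plan is to dispatch the odd case by a direct observation about the commutation matrix, and to establish the even case by computing the Pfaffian of $\Lambda$ and then invoking the standard argument for simplicity of quantum tori.

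First, suppose $n$ is odd. Inspection of the formula defining $\lambda_{ij}$ in Notation~\ref{WnTn} shows that $\lambda_{in}=0$ for every $i<n$, since the only case producing $\lambda_{ij}=1$ with $i<j$ requires $j$ to be even. Hence $z_n$ is central in $T_n^q$ and $T_n^q$ coincides with the Laurent polynomial ring $T_{n-1}^q[z_n^{\pm 1}]$ in the central variable $z_n$ over $T_{n-1}^q$; I will inherit the simplicity of $T_{n-1}^q$ from the even case applied at $n-1$.

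Now suppose $n$ is even. My first goal is to show that the skew-symmetric integer matrix $\Lambda$ is non-degenerate by computing $\mathrm{Pf}(\Lambda)$. The non-zero entries $\lambda_{ij}=1$ (with $i<j$) occur exactly when $i$ is odd and $j$ is even, so the only perfect matchings of $\{1,\dots,n\}$ contributing to the Pfaffian are those whose blocks each consist of an odd index matched with a strictly larger even index. A greedy induction starting from the largest odd index $n-1$ (which can only be paired with $n$), then $n-3$ (forced to pair with $n-2$), and so on down to $1$, shows the unique such matching is $\{(1,2),(3,4),\dots,(n-1,n)\}$, so $\mathrm{Pf}(\Lambda)=\pm1$ and in particular $\det\Lambda\neq 0$.

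To conclude, I will deduce simplicity of $T_n^q$ from non-degeneracy of $\Lambda$ combined with $q$ being of infinite order, by the standard argument. Given a non-zero two-sided ideal $I$, choose $t=\sum_{a\in S}c_a z^a\in I$ with $|S|$ minimal among non-zero elements. If $|S|\geq 2$, pick distinct $a_0,a_1\in S$; since $\Lambda(a_1-a_0)\neq 0$, some coordinate $(\Lambda(a_1-a_0))_k$ is non-zero, and because $q$ has infinite order, $q^{(\Lambda a_1)_k}\neq q^{(\Lambda a_0)_k}$. The element $z_k t z_k^{-1}-q^{(\Lambda a_0)_k}t$ then lies in $I$, is non-zero (the $a_1$-coefficient survives), and has support contained strictly in $S$, contradicting minimality. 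Hence $|S|=1$ and $t$ is a scalar multiple of a Laurent monomial, hence a unit, so $I=T_n^q$. The main obstacle I anticipate is the careful bookkeeping of this minimality argument; the Pfaffian count and the odd-case reduction to the even case are direct and essentially combinatorial.
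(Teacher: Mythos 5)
Your proof is correct. The substance is the same as the paper's: both arguments come down to the non-degeneracy of the commutation data $\Lambda$ together with $q$ having infinite order, and both handle the odd case by noting that $\lambda_{in}=0$ for $i<n$, so $z_n$ is central and $T_n^q=T_{n-1}^q[z_n^{\pm 1}]$ with $T_{n-1}^q$ simple by the even case. The differences are in execution. The paper cites the McConnell--Pettit simplicity criterion \cite[Proposition 1.3]{McCP} and verifies its hypothesis by directly solving the linear system $\sum_i m_i\lambda_{ij}=0$ through successive elimination in the orders $j=2,4,\dots,n$ and $j=n-1,\dots,3,1$; you instead reprove the criterion from scratch via the standard minimal-support conjugation argument (which is exactly the proof behind the cited result), and you obtain non-degeneracy by a Pfaffian computation, showing the unique contributing perfect matching is $\{(1,2),(3,4),\dots,(n-1,n)\}$ so that $\mathrm{Pf}(\Lambda)=\pm1$. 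Your route is self-contained and the Pfaffian count gives marginally more (unimodularity of $\Lambda$, hence injectivity of $\Lambda$ on $\Z^n$ without passing to $\Q$), at the cost of redoing a standard argument; the paper's version is shorter because it delegates the ideal-theoretic part to the literature and its elimination argument is equally elementary. Both are complete proofs of the statement.
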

\begin{proof} If $n$ is even we apply the criterion given by \cite[Proposition 1.3]{McCP}. Let $m_1,\dots,m_n\in \Z$ be such that $q_{1j}^{m_1}q_{2j}^{m_2}\dots q_{nj}^{m_n}=1$ for all $j$, $1\leq j\leq n$. If $j$ is even then $q_{ij}=1$ unless $i$ is odd and $i<j$, in which case $q_{ij}=q$. So successive considerations of the cases $j=2, 4,\dots, n$ gives $m_2=m_4=\dots=m_n=0$. If $j$ is odd  then $q_{ij}=1$ unless $i$ is even and $i>j$,  in which case $q_{ij}=q^{-1}$. Successive considerations of the cases $j=n-1, \dots, 3, 1$ gives $m_{n-1}=\dots=m_3=m_1=0$. By \cite[Proposition 1.3]{McCP}, $T_n^q$ is simple. If $n$ is odd then it is clear from Corollary~\ref{znnormal}(ii) that $T_n^q=T_{n-1}^q[z_n^{\pm 1}]$.
\end{proof}

The following result of Wexler-Kreindler \cite[Proposition 2]{wexler} on changing the indeterminate of a skew polynomial ring will be useful.
\begin{lemma}\label{sprgenchange}
Let $\alpha$ be an automorphism of a ring $R$ and let $\delta$ be an $\alpha$-derivation of $R$. Let $a\in R$ and let $u$ be a unit in $R$ with inner
automorphism $\gamma_u:r\mapsto uru^{-1}$. Let $\alpha^\prime=\gamma_u\alpha$ and, for $r\in R$, let $\delta^\prime(r)=u\delta(r)+ar-\gamma_u(\alpha(r))a$. Then
$\delta^\prime$ is an $\alpha^\prime$-derivation of $R$ and $R[x;\alpha,\delta]=R[x^\prime;\alpha^\prime,\delta^\prime]$, where $x^\prime=ux+a$.
\end{lemma}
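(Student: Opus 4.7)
The plan is to verify directly the two defining properties of a skew polynomial ring: that $\delta'$ is an $\alpha'$-derivation of $R$, and that inside $R[x;\alpha,\delta]$ the new element $x' = ux+a$ satisfies the commutation rule $x'r = \alpha'(r)x' + \delta'(r)$ for every $r \in R$. Once these are established, the identification $R[x;\alpha,\delta] = R[x';\alpha',\delta']$ will follow from a free-module argument based on the substitution $x = u^{-1}(x'-a)$.

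First I would check the skew derivation axiom. Expanding
\[
\delta'(rs) = u\delta(rs) + ars - \gamma_u(\alpha(rs))a
\]
using $\delta(rs) = \alpha(r)\delta(s) + \delta(r)s$ and the multiplicativity of $\gamma_u\alpha$, and separately expanding $\alpha'(r)\delta'(s) + \delta'(r)s$ after substituting the definitions of $\alpha'$ and $\delta'$, both expressions collapse to
\[
u\alpha(r)\delta(s) + u\delta(r)s + ars - \gamma_u(\alpha(r))\gamma_u(\alpha(s))a,
\]
because the cross terms $\gamma_u(\alpha(r))as$ cancel. So $\delta'$ is indeed an $\alpha'$-derivation. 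This also makes it legitimate to form the abstract skew polynomial ring $R[x';\alpha',\delta']$.

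Next I would verify the commutation rule inside $R[x;\alpha,\delta]$. Using only $xr = \alpha(r)x + \delta(r)$ and the definition $\gamma_u(s) = usu^{-1}$, direct computation gives
\[
x'r = (ux+a)r = u\alpha(r)x + u\delta(r) + ar,
\]
and expanding $\alpha'(r)x' + \delta'(r) = \gamma_u(\alpha(r))(ux+a) + u\delta(r) + ar - \gamma_u(\alpha(r))a$ yields the same expression, the $\gamma_u(\alpha(r))a$ terms cancelling. Hence inside $R[x;\alpha,\delta]$, the element $x'$ satisfies the defining relations of $R[x';\alpha',\delta']$, and there is a well-defined ring homomorphism $R[x';\alpha',\delta'] \to R[x;\alpha,\delta]$ extending $\id_R$ and sending $x'$ to $ux+a$.

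The final step is to show this homomorphism is a bijection. Because $u$ is a unit, $x = u^{-1}(x'-a)$ lies in the image, so the map is surjective; and since $R[x';\alpha',\delta']$ is a free left $R$-module on $\{1, x', x'^2, \ldots\}$, and the image set $\{1, ux+a, (ux+a)^2, \ldots\}$ is, by induction on degree, a left $R$-basis of $R[x;\alpha,\delta]$ (the leading term in $x$ of $(ux+a)^n$ being a unit multiple of $x^n$), the map is also injective. I do not expect any serious obstacle: the whole result is a bookkeeping exercise, and the mildly fiddly part is keeping track of the inner automorphism $\gamma_u$ when expanding $\delta'(rs)$, ensuring the terms containing $a$ cancel in exactly the right pattern.
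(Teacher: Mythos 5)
Your proposal is correct: the commutation check $x'r=(ux+a)r=u\alpha(r)x+u\delta(r)+ar=\alpha'(r)x'+\delta'(r)$ (using $\gamma_u(\alpha(r))u=u\alpha(r)$), the verification of the $\alpha'$-derivation axiom, and the bijectivity argument via $x=u^{-1}(x'-a)$ together with the unit leading coefficient of $(ux+a)^n$ all go through. The paper itself gives no proof of this lemma, quoting it as Wexler-Kreindler's result, so there is nothing to compare against; your direct verification is exactly the standard argument one would expect.
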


The next result allows us to identify $L_n^q$ with an intermediate $\K$-algebra between $W_n^q$ and $T_n^q$. After the proof of (i) this identification, which is pre-empted by the use of the notation $z_i$ in all three algebras, will be made implicitly.
\begin{notn} For $1\leq j\leq n$, let $\mathcal{Z}_j$ denote the multiplicatively closed set $\{fz_1^{a_1}z_2^{a_2}\dots z_j^{a_j}: f\in \K^*, a_i\in \N_0 , 1\leq i\leq j\}$ of $W_n^q$. We will make use of the fact that each $\mathcal{Z}_n$ is right and left Ore and that $T_n^q$ is the localization of  $W_n^q$ at $\mathcal{Z}_n$.
\end{notn}

\begin{prop}\label{n-1loc}
Let $n\geq 2$ and $q\in \K^*$. Suppose that $q$ is not a root of unity.
\begin{enumerate}
\item There are injective $\K$-algebra homomorphisms $\phi:W_n^q\hookrightarrow L_n^q$ and $\psi:L_n^q\hookrightarrow T_n^q$ such that $\psi\phi(z_i)=z_i$ for $1\leq i\leq n$. (This allows us to regard $W_n^q$ as a subalgebra of $L_n^q$ and to regard each $\mathcal{Z}_j$ as a subset of $L_n^q$.)
\item The sets $\mathcal{Z}_n$ and $\mathcal{Z}_{n-1}$ are right and left Ore sets in $L_n^q$. The localization of $L_n^q$ at $\mathcal{Z}_n$ is $T_n^q$. If $n$ is odd then the localization of $L_n^q$ at  $\mathcal{Z}_{n-1}$
    is the polynomial algebra $T_{n-1}^q[z_n]$ and if $n$ is even it is the skew polynomial algebra $T_{n-1}^q[z_n;\alpha]$, where
    $\alpha(z_i)=z_i$ if $i$ is even and $\alpha(z_i)=q^{-1}z_i$ if $i$ is odd.
\end{enumerate}
\end{prop}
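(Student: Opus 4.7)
The plan is to construct $\phi$ and $\psi$ by explicit formulae, show that the composition $\psi\phi$ is the canonical inclusion $W_n^q\hookrightarrow T_n^q$ (which immediately forces $\phi$ to be injective), and then establish injectivity of $\psi$ by induction on $n$ using the $\Z$-grading of $T_n^q$ by $z_n$-degree. Part (ii) will then follow by identifying each localization with a suitable subring of $T_n^q$ via $\psi$.

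For (i), I would define $\phi:W_n^q\to L_n^q$ by sending each generator of $W_n^q$ to the corresponding $z_i\in L_n^q$; this is well-defined by the universal property of $W_n^q$, since Corollary~\ref{znnormal}(ii) supplies the required $q_{ij}$-commutation relations among the $z_i$'s in $L_n^q$. Next, I would define $\psi:L_n^q\to T_n^q$ on generators by $\psi(x_i)=X_i:=z_{i-1}^{-1}(z_i+z_{i-2})$, with the conventions $z_{-1}=0$ and $z_0=1$. The principal task here is to verify that the $X_i$ satisfy relations \eqref{Lpres1}--\eqref{Lpres3}; this reduces to a finite collection of $q$-commutator computations in $T_n^q$, streamlined by Corollary~\ref{znnormal}(ii). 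Once $\psi$ is known to be a ring homomorphism, the identity $\psi\phi(z_i)=z_i$ in $T_n^q$ follows by induction on $i$ from the recursion $z_i=z_{i-1}x_i-z_{i-2}$: the inductive step gives $\psi\phi(z_i)=z_{i-1}\cdot z_{i-1}^{-1}(z_i+z_{i-2})-z_{i-2}=z_i$. Thus $\psi\phi$ equals the canonical inclusion $W_n^q\hookrightarrow T_n^q$, which is injective, and so $\phi$ is injective.

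Injectivity of $\psi$ I would establish by induction on $n$, with the base case $n=2$ being a direct check. I would equip $T_n^q$ with the $\Z$-grading in which $z_n$ has degree $1$ and $z_1,\dots,z_{n-1}$ have degree $0$; this is well-defined since Corollary~\ref{znnormal}(ii) shows that $z_n$ $q$-commutes with each of the other generators. The degree-zero subring is naturally identified with $T_{n-1}^q$, and $\psi$ restricted to the subalgebra $L_{n-1}^q\subset L_n^q=L_{n-1}^q[x_n;\tau_n,\delta_n]$ lands in this copy of $T_{n-1}^q$ and coincides with the analogous map for $L_{n-1}^q$, hence is injective by induction. Writing $\psi(x_n)=u+v$ with $u:=z_{n-1}^{-1}z_n$ of degree $1$ and $v:=z_{n-1}^{-1}z_{n-2}$ of degree $0$, a brief calculation yields $u^K=q^{c}z_n^Kz_{n-1}^{-K}$ for some $c\in\Z$, which is nonzero in the domain $T_n^q$. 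Writing a nonzero $\xi\in L_n^q$ uniquely as $\sum_{k=0}^{K}a_kx_n^k$ with $a_k\in L_{n-1}^q$ and $a_K\neq 0$, the degree-$K$ component of $\psi(\xi)$ is $\psi(a_K)u^K$, which is nonzero since $\psi(a_K)\neq 0$ by induction.

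For (ii), I would verify the Ore conditions on $\mathcal{Z}_n$ and $\mathcal{Z}_{n-1}$ on generators using Lemma~\ref{normalelementsthm}: for each pair $(x_i,z_j)$ the commutator is either a pure $q$-scalar or, in the exceptional case $j=i-1$, of the form $x_iz_{i-1}=c_1z_{i-1}x_i+c_2z_{i-2}$, where the correction $z_{i-2}$ lies in $\mathcal{Z}_{n-1}$ and $q$-commutes with $z_{i-1}$; combined with the normality of $z_n$ given by Corollary~\ref{znnormal}(i), this is enough to confirm Ore on both sides for both sets. Via $\psi$, the localization $L_n^q[\mathcal{Z}_n^{-1}]$ embeds into $T_n^q$ and contains every $z_i^{\pm 1}$, hence equals $T_n^q$. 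Similarly $L_n^q[\mathcal{Z}_{n-1}^{-1}]$ is identified with the subring $W_n^q[\mathcal{Z}_{n-1}^{-1}]\subset T_n^q$ generated by $\psi(L_n^q)$ and $\mathcal{Z}_{n-1}^{-1}$, and Corollary~\ref{znnormal}(ii) shows that this subring is $T_{n-1}^q[z_n]$ when $n$ is odd (since then $z_n$ is central in $T_n^q$) and $T_{n-1}^q[z_n;\alpha]$ with the stated $\alpha$ when $n$ is even. The chief technical obstacle is the verification that the $X_i$ satisfy the $L_n^q$ relations in $T_n^q$; this is conceptually routine but parity-sensitive, and constitutes the bulk of the calculation.
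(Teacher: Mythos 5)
Your part (i) is in substance the paper's own construction: your elements $X_i=z_{i-1}^{-1}(z_i+z_{i-2})$ are exactly the $v_i$ used there, and the burden of checking that they satisfy \eqref{Lpres1}--\eqref{Lpres3} is the same in both arguments. Your injectivity proofs differ from the paper's (which gets $\ker\phi=0$ from complete primeness of the kernel together with Lemma~\ref{Tsimpleorpolysimple}, and $\ker\psi=0$ via the subalgebra of elements that can be multiplied into $W_n^q$ by a member of $\mathcal{Z}_{n-1}$), but your two devices --- the observation that $\psi\phi$ is the canonical embedding $W_n^q\hookrightarrow T_n^q$, and the $z_n$-degree leading-term induction for $\psi$ --- are both sound, and the first even avoids the root-of-unity hypothesis at that point.

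The genuine gap is the Ore verification in (ii). Checking how the algebra generators $x_i$ commute past the monoid generators $z_j$ is not, by itself, a criterion for the Ore condition: none of $z_1,\dots,z_{n-1}$ is normal in $L_n^q$ (each $z_j$ with $j<n$ fails against $x_{j+1}$, by Lemma~\ref{normalelementsthm}), so normality only covers powers of $z_n$, and the assertion that the generator relations ``are enough to confirm Ore on both sides for both sets'' is precisely the statement that requires proof; as a general principle a generator-level check is not valid, and you offer no argument for why it suffices here. The paper avoids this entirely: for $\mathcal{Z}_n$ it uses the sandwich $W_n^q\subseteq L_n^q\subseteq T_n^q$ and the fact that a right and left quotient ring of $W_n^q$ with respect to $\mathcal{Z}_n$ is automatically one of the intermediate ring, while for $\mathcal{Z}_{n-1}$ it uses the $\tau_n$-invariance of $\mathcal{Z}_{n-1}$ in $L_{n-1}^q$ together with \cite[Lemma 1.4]{g}, and then Lemma~\ref{sprgenchange} to change the variable from $x_n$ to $z_n$. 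Note that your own map $\psi$ repairs the gap at essentially no cost: since $W_n^q\subseteq\psi(L_n^q)\subseteq T_n^q$ and $T_n^q$ is the Ore localization of $W_n^q$ at $\mathcal{Z}_n$ (respectively, $\K[z_1^{\pm1},\dots,z_{n-1}^{\pm1}][z_n]$ is its localization at $\mathcal{Z}_{n-1}$ and contains $\psi(L_n^q)$), the same ring is a quotient ring of the intermediate algebra; this gives the Ore property and your identifications of the two localizations in one stroke, so you should replace the generator computation by this argument.
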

\begin{proof}

 (i) By Corollary~\ref{znnormal}(ii), applied to each of the algebras $L_i^q$, there is a $\K$-algebra homomorphism $\phi:W_n^q\rightarrow L_n^q$ such that $\phi(z_i)=z_i$ for $1\leq i\leq n$. As $L_n^q$ is a domain so also is the image $\phi(W_n^q)$. Hence $\ker \phi$ is a completely prime ideal of $W_n^q$. If $\ker \phi\neq 0$ then, by Lemma~\ref{Tsimpleorpolysimple}, either $z_i\in \ker \phi$ for some $i$, $1\leq i\leq n-1$ or $z_n-\lambda\in \ker\phi$ for some $\lambda \in \K$ (with $\lambda=0$ if $n$ is even). As $L_n^q$ has a PBW basis and the coefficient of $x_1x_2\dots x_i$ in $z_i$ is $1$, it follows that $\ker \phi=0$ and hence that $\phi$ is injective.

With $z_{-1}=0$ and $z_0=1$, let $w_i=z_{i-1}^{-1}z_i$ and $v_i=w_i+w_{i-1}^{-1}=z_{i-1}^{-1}(z_i+z_{i-2})\in T_n^q$, $1\leq i\leq n$.
It follows, by Corollary~\ref{znnormal}(ii), that, for $1\leq i<j\leq n$, \[w_jw_i=\begin{cases}q^{-1}w_iw_j& \text{ if $i-j$ is even,}\\
qw_jw_i& \text{ if $i-j$ is odd.}\\
\end{cases}\]
From this it follows routinely that if $i>j+1$ then
\[v_jv_i=\begin{cases}q^{-1}v_jv_i& \text{ if $i-j$ is even,}\\
qv_jv_i& \text{ if $i-j$ is odd.}\\
\end{cases}\]
and that
\[v_{i+1}v_i=qv_iv_{i+1}+1-q.\]
Thus the $v_i$'s satisfy the defining relations for $L_n^q$ and there is  a $\K$-algebra homomorphism $\psi:L_n^q\rightarrow T_n^q$ such that $\psi(x_i)=v_i$ and $\psi(z_i)=z_i$ for $1\leq i\leq n$. As $T_n^q$ is a domain so also is $\psi(L_n^q)$ so $\ker \psi$ is a  completely prime ideal of $L_n^q$.

Let $D=\{r\in L_n^q: zr\in W_n^q \text{ for some }z\in \mathcal{Z}_{n-1}\}$. Using Corollary~\ref{znnormal}(ii), it is easy to see that $D$ is a subalgebra of $L_n^q$. For $1\leq i\leq n$, $z_{i-1}x_i=z_i+z_{i-2}$ so each $x_i\in D$ and therefore $L_n^q=D$. Now suppose that $\ker \psi\neq 0$ and let $0\neq f\in \ker \psi$. As $f\in D$, there exists $z\in \mathcal{Z}_{n-1}$ such that $zf\in \ker \psi\cap W_n^q$. As $\psi(z_i)=z_i$ for all $i$,
$\ker \psi\cap W_n^q=0$ so $zf=0$. But $L_n^q$ is a domain so $f=0$.
Hence $\ker \psi=0$ and $\psi$ is injective.

(ii) As $T_n^q$ is a right and left quotient ring of $W_n^q$ with respect to $\mathcal{Z}_n$, it is also a right and left quotient ring of the intermediate ring $L_n^q$ with respect to $\mathcal{Z}_n$ and hence $\mathcal{Z}_n$ is right and left Ore in $L_n^q$. The set $\mathcal{Z}_{n-1}$ is a $\tau_n$-invariant right and left Ore set in $L_{n-1}^q$, with quotient ring
$T_{n-1}^q$, so it follows easily from \cite[Lemma 1.4]{g} that it is right and left Ore in $L_n^{q}$ with quotient ring
$T_{n-1}^q[x_n;\tau_n,\delta_n]$. As $z_n=z_{n-1}x_n-z_{n-2}$ and $z_{n-1}$ is invertible in $T_{n-1}^q$, it follows from
Lemma~\ref{sprgenchange} that if $n$ is odd then $T_{n-1}^q[x_n;\tau_n,\delta_n]=T_{n-1}^q[z_n]$ and that if $n$ is even it is the skew polynomial algebra $T_{n-1}^q[x_n;\tau_n,\delta_n]=T_{n-1}^q[z_n;\tau]$ where
    $\tau(z_i)=z_i$ if $i$ is even and $\tau(z_i)=q^{-1}z_i$ if $i$ is odd.
\end{proof}

\begin{notn}
For $m\in \N$ and $q\in \K^*$, let $\qnum{m}{q}:=1+q+q^2+\ldots+q^{m-1}$, which, if $q\neq 1$, is $(q^m-1)/(q-1)$.
\end{notn}

\begin{prop}\label{LzL}
Let $n\geq 2$ and let $z=fz_1^{a_1}z_2^{a_2}\dots z_{n-1}^{a_{n-1}}\in \mathcal{Z}_{n-1}$, where $f\in \K^*$ and  $a_i\in \N_0$ for $1\leq i\leq n-1$. Let $L=L_n^q$ and suppose that $q$ is not a root of unity. Then $LzL=L$ and $P\cap \mathcal{Z}_{n-1}=\emptyset$ for all prime ideals $P$ of $L$.
\end{prop}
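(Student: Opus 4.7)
The plan is to prove the two conclusions in two stages: first I establish the special case $Lz_iL=L$ for each individual generator $z_i$ with $1\leq i\leq n-1$ by induction on $n$, and then I combine this with the complete primeness of all primes of $L$ (Proposition~\ref{Lcompprime}) to extend to arbitrary $z\in\mathcal{Z}_{n-1}$ and to obtain the statement about primes.

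For the base case $n=2$ the only generator is $z_1=x_1$, and the defining relation rearranges to $1-q=x_1x_2-qx_2x_1\in L_2^q\, x_1\, L_2^q$; since $q\neq 1$ this yields $1\in L_2^q x_1 L_2^q$. For the inductive step I assume the statement holds in $L_{n-1}^q$. The cases $i\leq n-2$ are immediate, because $z_i\in L_{n-1}^q$ and the inductive hypothesis already gives $1\in L_{n-1}^q z_i L_{n-1}^q\subseteq Lz_iL$. The essential case is $i=n-1$, which I handle by a bootstrap. Applying Corollary~\ref{normalelementsthm2} at index $n$, I solve for $z_n$: explicitly $(1-q)z_n=x_nz_{n-1}-qz_{n-1}x_n$ when $n$ is odd, and $(q^{-1}-1)z_n=x_nz_{n-1}-z_{n-1}x_n$ when $n$ is even, so in either case $z_n\in Lz_{n-1}L$. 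Combining this with the recursion $z_{n-2}=z_{n-1}x_n-z_n$ puts $z_{n-2}$ inside $Lz_{n-1}L$; but the case $i=n-2$ already established in the inductive step gives $Lz_{n-2}L=L$, whence $Lz_{n-1}L=L$.

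Having proved $Lz_iL=L$ for every generator, I let $z=fz_1^{a_1}\cdots z_{n-1}^{a_{n-1}}\in\mathcal{Z}_{n-1}$ and let $P$ be a prime ideal of $L$. By Proposition~\ref{Lcompprime} every such $P$ is completely prime, so $L/P$ is a domain in which the unit $f$ remains nonzero. If $z\in P$, then the domain property forces some $z_i$ with $1\leq i\leq n-1$ to lie in $P$, and therefore $L=Lz_iL\subseteq P$, which is absurd; hence $P\cap\mathcal{Z}_{n-1}=\emptyset$. Since $L$ is noetherian, any proper ideal is contained in a maximal and hence prime ideal, so the same contradiction rules out $LzL$ being proper, giving $LzL=L$.

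The principal obstacle is the case $i=n-1$ of the induction, because the element $1$ cannot be extracted directly from $z_{n-1}$; instead one has to recognise that $Lz_{n-1}L$ already contains the lower element $z_{n-2}$ via the identities supplied by Corollary~\ref{normalelementsthm2} and the defining recursion for the $z_j$, after which the inductive hypothesis applied to $z_{n-2}$ closes the argument.
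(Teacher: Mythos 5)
Your proof is correct and takes essentially the same approach as the paper: complete primeness (Proposition~\ref{Lcompprime}) reduces membership of $z$ in a prime (or maximal) ideal to membership of a single $z_i$, and the commutation identities among the $z_j$ force any ideal containing $z_i$ to contain $z_0=1$. The only difference is organizational---the paper runs the descent inside a maximal ideal containing $LzL$ via Lemma~\ref{normalelementsthm}, while you first establish $Lz_iL=L$ by induction on $n$ using Corollary~\ref{normalelementsthm2} and the recursion $z_n=z_{n-1}x_n-z_{n-2}$, then pass to primes.
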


\begin{proof}
Suppose that $LzL\neq L$ and let $M$ be a maximal ideal of $L$ containing $LzL$. By Proposition~\ref{Lcompprime}, $M$ is completely prime and so $z_i\in M$ for some $i$, $1\leq i\leq n-1$. By Lemma~\ref{normalelementsthm}, $z_j\in M$ for $0\leq j\leq i-1$. In particular, $1=z_0\in M$ which is impossible. Hence $LzL=L$ and consequently $P\cap \mathcal{Z}_{n-1}=\emptyset$ for all prime ideals $P$ of $L$.
\end{proof}

We are now in a position to determine the prime ideals of $L_n^q$ when $q$ is not a root of unity.

\begin{theorem}\label{specL} Let $L=L_n^q$ and suppose that $q$ is not a root of unity.
If $n$ is odd then the prime ideals of $L$ are $0$ and, for each $\lambda\in \K$, $P_\lambda:=(z_n-\lambda)L$.
If $n$ is even then the prime ideals of $L$ are $0$, $z_nL$  and, for each $\lambda\in \K^*$, $P^\prime_\lambda:=(z_{n-1}-\lambda)L+z_nL$.
\end{theorem}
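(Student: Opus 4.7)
The strategy is Cauchon-style: transfer the problem to the localization $T := L[\mathcal{Z}_{n-1}^{-1}]$ of Proposition~\ref{n-1loc}, classify $\Spec T$, and contract. By Proposition~\ref{LzL}, every prime of $L$ is disjoint from $\mathcal{Z}_{n-1}$, so the standard noncommutative localization correspondence gives a bijection $\Spec L \longleftrightarrow \Spec T$ via $P \mapsto PT$, with inverse $Q \mapsto Q\cap L$; it thus suffices to describe $\Spec T$ and read off the contractions.

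\emph{Odd case.} Proposition~\ref{n-1loc} identifies $T$ with the central-variable polynomial extension $T_{n-1}^q[z_n]$ of the simple ring $T_{n-1}^q$ (simple by Lemma~\ref{Tsimpleorpolysimple}, since $n-1$ is even). A standard monic-polynomial-division argument in a polynomial ring over a simple Noetherian ring $S$ with central indeterminate shows every non-zero two-sided ideal is generated by a polynomial with coefficients in $Z(S)$; and the McConnell--Pettit analysis underlying Lemma~\ref{Tsimpleorpolysimple} yields $Z(T_{n-1}^q) = \K$. Since $\K$ is algebraically closed, the non-zero primes of $T$ are $(z_n-\lambda)T$, $\lambda \in \K$, and because $z_n$ is central in $L$ by Corollary~\ref{znnormal}(i), these contract to the stated $(z_n-\lambda)L$.

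\emph{Even case.} Now $T = T_{n-1}^q[z_n;\alpha]$ and $T[z_n^{-1}] = T_n^q$ is simple by Lemma~\ref{Tsimpleorpolysimple}. Normality of $z_n$ together with simplicity of $T_n^q$ force every non-zero ideal of $T$ to meet the powers of $z_n$, so every non-zero prime of $T$ contains $z_n$. Primes of $T$ containing $z_n$ correspond to primes of $T/z_nT \cong T_{n-1}^q = T_{n-2}^q[z_{n-1}^{\pm 1}]$, a Laurent extension of the simple ring $T_{n-2}^q$ with $z_{n-1}$ central, whose primes are $0$ and $(z_{n-1}-\lambda)$ for $\lambda \in \K^*$ by the same division argument. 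Lifting gives the primes $z_nT$ and $z_nT+(z_{n-1}-\lambda)T$ of $T$.

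\emph{Contraction, primeness, and the main obstacle.} The contraction of $z_nT$ is $z_nL$, which is two-sided by normality of $z_n$. The main remaining step is to show that $P^\prime_\lambda = z_nL+(z_{n-1}-\lambda)L$ is two-sided and contracts correctly; the key point is that $z_{n-1}$ is central modulo $z_nL$. Lemma~\ref{normalelementsthm} gives $[x_i, z_{n-1}] = 0$ for $1\leq i\leq n-1$, while for $i = n$ it gives $x_nz_{n-1} = q^{-1}z_{n-1}x_n + (1-q^{-1})z_{n-2}$, and modulo $z_nL$ the identity $z_n = z_{n-1}x_n - z_{n-2}$ gives $z_{n-2} \equiv z_{n-1}x_n$, collapsing this relation to $x_nz_{n-1} \equiv z_{n-1}x_n$. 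Finally, for each listed ideal $I$ the quotient $T/IT$ is prime by the above classification, and $L/I$ embeds in $T/IT$ since $I \cap \mathcal{Z}_{n-1} = \emptyset$; hence $L/I$ is a domain and $I$ is a (completely) prime ideal of $L$, consistent with Proposition~\ref{Lcompprime}. Distinctness is immediate on inspection. The main obstacle is this centralization of $z_{n-1}$ modulo $z_n L$ in the even case: without it, the candidates predicted by the localization picture would not a priori be two-sided in $L$.
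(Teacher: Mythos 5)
Your overall route is the same as the paper's: localize at $\mathcal{Z}_{n-1}$ (legitimate by Proposition~\ref{LzL} together with the extension/contraction correspondence), classify $\Spec T$ using Proposition~\ref{n-1loc}, Lemma~\ref{Tsimpleorpolysimple} and the standard description of ideals of a (skew/Laurent) polynomial ring over a simple ring with centre $\K$, and then contract; your observation that $z_{n-1}$ is central modulo $z_nL$ (the paper's Corollary~\ref{normalelementsthm2}) is exactly what the paper uses to make $P^\prime_\lambda$ an ideal. The classification of $\Spec T$ in both parities is correct as you state it.

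However, there is a genuine gap at the contraction step. You assert that for each listed ideal $I$ the algebra $L/I$ embeds in $T/IT$ ``since $I\cap\mathcal{Z}_{n-1}=\emptyset$'', and from this deduce that $I$ is prime and that the primes of $T$ contract to the listed ideals. Disjointness of an ideal from an Ore set does not imply $IT\cap L=I$ (equivalently, injectivity of $L/I\rightarrow T/IT$): for instance, in $\K[x,y]$ with the Ore set $\{x^k\}$ and $I=xy\,\K[x,y]$ one has $I\cap\{x^k\}=\emptyset$ but $y\in IT\cap L\setminus I$. What is really needed is that the elements of $\mathcal{Z}_{n-1}$ are regular modulo $I$, and the natural way to get that (cf. Lemma~\ref{regular}) is to know already that $I$ is prime -- precisely what you are trying to deduce, so the argument is circular as written. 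The paper closes this gap by proving complete primeness of the candidates independently of the localization: $(z_n-\lambda)L$ is completely prime because $z_n-\lambda=z_{n-1}x_n-z_{n-2}-\lambda$ is a normal element of degree one in $x_n$ and \cite[Proposition 1]{normalels} applies inductively (this also covers $z_nL$ when $n$ is even), while for $\lambda\neq 0$ one has $x_n\equiv\lambda^{-1}z_{n-2}\bmod P^\prime_\lambda$, giving an isomorphism $L/P^\prime_\lambda\simeq L_{n-1}^q/(z_{n-1}-\lambda)L_{n-1}^q$, whence $P^\prime_\lambda$ is completely prime. Once the candidates are known to be prime (and disjoint from $\mathcal{Z}_{n-1}$ by Proposition~\ref{LzL}), the bijection $P\mapsto PT$ identifies each prime $P$ of $L$ with the unique candidate having the same extension, which is the paper's concluding step. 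You need to supply this primeness argument (or a direct proof that $\mathcal{Z}_{n-1}$ is regular modulo each candidate) for your proof to be complete.
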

\begin{proof} First note that if $n$ is odd then $P_\lambda$ is an ideal of $L$ as $z_n$ is central. If $n$ is even, then $z_nL$ is an ideal of $L$, by Corollary~\ref{znnormal}(i), and $z_{n-1}$ is central modulo $z_nL_n^q$ in $L_n^q$, by Corollary~\ref{normalelementsthm2}. So $P^\prime_\lambda$ is an ideal of $L$.

For $\lambda\in \K$, note that $z_n-\lambda=z_{n-1}x_n-z_{n-2}-\lambda$ has degree one in $x_n$. Using \cite[Proposition 1]{normalels} and Corollary~\ref{znnormal}, it is easily shown by induction that $(z_n-\lambda)L$ is completely prime if $n$ is odd or if $n$ is even and $\lambda=0$.  If $\lambda\neq 0$, $x_n\equiv \lambda^{-1}z_{n-2}\bmod P^\prime_\lambda$, so $L/P^\prime_\lambda$ is generated by the images $\overline{x_i}$, $1\leq i\leq n-1$, and there are inverse homomorphisms $\varphi:L/P^\prime_\lambda\rightarrow L_{n-1}^q/(z_{n-1}-\lambda)L_{n-1}^q$ and $\theta:L_{n-1}^q/(z_{n-1}-\lambda)L_{n-1}^q\rightarrow L/P^\prime_\lambda$  such that $\varphi(\overline{x_i})=\overline{x_i}$ and $\theta(\overline{x_i})=\overline{x_i}$ for $1\leq i\leq n-1$. Thus $L/P^\prime_\lambda\simeq L_{n-1}^q/(z_{n-1}-\lambda)L_{n-1}^q$ and hence $P^\prime_\lambda$ is also completely prime.

Let $T$ be the localization of $L$ at $\mathcal{Z}_{n-1}$ and let $P$ be a non-zero prime ideal of $L$. Suppose that  $n$ is odd. By Proposition~\ref{n-1loc} and Lemma~\ref{Tsimpleorpolysimple},
$T$ is the polynomial ring $T_{n-1}^q[z_n]\simeq T_{n-1}^q\otimes_\K \K[z_n]$ over the simple ring $T_{n-1}^q$. By \cite[Proposition 1.3]{McCP}, the centre of $T_{n-1}^q$ is $\K$. It is then a consequence of \cite[Lemma 9.6.9(i)]{McCR} that  $PT=P_\lambda T$ for some $\lambda\in \K$. By Proposition~\ref{LzL} and \cite[Proposition 2.1.16(vii)]{McCR}, $P=P_\lambda$.

Now suppose that $n$ is even. By Proposition~\ref{n-1loc} and Lemma~\ref{Tsimpleorpolysimple},
$T$ is a skew polynomial ring $T_{n-1}^q[z_n;\alpha]$ and $T_{n-1}^q[z_n^{\pm 1};\alpha]$ is simple. It follows that $PT$ contains $z_n$ and hence has the form $Q+z_nT$ for some prime ideal $Q$ of $T_{n-1}^q=T_{n-2}^q[z_{n-1}^{\pm 1}]$. By \cite[Lemma 9.6.9(i)]{McCR}, every such prime ideal of $T_{n-1}^q$ has the form $0$ or $(z_{n-1}-\lambda)T_{n-1}^q$ for some $\lambda\in \K^*$ so $PT=z_nT$ or $PT=P^\prime_\lambda T$. The result in the even case now follows as in the odd case using \cite[Proposition 2.16(vii)]{McCR} and Proposition~\ref{LzL}.

\end{proof}

Recall from \cite{Chatters} that a noetherian domain R is called a  unique factorization domain (UFD) if every non-zero prime ideal
of R contains a non-zero completely prime ideal of the form $pR$ where $p$ is normal in $R$. The following is immediate from
Theorem~\ref{specL}.

\begin{cor} Suppose that $q$ is not a root of unity.
If $n\geq 2$ then the linear connected quantized Weyl algebra $L_n^q$ is a UFD. If $n$ is odd then every height one prime ideal of $L_n^q$ is maximal.
If $n$ is even  then there is a unique height one prime ideal  $z_nL_n^q$ and $L_n^q/z_nL_n^q$ is a UFD in which every height one prime ideal is maximal.
\end{cor}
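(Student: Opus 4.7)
The plan is to read both parts off Theorem~\ref{specL}, which already supplies a complete list of prime ideals together with explicit (normal) generators; the remaining work is to check normality, complete primeness, and pairwise incomparability of the candidates.

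First I would handle the case when $n$ is odd. By Theorem~\ref{specL}, every non-zero prime ideal of $L=L_n^q$ is of the form $P_\lambda=(z_n-\lambda)L$ for some $\lambda\in\K$, and by Corollary~\ref{znnormal}(i) the element $z_n$ (and hence every $z_n-\lambda$) is central in $L$, so in particular normal. The proof of Theorem~\ref{specL} already establishes that each $P_\lambda$ is completely prime, which verifies the Chatters UFD condition. A containment $P_\lambda\subseteq P_\mu$ would force $\lambda-\mu\in P_\mu$ and hence $P_\mu=L$, so the $P_\lambda$ are pairwise incomparable; therefore each has height one and is maximal.

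Next I would address the even case. Theorem~\ref{specL} lists the non-zero primes as $z_nL$ and $P'_\lambda=(z_{n-1}-\lambda)L+z_nL$ for $\lambda\in\K^*$, and $z_nL\subsetneq P'_\lambda$ for every such $\lambda$, so $z_nL$ is the unique height one prime of $L$. By Corollary~\ref{znnormal}(i) the element $z_n$ is normal and $z_nL$ is completely prime, which gives the UFD property for $L$. Setting $\bar L=L/z_nL$, which is a noetherian domain by Proposition~\ref{ringprops}(i) together with the completeness of the prime $z_nL$, the non-zero primes of $\bar L$ are the images $\bar P'_\lambda$ of the $P'_\lambda$; by Corollary~\ref{normalelementsthm2} the image $\bar z_{n-1}$ is central in $\bar L$, so each $\bar P'_\lambda=(\bar z_{n-1}-\lambda)\bar L$ is generated by a central, hence normal, element, and is completely prime because $P'_\lambda$ is. The same scalar-subtraction argument as in the odd case shows the $\bar P'_\lambda$ are pairwise incomparable, so each has height one and is maximal in $\bar L$, completing the UFD statement for the quotient.

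There is no genuine obstacle here: normality of $z_n$, centrality of $\bar z_{n-1}$ in $\bar L$ when $n$ is even, complete primeness (Proposition~\ref{Lcompprime}), and the explicit classification of primes (Theorem~\ref{specL}) are all already in place, so the corollary is a short bookkeeping argument whose only nontrivial content is the incomparability observation that upgrades \emph{height one} to \emph{maximal}.
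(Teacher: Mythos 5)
Your argument is correct and is exactly the bookkeeping the paper intends when it declares the corollary "immediate" from Theorem~\ref{specL}: you use the same ingredients (the classification of primes, normality/centrality of $z_n$ from Corollary~\ref{znnormal}, centrality of $z_{n-1}$ modulo $z_nL$ from Corollary~\ref{normalelementsthm2}, and complete primeness), plus the easy scalar-subtraction incomparability observation. Nothing further is needed.
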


\begin{cor}\label{primitivespecL} Let $L=L_n^q$ and suppose that $q$ is not a root of unity.
If $n$ is odd then the primitive ideals of $L$ are the prime ideals $(z_n-\lambda)L$, $\lambda\in \K$.
If $n$ is even then the primitive ideals of $L$ are $0$ and, for each $\lambda\in \K^*$, $(z_{n-1}-\lambda)L+z_nL$.
\end{cor}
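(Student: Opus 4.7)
The plan is to use Theorem~\ref{specL} as a complete list of primes and then decide primitivity prime by prime using Proposition~\ref{ringprops}(ii,iii). The key observation is that every maximal ideal of $L$ produces a simple factor ring and is therefore automatically primitive, while for a non-maximal prime $P$ one can either rule out primitivity by exhibiting a non-scalar central element in $L/P$ (via (ii)) or establish primitivity of $L/P$ by showing that the intersection of the non-zero primes of $L/P$ is non-zero (via (iii)).

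In the odd case, Theorem~\ref{specL} shows that the $(z_n-\lambda)L$, $\lambda\in\K$, form an antichain of non-zero primes, so each is maximal and hence primitive; the only other prime is $0$, which is not primitive since $z_n$ is central in $L$ by Corollary~\ref{znnormal}(i) and non-scalar by the PBW basis, whence Proposition~\ref{ringprops}(ii) applies to $L$ itself. In the even case, the same antichain argument shows that each $(z_{n-1}-\lambda)L+z_nL$ with $\lambda\in\K^*$ is maximal, and therefore primitive.

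It remains to handle the two leftover primes in the even case, namely $z_nL$ and $0$. For $z_nL$ I would verify that $\overline{z_{n-1}}$ is a non-scalar central element of $L/z_nL$: the element $z_{n-1}$ lies in the subalgebra $L_{n-1}^q$, where it is central by Corollary~\ref{znnormal}(i) applied with $n-1$ odd, and it commutes with $x_n$ modulo $z_nL$ by Corollary~\ref{normalelementsthm2}; non-scalarness modulo $z_nL$ follows from the fact that $(z_{n-1}-\lambda)L+z_nL$ is a proper ideal for every $\lambda\in\K^*$ by Theorem~\ref{specL}. Proposition~\ref{ringprops}(ii) then rules out primitivity of $z_nL$. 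Finally, for the zero ideal, Theorem~\ref{specL} shows that every non-zero prime of $L$ contains $z_n$, so the intersection of the non-zero primes contains $z_n\neq 0$ and Proposition~\ref{ringprops}(iii) gives that $L$ is primitive.

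The only mildly delicate step is the non-scalarness of $\overline{z_{n-1}}$ in $L/z_nL$; everything else is bookkeeping across the prime list of Theorem~\ref{specL} combined with the normality and centrality calculations already in hand.
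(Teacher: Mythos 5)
Your proposal follows essentially the same route as the paper: read off the primes from Theorem~\ref{specL}, note the non-zero primes in the maximal layer are primitive, rule out $0$ (odd case) and $z_nL$ (even case) via the central-element criterion of Proposition~\ref{ringprops}(ii), and get primitivity of $0$ in the even case from Proposition~\ref{ringprops}(iii) since every non-zero prime contains $z_n$. The one point where your justification does not quite work is the non-scalarness of $\overline{z_{n-1}}$ in $L/z_nL$: properness of $(z_{n-1}-\lambda)L+z_nL$ for $\lambda\in\K^*$ does not preclude $z_{n-1}-\lambda\in z_nL$ (the sum would then just equal $z_nL$, still proper), and the case $\lambda=0$ is not addressed at all. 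The fact itself is immediate, however: since $L=L_{n-1}^q[x_n;\tau_n,\delta_n]$ is a domain and $z_n=z_{n-1}x_n-z_{n-2}$ has degree $1$ in $x_n$, every non-zero element of $z_nL$ has $x_n$-degree at least $1$, whereas $z_{n-1}-\lambda$ lies in $L_{n-1}^q$ and is non-zero by the PBW basis; so $\overline{z_{n-1}}\notin\K$ and Proposition~\ref{ringprops}(ii) applies as you intend.
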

\begin{proof}
Suppose that $n$ is odd. The ideal $0$ of $L$ is not primitive, by Proposition~\ref{ringprops}(ii), due to the existence of the central element $z_n$. All the non-zero prime ideals are maximal and hence primitive.

Now suppose that $n$ is even. The ideal $0$ of $L$ is primitive, by Proposition~\ref{ringprops}(iii), because $L$ has a unique height one prime ideal. The ideal $z_nL$ is not primitive, by Proposition~\ref{ringprops}(ii), as $z_{n-1}$ is central modulo $z_nL$. The remaining prime ideals are maximal and hence primitive.
\end{proof}

\section{Prime Spectrum of $C_n^q$}
Throughout this section, $n\geq 3$ is an odd positive integer. Recall that the cyclic connected quantized Weyl algebra $C_n^q$ has the form $L_{n-1}^q[x_n;\tau_n,\partial_n]$ and contains
 $L_m^q$ as a subalgebra for $1\leq m\leq n-1$.  Let $\theta$ be the $\K$-automorphism of $C_n^q$ specified in Proposition~\ref{someautos}(iii). Thus $\theta(x_i)=x_{i+1}$ for $1\leq i\leq n-1$ and $\theta(x_n)=x_{1}$. For $1\leq m<n$, $\theta(L_m^q)$ is the $\K$-subalgebra of $C_n^q$ generated by
 $x_2,x_3,\dots,x_{m+1}$.
The relations that the  elements
$z_1,z_2,\dots,z_{n-1}\in C_n^q$ satisfy with each other, and with $x_1,x_2,\dots,x_{n-1}$, are as in Lemma~\ref{normalelementsthm} and Corollaries \ref{normalelementsthm2} and \ref{znnormal}. The next lemma gives the relations between $x_n$ and each of $z_1,z_2,\dots,z_{n-1}$   in $C_n^q$ and, by modifying the formula that defined $z_n$ in $L_n^q$, identifies a distinguished central element of $C_n^q$.

\begin{lemma}\label{xzC}
Let $\Omega=z_{n-1}x_n - z_{n-2} - q\theta(z_{n-2})\in C_n^q$ and let $\theta$ be the $\K$-automorphism of $C_n^q$ specified in Proposition~\ref{someautos}(iii).
\begin{enumerate}
\item For $1\leq j\leq n-2$,
\[x_nz_j=\begin{cases}
qz_jx_n+(1-q)\theta(z_{j-1})&\text{ if }j\text{ is odd},\\
z_jx_n+(1-q)\theta(z_{j-1})&\text{ if }j\text{ is even}.
\end{cases}\]
\item $x_nz_{n-1}=z_{n-1}x_n+(1-q)(\theta(z_{n-2})-z_{n-2})$.
\item $\theta(\Omega)=\Omega$.
\item $\Omega$ is central in $C_n^q$.
\end{enumerate}

\end{lemma}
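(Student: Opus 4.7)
For part (i), I would proceed by induction on $j$, mirroring the structure of the proof of Lemma~\ref{normalelementsthm}. The base case $j=1$ is just the defining relation $x_nx_1=qx_1x_n+(1-q)$, which matches the odd formula since $\theta(z_0)=1$. For the inductive step one writes $x_nz_j=x_nz_{j-1}x_j-x_nz_{j-2}$, applies the inductive formulas to $x_nz_{j-1}$ and $x_nz_{j-2}$, and uses the commutation $x_nx_j=q^{\pm 1}x_jx_n$ for $2\leq j\leq n-2$ (since $n$ is odd, $n-j$ has the opposite parity to $j$, determining the sign). The $\theta(z_{j-1})$ term emerges from the elementary identity $\theta(z_{j-2})x_j-\theta(z_{j-3})=\theta(z_{j-1})$, obtained by applying $\theta$ to the recursion $z_{j-1}=z_{j-2}x_{j-1}-z_{j-3}$ and using $\theta(x_{j-1})=x_j$.

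For part (ii) the special relation $x_nx_{n-1}=q^{-1}x_{n-1}x_n+(1-q^{-1})$ enters. Applying part (i) at $j=n-2$ (odd) gives $x_nz_{n-2}=qz_{n-2}x_n+(1-q)\theta(z_{n-3})$, while (i) at $j=n-3$ (even) handles $x_nz_{n-3}$. Expanding $x_nz_{n-1}=x_nz_{n-2}x_{n-1}-x_nz_{n-3}$ and simplifying produces $z_{n-1}x_n+(q-1)z_{n-2}+(1-q)\bigl(\theta(z_{n-3})x_{n-1}-\theta(z_{n-4})\bigr)$; recognising the bracketed expression as $\theta(z_{n-2})$ yields the claimed formula.

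For part (iii), the key observation is that the proof of Lemma~\ref{altz} transfers verbatim to $C_n^q$ at $i=n$: only the equalities $\theta(x_{n-1})=x_n$ and $\theta^2(x_{n-2})=x_n$ are used, and both hold in $C_n^q$. Thus setting $z_n':=z_{n-1}x_n-z_{n-2}$ in $C_n^q$ gives $z_n'=x_1\theta(z_{n-1})-\theta^2(z_{n-2})$, whence the alternative expression $\Omega=x_1\theta(z_{n-1})-\theta^2(z_{n-2})-q\theta(z_{n-2})$. Applying $\theta$ to part (ii) produces the commutator $x_1\theta(z_{n-1})-\theta(z_{n-1})x_1=(1-q)(\theta^2(z_{n-2})-\theta(z_{n-2}))$. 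Substituting this into the alternative form of $\Omega$ yields $\Omega=\theta(z_{n-1})x_1-q\theta^2(z_{n-2})-\theta(z_{n-2})$, which is precisely $\theta(\Omega)$ read off from the original form of $\Omega$ by applying $\theta$ termwise.

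For part (iv) I would exploit (iii): since $\theta(\Omega)=\Omega$ and $\theta$ cyclically permutes $\{x_1,\dots,x_n\}$, it suffices to verify $[\Omega,x_n]=0$, for then $[\Omega,x_j]=\theta^{j-n}([\Omega,x_n])=0$ for every $j$. I would compute $x_n\Omega$ using part (ii) to rewrite $x_nz_{n-1}$, part (i) at $j=n-2$ for $x_nz_{n-2}$, and the cyclic-shift identity $x_n\theta(z_{n-2})=\theta(x_{n-1}z_{n-2})$ combined with Lemma~\ref{normalelementsthm} applied at $(i,j)=(n-1,n-2)$ for $x_n\theta(z_{n-2})$. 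The $(1-q)$- and $(1-q^{-1})$-terms and the $\theta(z_{n-3})$-terms should cancel, leaving $x_n\Omega=\Omega x_n$. The main obstacle throughout is bookkeeping: tracking parity-dependent signs and $q$-exponents, and justifying that the Lemma~\ref{altz}-style identity extends to $C_n^q$ at $i=n$, is where errors are most likely.
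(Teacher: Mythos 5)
Your proposal is correct and follows essentially the same route as the paper: the paper proves (i) by the equivalent computation of $\tau_n(z_j)$ and $\partial_n(z_j)=(1-q)\theta(z_{j-1})$ in the skew polynomial presentation $C_n^q=L_{n-1}^q[x_n;\tau_n,\partial_n]$, and its arguments for (ii), (iii) and (iv) use exactly your ingredients — the relation $x_nx_{n-1}=q^{-1}x_{n-1}x_n+(1-q^{-1})$, the Lemma~\ref{altz}-type identity $z_{n-1}x_n-z_{n-2}=x_1\theta(z_{n-1})-\theta^2(z_{n-2})$ together with part (ii), and the identity $qx_n\theta(z_{n-2})=\theta(z_{n-2}x_{n-1}+(q-1)z_{n-3})$ from Lemma~\ref{normalelementsthm}, followed by transporting $[\Omega,x_n]=0$ to all $x_i$ via $\theta(\Omega)=\Omega$.
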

\begin{proof}
 Recall that $C_n^q=L_{n-1}^q[x_n;\tau_n,\partial_n]$ for the $\K$-automorphism $\tau_n$ of $L_{n-1}^q$ such that $\tau_n(x_j)=q^{(-1)^{j-1}}x_j$ for $1\leq j\leq n-1$ and the $\tau_n$-derivation $\partial_n$ of $L_{n-1}^q$
such that
$\partial_n(x_1)=1-q$, $\partial_n(x_j)=0$ for $2\leq j\leq n-2$ and
$\partial_n(x_{n-1})=1-q^{-1}.$

For (i), let $1\leq j\leq n-2$. It is a routine matter to check, by induction, that
$\tau_n(z_j)=qz_j$ if $j$  is odd and $\tau_n(z_j)=z_j$ if $j$ is even
and that $\partial_n(z_j)=\partial_n(z_{j-1}x_j-z_{j-2})=(1-q)\theta(z_{j-1})$.
The result (i) follows.

Using (i), we see that $\tau_n(z_{n-1})=\tau_n(z_{n-2}x_{n-1}-z_{n-3})=z_{n-1}$ and that
\begin{eqnarray*}
\partial_n(z_{n-1})&=&\partial_n(z_{n-2}x_{n-1}-z_{n-3})\\
&=&(1-q)\theta(z_{n-3})\theta(x_{n-2})+(1-q^{-1})qz_{n-2}-(1-q)\theta(z_{n-4})\\
&=&(1-q)(\theta(z_{n-2})-z_{n-2}).
\end{eqnarray*}
Thus (ii) holds. For (iii),
\begin{alignat*}{2}
\theta(\Omega)&=\theta(z_{n-1}x_n) - \theta(z_{n-2}) - q\theta^2(z_{n-2})&&\\
&=\theta(x_nz_{n-1} + (1-q)(z_{n-2} - \theta(z_{n-2}))) - \theta(z_{n-2}) - q\theta^2(z_{n-2})&&\text{ (by (ii))}\\
&=x_1\theta(z_{n-1}) - q\theta(z_{n-2}) - \theta^2(z_{n-2})&&\\
&=x_1\theta(z_{n-2})x_n - x_1\theta(z_{n-3}) - \theta^2(z_{n-3})x_n + \theta^2 (z_{n-4}) - q\theta(z_{n-2})&&\\
&=z_{n-1}x_n - z_{n-2} - q\theta(z_{n-2})&&\text{ (by \ref{altz})}\\
&=\Omega.&&
\end{alignat*}

For (iv), first note that, by Lemma~\ref{normalelementsthm}, $x_{n-1}z_{n-2}=q^{-1}z_{n-2}x_{n-1}+(1-q^{-1})z_{n-3}$
so that
\begin{equation}
qx_n\theta(z_{n-2})=\theta(qx_{n-1}z_{n-2})=\theta(z_{n-2}x_{n-1}+(q-1)z_{n-3}).\label{xnthetaznminus2}
\end{equation}
By (ii), (i) and \eqref{xnthetaznminus2},
\begin{alignat*}{2}
x_n\Omega&=x_n(z_{n-1}x_n - z_{n-2} - q\theta(z_{n-2}))&&\\
&=(z_{n-1}x_n+(1-q)(\theta(z_{n-2})-z_{n-2}))x_n-qz_{n-2}x_n-(1-q)\theta(z_{n-3})-qx_n\theta(z_{n-2})&&\\
&=z_{n-1}x_n^2+(1-q)\theta(z_{n-2})x_n-z_{n-2}x_n-(1-q)\theta(z_{n-3})-qx_n\theta(z_{n-2})&&\\
&=z_{n-1}x_n^2-z_{n-2}x_n-q\theta(z_{n-2})x_n&&\\
&=\Omega x_n.&&
\end{alignat*}
By (iii),  $x_i\Omega=\Omega x_i$  for $1\leq i\leq n$, so  $\Omega$   is central in $C_n^q$.
\end{proof}

\begin{prop}\label{Cloc}
Let $q\in \K^*$. Suppose that $q$ is not a root of unity.
 The subsets $\mathcal{Z}_{n-1}$ and $\mathcal{Z}_{n-2}$ of $L_{n-1}^q$ are right and left Ore sets in $C_n^q$. The localization of $C_n^q$ at $\mathcal{Z}_{n-1}$
    is the polynomial ring $T_{n-1}^q[\Omega]$ over the simple algebra $T_{n-1}^q$.
\end{prop}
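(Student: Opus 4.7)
The plan is to mirror the strategy of Proposition~\ref{n-1loc}(ii): first localize so as to isolate a skew polynomial extension $T_{n-1}^q[x_n;\tau_n,\partial_n]$, then use Lemma~\ref{sprgenchange} to replace the generator $x_n$ by the element $\Omega$, and finally appeal to the centrality of $\Omega$ to show that the resulting skew polynomial structure is in fact trivial.

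First I would verify that $\mathcal{Z}_{n-1}$ and $\mathcal{Z}_{n-2}$ are $\tau_n$-invariant right and left Ore subsets of $L_{n-1}^q$. The Ore property in $L_{n-1}^q$ follows from Proposition~\ref{n-1loc}(ii) applied to $L_{n-1}^q$ itself for $\mathcal{Z}_{n-1}$, and from the same result applied to $L_{n-2}^q$ combined with \cite[Lemma 1.4]{g} in $L_{n-1}^q=L_{n-2}^q[x_{n-1};\tau_{n-1},\delta_{n-1}]$ for $\mathcal{Z}_{n-2}$. The $\tau_n$-invariance is immediate from the computation recorded in the proof of Lemma~\ref{xzC}(i) that $\tau_n(z_j)\in\{z_j,qz_j\}$ for all $j\le n-1$. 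A further application of \cite[Lemma 1.4]{g} to $C_n^q=L_{n-1}^q[x_n;\tau_n,\partial_n]$ then transfers both Ore properties to $C_n^q$ and identifies the $\mathcal{Z}_{n-1}$-localization with the skew polynomial ring $T_{n-1}^q[x_n;\tau_n,\partial_n]$, where $\tau_n$ and $\partial_n$ extend to $T_{n-1}^q$ by the usual formulae.

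In this localization $z_{n-1}$ is a unit, so Lemma~\ref{sprgenchange} applies with $u=z_{n-1}$ and $a=-z_{n-2}-q\theta(z_{n-2})$, and the new generator $ux_n+a$ is exactly $\Omega$ as given by Lemma~\ref{xzC}. This presents the localization as $T_{n-1}^q[\Omega;\tau',\partial']$ for some automorphism $\tau'$ and $\tau'$-derivation $\partial'$. Now Lemma~\ref{xzC}(iv) says $\Omega$ is central in $C_n^q$, hence central in the localization; comparing coefficients in $r\Omega=\Omega r=\tau'(r)\Omega+\partial'(r)$ for each $r\in T_{n-1}^q$ forces $\tau'=\id$ and $\partial'=0$, so the skew polynomial ring collapses to the ordinary polynomial ring $T_{n-1}^q[\Omega]$. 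Simplicity of $T_{n-1}^q$ follows at once from Lemma~\ref{Tsimpleorpolysimple}, since $n$ odd forces $n-1$ even.

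The main obstacle is really just bookkeeping: confirming the $\tau_n$-invariance checks for both $\mathcal{Z}_{n-1}$ and $\mathcal{Z}_{n-2}$, and verifying that the change-of-variable data $(u,a)$ in Lemma~\ref{sprgenchange} is chosen so that $ux_n+a$ literally equals $\Omega$, which is what allows the centrality result of Lemma~\ref{xzC}(iv) to be used to trivialize the twists after localization.
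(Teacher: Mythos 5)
Your proposal is correct and follows essentially the same route as the paper: localize via \cite[Lemma 1.4]{g} using $\tau_n$-invariance of $\mathcal{Z}_{n-1}$ and $\mathcal{Z}_{n-2}$, rewrite the generator as $\Omega=z_{n-1}x_n-(z_{n-2}+q\theta(z_{n-2}))$ via Lemma~\ref{sprgenchange}, and use centrality of $\Omega$ (Lemma~\ref{xzC}(iv)) to trivialize the twist, with simplicity of $T_{n-1}^q$ from Lemma~\ref{Tsimpleorpolysimple}. No gaps.
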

\begin{proof}
By Proposition~\ref{n-1loc}, $\mathcal{Z}_{n-1}$ is a right and left Ore set in $L_{n-1}^q$ and it is clearly $\tau_n$-invariant. By \cite[Lemma 1.4]{g}, $\mathcal{Z}_{n-1}$  is right and left Ore in $C_n^{q}$ with quotient ring
$T_{n-1}^q[x_n;\tau_n,\partial_n]$. Similarly, $\mathcal{Z}_{n-2}$ is a right and left Ore set in $L_{n-1}^q$ and in $C_n^{q}$.

As $\Omega=ux_n+a$, where $u=z_{n-1}$, which is invertible in $T_{n-1}^q$, and $a=-(z_{n-2}+q\theta(z_{n-2}))\in L_{n-1}^q$, it follows from Lemma~\ref{sprgenchange} that $T_{n-1}^q[x_n;\tau_n,\partial_n]$ has the form
$T_{n-1}^q[\Omega;\tau_n^\prime,\partial_n^\prime]$. As $\Omega$ is central, $\tau_n^\prime$ must be the identity automorphism on $L_{n-1}^q$ and $\partial_n^\prime$ must be the zero derivation.
\end{proof}

The next lemma will be significant in identifying the localization of $C_n^q$ at $\mathcal{Z}_{n-2}$ as an  \emph{ambiskew polynomial ring}.
Our notation for such rings  and the related \emph{generalized Weyl algebras} will be essentially as in \cite{ambiskew}.
Given a $\K$-algebra $A$, commuting $\K$-automorphisms $\alpha$ and $\gamma$ of $A$, an element $v\in A$ such that $va=\gamma(a)v$ for all $a\in A$  and $\gamma(v)=v$, and a scalar
$\rho\in
\K\backslash\{0\}$,
 the ambiskew polynomial ring $R=R(A,\alpha,v,\rho)$ is the iterated skew polynomial ring
$A[y;\alpha][x;\beta,\delta]$, where $\beta=\alpha^{-1}\gamma$ is extended to a $\K$-automorphism of
$A[y;\alpha]$ by setting $\beta(y)=\rho y$ and $\delta$ is a
$\beta$-derivation of $A[y;\alpha]$ such that $\delta(A)=0$ and
$\delta(y)=v$. Thus $xy=\rho yx+v$ and, for all $a\in A$,
$ya=\alpha(a)y$ and
$xa=\beta(a)x$.

If $v$ is regular, as will be the case in all examples considered here, then $v$ determines $\gamma$.

If there exists $u\in A$ such that $ua=\gamma(a)u$ for all $a\in A$ and
$v=u-\rho\alpha(u)$ then the element
$z:=xy-u=\rho(yx-\alpha(u))$ is such that
$zy=\rho yz$, $zx=\rho^{-1} xz$, $za=\gamma(a)z$ for all $a\in A$ and $zu=uz$.
If such an element $u$ exists then
$R$ is a {\it conformal ambiskew polynomial ring}, $u$ is a {\it splitting element} and $z$, which is normal in $R$, is the corresponding {\it Casimir element}
of $R$. The factor $R/zR$, which we denote here by $W(A,\alpha,u)$, is then generated by $A$, $X:=x+zR$ and $Y:=y+zR$ subject to the relations
$XY=u$, $YX=\alpha(u)$ and, for all $a\in A$,
$Ya=\alpha(a)Y$ and $Xa=\beta(a)X$.
In the case where $u$ is central this is one of the algebras named \emph{generalized Weyl algebras} in \cite{vlad1} and we use the same name here.

\begin{lemma}\label{w}
For $q\in \K^*$ and $1\leq i\leq n-1$,
\begin{alignat*}{3}
qz_i\theta(z_{i-2})-z_{i-1}\theta(z_{i-1})&=&\;-q^\frac{i-1}{2}&\quad \text{ if $i$ is odd and}\\
z_i\theta(z_{i-2})-z_{i-1}\theta(z_{i-1})&=&\;-q^\frac{i-2}{2}& \quad \text{ if $i$ is even}.
\end{alignat*}
\end{lemma}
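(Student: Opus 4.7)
The plan is to proceed by induction on $i$. The base cases $i=1$ and $i=2$ are immediate: since $z_{-1}=0$, $z_0=1$, $z_1=x_1$, $z_2=x_1x_2-1$ and $\theta(z_k)$ is obtained from $z_k$ by the substitution $x_j\mapsto x_{j+1}$, both $qz_1\theta(z_{-1})-z_0\theta(z_0)$ and $z_2\theta(z_0)-z_1\theta(z_1)$ evaluate to $-1=-q^0$, matching the asserted formulae.

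For the inductive step the strategy is to apply the defining recurrence $z_i=z_{i-1}x_i-z_{i-2}$, together with $\theta(z_{i-1})=\theta(z_{i-2})x_i-\theta(z_{i-3})$ (obtained by applying $\theta$ to the definition of $z_{i-1}$), to the left hand side, and then to commute $x_i$ past $\theta(z_{i-2})$. The key observation, which is where the real work lies, is that the $\K$-subalgebra of $C_n^q$ generated by $x_2,\dots,x_i$ is a copy of $L_{i-1}^q$ under the reindexing $x_{j+1}\leftrightarrow x_j$, with $\theta(z_k)$ corresponding to $z_k$ and $x_i$ playing the role of the top generator. Hence the $j=k-1$ case of Lemma~\ref{normalelementsthm} applied inside this shifted copy (so that the parity condition of the Lemma translates into a condition on $i-1$) yields
\[x_i\theta(z_{i-2})=\begin{cases} q^{-1}\theta(z_{i-2})x_i+(1-q^{-1})\theta(z_{i-3}) & \text{if $i$ is odd,}\\ \theta(z_{i-2})x_i+(q-1)\theta(z_{i-3}) & \text{if $i$ is even.}\end{cases}\]

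With this commutation in hand, expansion of the left hand side causes the $x_i$-terms to cancel in both parities. For $i$ odd the remainder collapses to $q\bigl(z_{i-1}\theta(z_{i-3})-z_{i-2}\theta(z_{i-2})\bigr)$, which by the inductive hypothesis for the even index $i-1$ equals $q\cdot(-q^{(i-3)/2})=-q^{(i-1)/2}$. For $i$ even it collapses to $qz_{i-1}\theta(z_{i-3})-z_{i-2}\theta(z_{i-2})$, which is exactly the left hand side of the identity at the odd index $i-1$ and hence, by induction, equals $-q^{(i-2)/2}$. This closes the argument. The main obstacle is extracting the correct parity-dependent scalar in the commutator $[x_i,\theta(z_{i-2})]$; once this is pinned down by the subalgebra-shift reduction to Lemma~\ref{normalelementsthm}, the remainder is straightforward bookkeeping, and the dichotomy in the statement is a faithful reflection of this single parity-dependent commutation.
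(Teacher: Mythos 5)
Your proof is correct and follows essentially the same route as the paper: a cross-parity induction on $i$ using the recurrence $z_i=z_{i-1}x_i-z_{i-2}$, the shift $\theta$, and the $j=i-1$ case of Lemma~\ref{normalelementsthm} to commute $x_i$ past $\theta(z_{i-2})$. The paper merely phrases that commutation by writing $x_i\theta(z_{i-2})=\theta(x_{i-1}z_{i-2})$ and keeping the computation under $\theta$, which is the same content as your subalgebra-shift observation.
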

\begin{proof}
When $i=1$, $z_{i-2}=0$ and $z_{i-1}=1$ so the result holds. Let $i>1$ and suppose that the result holds for $i-1$.
If $i$ is odd then
\begin{alignat*}{3}
&&&\;qz_i\theta(z_{i-2})-z_{i-1}\theta(z_{i-1})&\\
&=&&\;q(z_{i-1}x_i-z_{i-2})\theta(z_{i-2})-z_{i-1}\theta(z_{i-2}x_{i-1}-z_{i-3})&\\
&=&&\;z_{i-1}\theta(qx_{i-1}z_{i-2}-z_{i-2}x_{i-1}+z_{i-3})-qz_{i-2}\theta(z_{i-2})&\\
&=&&\;qz_{i-1}\theta(z_{i-3})-qz_{i-2}\theta(z_{i-2})&\text{ (by \ref{normalelementsthm})}\\
&=&&\;{-q^\frac{i-1}{2}}.&
\end{alignat*}
If $i$ is even then
\begin{align*}
&\;z_i\theta(z_{i-2})-z_{i-1}\theta(z_{i-1})&\\
=&\;(z_{i-1}x_i-z_{i-2})\theta(z_{i-2})-z_{i-1}\theta(z_{i-2}x_{i-1}-z_{i-3})&\\
=&\;z_{i-1}\theta(x_{i-1}z_{i-2}-z_{i-2}x_{i-1}+z_{i-3})-z_{i-2}\theta(z_{i-2})&\\
=&\;qz_{i-1}\theta(z_{i-3})-z_{i-2}\theta(z_{i-2})&\text{ (by \ref{normalelementsthm})}\\
=&\;{-q^\frac{i-2}{2}}.&
\end{align*}
The result follows by induction on $i$.
\end{proof}

\begin{prop}\label{Cloc2}
Let $n\geq 3$ be odd and let $q\in \K^*$. Suppose that $q$ is not a root of unity.
 Let $\alpha$ be the $\K$-automorphism of $T_{n-2}^q$ such that, for $1\leq i\leq n-2$,
$\alpha(z_i)=z_i$ if $i$ is even and
$\alpha(z_i)=q^{-1}z_i$ if $i$ is odd.
\begin{enumerate}
\item The localization $S$ of $C_n^q$ at $\mathcal{Z}_{n-2}$
    is the ambiskew polynomial algebra  $R(T_{n-2}^q,\alpha,v,1)$ where $v=(1-q)(q^\frac{n-3}{2}z_{n-2}^{-1}-z_{n-2})$,
$x=\theta^{-1}(z_{n-1})$ and $y=z_{n-2}^{-1}z_{n-1}$.

\item For any $\lambda\in \K$, the element $q^{\frac{n-3}{2}}z_{n-2}^{-1}+\lambda+qz_{n-2}$ is splitting, $\Omega-\lambda$ is a central Casimir element and $S/(\Omega-\lambda)S$ is a generalized Weyl algebra over $T_{n-2}^q$.
\end{enumerate}
\end{prop}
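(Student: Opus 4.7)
The strategy for (i) is to obtain $S$ by iterated localization of $C_n^q$ and then to recognise it as an ambiskew polynomial ring via two applications of Lemma~\ref{sprgenchange}. Proposition~\ref{Cloc} gives that $\mathcal{Z}_{n-2}$ is Ore in $C_n^q$; since $C_n^q = L_{n-1}^q[x_n;\tau_n,\partial_n]$ and $\mathcal{Z}_{n-2}\subset L_{n-1}^q$ is $\tau_n$-stable, \cite[Lemma 1.4]{g} yields $S = L'[x_n;\tau_n,\partial_n]$, where $L'$ is the localization of $L_{n-1}^q$ at $\mathcal{Z}_{n-2}$. Since $n-1$ is even, Proposition~\ref{n-1loc}(ii) identifies $L'$ with $T_{n-2}^q[z_{n-1};\alpha]$ for exactly the automorphism $\alpha$ of the statement. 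Because $n-2$ is odd, Corollary~\ref{znnormal}(ii) forces $\lambda_{i,n-2}=0$ for all $i<n-2$, so $z_{n-2}$ is central in $T_{n-2}^q$; hence Lemma~\ref{sprgenchange} permits the substitution $y = z_{n-2}^{-1}z_{n-1}$, giving $L' = T_{n-2}^q[y;\alpha]$.

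Applying $\theta^{-1}$ to the identity of Lemma~\ref{altz} at $i=n-1$ yields $x = \theta^{-1}(z_{n-1}) = x_nz_{n-2} - \theta(z_{n-3})$, a degree-one polynomial in $x_n$ whose leading coefficient is a unit of $L'$. A second application of Lemma~\ref{sprgenchange} then presents $S$ as the iterated skew polynomial ring $T_{n-2}^q[y;\alpha][x;\alpha',\delta']$ for some automorphism $\alpha'$ and $\alpha'$-derivation $\delta'$. The proposed $v$ lies in $T_{n-2}^q$ and is central there, so the ambiskew automorphism $\gamma$ is trivial, $\beta=\alpha^{-1}$, and the expected data for $R(T_{n-2}^q,\alpha,v,1)$ is $\alpha'|_{T_{n-2}^q} = \alpha^{-1}$, $\alpha'(y) = y$, $\delta'|_{T_{n-2}^q} = 0$ and $\delta'(y) = v$. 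The first three of these I would verify by computing $xa$ on generators $a \in T_{n-2}^q$ using the explicit formula for $x$, Lemma~\ref{normalelementsthm}, Lemma~\ref{xzC}(i), and the fact (from Corollary~\ref{znnormal}(i), since $n-2$ is odd) that $z_{n-2}$ is central in $L_{n-2}^q$ and hence commutes with $\theta(z_{n-3}) \in L_{n-2}^q$; in particular this yields $xz_{n-2} = qz_{n-2}x$, matching $\alpha^{-1}(z_{n-2}) = qz_{n-2}$.

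The main obstacle is the identity $\delta'(y) = v$, equivalently $xy = \Omega + qz_{n-2} + q^{(n-3)/2}z_{n-2}^{-1}$ in $S$. One route is to use $\theta^{-1}(\Omega) = \Omega$ (Lemma~\ref{xzC}(iii)) to write $\Omega = x\cdot x_{n-1} - \theta^{-1}(z_{n-2}) - qz_{n-2}$, substitute $x_{n-1} = y + z_{n-3}z_{n-2}^{-1}$ and $\theta^{-1}(z_{n-2}) = x_nz_{n-3} - \theta(z_{n-4})$ from Lemma~\ref{altz}, and then use the centrality of $z_{n-2}$ in $L_{n-2}^q$ to reduce the computation of $xy$ to verifying the scalar identity $\theta(z_{n-3})z_{n-3} - z_{n-2}\theta(z_{n-4}) = q^{(n-3)/2}$. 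This last identity is closely related to Lemma~\ref{w} at $i = n-2$, which produces the variant $z_{n-3}\theta(z_{n-3}) - qz_{n-2}\theta(z_{n-4}) = q^{(n-3)/2}$; the two are reconciled by the commutator identity $\theta(z_{n-3})z_{n-3} - z_{n-3}\theta(z_{n-3}) = (1-q)z_{n-2}\theta(z_{n-4})$, which can be established by induction from the defining relations. Once the formula for $xy$ is in hand, part (ii) is formal: the splitting condition $v = u_\lambda - \alpha(u_\lambda)$ for $u_\lambda = q^{(n-3)/2}z_{n-2}^{-1} + \lambda + qz_{n-2}$ is a direct check using $\alpha(z_{n-2}^{\pm 1}) = q^{\mp 1}z_{n-2}^{\pm 1}$; the associated Casimir element $z_\lambda = xy - u_\lambda$ equals $\Omega - \lambda$ by the formula for $xy$; and since $u_\lambda$ is central in $T_{n-2}^q$, the factor $S/(\Omega - \lambda)S$ is the generalized Weyl algebra $W(T_{n-2}^q,\alpha,u_\lambda)$ in the sense of the definitions preceding Lemma~\ref{w}.
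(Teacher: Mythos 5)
Your overall route is the same as the paper's: the same localization chain via Proposition~\ref{Cloc}, \cite[Lemma 1.4]{g} and Proposition~\ref{n-1loc}, the same two applications of Lemma~\ref{sprgenchange} with the same choices $y=z_{n-2}^{-1}z_{n-1}$ and $x=\theta^{-1}(z_{n-1})$, the same expected ambiskew data, and in (ii) the same splitting element with the Casimir element identified as $\Omega-\lambda$. The genuine divergence is in the key identity $\delta^\prime(y)=v$. The paper computes the Wexler--Kreindler derivation on $x_{n-1}$ directly: applying $\theta^{-1}$ to Lemma~\ref{xzC}(ii) gives $xx_{n-1}=x_{n-1}x+(q-1)(z_{n-2}-\theta^{-1}(z_{n-2}))$, and after rewriting $\theta^{-1}(z_{n-2})$ via Lemma~\ref{altz} the crucial product appears as $z_{n-3}z_{n-2}^{-1}\theta(z_{n-3})$, with $z_{n-3}$ on the \emph{left}, so Lemma~\ref{w} applies verbatim and nothing further is needed. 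Your route through $\theta^{-1}(\Omega)=\Omega$ produces the product in the reversed order $\theta(z_{n-3})z_{n-3}$, and that is exactly why you are forced to introduce the extra commutator identity $\theta(z_{n-3})z_{n-3}-z_{n-3}\theta(z_{n-3})=(1-q)z_{n-2}\theta(z_{n-4})$, which is not among the paper's lemmas.

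That identity is in fact true: it checks out by direct computation for $n=5$, and since your reduction of $xy-yx$ to the scalar identity $\theta(z_{n-3})z_{n-3}-z_{n-2}\theta(z_{n-4})=q^{\frac{n-3}{2}}$ is algebraically valid, the identity is forced by the truth of the proposition. But as written you only assert that it ``can be established by induction from the defining relations''; that induction is a real piece of work, of the same order as the paper's proof of Lemma~\ref{w}, and it is the one substantive gap in your argument. Either supply that induction (probably proving an odd-index companion identity simultaneously, as in Lemma~\ref{w}) or reorganize the computation as the paper does so that Lemma~\ref{w} is used in the order in which it is stated. Two smaller points. First, your plan to verify $\alpha^\prime|_{T_{n-2}^q}=\alpha^{-1}$ and $\delta^\prime|_{T_{n-2}^q}=0$ by expanding $xz_i$ through the explicit formula for $x$ requires commutation of $\theta(z_{n-3})$ with the $z_i$, which the lemmas you cite do not directly provide; the paper's shortcut is to apply $\theta^{-1}$ to the relations $z_{n-1}x_{i+1}=q^{(-1)^{i+1}}x_{i+1}z_{n-1}$ from Corollary~\ref{znnormal}(i), giving $xx_i=q^{(-1)^{i+1}}x_ix$ for $1\leq i\leq n-2$ and hence the action on all of $T_{n-2}^q$. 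Second, $\alpha^\prime(y)=y$ cannot be read off from commutation with elements of $T_{n-2}^q$ alone, since $y\notin T_{n-2}^q$; what your route actually needs, and delivers once the scalar identity is in place, is the single relation $xy-yx=v$ with $v\in T_{n-2}^q$, which pins down $\alpha^\prime(y)=y$ and $\delta^\prime(y)=v$ simultaneously by uniqueness of skew polynomial expansions (alternatively, argue as the paper does via the conjugation formula in Lemma~\ref{sprgenchange}).
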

\begin{proof}
(i)  By \cite[Lemma 1.4]{g}, $T_{n-2}^q[x_{n-1};\tau_{n-1},\partial_{n-1}][x_n;\tau_n,\partial_n]$ is the localization of $C_n^{q}$ at $\mathcal{Z}_{n-2}$.
Observe that
$y=x_{n-1}-z_{n-2}^{-1}z_{n-3}$
 and that, by Lemma~\ref{altz},
 \begin{alignat*}{3}
 x&=&&\; \theta^{-1}(z_{n-1})=x_nz_{n-2}-\theta(z_{n-3})&\\
 &=&&\; qz_{n-2}x_n+(1-q)\theta(z_{n-3})-\theta(z_{n-3}) & \text{ (by \ref{xzC}(i))}\\
&=&&\; q(z_{n-2}x_n-\theta(z_{n-3})).&
 \end{alignat*}

 As $qz_{n-2}$ is a  unit in $T_{n-2}^q$, it follows from Lemma~\ref{sprgenchange} that $S$ is an iterated skew polynomial ring of the form $T_{n-2}^q[y;\tau_{n-1}^\prime,\partial_{n-1}^\prime][x;\tau_n^\prime,\partial_n^\prime]$ over $T_{n-2}^q$.
By Corollary~\ref{znnormal}, $yx_i=q^{(-1)^i}x_iy$ for $1\leq i\leq n-2$ and it follows that $\tau_{n-1}^\prime=\alpha$ and $\partial_{n-1}^\prime=0$.
Also by Corollary~\ref{znnormal}, $z_{n-1}x_{i+1}=q^{(-1)^{i+1}}x_{i+1}z_{n-1}$ for $1\leq i\leq n-2$. Applying $\theta^{-1}$,
we see that
 $xx_i=q^{(-1)^{i+1}}x_ix$ for $1\leq i\leq n-2$ and hence that the restrictions of $\tau_{n}^\prime$ and $\partial_{n}^\prime$ to
$T_{n-2}^q$ are $\alpha^{-1}$ and $0$ respectively. It remains to show that
$\tau_{n}^\prime(y)=y$ and that $\partial_{n}^\prime(y)=v$.

By Lemma~\ref{xzC}(i), $\tau_n(z_{n-1})=z_{n-1}$ and $\tau_n(z_{n-2})=qz_{n-2}$  and,  by \ref{znnormal}, $\gamma_{qz_{n-2}}(z_{n-1})=qz_{n-1}$. It follows,
by Lemma~\ref{sprgenchange}, that
$\tau_{n}^\prime(y)=y$.

We have seen that $\partial_{n}^\prime(t)=0$ for all $t\in T_{n-2}^q$, so $\partial_{n}^\prime(y)=\partial_{n}^\prime(x_{n-1}-z_{n-2}^{-1}z_{n-3})=\partial_{n}^\prime(x_{n-1})$.
Applying $\theta^{-1}$ to the equation in Lemma~\ref{xzC}(ii), we obtain
\begin{equation*}\label{xxn-1}
xx_{n-1}=x_{n-1}x+(q-1)(z_{n-2}-\theta^{-1}(z_{n-2})).\end{equation*}
Here   $\theta^{-1}(z_{n-2}))\notin  T_{n-2}^q[y;\tau_{n-1}^\prime,\partial_{n-1}^\prime]$ but, using Lemmas~\ref{altz} and  \ref{xzC}(i),
\begin{alignat*}{2}
\theta^{-1}(z_{n-2})&=&&\;x_nz_{n-3}-\theta(z_{n-4})\\
&=&&\;z_{n-3}x_n-q\theta(z_{n-4})\\
&=&&\;z_{n-3}(q^{-1}z_{n-2}^{-1}(x+q\theta(z_{n-3}))-q\theta(z_{n-4})
\end{alignat*}
so
\begin{equation*}
xx_{n-1}=(x_{n-1}+(q^{-1}-1)z_{n-3}z_{n-2}^{-1})x+(q-1)(z_{n-2}+q\theta(z_{n-4})-z_{n-3}z_{n-2}^{-1}\theta(z_{n-3})).\end{equation*}
As $x_{n-1}+(q^{-1}-1)z_{n-3}z_{n-2}^{-1}$ and $(q-1)(z_{n-2}+q\theta(z_{n-4})-z_{n-3}z_{n-2}^{-1}\theta(z_{n-3}))$ are both in $T_{n-2}^q[y;\tau_{n-1}^\prime,\partial_{n-1}^\prime]$, and as $z_{n-2}$ is central in $T_{n-2}^q$, it follows that
\begin{alignat*}{3}
\partial_{n}^\prime(y)=\partial_{n}^\prime(x_{n-1})& = &&\;(q-1)(z_{n-2}+q\theta(z_{n-4})-z_{n-2}^{-1}z_{n-3}\theta(z_{n-3}))&\\
& = &&\;(q-1)(z_{n-2}+z_{n-2}^{-1}(qz_{n-2}\theta(z_{n-4})-z_{n-3}\theta(z_{n-3})))\\
& = &&\;(q-1)(z_{n-2}-q^{\frac{n-3}{2}}z_{n-2}^{-1})&\text{ (by Lemma~\ref{w})}\\
& = &&\;v.&
\end{alignat*}
This completes the proof of (i).

(ii)
Let $\lambda\in \K$ and let $u=q^{\frac{n-3}{2}}z_{n-2}^{-1}+\lambda+qz_{n-2}$. Then $u$ is central in $T_{n-2}^q$ because $z_{n-2}$ is central. Also
\begin{alignat*}{2}
u - \alpha(u)&=&&\;q^\frac{n-3}{2}z_{n-2}^{-1}+\lambda+qz_{n-2} - q q^\frac{n-3}{2}z_{n-2}^{-1} -\lambda - z_{n-2}\\
&=&&\;(q-1)(z_{n-2}-q^\frac{n-3}{2}z_{n-2}^{-1})\\
&=&&\; v
\end{alignat*}
so $u$ is a splitting element.

As $z_{n-2}$ is central in $L_{n-2}^q$, the corresponding Casimir element is
\[z:=xy-u= \theta^{-1}(z_{n-1})(x_{n-1}-z_{n-3}z_{n-2}^{-1}) - q^\frac{n-3}{2}z_{n-2}^{-1} - \lambda  - q z_{n-2}.\]
Recall that $\Omega=z_{n-1}x_n - z_{n-2} - q\theta(z_{n-2})\in C_n^q$ and, from Lemma~\ref{xzC}(iii), that $\theta(\Omega)=\Omega$.
Hence $\theta^{-1}(z_{n-1})x_{n-1}=\Omega+\theta^{-1}(z_{n-2})+qz_{n-2}$ and it follows that
\[z=\Omega-\lambda+\theta^{-1}(z_{n-2})-\theta^{-1}(z_{n-1})z_{n-3}z_{n-2}^{-1}- q^\frac{n-3}{2}z_{n-2}^{-1}.\]
By Lemma~\ref{w} with $i=n-2$, $z_{n-2}\theta(z_{n-2})=z_{n-1}\theta(z_{n-3})+q^{\frac{n-3}{2}}$ so, applying $\theta^{-1}$ and postmultiplying by $z_{n-2}^{-1}$,
\[\theta^{-1}(z_{n-2})=\theta^{-1}(z_{n-1})z_{n-3}z_{n-2}^{-1}+q^{\frac{n-3}{2}}z_{n-2}^{-1}.\]
Hence $z=\Omega-\lambda$ is a Casimir element and $S/(\Omega-\lambda)S$ is a generalized Weyl algebra over $T_{n-2}^q$.
 \end{proof}

\begin{prop}\label{CSspec}
There is a bijection between $\Spec C_n^q$ and $\Spec S$ given by $P\mapsto PS$, for $P\in \Spec C_n^q$, and $Q\mapsto Q\cap C_n^q$, for $Q\in \Spec S$.
\end{prop}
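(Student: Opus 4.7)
The plan is to reduce to the standard localization correspondence \cite[Proposition 2.1.16(vii)]{McCR} that was already invoked in Theorem~\ref{specL}. Combined with the fact, from Proposition~\ref{Cloc}, that $\mathcal{Z}_{n-2}$ is a right and left Ore set in $C_n^q$, that result yields an inclusion-preserving bijection between the primes $P$ of $C_n^q$ satisfying $P\cap\mathcal{Z}_{n-2}=\emptyset$ and all primes of $S$, implemented by the two maps in the statement. The proposition therefore amounts to the assertion that \emph{every} prime ideal of $C_n^q$ is disjoint from $\mathcal{Z}_{n-2}$.

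To establish this, I would show that $C_n^q\,z\,C_n^q = C_n^q$ for every $z\in\mathcal{Z}_{n-2}$; no proper ideal, and \emph{a fortiori} no prime, could then contain an element of $\mathcal{Z}_{n-2}$. The key observation is that $\mathcal{Z}_{n-2}\subset L_{n-1}^q\subset C_n^q$, so the required identity already sits inside the subalgebra $L_{n-1}^q$. Applying Proposition~\ref{LzL} to $L_{n-1}^q$ (that is, replacing $n$ by $n-1$ in its statement) gives $L_{n-1}^q\,z\,L_{n-1}^q = L_{n-1}^q$ for every $z\in\mathcal{Z}_{n-2}$, and hence $1\in L_{n-1}^q\subseteq C_n^q\,z\,C_n^q$, forcing $C_n^q\,z\,C_n^q = C_n^q$ as required.

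There is no serious obstacle in this argument, but it is worth noting that one cannot simply imitate the proof of Proposition~\ref{LzL} directly inside $C_n^q$: that proof relied on the complete primeness of all maximal ideals supplied by Proposition~\ref{Lcompprime}, a property the cyclic algebra fails to share. What makes the present strategy work is precisely that Proposition~\ref{LzL} has already been proved, so the necessary ideal-theoretic identity can be imported into $C_n^q$ through the subalgebra inclusion $L_{n-1}^q\hookrightarrow C_n^q$, and then fed into the localization correspondence to produce the claimed bijection.
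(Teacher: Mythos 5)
Your argument is correct and is essentially the paper's own proof: the paper likewise applies Proposition~\ref{LzL} (with $n$ replaced by $n-1$) to get $L_{n-1}^q z L_{n-1}^q = L_{n-1}^q$ for $z\in\mathcal{Z}_{n-2}$, deduces $C_n^q z C_n^q = C_n^q$ so that every prime of $C_n^q$ misses $\mathcal{Z}_{n-2}$, and then invokes the localization correspondence \cite[Proposition 2.1.16(vii)]{McCR}. Your remark about why one cannot rerun the Proposition~\ref{LzL} argument directly in $C_n^q$ is a sensible extra observation but not needed.
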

\begin{proof}
For all $z\in  \mathcal{Z}_{n-2}$, $L_{n-1}^qzL_{n-1}^q=L_{n-1}^q$, by Proposition~\ref{LzL},
and so $C_{n}^qzC_{n}^q=C_{n}^q$. Hence $P\cap \mathcal{Z}_{n-2}=\emptyset$ for all $P\in \Spec C_n^q$. The result follows by \cite[Proposition 2.1.16(vii)]{McCR}.
\end{proof}

The ambiskew polynomial ring $S$ is the main example of \cite{cdfdaj}, where its prime spectrum is computed. It consists of
\begin{itemize}
\item $0$;
\item a height one prime ideal $(\Omega-\lambda)S$ for each $\lambda\in \K$;
\item for each positive integer $m$, two maximal ideals $F_{m,1}$ and $F_{m,-1}$ such that $S/F_{m,1}$ and $S/F_{m,-1}$ have Goldie rank $m$.
\end{itemize}
Here $F_{m,1}$ contains the height one prime ideal   $(\Omega-\lambda)S$, where $\lambda=(q^m+1)q^{\frac{n-2m-1}{4}}$, and $F_{m,-1}$ contains $(\Omega+\lambda)S$. Also, if $m,\ell\in \N$ are such that $m\neq\ell$ then $(q^m+1)q^{\frac{n-2m-1}{4}}\neq \pm (q^\ell+1)q^{\frac{n-2\ell-1}{4}}$.  For details, see \cite[Examples 2.8 and 3.12 and Corollary 4.7]{cdfdaj}, where $p=n-2$.

  \begin{prop}\label{Omegacompprime} For all $\lambda\in \K$, the ideal $(\Omega-\lambda)C_n^q$ is a completely prime ideal of $C_n^q$.
\end{prop}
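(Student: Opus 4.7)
The plan is to prove complete primeness by showing $C_n^q/(\Omega-\lambda)C_n^q$ is a domain. Since $\Omega$ is central by Lemma~\ref{xzC}(iv), $(\Omega-\lambda)C_n^q$ is a two-sided ideal, and my strategy is to embed the quotient into $S/(\Omega-\lambda)S$ after showing the latter is a domain.

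By Proposition~\ref{Cloc2}(ii), $S/(\Omega-\lambda)S$ is the generalized Weyl algebra $W(T_{n-2}^q,\alpha,u)$ with $u=q^{(n-3)/2}z_{n-2}^{-1}+\lambda+qz_{n-2}$. Because $n-2$ is odd, Corollary~\ref{znnormal}(ii) shows that $z_{n-2}$ is central in $T_{n-2}^q$, and $\alpha(z_{n-2})=q^{-1}z_{n-2}$; hence each $\alpha^i(u)=q^{(n-3)/2+i}z_{n-2}^{-1}+\lambda+q^{1-i}z_{n-2}$ is a nonzero Laurent polynomial in the central element $z_{n-2}$. The $\Z$-graded decomposition of a generalized Weyl algebra, together with unique factorization in the central Laurent polynomial subring $\K[z_{n-2}^{\pm 1}]\subset T_{n-2}^q$ ensuring that products of the $\alpha^i(u)$ remain nonzero, then yields that $W(T_{n-2}^q,\alpha,u)$ is a domain.

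For the embedding to be injective, I require $(\Omega-\lambda)S\cap C_n^q=(\Omega-\lambda)C_n^q$. I establish this by a division argument in the skew polynomial presentation $C_n^q=L_{n-1}^q[x_n;\tau_n,\partial_n]$: the element $\Omega-\lambda=z_{n-1}x_n-(z_{n-2}+q\theta(z_{n-2})+\lambda)$ has $x_n$-degree one with leading coefficient $z_{n-1}$, which is normal in $L_{n-1}^q$. Given $a\in C_n^q\cap(\Omega-\lambda)S$ of $x_n$-degree $N\geq 1$, comparing leading coefficients in the equation $a=(\Omega-\lambda)s$ (viewed in $S$) and clearing denominators using normality yields $a_N\,u\in z_{n-1}L_{n-1}^q$ for some $u\in\mathcal{Z}_{n-2}$. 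Since $z_{n-1}L_{n-1}^q$ is completely prime (Proposition~\ref{Lcompprime} applied to $L_{n-1}^q$ together with Theorem~\ref{specL}), and since $u\in L_{n-2}^q\setminus\{0\}$ while every nonzero element of $z_{n-1}L_{n-1}^q$ has positive $x_{n-1}$-degree, the image of $u$ in $L_{n-1}^q/z_{n-1}L_{n-1}^q$ is regular, so $a_N\in z_{n-1}L_{n-1}^q$. Iterated subtraction of suitable $L_{n-1}^q$-multiples of $\Omega-\lambda$ then reduces $a$ modulo $(\Omega-\lambda)C_n^q$ to an element of $L_{n-1}^q\cap(\Omega-\lambda)S$, which must be zero by a final degree count. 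The main obstacle is this division/divisibility step; once it is done, $C_n^q/(\Omega-\lambda)C_n^q$ embeds into the domain $W(T_{n-2}^q,\alpha,u)$, and complete primeness follows.
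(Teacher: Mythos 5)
Your proposal is essentially correct but follows a genuinely different, and much heavier, route than the paper. The paper's proof is a one-liner: since $\Omega-\lambda=z_{n-1}x_n-z_{n-2}-q\theta(z_{n-2})-\lambda$ has degree one in the presentation $C_n^q=L_{n-1}^q[x_n;\tau_n,\partial_n]$, with leading coefficient $z_{n-1}$ generating a completely prime ideal of $L_{n-1}^q$ and with $z_{n-2}+q\theta(z_{n-2})\notin z_{n-1}L_{n-1}^q$, the last part of \cite[Proposition 1]{normalels} applies directly. Your argument instead passes to the localization $S$ at $\mathcal{Z}_{n-2}$, uses Proposition~\ref{Cloc2}(ii) to identify $S/(\Omega-\lambda)S$ with a generalized Weyl algebra over the domain $T_{n-2}^q$ with $u$ and all $\alpha^i(u)$ nonzero (so the $\Z$-graded argument does give a domain; the appeal to unique factorization in $\K[z_{n-2}^{\pm1}]$ is unnecessary), and then proves the contraction equality $(\Omega-\lambda)S\cap C_n^q=(\Omega-\lambda)C_n^q$ by a leading-coefficient induction. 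That induction is sound: the leading coefficient of $(\Omega-\lambda)s$ is $z_{n-1}\tau_n(s_{N-1})$, a common right denominator gives $a_Nu\in z_{n-1}L_{n-1}^q$ with $u\in\mathcal{Z}_{n-2}$, and since $L_{n-2}^q\cap z_{n-1}L_{n-1}^q=0$ and the quotient is a domain, $a_N\in z_{n-1}L_{n-1}^q$; the final degree count is fine. In effect your divisibility step re-proves, in this special case, exactly the content of the cited result on degree-one normal elements, so the paper's route buys brevity while yours buys a concrete picture of the quotient as (a subalgebra of) a GWA.

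Two caveats. First, a point of generality rather than of logic: the proposition is stated for all $q\in\K^*$, and the paper's proof needs no restriction, but Proposition~\ref{Cloc2}, Proposition~\ref{Lcompprime} and Theorem~\ref{specL} (and Proposition~\ref{n-1loc} behind them) all carry the hypothesis that $q$ is not a root of unity, so as written your argument only establishes the statement in that case; to recover full generality you would have to obtain the complete primality of $z_{n-1}L_{n-1}^q$ directly (as the paper does inside the proof of Theorem~\ref{specL}, again via \cite[Proposition 1]{normalels}), at which point you are very close to the paper's own argument. Second, a small slip of wording: the elements of $\mathcal{Z}_{n-2}$ are \emph{not} normal in $L_{n-1}^q$ (for instance $x_{j+1}z_j$ is not a scalar multiple of $z_jx_{j+1}$); what your clearing-of-denominators step actually uses is only the Ore property of $\mathcal{Z}_{n-2}$ together with the normality of the leading coefficient $z_{n-1}$ itself, and it should be phrased that way.
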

\begin{proof} As $z_{n-1}L_{n-1}^q$ is completely prime in $L_{n-1}^q$ and  $z_{n-2}+q\theta(z_{n-2})\notin z_{n-1}L_{n-1}^q$, this follows on applying the last part of \cite[Proposition 1]{normalels} to $\Omega-\lambda=z_{n-1}x_n-z_{n-2}-q\theta(z_{n-2})-\lambda$.
\end{proof}

\begin{lemma}\label{regular} Let $R$ be a right and left noetherian ring with a right and left denominator set $\mathcal{S}$ and let $P$ be a prime ideal such that
$P\cap\mathcal{S}=\emptyset$. Then the elements of $\mathcal{S}$ are regular modulo $P$ and their images in $R/P$ form a right and left Ore set with right and left quotient ring $R_\mathcal{S}/PR_\mathcal{S}$.
\end{lemma}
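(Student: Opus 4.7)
The plan is to reduce this to standard facts about localization by working through $R_\mathcal{S}$. The key observation is that the prime ideal $P$ of $R$ with $P\cap\mathcal{S}=\emptyset$ extends to a prime ideal $PR_\mathcal{S}$ of $R_\mathcal{S}$ satisfying $PR_\mathcal{S}\cap R=P$. This is the standard prime correspondence for Ore localizations, available for instance in \cite[Theorem 2.1.16]{McCR}, and it is already being invoked in the surrounding proofs (e.g.\ Proposition~\ref{CSspec}). I would cite this as the foundation.

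First I would establish regularity modulo $P$. Elements of $\mathcal{S}$ become units in $R_\mathcal{S}$, hence their images are units and therefore regular in $R_\mathcal{S}/PR_\mathcal{S}$. Composing the quotient map $R\to R/P$ with the natural map $R/P\to R_\mathcal{S}/PR_\mathcal{S}$ recovers the restriction of $R\to R_\mathcal{S}/PR_\mathcal{S}$, and its kernel is $PR_\mathcal{S}\cap R=P$. Thus $R/P$ embeds in $R_\mathcal{S}/PR_\mathcal{S}$, and regularity of the images of $s\in\mathcal{S}$ in the larger ring forces their regularity in $R/P$.

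Next I would check the Ore conditions for $\overline{\mathcal{S}}:=\{s+P:s\in\mathcal{S}\}$ in $R/P$. Given $s\in\mathcal{S}$ and $r\in R$, the right Ore condition in $R$ produces $s'\in\mathcal{S}$ and $r'\in R$ with $rs'=sr'$; reducing modulo $P$ gives the required identity in $R/P$, and symmetrically on the left. The reversibility (denominator) condition is automatic because the images of $\mathcal{S}$ are already regular in $R/P$: if $\bar r\bar s=0$ then $\bar r=0$, so any $\bar s'\in\overline{\mathcal{S}}$ works. Hence $\overline{\mathcal{S}}$ is a right and left denominator set in $R/P$ and the Ore localization $(R/P)_{\overline{\mathcal{S}}}$ exists.

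Finally I would identify $(R/P)_{\overline{\mathcal{S}}}$ with $R_\mathcal{S}/PR_\mathcal{S}$. The composite $R\to R_\mathcal{S}\to R_\mathcal{S}/PR_\mathcal{S}$ kills $P$ and sends each $s\in\mathcal{S}$ to a unit, so by the universal property of Ore localization it factors through a $\K$-algebra homomorphism $\varphi:(R/P)_{\overline{\mathcal{S}}}\to R_\mathcal{S}/PR_\mathcal{S}$. The map is surjective because every element of $R_\mathcal{S}$ has the form $rs^{-1}$ with $r\in R$, $s\in\mathcal{S}$, and the preimage $\overline r\,\overline s{}^{-1}$ lies in $(R/P)_{\overline{\mathcal{S}}}$. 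For injectivity, if $\varphi(\overline r\,\overline s{}^{-1})=0$ then $rs^{-1}\in PR_\mathcal{S}$, which means $r\in PR_\mathcal{S}\cap R=P$ and hence $\overline r=0$. The hardest step to pin down cleanly is this last identification, but given the prime correspondence $PR_\mathcal{S}\cap R=P$ it is essentially bookkeeping via the universal property; the rest of the lemma is a routine unpacking of the definition of a denominator set.
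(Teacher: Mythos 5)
Your argument is correct, but it reaches the crux of the lemma by a different route than the paper. The paper proves the regularity of $\mathcal{S}$ modulo $P$ from scratch: it forms the saturation $J=\{r\in R: rs\in P\text{ for some }s\in\mathcal{S}\}$, uses left noetherianness to get finitely many generators, applies the common right denominator lemma \cite[Lemma 2.1.8]{McCR} to find a single $s$ with $Js\subseteq P$, and then invokes primeness of $P$ to force $J=P$; the remaining assertions are dismissed as routine. You instead deduce regularity from the extension--contraction statement $PR_\mathcal{S}\cap R=P$ (with $PR_\mathcal{S}$ prime), quoting \cite[2.1.16]{McCR}, and then carry out the routine part (Ore conditions in $R/P$ and the identification of $R_\mathcal{S}/PR_\mathcal{S}$ via the universal property) in more detail than the paper does; those verifications are fine, as is the observation that units of $R_\mathcal{S}/PR_\mathcal{S}$ are regular in the embedded copy of $R/P$. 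The trade-off is this: the paper's argument is self-contained and shows exactly where the noetherian hypothesis enters, whereas your argument is shorter but outsources precisely the nontrivial point, since the textbook proof that a prime disjoint from a denominator set contracts back from its extension is essentially the same saturation/common-denominator argument (and in complete generality one must check that the cited statement really covers primes merely \emph{disjoint} from $\mathcal{S}$, not primes modulo which $\mathcal{S}$ is already assumed regular, on pain of circularity). Since the authors themselves cite \cite[Proposition 2.1.16(vii)]{McCR} and \cite[Theorem 10.20]{GW} in exactly this way (e.g.\ in Theorem~\ref{specL} and Proposition~\ref{CSspec}), your citation is acceptable by the paper's own standards, so I regard your proof as valid, just relocating rather than avoiding the work that Lemma~\ref{regular} was written to record; a minor quibble is that you call $\varphi$ a $\K$-algebra homomorphism although the lemma concerns arbitrary noetherian rings.
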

\begin{proof}
Let $J=\{r\in R:rs\in P\text{ for some }s\in \mathcal{S}\}$. By a standard argument on the right Ore condition, $J$ is an ideal of $R$. As $R$ is
left noetherian, $J=Rj_1+\dots+R j_n$ for some $j_1,\dots,j_n\in R$ such that $j_is_i\in P$ for some $s_1,...,s_n\in \mathcal{S}$. By \cite[Lemma 2.1.8]{McCR} with $r_1=\dots=r_n=1$, there exists $s\in \mathcal{S}$ such that each $j_is\in P$ and hence such that $Js\subseteq P$. As $J$ is an ideal containing the prime ideal $P$ and $s\notin P$, it follows that $J=P$. By symmetry, $\{r\in R:sr\in P\text{ for some }s\in \mathcal{S}\}=P$, whence the elements of $\mathcal{S}$ are regular modulo $P$. For the rest, it is easy to check that
$\overline{\mathcal{S}}=\{s+P:s\in \mathcal{S}\}$ is a right and left Ore set in $R/P$ and that $R_\mathcal{S}/PR_\mathcal{S}$ is a right and left quotient ring of $R/P$ with respect to $\overline{\mathcal{S}}$.
\end{proof}

\begin{theorem}\label{classspecC} Suppose that $q$ is not a root of unity.
The prime spectrum of $C_n^q$ consists of $0$, the height one prime ideals $(\Omega-\lambda)C_n^q$, $\lambda\in \K$, and, for each positive integer $m$, two height two prime ideals $M_{m,1}$ and $M_{m,-1}$ for which the factors $C_n^q/M_{m,1}$ and $C_n^q/M_{m,-1}$ have Goldie rank $m$.

If $\lambda\neq \pm (q^m+1)q^{\frac{n-2m-1}{4}}$ for all $m\in \N$ then $(\Omega-\lambda)C_n^q$ is maximal. If $\lambda=(q^m+1)q^{\frac{n-2m-1}{4}}$ then
$(\Omega-\lambda)C_n^q\subset M_{m,1}$ and
$(\Omega+\lambda)C_n^q\subset M_{m,-1}$.
\end{theorem}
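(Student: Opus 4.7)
The plan is to combine the bijection of Proposition~\ref{CSspec} with the description of $\Spec S$ given in the bulleted list immediately following, which is imported from \cite{cdfdaj}. Concretely, $\Spec S$ consists of $0$, the height one primes $(\Omega-\lambda)S$ ($\lambda\in \K$), and, for each $m\in\N$, two height two maximals $F_{m,1}, F_{m,-1}$ with $S/F_{m,\pm 1}$ of Goldie rank $m$. The bijection $P\mapsto PS$, $Q\mapsto Q\cap C_n^q$ preserves heights and inclusions, so I would simply transport each item in this list back to $C_n^q$ and verify that the contracted ideals admit the descriptions claimed in the theorem.

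For the height one primes, I would first observe that, for each $\lambda\in\K$, Proposition~\ref{Omegacompprime} guarantees that $(\Omega-\lambda)C_n^q$ is a completely prime ideal of $C_n^q$, and its extension to $S$ is clearly $(\Omega-\lambda)S$. Hence under the bijection of Proposition~\ref{CSspec} we must have
\[
(\Omega-\lambda)S\cap C_n^q=(\Omega-\lambda)C_n^q,
\]
because both sides are primes of $C_n^q$ with the same extension to $S$. For the two-parameter family I would simply define $M_{m,\epsilon}:=F_{m,\epsilon}\cap C_n^q$ for $\epsilon\in\{\pm 1\}$; these are distinct primes of $C_n^q$ by the injectivity of the bijection, and they have height two because the bijection preserves chains of primes.

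The Goldie rank computation is the step where one must be slightly careful. By Proposition~\ref{LzL} (applied to $L_{n-1}^q$) and the skew polynomial construction of $C_n^q$ over $L_{n-1}^q$, the set $\mathcal{Z}_{n-2}$ meets no prime of $C_n^q$; consequently Lemma~\ref{regular} applies and shows that the images of $\mathcal{Z}_{n-2}$ in $C_n^q/M_{m,\epsilon}$ form a right and left Ore set whose localization is exactly $S/F_{m,\epsilon}$. Since passage to a right/left quotient ring preserves Goldie rank, we obtain that $C_n^q/M_{m,\epsilon}$ has Goldie rank $m$, as asserted. Finally, the inclusion statements $(\Omega-\lambda)C_n^q\subset M_{m,1}$ when $\lambda=(q^m+1)q^{(n-2m-1)/4}$, and similarly for $M_{m,-1}$, follow by contracting the corresponding inclusions $(\Omega-\lambda)S\subset F_{m,1}$ recorded in the bulleted list, and the maximality of $(\Omega-\lambda)C_n^q$ for $\lambda$ avoiding the exceptional values $\pm(q^m+1)q^{(n-2m-1)/4}$ ($m\in\N$) is then forced by the analogous maximality of $(\Omega-\lambda)S$ in $S$ together with the bijection.

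The only non-routine step is the identification $(\Omega-\lambda)S\cap C_n^q=(\Omega-\lambda)C_n^q$, and even this reduces at once to Proposition~\ref{Omegacompprime}: without it, one would only know the contraction is \emph{some} prime containing $(\Omega-\lambda)C_n^q$, and it would be possible in principle that $(\Omega-\lambda)C_n^q$ itself is not prime. Once this point is settled everything else is bookkeeping on the already-established bijection.
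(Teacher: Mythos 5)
Your proposal is correct and follows essentially the same route as the paper: transporting the known description of $\Spec S$ back through the bijection of Proposition~\ref{CSspec}, using Proposition~\ref{Omegacompprime} to identify the contracted height one primes, and using Lemma~\ref{regular} together with preservation of Goldie rank under Ore localization (the paper cites \cite[Lemma 2.2.12]{McCR} for this) to get the Goldie ranks of the factors by the exceptional primes. Your write-up merely makes explicit some bookkeeping that the paper leaves to the cited results.
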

\begin{proof} It follows from Proposition~\ref{CSspec}, the above specification of $\Spec S$ and \cite[Proposition 2.1.14]{McCR} that the prime ideals of $C_n^q$ are as listed. Lemma~\ref{regular} ensures that \cite[Lemma 2.2.12]{McCR} is applicable to show that $S/F_{m,\pm 1}$ and $C_n^q/(F_{m,\pm 1}\cap C_n^q)$ have the same Goldie rank $m$.
\end{proof}

\begin{cor} Suppose that $q$ is not a root of unity.
If $n\geq 3$ then the cyclic connected quantized Weyl algebra $C_n^q$ is a UFD in which all but countably many  height one prime ideals are maximal.
\end{cor}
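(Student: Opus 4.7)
The plan is to read the UFD condition off directly from Theorem~\ref{classspecC} together with Proposition~\ref{Omegacompprime} and Lemma~\ref{xzC}. First I would recall from Proposition~\ref{ringprops}(i) that $C_n^q$ is a noetherian domain, so the Chatters definition applies. I would then observe that by Theorem~\ref{classspecC} the non-zero prime ideals of $C_n^q$ come in two families: the height one primes $(\Omega-\lambda)C_n^q$, $\lambda\in\K$, and the height two primes $M_{m,\pm 1}$, $m\in\N$, with $M_{m,1}\supset(\Omega-\lambda_m)C_n^q$ and $M_{m,-1}\supset(\Omega+\lambda_m)C_n^q$ for $\lambda_m=(q^m+1)q^{(n-2m-1)/4}$.

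Next I would verify the two properties required of the prime ideals $(\Omega-\lambda)C_n^q$ in the Chatters definition: the element $\Omega-\lambda$ is normal (indeed central) by Lemma~\ref{xzC}(iv), and the ideal $(\Omega-\lambda)C_n^q$ is completely prime by Proposition~\ref{Omegacompprime}. Since every non-zero prime ideal of $C_n^q$ contains at least one such principal completely prime ideal generated by a normal element, $C_n^q$ is a UFD in the sense of Chatters.

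For the second assertion, Theorem~\ref{classspecC} tells us that a height one prime $(\Omega-\lambda)C_n^q$ fails to be maximal precisely when $\lambda$ is of the form $\pm(q^m+1)q^{(n-2m-1)/4}$ for some positive integer $m$, in which case it is strictly contained in $M_{m,\pm 1}$. The exceptional set
\[E=\bigl\{\pm(q^m+1)q^{(n-2m-1)/4}:m\in\N\bigr\}\subset \K\]
is clearly countable, so all but countably many of the height one prime ideals of $C_n^q$ are maximal.

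There is no real obstacle here, since the substantive work has already been carried out in Theorem~\ref{classspecC}; the only point that needs care is matching the Chatters definition to our list of primes and noting that $\Omega$ being central makes normality of $\Omega-\lambda$ automatic.
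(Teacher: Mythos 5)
Your proposal is correct and follows exactly the route the paper intends: the corollary is stated as an immediate consequence of Theorem~\ref{classspecC}, with the Chatters UFD condition verified via the central element $\Omega-\lambda$ (Lemma~\ref{xzC}(iv)) and complete primeness of $(\Omega-\lambda)C_n^q$ (Proposition~\ref{Omegacompprime}), and the non-maximal height one primes indexed by the countable set $\{\pm(q^m+1)q^{(n-2m-1)/4}:m\in\N\}$. Nothing is missing.
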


\begin{cor}\label{primitivespecC} Suppose that $q$ is not a root of unity.
The primitive spectrum of $C_n^q$ consists of the non-zero prime ideals listed in Theorem~\ref{classspecC}.
\end{cor}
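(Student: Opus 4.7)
The plan is to deal with the three kinds of prime separately: the zero ideal, the maximal ideals, and the non-maximal height one primes. The two tools are Proposition~\ref{ringprops}(ii) (a prime factor whose centre strictly contains $\K$ is not primitive) and Proposition~\ref{ringprops}(iii) (a prime factor in which the non-zero primes have non-zero intersection is primitive).

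First, I would show that $0$ is not primitive. By Lemma~\ref{xzC}(iv) the element $\Omega$ lies in $Z(C_n^q)$, and a PBW-leading-term comparison shows $\Omega\notin\K$: the summand $z_{n-1}x_n$ has leading monomial $x_1x_2\cdots x_n$ whereas the remaining summands $-z_{n-2}-q\theta(z_{n-2})$ involve only $x_1,\dots,x_{n-1}$. Thus $Z(C_n^q)\neq\K$ and Proposition~\ref{ringprops}(ii) applies.

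Next, every maximal ideal is automatically primitive, and Theorem~\ref{classspecC} already identifies which primes are maximal: all the height two primes $M_{m,\pm 1}$ (since $\Spec C_n^q$ has height at most $2$), together with those $(\Omega-\lambda)C_n^q$ for which $\lambda\neq\pm(q^m+1)q^{(n-2m-1)/4}$ for every $m\in\N$. These require no additional argument.

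The content of the corollary is therefore the primitivity of the remaining height one primes $P_{\varepsilon\lambda_m}:=(\Omega-\varepsilon\lambda_m)C_n^q$, where $\lambda_m:=(q^m+1)q^{(n-2m-1)/4}$ and $\varepsilon\in\{+1,-1\}$. I would apply Proposition~\ref{ringprops}(iii) to $R:=C_n^q/P_{\varepsilon\lambda_m}$: it suffices to show that $M_{m,\varepsilon}/P_{\varepsilon\lambda_m}$ is the unique non-zero prime of $R$, equivalently that $M_{m,\varepsilon}$ is the only prime of $C_n^q$ strictly containing $P_{\varepsilon\lambda_m}$. Via the containment-preserving bijection of Proposition~\ref{CSspec} this reduces to asking which maximal ideals $F_{\ell,\delta}$ of $S$ contain $(\Omega-\varepsilon\lambda_m)S$. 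Since $F_{\ell,\delta}$ contains $\Omega-\delta\lambda_\ell$, such a containment would force $\varepsilon\lambda_m=\delta\lambda_\ell$ in $\K$. The main obstacle is to rule out all but the intended pair, and this is exactly what the scalar inequalities recorded just after Theorem~\ref{classspecC} do: together with the observation that $\lambda_m\neq 0$ (since $q^m=-1$ is excluded by the hypothesis on $q$), they force $(\ell,\delta)=(m,\varepsilon)$, completing the proof.
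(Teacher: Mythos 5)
Your proposal is correct and follows essentially the same route as the paper: $0$ is non-primitive by Proposition~\ref{ringprops}(ii) via the central element $\Omega$, maximal primes are primitive for free, and each non-maximal height one prime is primitive by Proposition~\ref{ringprops}(iii) because it lies under a unique non-zero prime. The only difference is that you spell out, via Proposition~\ref{CSspec} and the scalar inequalities recorded after Theorem~\ref{classspecC}, the uniqueness of the height two prime above $(\Omega\mp\lambda_m)C_n^q$, which the paper simply asserts.
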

\begin{proof}
By Proposition~\ref{ringprops}(ii), the ideal $0$ of $C_n^q$ is not primitive due to the existence of the central element $\Omega$. All other prime ideals of $C_n^q$ are primitive. For the non-maximal height one prime ideals this is a consequence of Proposition~\ref{ringprops}(iii) and the fact that each of the non-maximal height one prime ideals is strictly contained in a unique height two prime ideal, $F_{m,1}$ or $F_{m,-1}$ for an appropriate value of $m$.
\end{proof}

\section{Automorphisms}
In this section we determine the automorphism groups of the algebras $L_n^q$ and $C_n^q$ when $q\neq\pm1$.
We have already observed, in Proposition~\ref{someautos}, the
$\K$-automorphisms $\iota_\nu$ of $L_n^q$ such that $\iota_\nu(x_i) = \nu^{(-1)^i} x_i$ for $1\leq i\leq n$, $\nu\in \K^*$,
and, when $n$ is odd, the $\K$-automorphisms $\iota$  and $\theta$ of $C_n^q$ such that  $\iota(x_i) = -x_i$ and $\theta(x_i)=x_{i+1}$ for $1\leq i\leq n$, where indices are interpreted modulo $n$.  Clearly $\theta\iota=\iota\theta$.

\begin{theorem}\label{CAut} If $n\geq 3$ is odd and $q\neq\pm1$ then
$\aut_\K(C_n^q)$ is cyclic of order $2n$, generated by $\iota\theta$.
\end{theorem}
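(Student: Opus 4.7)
The plan is as follows. First observe that $\iota\theta$ has order $\mathrm{lcm}(2,n)=2n$ in $\aut_\K(C_n^q)$: from Proposition~\ref{someautos} we have $\iota^2=\id$ and $\theta^n=\id$, while on generators $\iota\theta(x_i)=-x_{i+1}=\theta\iota(x_i)$, so $\iota$ and $\theta$ commute; since $n$ is odd this forces $\langle\iota\theta\rangle=\langle\iota,\theta\rangle$ to have order $2n$. The content of the theorem is therefore that every $\phi\in\aut_\K(C_n^q)$ lies in this cyclic subgroup.

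The first stage is to control $\phi(\Omega)$. By Theorem~\ref{classspecC} the height-one prime ideals of $C_n^q$ are exactly $\{(\Omega-\lambda)C_n^q:\lambda\in\K\}$, and $\phi$ permutes them; combined with $U(C_n^q)=\K^*$ from Proposition~\ref{ringprops}(i), this forces
\[\phi(\Omega-\lambda)=u_\lambda(\Omega-\sigma(\lambda))\]
for a bijection $\sigma:\K\to\K$ and units $u_\lambda\in\K^*$. Comparing two distinct values of $\lambda$ shows $u_\lambda$ is a constant $\mu\in\K^*$, giving $\phi(\Omega)=\mu\Omega+c$ and $\sigma(\lambda)=(\lambda-c)/\mu$. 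Fix any $m\in\N$; the pair $\{M_{m,1},M_{m,-1}\}$ is $\phi$-invariant because $\phi$ preserves Goldie rank, so $\sigma$ must permute the pair $\{+\lambda_m,-\lambda_m\}$, where $\lambda_m=(q^m+1)q^{(n-2m-1)/4}$. Adding the two resulting linear equations for $\sigma$ gives $c=0$, whence $\mu^2=1$. A short induction on $j$ using $z_j=z_{j-1}x_j-z_{j-2}$ shows $\iota(z_j)=(-1)^jz_j$; substituting in $\Omega=z_{n-1}x_n-z_{n-2}-q\theta(z_{n-2})$ and using that $n-1$ is even and $n-2$ is odd yields $\iota(\Omega)=-\Omega$. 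Composing with $\iota$ when $\mu=-1$, we may henceforth assume $\phi(\Omega)=\Omega$.

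The main obstacle is then to prove that every such centre-fixing $\phi$ belongs to $\langle\theta\rangle$. My approach would be to pass to the localization $S$ of $C_n^q$ at $\mathcal{Z}_{n-2}$, which by Proposition~\ref{Cloc2} is an ambiskew polynomial ring over the quantum torus $T_{n-2}^q$, with each quotient $S/(\Omega-\lambda)S$ a generalized Weyl algebra over $T_{n-2}^q$. One first shows that $\phi$ extends to $S$; this reduces to the rigidity statement that, up to $\K^*$-scalars, $\phi$ permutes the normal elements $z_1,\dots,z_{n-2}$, which can be read off from how $\phi$ acts on the height-one spectra of the smaller subalgebras $L_k^q\subset C_n^q$ together with the fact that $\phi$ fixes $\Omega$. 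Once $\phi$ is realised on $S$, reducing modulo $\Omega-\lambda$ for generic $\lambda$ yields an automorphism of a generalized Weyl algebra over a quantum torus with $q$ not a root of unity; rigidity of automorphism groups of such GWAs forces the reduction to be, up to a scaling, a cyclic shift of the generators. Lifting back to $C_n^q$ and matching against the relations $x_ix_{i+1}-qx_{i+1}x_i=1-q$ forces the scaling to be trivial, yielding $\phi\in\langle\theta\rangle$. The hardest step is the extendibility to $S$ and the accompanying control of $\phi$ on the normal elements $z_i$; once this is in hand the GWA rigidity and the defining relations complete the proof.
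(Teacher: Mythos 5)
Your opening reduction is fine and matches the paper: $\langle\iota\theta\rangle$ has order $2n$, and from the classification of height one primes together with $U(C_n^q)=\K^*$ one gets $\phi(\Omega)=\mu(\Omega-\lambda)$ (your extra step forcing $\lambda=0$, $\mu=\pm1$ via the pairs $M_{m,\pm1}$ is legitimate, though the paper never needs it). The genuine gap is in the main stage. Your route rests on two claims that are not established and, as sketched, would not go through. First, the "rigidity statement" that $\phi$ permutes $z_1,\dots,z_{n-2}$ up to scalars is supposed to be read off from how $\phi$ acts on the height-one spectra of the subalgebras $L_k^q\subset C_n^q$; but these subalgebras are not canonically attached to $C_n^q$ and are not $\phi$-stable (even $\theta$ moves $L_k^q$ to the subalgebra generated by $x_2,\dots,x_{k+1}$), so $\phi$ does not act on their spectra at all. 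Knowing that $\phi$ sends each $z_i$ to a scalar multiple of some $z_j$ is essentially equivalent to what you are trying to prove, so this step is where the real content lies and it is missing. Second, the appeal to "rigidity of automorphism groups" of the generalized Weyl algebras $S/(\Omega-\lambda)S$ over $T_{n-2}^q$ is an unsupported black box: no such classification is in the paper or standard, and it is not even clear what "cyclic shift of the generators" means for a GWA presented by $T_{n-2}^q$, $X$, $Y$ rather than by the cyclically symmetric generators $x_1,\dots,x_n$. Finally, even granting both claims, an automorphism of the localization $S$, or of a quotient $S/(\Omega-\lambda)S$, need not restrict to or lift to an automorphism of $C_n^q$ without further argument.

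For comparison, the paper's proof avoids all of this with one short observation you did not exploit: since $\Omega$ has total degree $n$ and degree $1$ in each $x_i$, the relation $\psi(\Omega)=\mu(\Omega-\lambda)$ already forces $\psi(x_i)=\mu_i x_{\pi(i)}+\lambda_i$ for a permutation $\pi$ and scalars $\mu_i\in\K^*$, $\lambda_i\in\K$. Applying $\psi$ to the relations $x_ix_{i+1}-qx_{i+1}x_i=1-q$ and listing the possible values of $[x_k,x_\ell]_q=x_kx_\ell-qx_\ell x_k$ then kills the $\lambda_i$, forces $\pi$ to be a power of the cycle $(1\,2\,\dots\,n)$ and all $\mu_i=\pm1$ with common sign, giving $\psi\in\langle\iota\theta\rangle$ directly. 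If you want to salvage your approach, the indispensable missing ingredient is a proof (not an assertion) that a centre-fixing automorphism sends the set $\{x_1,\dots,x_n\}$, or the set $\{z_i\}$, to itself up to scalars; the degree argument above is precisely the tool that does this.
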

\begin{proof}
We interpret the indices in the $x_i$'s modulo $n$.
As $\theta\iota=\iota\theta$, $\theta$ has odd order $n$ and $\iota$ has order $2$, the subgroup $\langle \iota,\theta \rangle$ of $\aut_K(C_n^q)$ is cyclic of order $2n$, generated by $\iota\theta$.

Let $\psi\in \aut_\K(C_n^q)$. As $\psi$ must send height 1 primes to height 1 primes and $U(C_n^q)=\K$ it follows from
Theorem~\ref{classspecC}
 that  $\psi(\Omega) = \mu(\Omega - \lambda)$ for some $\mu \in \K^*$, $\lambda \in \K$. As $\Omega$ has total degree $n$ and degree $1$ in each $x_i$, there must be a permutation $\pi\in S_n$ and scalars
$\mu_i\in \K^*$ and $\lambda_i\in \K$ such that, for $1\leq i\leq n$,
$\psi(x_i) = \mu_i x_{\pi(i)} + \lambda_i$.

For $r,s\in C_n^q$, let $[r,s]_q=rs-qrs$. The possibilities for $[x_k,x_\ell]_q$, $1\leq k,\ell\leq n$, are as follows. If $\ell=k+1$ then $[x_k,x_\ell]_q=1-q$, if $\ell=k-1$ then
$[x_k,x_\ell]_q=(1-q^2)x_k x_{k-1}+q^2-q=(q^{-1}-q)x_{k-1}x_k+1-q^{-1}$, otherwise, $[x_k,x_\ell]_q \in \K x_k x_\ell=\K x_\ell x_k$.
For $1\leq i\leq n$,
\begin{eqnarray*}
1-q&=&\psi([x_i,x_{i+1}]_q)\\
&=&[\psi(x_i),\psi(x_{i+1})]_q\\
&=&\mu_i\mu_{i+1}[x_{\pi(i)},x_{\pi(i+1)}]_q+(1-q)(\lambda_i\mu_{i+1}x_{\pi(i+1)}+\lambda_{i+1}\mu_ix_{\pi(i)}+\lambda_i\lambda_{i+1}).
\end{eqnarray*}
From the possibilities listed above, we see that $\lambda_i=0$ for all $i$ so $1-q=\mu_i\mu_{i+1}[x_{\pi(i)},x_{\pi(i+1)}]_q$.
 Thus $[x_{\pi(i)},x_{\pi(i+1)}]_q\in K^*$ but, as $q\neq \pm 1$, the only possibility is that $\pi(i+1)=\pi(i)+1$. Hence each $\mu_i\mu_{i+1}=1$ and $\pi=(1~2~3~\dots~n)^{\pi(1)-1}$. Also $\mu_1^{-1}=\mu_n=\mu_{n-1}^{-1}=\mu_{n-2}=\dots\mu_2^{-1}=\mu_1$ so $\mu_1=\pm 1$ and either $\mu_i=1$ for all $i$ or $\mu_i=-1$ for all $i$.
Thus $\psi=\theta^{\pi(1)-1}$ or $\psi=\iota\theta^{\pi(1)-1}$, whence $\psi\in \langle \iota,\theta \rangle=\langle \iota\theta \rangle$ and therefore
$\aut_K(C_n^q)=\langle \iota\theta \rangle$.
\end{proof}

\begin{theorem}\label{LAut}
If $n\geq 2$ and $q\neq \pm1$ then the map $\Gamma:\K^*\rightarrow \aut_\K(L^q_n)$ given by $\nu\mapsto \iota_\nu$ is a group isomorphism.
\end{theorem}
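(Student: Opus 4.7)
The plan is to imitate the proof of Theorem~\ref{CAut}, with the element $z_n$ playing the role of $\Omega$. That $\Gamma$ is a group homomorphism is immediate from the identity $\mu^{(-1)^i}\nu^{(-1)^i} = (\mu\nu)^{(-1)^i}$, and injectivity follows at once: if $\iota_\nu = \id$, then evaluating at $x_2$ gives $\nu = 1$.

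For surjectivity, let $\psi \in \aut_\K(L_n^q)$. Since $U(L_n^q) = \K^*$ by Proposition~\ref{ringprops}, $\psi$ permutes the height one prime ideals of $L_n^q$, so Theorem~\ref{specL} gives $\psi(z_n) = \mu(z_n - \lambda)$ with $\mu \in \K^*$ and $\lambda \in \K$ when $n$ is odd, and $\psi(z_n) = \mu z_n$ with $\mu \in \K^*$ when $n$ is even. In either case $\psi(z_n)$ has total degree $n$.

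The recursion $z_i = z_{i-1} x_i - z_{i-2}$ shows that $z_n = x_1 x_2 \cdots x_n + w$, where $w$ has total degree at most $n-2$, and that every monomial appearing in $z_n$ is a product of distinct generators. Because the associated graded ring of $L_n^q$ with respect to total degree is a quantum affine space, hence a domain, and because $\deg \psi(x_i) \geq 1$ (as $\psi(x_i) \notin \K$), comparing top components of $\psi(z_n)$ shows that only the image of the leading monomial contributes in top degree, forcing $\sum_i \deg \psi(x_i) = n$ and hence $\deg \psi(x_i) = 1$ for every $i$. The resulting identity $\psi(x_1)\psi(x_2)\cdots \psi(x_n) = \mu x_1 x_2 \cdots x_n$ in the associated graded then forces each $\psi(x_i)$ to have its leading linear part a scalar multiple of a single variable, giving $\psi(x_i) = \mu_i x_{\pi(i)} + \lambda_i$ for some permutation $\pi \in S_n$ and scalars $\mu_i \in \K^*$, $\lambda_i \in \K$, exactly as in Theorem~\ref{CAut}.

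The remaining step is to apply $\psi$ to each relation $x_i x_{i+1} - q x_{i+1} x_i = 1 - q$, $1 \leq i \leq n-1$, and expand using the $q$-commutator $[a,b]_q := ab - qba$. For $q \neq \pm 1$, inspection of the possible values of $[x_{\pi(i)}, x_{\pi(i+1)}]_q$ rules out every option except $\pi(i+1) = \pi(i) + 1$: a nonzero degree two component in $[x_{\pi(i)}, x_{\pi(i+1)}]_q$ cannot be cancelled by the degree at most one correction terms contributed by the $\lambda_j$'s, whilst the scalar cases with $[x_{\pi(i)}, x_{\pi(i+1)}]_q = 0$ would force $0 = 1 - q$, a contradiction. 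Inductively $\pi(i) = \pi(1) + i - 1$, so $\pi = \id$; the same comparison also forces $\lambda_i = 0$ and $\mu_i \mu_{i+1} = 1$ for $1 \leq i \leq n-1$. Setting $\nu = \mu_2$ yields $\mu_i = \nu^{(-1)^i}$, so $\psi = \iota_\nu$, proving that $\Gamma$ is surjective.

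The main obstacle is justifying that each leading linear form of $\psi(x_i)$ is a scalar multiple of a single generator in the quantum affine space, an instance of unique factorisation for products of linear forms in a quantum polynomial ring that mirrors the step left implicit in Theorem~\ref{CAut}. Once that structural fact is in hand, the remainder is the routine $q$-commutator calculation.
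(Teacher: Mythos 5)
Your proposal follows the paper's proof essentially verbatim: the action on height-one primes together with $U(L_n^q)=\K^*$ gives $\psi(z_n)=\mu(z_n-\lambda)$ (with $\lambda=0$ when $n$ is even), degree considerations give $\psi(x_i)=\mu_i x_{\pi(i)}+\lambda_i$, and applying $\psi$ to the relations $x_ix_{i+1}-qx_{i+1}x_i=1-q$ for $1\le i\le n-1$ forces $\pi=\id$, all $\lambda_i=0$ and $\mu_i\mu_{i+1}=1$, whence $\psi=\iota_\nu$. The ``linear factors of $x_1x_2\cdots x_n$ in the associated graded quantum affine space must be scalar multiples of single variables'' step that you flag as the main obstacle is precisely the step the paper also leaves implicit (it passes directly from $z_n$ having total degree $n$ and degree one in each $x_i$ to the affine-permutation form of $\psi(x_i)$), so your treatment is, if anything, more detailed than the paper's at that point.
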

\begin{proof}
Certainly $\Gamma$ is an injective homomorphism. We proceed as in the proof of Theorem~\ref{CAut} with $z_n$, which has total degree $n$ and degree $1$ in each $x_i$, replacing $\Omega$. The height one primes have the form $(z_n-\lambda)L_n^q$, with $\lambda=0$ if $n$ is odd, so if $\psi\in \aut_K(L_n^q)$ then $\psi(x_i) = \mu_i x_{\pi(i)} + \lambda_i$ for  some permutation $\pi \in S_n$ and some scalars $\mu_i \in \K^*$, $\lambda_i \in \K$.  Proceeding as in
\ref{CAut}, we find that $1-q=\mu_i\mu_{i+1}[x_{\pi(i)},x_{\pi(i+1)}]_q$ but only for $1\leq i\leq n-1$. If $1\leq k\leq n-1$ then $[x_n,x_k]_q\notin \K\backslash\{0\}$, as $q\neq\pm1$, so $\pi(i)\neq n$ for $1\leq i\leq n-1$. Hence $\pi(n)=n$ and it follows successively that $\pi(n-1)=n-1,\dots,\pi(1)=1$, so $\pi$ is the identity permutation. Although, unlike the cyclic case, it is not necessary that $\mu_n\mu_{1}=1$, we do have
$\mu_i\mu_{i+1}=1$ for $1\leq i\leq n-1$ and so  $\psi=\iota_\nu$, where
$\nu=\mu_1^{-1}$. Thus $\Gamma$ is an isomorphism.
\end{proof}

\section{Quantum cluster algebras}\label{qca}
In this section we present two classes of quantum cluster algebras  in terms related to the connected quantized Weyl algebras $L_n^q$ and  $C_n^q$ when $n$ is odd. We shall not give a full account of the theory of quantum cluster algebras. Helpful references include \cite{BerZel}, where the theory was first developed, \cite{GrabLaun} and, although we will not exploit its multiparameter aspect, \cite{goodyak}. The quantum cluster algebras that we consider here will have no frozen variables. In published definitions of uniparameter quantum cluster algebras, the base ring may be the ring $\Z[q^{\pm \frac{1}{2}}]$ for an indeterminate $q$, as in \cite{BerZel}, or a field which may be either $\Q(q^\frac{1}{2})$, with $q$ as before, as in \cite{GrabLaun}, or, more generally, an arbitrary field $\K$ with a distinguished element $q$ that has a square root in $\K$ and is not a root of unity, as in \cite{goodyak}. We shall take the last of these approaches..

Let $n\geq 3$ be odd. Consider the Dynkin quiver $\Delta_{n-1}$ of type $A_{n-1}$, oriented as shown:
\[1\rightarrow 2\rightarrow 3\rightarrow\dots\rightarrow n-2\rightarrow n-1.\]
It is known that all quivers
with the Dynkin diagram of type $A_{n-1}$ as underlying graph are mutation equivalent, see \cite[Lemma 3.23]{gsv}. The adjacency matrix is $B=(b_{ij})$ where, for $1\leq i\leq j\leq n-1$, $b_{ij}=1$ if $j=i+1$ and $b_{ij}=0$ if $j\neq i+1$. The inverse of $B^T$ is the skew symmetric $(n-1)\times (n-1)$ integer matrix $\Lambda=(\lambda_{ij})$ where, as in Corollary~\ref{znnormal},
$\lambda_{ij}=
0$ if $j$ is odd  or if $j$ and $i$ are both even, and
$\lambda_{ij}=1$ if $j$ is even and $i$ is odd.
Hence there is a quantum cluster algebra $\mathcal{A}_{n-1}^q$ on $A_{n-1}$ and, with each $z_jz_i=q^{\lambda_{ij}}z_iz_j$,
$(\Delta_{n-1},\{z_1,z_2,\dots,z_{n-1}\})$ is an initial seed, where $z_1,\dots,z_{n-1}$ are as in Notation~\ref{WnTn}. It will be convenient to amend the quantum cluster $\{z_1,z_2,\dots,z_{n-1}\}$ to $\{y_1,y_2,\dots,y_{n-1}\}$, where $y_i=q^{\frac{1-i}{4}}z_i$ if $i$ is odd and $y_i=q^{\frac{-i}{4}}z_i$ if $i$ is even. By the Laurent phenomenon \cite[Corollary 5.2]{BerZel}, $\mathcal{A}_{n}^q$ is a $\K$-subalgebra of $T_{n-1}^q=\K[y_1^{\pm 1},\dots,y_{n-1}^{\pm 1}]$.
By Proposition~\ref{Tsimpleorpolysimple}, $T_{n-1}^q$ is simple and $T_{n}^q=T_{n-1}^q[z_{n}^{\pm 1}]$. The linear connected quantized Weyl algebra $L_{n}^q$ is a subalgebra of
$T_{n-1}^q[z_{n}]$.
\begin{prop}\label{Anqca}
The quantum cluster algebra $\mathcal{A}_{n-1}^q$  is isomorphic to $L_{n}^q/(z_{n}-q^{\frac{n-1}{4}})L_{n}^q$.
\end{prop}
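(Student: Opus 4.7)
The plan is to exploit the embedding $\psi\colon L_n^q\hookrightarrow T_{n-1}^q[z_n]$ of Proposition~\ref{n-1loc}(ii), which is available because $n$ is odd (so $T_{n-1}^q[z_n]$ is a genuine polynomial ring over $T_{n-1}^q$ in the central variable $z_n$). Since $z_n$ is central in both algebras, the substitution $z_n\mapsto q^{(n-1)/4}$ extends to a $\K$-algebra surjection $T_{n-1}^q[z_n]\twoheadrightarrow T_{n-1}^q$; composing with $\psi$ produces a $\K$-algebra homomorphism $\Phi\colon L_n^q\to T_{n-1}^q$ whose kernel contains $(z_n-q^{(n-1)/4})L_n^q$. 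By Theorem~\ref{specL} this ideal is already maximal (all non-zero primes of $L_n^q$ are maximal when $n$ is odd), so $\ker\Phi=(z_n-q^{(n-1)/4})L_n^q$ and $\Phi$ descends to an injection
\[
\bar\Phi\colon L_n^q/(z_n-q^{(n-1)/4})L_n^q\hookrightarrow T_{n-1}^q.
\]

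To identify the image of $\bar\Phi$ with $\mathcal{A}_{n-1}^q$, I would first match the generators $\bar\Phi(x_i)$ with quantum cluster variables. Using $\psi(x_i)=z_{i-1}^{-1}(z_i+z_{i-2})$ from the proof of Proposition~\ref{n-1loc}, we obtain $\bar\Phi(x_1)=z_1=y_1$, $\bar\Phi(x_i)=z_{i-1}^{-1}(z_i+z_{i-2})$ for $2\leq i\leq n-1$, and $\bar\Phi(x_n)=z_{n-1}^{-1}(z_{n-2}+q^{(n-1)/4})$. Under the amended normalization $y_i=q^{(1-i)/4}z_i$ for odd $i$ and $y_i=q^{-i/4}z_i$ for even $i$, each of these coincides with a scalar multiple of the quantum cluster variable produced by a single mutation of the initial seed: for $2\leq i\leq n-1$ the element $\bar\Phi(x_i)$ equals (up to scalar) the mutation $y_{i-1}^\ast$ at vertex $i-1$, satisfying the standard quantum exchange identity $y_{i-1}y_{i-1}^\ast=q^\alpha y_i+q^\beta y_{i-2}$ with $\alpha,\beta$ prescribed by $B$ and $\Lambda$; for $i=n$, the role of the absent $y_n$ in the boundary mutation at vertex $n-1$ is played by the frozen scalar $q^{(n-1)/4}$, dictating precisely this value in the quotient. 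Verifying the match of quantum factors is a direct calculation using Corollary~\ref{znnormal}.

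This gives $\bar\Phi(L_n^q)\subseteq\mathcal{A}_{n-1}^q$. For the reverse inclusion, the recursion $z_i=z_{i-1}x_i-z_{i-2}$ inductively writes each $z_i$ (hence each $y_i$) as a polynomial in $\bar\Phi(x_1),\dots,\bar\Phi(x_i)$, so $\bar\Phi(L_n^q)$ contains the full initial cluster $\{y_1,\dots,y_{n-1}\}$ and, by the preceding paragraph, also all $n-1$ first-step mutated variables $\{y_k^\ast\}_{k=1}^{n-1}$. Since $\mathcal{A}_{n-1}^q$ is of finite type $A_{n-1}$, every cluster variable is reached from the initial seed by an iterated sequence of mutations, and one argues inductively along the exchange graph that every subsequent mutated variable is forced into $\bar\Phi(L_n^q)$ by an exchange relation whose other ingredients are already present.

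The main obstacle is this last inductive step. Unlike the first-stage mutations, which fall out of the defining relations of $L_n^q$ together with $z_n=q^{(n-1)/4}$, the later exchanges require solving $y\,y^\ast=M_++M_-$ for $y^\ast$ inside the subalgebra $\bar\Phi(L_n^q)$ rather than in the ambient torus $T_{n-1}^q$. The cleanest routes I see are either (a) induction on $n$, using the chain $L_{n-2}^q\subset L_n^q$ alongside the corresponding cluster subalgebra $\mathcal{A}_{n-3}^q\subset\mathcal{A}_{n-1}^q$ on the path subquiver, reducing the enlargement to exchanges involving the new generator $x_n$; or (b) for finite type $A_{n-1}$, an explicit identification of each cluster variable as a polynomial expression (e.g.\ a quantum continuant) in the first-mutation and initial variables, followed by verification that these expressions lie in $\bar\Phi(L_n^q)$.
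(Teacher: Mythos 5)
Your first half coincides with the paper's argument: the same composite map $L_n^q\hookrightarrow T_{n-1}^q[z_n]\twoheadrightarrow T_{n-1}^q$ sending $z_n\mapsto q^{\frac{n-1}{4}}$, the same identification of $\varphi(x_i)$ for $i<n$ and of $\varphi(x_n)=z_{n-1}^{-1}(z_{n-2}+q^{\frac{n-1}{4}})$, and the same use of Theorem~\ref{specL} (simplicity/maximality of $(z_n-q^{\frac{n-1}{4}})L_n^q$ when $q$ is not a root of unity) to pin down the kernel. The matching of these elements with the one-step mutated quantum cluster variables via the exchange relations is also exactly what the paper does.

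The genuine gap is the inclusion $\mathcal{A}_{n-1}^q\subseteq\bar\Phi(L_n^q)$, which you explicitly leave open ("the main obstacle") and for which you only sketch two possible strategies. Neither is carried out, and the induction "along the exchange graph" is precisely the step that does not follow formally: an exchange relation $yy^\ast=M_++M_-$ only determines $y^\ast$ inside the ambient torus, and there is no a priori reason the solution lies in the subalgebra $\bar\Phi(L_n^q)$ without further input. The paper closes this in one stroke by citing Berenstein--Zelevinsky \cite[Theorem 7.6]{BerZel}: since the quiver $\Delta_{n-1}$ is acyclic, the quantum cluster algebra is generated by the initial cluster variables $y_1,\dots,y_{n-1}$ together with the first-step mutations $w_2,\dots,w_n$ (equality with the lower bound), and since each $y_i$ is recovered recursively from $y_{i-1}w_i$ and $y_{i-2}$, the generating set reduces to $y_1,w_2,\dots,w_n=\varphi(x_1),\dots,\varphi(x_n)$. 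With that theorem your computation of the one-step mutations already yields $\mathcal{A}_{n-1}^q=\varphi(L_n^q)$, and no induction over the (exponentially branching, deeper) exchange graph is needed; without it, your proof is incomplete as written.
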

\begin{proof}
Let $\varphi$ denote the composition of the $\K$-homomorphisms
\[L_{n}^q\hookrightarrow T_{n-1}^q[z_{n}]\twoheadrightarrow T_{n-1}^q[z_{n}]/(z_{n}-q^{\frac{n-1}{4}})T_{n-1}^q[z_{n}]\simeq T_{n-1}^q.\]
Note that $\varphi(x_i)=x_i$ for $1\leq i\leq n-1$ and $\varphi(x_{n})=\varphi(z_{n-1}^{-1}(z_{n-2}+z_{n}))=z_{n-1}^{-1}(z_{n-2}+q^{\frac{n-1}{4}})$.
By \cite[Theorem 7.6]{BerZel}, $\mathcal{A}_{n-1}^q$ is generated by $y_1,y_2,\dots,y_{n-1},w_2,w_3,\dots,w_{n}$ where, for $1<i\leq n$, $w_i$ is the quantum cluster variable obtained by mutation at vertex $i-1$. Note that $y_1=z_1=x_1$. By the exchange relations (see, for example \cite[2.2, p702]{GrabLaun})
\[w_2=y_1^{-1}(1+q^\frac{1}{2}y_2)=z_1^{-1}(1+z_2)=x_2=\varphi(x_2)\]
and, for $3\leq i\leq n-1$,
\[w_i=\begin{cases}y_{i-1}^{-1}(q^{-\frac{1}{2}}y_{i-2}+y_i)\text{ if $i$ is odd}\\
y_{i-1}^{-1}(y_{i-2}+q^\frac{1}{2}y_i)\text{ if $i$ is even}.\end{cases}\]
In both cases, $w_i=z_{i-1}^{-1}(z_{i-2}+z_i))=x_i=\varphi(x_i).$
Finally,
\[w_{n}=y_{n-1}^{-1}(q^{-\frac{1}{2}}y_{n-2}+1)=z_{n-1}^{-1}(z_{n-2}+q^{\frac{n-1}{2}})=\varphi(x_{n}).\]
For $1<i<n$, $y_i$ is a $\K$-linear combination of $y_{i-1}w_i$ and $y_{i-2}$, where $y_0=1$,  so $\mathcal{A}_{n-1}^q$ is generated by $y_1,w_2,w_3,\dots,w_{n}$, that is by
$\varphi(x_1),\varphi(x_2),\dots,\varphi(x_{n-1})$ and $\varphi(x_{n})$. Thus $\mathcal{A}_{n-1}^q=\varphi(L_{n}^q)$. Clearly $z_{n}-q^{\frac{n- 1}{4}}\in \ker \varphi$
so $\mathcal{A}_{n-1}^q$ is a homomorphic image of $L_{n}^q/(z_{n}-q^{\frac{n-1}{4}})L_{n}^q$. As $q$ is not a root of unity, $L_{n}^q/(z_{n}-q^{\frac{n-1}{4}})L_{n}^q$ is simple, by Theorem~\ref{specL}, so $\mathcal{A}_{n-1}^q\simeq L_{n}^q/(z_{n}-q^{\frac{n-1}{4}})L_{n}^q$.
\end{proof}

\begin{cor}\label{simnoeth}
The quantum cluster algebra $\mathcal{A}_{n-1}^q$  is simple noetherian.
\end{cor}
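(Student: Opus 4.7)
The plan is straightforward: combine Proposition~\ref{Anqca}, which gives the isomorphism
\[\mathcal{A}_{n-1}^q \;\simeq\; L_n^q/(z_n - q^{(n-1)/4})L_n^q,\]
with the classification of prime ideals of $L_n^q$ from Theorem~\ref{specL} and the noetherian property from Proposition~\ref{ringprops}(i).

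First, I would observe that $L_n^q$ is right and left noetherian by Proposition~\ref{ringprops}(i), so any factor ring of $L_n^q$ is noetherian, and in particular $\mathcal{A}_{n-1}^q$ is noetherian.

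For simplicity, I would invoke Theorem~\ref{specL} in the odd case (recall $n$ is odd in this section, and $q$ is not a root of unity by the standing assumption on the base field for the quantum cluster algebra). That theorem lists the prime ideals of $L_n^q$ as exactly $0$ and the ideals $P_\lambda = (z_n - \lambda)L_n^q$ for $\lambda \in \K$. Since these form a pairwise incomparable family (they differ in the scalar they pin $z_n$ to), each $P_\lambda$ is maximal. Specializing to $\lambda = q^{(n-1)/4}$ shows that $(z_n - q^{(n-1)/4})L_n^q$ is maximal, so the quotient $L_n^q/(z_n - q^{(n-1)/4})L_n^q$ is a simple ring. Transporting across the isomorphism of Proposition~\ref{Anqca} yields that $\mathcal{A}_{n-1}^q$ is simple.

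There is no real obstacle: both ingredients (the presentation and the prime spectrum) have already been established, and this corollary is essentially a bookkeeping assembly of those results. The only thing to be careful about is ensuring the standing hypothesis that $q$ is not a root of unity, which is needed to apply Theorem~\ref{specL}; this is guaranteed by the conventions on $q$ adopted at the start of Section~\ref{qca}.
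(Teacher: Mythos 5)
Your proof is correct and follows exactly the paper's route: noetherianity is inherited by the factor via Proposition~\ref{ringprops}(i) and Proposition~\ref{Anqca}, and simplicity follows from Theorem~\ref{specL} (in the odd case, with $q$ not a root of unity) since the nonzero primes $(z_n-\lambda)L_n^q$ are maximal. Your extra remark justifying maximality via incomparability of the $P_\lambda$ is a correct elaboration of a step the paper treats as immediate.
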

\begin{proof}
The noetherian condition is immediate from Proposition~\ref{Anqca} and Proposition~\ref{ringprops}(i) while simplicity is immediate from Proposition~\ref{Anqca} and Theorem~\ref{specL}.
\end{proof}

\begin{rmk}
It can be deduced, either from Theorem~\ref{Anqca} or directly using a similar proof, that $L_n^q$ is the quantum cluster algebra of $\Delta_n$ if the vertex $n$ is frozen.
\end{rmk}

We continue to fix an odd integer $n\geq 3$ and a scalar $q\in K^*$ that is not a root of unity.
We now aim to explain the connection between the cyclic connected quantized Weyl algebra $C_n^{q^2}$ and the quantum cluster algebra $Q_q$ of the quiver denoted $P_{n+1}^{(1)}$ in the classification of periodic mutation by Fordy and Marsh \cite{fordymarsh}. We shall express this quantum cluster algebra as a quotient $U/\Delta U$ of an iterated skew polynomial extension $U$ of $\K$, with $\Delta$ central in
$U$.  Both $U$ and $U/\Delta U$ contain $C_n^{q^2}$ as a subalgebra.

The quiver $P_{n+1}^{(1)}$
 has $n+1$ vertices, which we  label $0$ to $n$, and adjacency matrix
\[B=\left(\begin{array}{cccccccc}
0&1&0&0&\dots&0&0&1\\
-1&0&1&0&\dots&0&0&0\\
0&-1&0&1&\dots&0&0&0\\
0&0&-1&0&\dots&0&0&0\\
\vdots&\vdots&\vdots&\vdots&\ddots&\vdots&\vdots&\vdots\\
0&0&0&0&\dots&-1&0&1\\
-1&0&0&0&\dots&0&-1&0
\end{array}\right)=(b_{ij}),\text{ where }\]
\[b_{ij}=\begin{cases}1&\text{ if }0\leq i\leq n-1\text{ and }j=i+1\text{ or }i=0\text{ and }j=n,\\
-1&\text{ if }1\leq i\leq n\text{ and }j=i-1\text{ or }i=n\text{ and }j=0,\\
0&\mbox{ otherwise.}
\end{cases}\]

 The labelling from $0$ will also be applied to the rows and columns of the matrices $B$ above and $\Lambda$ below and has been chosen to fit with the notation $x_1,\dots,x_n$ in a cyclic connected quantized Weyl algebra.

Figure~\ref{mutationdiag} shows $P_{6}^{(1)}$. In  general there are $n+1$ vertices $0,1,\dots,n$ with a source at $0$, a sink at $n$ and a single path from $1$ to $n$. Mutation at the source gives the same diagram but rotated so that the source is at $1$ and  the sink at $0$.
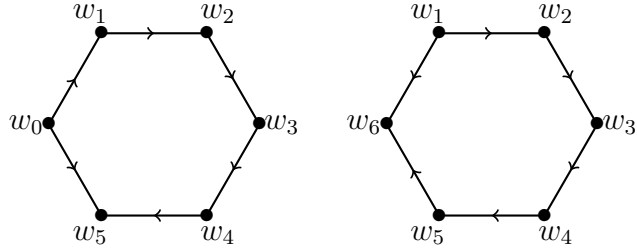
\begin{figure}[h]
\begin{center}\begin{tabular}{cc}
\begin{tikzpicture}[]
\begin{scope}[thick,decoration={
    markings,
    mark=at position 0.5 with {\arrow{>}}}
    ]
\draw[postaction={decorate}](60:1.4)--(0:1.4);
\draw[postaction={decorate}](180:1.4)--(120:1.4);
\draw[postaction={decorate}](120:1.4)--(60:1.4);
\draw[postaction={decorate}](180:1.4)--(240:1.4);
\draw[postaction={decorate}](0:1.4)--(300:1.4);
\draw[postaction={decorate}](300:1.4)--(240:1.4);
\end{scope}
\draw (0:1.7) node{$w_3$};
\draw (60:1.7) node{$w_2$};
\draw (120:1.7) node{$w_1$};
\draw (180:1.7) node{$w_0$};
\draw (240:1.7) node{$w_5$};
\draw (300:1.7) node{$w_4$};

\draw (0:1.4) node{$\bullet$};
\draw (60:1.4) node{$\bullet$};
\draw (120:1.4) node{$\bullet$};
\draw (180:1.4) node{$\bullet$};
\draw (240:1.4) node{$\bullet$};
\draw (300:1.4) node{$\bullet$};
\end{tikzpicture}
&
\begin{tikzpicture}[]
\begin{scope}[thick,decoration={
    markings,
    mark=at position 0.5 with {\arrow{>}}}
    ]
\draw[postaction={decorate}](60:1.4)--(0:1.4);
\draw[postaction={decorate}](120:1.4)--(180:1.4);
\draw[postaction={decorate}](120:1.4)--(60:1.4);
\draw[postaction={decorate}](240:1.4)--(180:1.4);
\draw[postaction={decorate}](0:1.4)--(300:1.4);
\draw[postaction={decorate}](300:1.4)--(240:1.4);
\end{scope}
\draw (0:1.7) node{$w_3$};
\draw (60:1.7) node{$w_2$};
\draw (120:1.7) node{$w_1$};
\draw (180:1.7) node{$w_6$};
\draw (240:1.7) node{$w_5$};
\draw (300:1.7) node{$w_4$};

\draw (0:1.4) node{$\bullet$};
\draw (60:1.4) node{$\bullet$};
\draw (120:1.4) node{$\bullet$};
\draw (180:1.4) node{$\bullet$};
\draw (240:1.4) node{$\bullet$};
\draw (300:1.4) node{$\bullet$};
\end{tikzpicture}
\end{tabular}
\end{center}
\caption{$P_6^{(1)}$ before and after mutation at $0$}
 \label{mutationdiag}
\end{figure}

Let $\Lambda=(\lambda_{ij})_{0\leq i,j\leq n}$ be the $(n+1)\times (n+1)$ skew-symmetric matrix
\[\left(\begin{array}{cccccccc}
0&1&0&1&\dots&1&0&1\\
-1&0&1&0&\dots&0&1&0\\
0&-1&0&1&\dots&1&0&1\\
-1&0&-1&0&\dots&0&1&0\\
\vdots&\vdots&\vdots&\vdots&\ddots&\vdots&\vdots&\vdots\\
-1&0&-1&0&\dots&0&1&0\\
0&-1&0&-1&\dots&-1&0&1\\
-1&0&-1&0&\dots&0&-1&0
\end{array}\right).\]
Thus
\[\lambda_{ij}=\begin{cases}0&\text{ if }i+j\text{ is even ,}\\
1&\text{ if }i<j\text{ and }i+j\text{ is odd,}\\
-1&\text{ if }i>j\text{ and }i+j\text{ is odd}.\\
\end{cases}\]
Let $T_q$ be the co-ordinate ring of  the quantum $(n+1)$-torus with generators $w_0^{\pm 1},w_1^{\pm 1},\dots,w_n^{\pm 1}$ and relations
$w_iw_j=q^{\lambda_{ij}}w_jw_i$ for $0\leq i<j\leq n$.
Thus  $w_iw_j=w_jw_i$ if $i+j$ is even and $w_iw_j=qw_jw_i$ if $i<j$ and $i+j$ is odd.
Then
$B^T\Lambda=2I_{n+1}$, so, with $\underline{w}:=(w_0,w_1,\dots,w_n)$, the triple $(\underline{w},B,\Lambda)$ is a quantum seed for the quiver $P_{n+1}^{(1)}$ with exchange matrix $B$, quasi-commutation matrix $\Lambda$ and initial quantum cluster $\underline{w}$.

\begin{notn}
With $\Lambda$ and $T_q$ as above, let $P_q$ denote the subalgebra of $T_q$ generated by $w_0,w_1,\dots,w_{n}$, that is the coordinate ring of $(n+1)$-dimensional quantum affine space with quasi-commutation matrix $\Lambda$. Let $D_q$ denote the quotient division algebra of $T_q$ (and $P_q$).
The  quantum cluster algebra $Q_q$ for $P_{n+1}^{(1)}$ is the subalgebra of $D_q$ generated by all possible cluster variables and, by the Laurent phenomenon, it is a subalgebra of $T_q$.
\end{notn}

Our next step is to use the exchange relations, \cite[(4.23)]{BerZel} or \cite[2.3]{GrabLaun}, to identify some further quantum cluster variables.
The mutation illustrated in Figure~\ref{mutationdiag} is in the direction of $w_0$, where the quiver has its source, and yields the new  quantum cluster variable
$w_{n+1}$, and the seed $\{w_1,w_2,\dots,w_{n+1}\}$, where, by the exchange relations,
\begin{equation}
w_{n+1}=w_0^{-1}(1+qw_1w_{n})=(1+q^{-1}w_1w_{n})w_0^{-1},\label{wnplusone}
\end{equation}
so that
\begin{equation}
w_0w_{n+1}=1+qw_1w_{n},\; w_{n+1}w_0=1+q^{-1}w_1w_{n}\text{ and }w_0w_{n+1}-w_{n+1}w_0=(q-q^{-1})w_1w_{n}.\label{w0wnplusone}
\end{equation}

Similarly, mutation in the direction of $w_n$, where the quiver has its sink, gives rise to the quantum cluster variable $w_{-1}$, and to the seed $\{w_{-1},w_0,w_1,\dots,w_{n-1}\}$, where
\[w_{-1}=w_n^{-1}(1+q^{-1}w_0w_{n-1})=(1+qw_0w_{n-1})w_n^{-1}.\]
Here the source moves to $n-1$ and  the sink to $n$.

A sequence of mutations where we successively mutate in the direction of the sources $w_0,w_1,\dots$, respectively the sinks $w_{n},w_{n-1},\dots$, will  rotate the quiver clockwise, respectively   anticlockwise. This gives rise to  a countable set $\{w_i\}_{i\in \Z}$ of cluster variables and a countable set of seeds $\{w_i,w_2,\dots,w_{i+n}\}$, $i\in \Z$, with a source at $w_i$ and a sink at $w_{i+n}$. For $i>n$,
\[w_{i}=w_{i-n-1}^{-1}(1+qw_{i-n}w_{i-1})=(1+q^{-1}w_{i-n}w_{i-1})w_{i-n-1}^{-1},\]
generalising the formula for $w_{n+1}$,
and, for $i<0$,
\[w_{i}=w_{i+n+1}^{-1}(1+q^{-1}w_{i+1}w_{i+n})=(1+qw_{i+1}w_{i+n})w_{i+n+1}^{-1},\]
generalising the formula for $w_{-1}$.
Straightforward calculation shows that, for $i\leq j\leq i+n$,
\begin{equation}\label{genwiwj}
w_iw_j=\begin{cases} w_jw_i& \text{ if }i+j\text{ is even},\\
qw_jw_i&\text{ if }i+j\text{ is odd}.\end{cases}
\end{equation}

Next we consider mutations in directions of vertices that are neither sources nor sinks.
For $i\in \Z$, let $x_i=w_i^{-1}(q^{-\frac{1}{2}}w_{i-1}+q^{\frac{1}{2}}w_{i+1})$. This is the new cluster variable obtained when a  seed $\{w_{j},w_{j+1},\dots,w_{j+n}$\} with $j<i<j+n$ is mutated in the direction of $w_i$.

\begin{lemma}\label{xrelations} For $i\in \Z$, let $w_i$ and $x_1$ be as specified above.
\begin{enumerate}
\item For all $i\in \Z$ and $k>0$,
\begin{alignat*}{2}
\text{\rm (a)}&\;x_ix_{i+1}-q^2x_{i+1}x_i=1-q^2,\\
\text{\rm (b)}&\;x_ix_{i+2k}-q^{-2}x_{i+2k}x_i=0,\\
\text{\rm (c)}&\;x_ix_{i+2k+1}-q^{2}x_{i+2k+1}x_i=0.
\end{alignat*}

\item For $i\in \Z$,
\begin{eqnarray}
w_ix_i=q^{-\frac{1}{2}}w_{i-1}+q^{\frac{1}{2}}w_{i+1},&&\quad x_iw_i=q^{\frac{1}{2}}w_{i-1}+q^{-\frac{1}{2}}w_{i+1},\label{xiwi}\\
x_iw_i-qw_ix_i&=&q^{\frac{1}{2}}(q^{-1}-q)w_{i+1}\text{ and }\label{wn}\\
x_iw_i-q^{-1}w_ix_i&=&q^{\frac{1}{2}}(1-q^{-2})w_{i-1}.\label{w0}
\end{eqnarray}
\item
For $1\leq i\leq n$ and $0\leq j\leq n$ with $j\neq i$,
\[x_iw_j=\begin{cases}
qw_jx_i&\text{ if }i<j \text{ and }i+j\text{ is even or }i>j\text{ and }i+j\text{ is odd,}\\
q^{-1}w_jx_i&\text{ if }i<j\text{ and } i+j\text{ is  odd or }i>j\text{ and } i+j\text{ is even.}\\
\end{cases}\]
\item For all $i\in \Z$, $x_{n+i}=x_i$.
\end{enumerate}
\end{lemma}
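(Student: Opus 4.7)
The plan is to prove the four parts in the order (ii), (iii), (i), (iv). Parts (ii) and (iii) are direct calculations using the quantum torus relations \eqref{genwiwj}. Once these are in hand, (i) follows by expanding $x_ix_j$ using the definition of $x_j$ and pushing $x_i$ through with (ii) and (iii). Part (iv) requires extra input from the cluster exchange recursions.

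For (ii), I would multiply the defining equation of $x_i$ by $w_i$ on the left, which immediately gives $w_ix_i = q^{-1/2}w_{i-1} + q^{1/2}w_{i+1}$. For $x_iw_i$, push $w_i$ past each term using $w_{i-1}w_i = qw_iw_{i-1}$ and $w_{i+1}w_i = q^{-1}w_iw_{i+1}$ from \eqref{genwiwj}; collecting $q$-powers yields $q^{1/2}w_{i-1} + q^{-1/2}w_{i+1}$. Equations \eqref{wn} and \eqref{w0} then follow as $\K$-linear combinations of the two formulas in \eqref{xiwi}. For (iii), case-split on the parity of $i+j$. When $i+j$ is even, $w_i$ (hence $w_i^{-1}$) commutes with $w_j$ while $w_{i\pm 1}$ have opposite parity to $j$ and both $q$-commute with $w_j$ with the same exponent, determined by the sign of $j-i$; the stated formula drops out. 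When $i+j$ is odd, the roles of $w_i$ and $w_{i\pm 1}$ reverse and the symmetric analysis applies.

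For (i)(a), expand $x_ix_{i+1}$ using $x_{i+1} = q^{-1/2}w_{i+1}^{-1}w_i + q^{1/2}w_{i+1}^{-1}w_{i+2}$. Part (iii) gives $x_iw_{i+1} = q^{-1}w_{i+1}x_i$, hence $x_iw_{i+1}^{-1} = qw_{i+1}^{-1}x_i$. Use (ii) to substitute for $x_iw_i$ and (iii) again to pass $x_i$ past $w_{i+2}$. A parallel computation handles $x_{i+1}x_i$. Both products decompose as $c_1\,w_{i+1}^{-1}w_{i-1} + c_2 + c_3\,w_{i+1}^{-1}w_{i+2}x_i$ for suitable scalars, and forming $x_ix_{i+1} - q^2 x_{i+1}x_i$ cancels every non-scalar term and leaves precisely $1 - q^2$. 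For (i)(b) and (i)(c), since $|j-i| \geq 2$ no "interior" matching occurs between the $w$-indices in $x_i$ and $x_j$, so no inhomogeneous terms arise; the products differ only by a pure $q$-power determined by the parity tracking of (iii).

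For (iv), since $w_{n+i}$ is a unit of $T_q$ and the defining equation $w_{n+i}x_{n+i} = q^{-1/2}w_{n+i-1} + q^{1/2}w_{n+i+1}$ determines $x_{n+i}$ uniquely, the claim $x_{n+i} = x_i$ is equivalent to the identity $w_{n+i}x_i = q^{-1/2}w_{n+i-1} + q^{1/2}w_{n+i+1}$. I would use the exchange recursion $w_{n+i+1} = w_i^{-1}(1 + qw_{i+1}w_{n+i})$ (and its analogue for negative indices) to eliminate $w_{n+i+1}$, reducing the claim to a closed relation among the $w_j$'s that follows from the quantum torus relations combined with (iii); then induct on $|i|$ with base case $i=1$. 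The main obstacle is exactly this base case: after unpacking $w_{n+1}$ and $w_{n+2}$ via their mutation formulas and substituting $x_1 = w_1^{-1}(q^{-1/2}w_0 + q^{1/2}w_2)$, the resulting expression must be rearranged via \eqref{genwiwj} into a form matching $q^{-1/2}w_n + q^{1/2}w_{n+2}$. Once this bookkeeping is carried out, the inductive step amounts to essentially the same parity-tracking calculation.
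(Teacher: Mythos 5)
Your computations for (i)--(iii) are correct and essentially reproduce the paper's argument: (ii) and (iii) are read off directly from \eqref{genwiwj}, and your proof of (i)(a) --- expanding the products and checking that the $w_{i+1}^{-1}w_{i-1}$ and $w_{i+1}^{-1}w_{i+2}x_i$ terms cancel in $x_ix_{i+1}-q^2x_{i+1}x_i$, leaving $1-q^2$ --- is the paper's calculation merely routed through (ii) and (iii) instead of being written over the common left factor $w_{i+1}^{-1}w_i^{-1}$; your parity-tracking for (b) likewise matches the paper. Two organizational differences are worth noting. The paper does not prove (c) by a separate computation: it deduces (c) from (b) together with (iv) (e.g.\ $x_ix_{i+3}=x_{i+n}x_{i+3}=q^2x_{i+3}x_{i+n}=q^2x_{i+3}x_i$, since $n-3$ is even), which is cheaper and keeps all $w$-index gaps inside the range where \eqref{genwiwj} is actually stated (gaps at most $n$); your direct route for (c) is fine for the gaps genuinely needed, but the uniform ``pure $q$-power'' claim silently uses that restriction. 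For (iv), the paper verifies the single identity $x_0=x_n$ by substituting $w_{-1}=w_n^{-1}(1+qw_0w_{n-1})$ and $w_{n+1}=w_0^{-1}(1+qw_1w_n)$ and comparing, and then notes the general case follows by adding $i$ to every subscript, since the exchange recursions and \eqref{genwiwj} are translation-invariant; your induction on $|i|$ buys nothing --- each instance is the same computation with shifted indices, and as described your inductive step never actually invokes the inductive hypothesis --- while the base-case ``bookkeeping'' you defer is precisely the content of the paper's proof of (iv), so it needs to be carried out rather than assumed. Finally, a small slip: $w_{n+i}$ is not a unit of $T_q$ (units of a quantum torus are scalar multiples of monomials, and $w_0^{-1}(1+qw_1w_n)$ is not one); for your uniqueness argument you only need that $w_{n+i}\neq 0$ in the domain $T_q$, hence invertible in the division algebra $D_q$, which is how the $x_i$ are defined in the first place.
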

\begin{proof}
(i)(a) Using \eqref{genwiwj},
\begin{eqnarray*}
x_ix_{i+1}&=&(w_i^{-1}(q^{-\frac{1}{2}}w_{i-1}+q^{\frac{1}{2}}w_{i+1}))(w_{i+1}^{-1}(q^{-\frac{1}{2}}w_{i}+q^{\frac{1}{2}}w_{i+2}))\\
&=&w_{i+1}^{-1}w_{i}^{-1}(w_{i-1}w_{i}+qw_{i-1}w_{i+2}+w_{i}w_{i+1}+q^2w_{i+1}w_{i+2})
\end{eqnarray*}
whereas
\begin{eqnarray*}
x_{i+1}x_i&=&(w_{i+1}^{-1}(q^{-\frac{1}{2}}w_{i}+q^{\frac{1}{2}}w_{i+2}))(w_i^{-1}(q^{-\frac{1}{2}}w_{i-1}+q^{\frac{1}{2}}w_{i+1}))\\
&=&w_{i+1}^{-1}w_{i}^{-1}(q^{-2}w_{i-1}w_{i}+q^{-1}w_{i-1}w_{i+2}+w_{i}w_{i+1}+w_{i+1}w_{i+2})
\end{eqnarray*}
so $x_ix_{i+1}-q^2x_{i+1}x_i=1-q^2$.

(b)
\begin{eqnarray*}
x_ix_{i+2k}&=&q^{-1}w_i^{-1}w_{i+2k}^{-1}(q^{-\frac{1}{2}}w_{i-1}+q^{\frac{1}{2}}w_{i+1})(q^{-\frac{1}{2}}w_{i}+q^{\frac{1}{2}}w_{i+2})\\
&=&q^{-1}w_i^{-1}w_{i+2k}^{-1}(q^{-\frac{1}{2}}w_{i}+q^{\frac{1}{2}}w_{i+2})(q^{-\frac{1}{2}}w_{i-1}+q^{\frac{1}{2}}w_{i+1})\\
&=&q^{-2}w_{i+2k}^{-1}(q^{-\frac{1}{2}}w_{i}+q^{\frac{1}{2}}w_{i+2})w_i^{-1}(q^{-\frac{1}{2}}w_{i-1}+q^{\frac{1}{2}}w_{i+1})\\
&=&q^{-2}x_{i+2k}x_i.
\end{eqnarray*}

A similar calculation establishes (c) but it will be redundant given (iv), for example, as $n-3$ is even, $x_ix_{i+3}=x_{i+n}x_{i+3}=q^2x_{i+3}x_{i+n}=q^2x_{i+3}x_{i}$.

(ii) and (iii) are straightforward from the definition of $x_i$ and \eqref{genwiwj}.

(iv)
We show that $x_0=x_n$. For the general case, add $i$ to all the subscripts.
 \begin{eqnarray*}
x_0&=&w_0^{-1}(q^{-\frac{1}{2}}w_{-1}+q^{\frac{1}{2}}w_{1})\\
&=&w_0^{-1}(q^{-\frac{1}{2}}w_n^{-1}(1+qw_0w_{n-1})+q^{\frac{1}{2}}w_{1})\\
&=&q^{\frac{1}{2}}w_n^{-1}w_0^{-1}(1+qw_0w_{n-1})+q^{\frac{1}{2}}w_0^{-1}w_{1}\\
&=&q^{\frac{1}{2}}w_n^{-1}w_0^{-1}+q^{-\frac{1}{2}}w_n^{-1}w_{n-1}+q^{\frac{1}{2}}w_0^{-1}w_{1}\\
\end{eqnarray*}
and
  \begin{eqnarray*}
x_n&=&w_n^{-1}(q^{-\frac{1}{2}}w_{n-1}+q^{\frac{1}{2}}w_{n+1})\\
&=&w_n^{-1}(q^{-\frac{1}{2}}w_{n-1}+q^{\frac{1}{2}}w_0^{-1}(1+qw_{1}w_n))\\
&=&q^{-\frac{1}{2}}w_n^{-1}w_{n-1}+q^{\frac{1}{2}}w_n^{-1}w_0^{-1}+q^\frac{3}{2}w_n^{-1}w_0^{-1}w_{1}w_n\\
&=&q^{-\frac{1}{2}}w_n^{-1}w_{n-1}+q^{\frac{1}{2}}w_n^{-1}w_0^{-1}+q^\frac{1}{2}w_0^{-1}w_{1}=x_0.\\
\end{eqnarray*}
 \end{proof}

\begin{rmk}
By (i)(a)(b) and (ii), the subalgebra $C$ of the quantum cluster algebra $Q_q$ generated by $x_1,x_2,\dots,x_n$ is a homomorphic image of the cyclic quantized Weyl algebra $C_n^{q^2}$. We shall see later that $C=C_n^{q^2}$.
\end{rmk}

\begin{theorem}\label{Qgen}
The quantum cluster algebra $Q_q$ is generated by $w_0, w_1, x_1,x_2,\dots x_n$ and, indeed, by $w_1, x_1,x_2,\dots x_n$.
\end{theorem}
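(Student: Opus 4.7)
The plan is to prove that the subalgebra $A$ of $T_q$ generated by $w_0, w_1, x_1, \ldots, x_n$ equals $Q_q$, then eliminate $w_0$ using a single commutator identity.

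I would begin by establishing the two-sided recursion
\[w_{i+1} = q^{-\frac{1}{2}} w_i x_i - q^{-1} w_{i-1}, \qquad w_{i-1} = q^{\frac{1}{2}} w_i x_i - q w_{i+1},\]
obtained by solving equation \eqref{xiwi} of Lemma~\ref{xrelations}(ii) for $w_{i+1}$ and $w_{i-1}$ respectively. Starting from $w_0, w_1 \in A$ and reading $x$-indices cyclically via the periodicity $x_{n+i} = x_i$ of Lemma~\ref{xrelations}(iv), a straightforward induction in both directions places $w_i \in A$ for every $i \in \Z$.

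Next I would argue that $Q_q = A$. The inclusion $A \subseteq Q_q$ is immediate, since each generator of $A$ is a cluster variable. For the reverse inclusion I would rely on the description of cluster variables given in the paragraphs preceding the theorem: they are the $w_i$, $i \in \Z$, produced by iterated source- and sink-mutations along the rotation orbit of the initial seed, together with the $x_j$, produced by mutating a rotation-orbit seed at an internal vertex. All of these now lie in $A$. The main obstacle will be to confirm that these really do exhaust the cluster variables --- that no genuinely new variable is produced by a mutation sequence leaving the rotation orbit --- which I would address by invoking the period-one mutation structure of $P_{n+1}^{(1)}$ from Fordy--Marsh \cite{fordymarsh}, which constrains any iterated mutation to return, up to the insertion of some $x_j$, to a rotation-orbit seed.

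Finally, to pass from $\{w_0, w_1, x_1, \ldots, x_n\}$ to $\{w_1, x_1, \ldots, x_n\}$, I would apply identity \eqref{w0} at $i = 1$, namely $x_1 w_1 - q^{-1} w_1 x_1 = q^{\frac{1}{2}}(1 - q^{-2}) w_0$. Since $q$ is not a root of unity, the scalar on the right is nonzero, and solving gives $w_0 = q^{-\frac{1}{2}}(1 - q^{-2})^{-1}(x_1 w_1 - q^{-1} w_1 x_1)$, which lies in the subalgebra generated by $w_1$ and $x_1$ alone. The recursion and the $w_0$-elimination are short direct computations; the conceptually delicate part of the argument is the verification that the cluster-variable list is complete.
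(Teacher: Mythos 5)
Your first and third steps (the recursion placing every $w_i$ in the subalgebra generated by $w_0,w_1,x_1,\dots,x_n$, and the elimination of $w_0$ via \eqref{w0} at $i=1$) are exactly what the paper does. The genuine gap is the reverse inclusion $Q_q\subseteq A$. You propose to prove it by arguing that the cluster variables are exhausted by the $w_i$, $i\in\Z$, and the $x_j$, and to obtain this exhaustion from the mutation-periodicity of $P_{n+1}^{(1)}$ in Fordy--Marsh. This does not work, for two reasons. First, the claimed exhaustion is false: mutation sequences that leave the rotation orbit do produce further cluster variables --- as noted in the remark following Theorem~\ref{Q}, the full set of quantum cluster variables consists of the infinite orbit $\{w_i\}_{i\in\Z}$ together with $n-1$ finite $\theta$-orbits $\{\theta^j(z_k)\}$, $1\leq k\leq n-1$, and only the orbit of $z_1=x_1$ is among your listed variables; the variables $\theta^j(z_k)$ for $k\geq 2$ are genuinely new. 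Second, the period-one property of Fordy--Marsh concerns only the particular mutation at the source (or sink), which rotates the quiver; it places no constraint whatever on mutation sequences through internal vertices, so it cannot deliver the completeness statement you need (indeed the determination of all cluster variables is deferred in the paper to the first author's thesis and is not available at this point).

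The paper sidesteps the enumeration problem entirely: since the initial quiver $P_{n+1}^{(1)}$ is acyclic, \cite[Theorem 7.6]{BerZel} (the acyclicity theorem identifying the quantum cluster algebra with its lower bound) gives that $Q_q$ is generated by the initial cluster variables $w_0,\dots,w_n$ together with the variables obtained by a single mutation at each vertex, namely $w_{-1}$, $w_{n+1}$ and $x_1,\dots,x_{n-1}$ (with $x_n=x_0$ by Lemma~\ref{xrelations}(iv)). Your recursion then reduces this finite generating set to $w_0,w_1,x_1,\dots,x_n$, and \eqref{w0} removes $w_0$. To repair your argument you should replace the appeal to periodicity by this (or an equivalent) lower-bound result; without it the inclusion $Q_q\subseteq A$ is unsupported.
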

\begin{proof} Let $S$ be the subalgebra of $Q_q$ generated by the cluster variables $w_0, w_1, x_1,x_2,\dots x_n$. As the quiver $P_{n+1}^{(1)}$ is acyclic, it follows from \cite[Theorem 7.6]{BerZel} that $Q_q$ is generated by $w_{-1},w_0, w_1,\dots,w_n,w_{n+1},x_1,x_2,\dots$ and $x_n$. Although quantum cluster algebras are defined over $\Z[q^{\pm 1/2}]$ in \cite{{BerZel}}, the result in that case implies the result for quantum cluster algebras defined over $\K$.
Recall that $w_{-1}=q^{\frac{1}{2}}(w_0x_n-q^{\frac{1}{2}}w_{1})$ so $w_{-1}\in S$. Also,  for $i\geq 1$, it follows from Lemma~\ref{xrelations}(ii) that
$w_{i+1}=q^{\frac{-1}{2}}(w_ix_i-q^{\frac{-1}{2}}w_{i-1})$, from which it follows, inductively, that $w_2,w_3,\dots,w_n,w_{n+1}\in S$. Therefore
$Q_q=S$.
Finally, by \eqref{w0} with $i=1$,
\[x_1w_1-q^{-1}w_1x_1=q^{\frac{1}{2}}(1-q^{-2})w_0\] so $w_0$ may be omitted from the list of generators of $S$.
\end{proof}

\begin{theorem}\label{Q}
\begin{enumerate}
\item The subalgebra $S$ of $T_q$ generated by $w_0, w_1, x_1,x_2,\dots,x_{n-1}$ is an iterated skew polynomial extension $\K[w_0][w_1,\rho_0][x_1;\rho^\prime_1,\delta^\prime_1]\cdots[x_{n-1};\rho^\prime_{n-1},\delta^\prime_{n-1}]$.
\item There is a skew polynomial extension $U=S[x^\prime;\beta^\prime,\delta^\prime]$ of $S$ with a central element $\Delta$
such that there is a surjective $\K$-algebra homomorphism $\Gamma:U\rightarrow Q_q$ with $\Gamma(x)=x_n$ and $\Gamma(\Delta)=0$.
\item $\Delta U$ is a completely prime ideal of $U$.
\item $\Delta U$ is a maximal ideal of $U$ and $Q_q\simeq U/\Delta U.$
\item $Q_q$ is simple and (right and left) noetherian.
\item $Q_q$ is generated by $w_0, w_1, x_1,x_2,\dots,x_{n}$ subject to the relations
\begin{alignat*}{2}
w_0w_1&=qw_1w_0&\\
x_jw_0&=q^{(-1)^{j+1}}w_0x_j,&&\text{ if }1\leq j<n,\\
x_jw_1&=q^{(-1)^{j}}w_1x_j,&& \text{ if }1<j\leq n,\\
x_1w_1&=q^{-1}w_1x_1+q^\frac{1}{2}(1-q^2)w_0,&&\\
x_nw_0&=qw_0x_n+q^\frac{-1}{2}(1-q^2)w_1,&&\\
x_ix_{i+1}&=q^2x_{i+1}x_i+1-q^2,&& \text{ if }1\leqslant i\leqslant n-1,\\
x_nx_1&=q^2x_1x_n+1-q^2,&&\\
x_ix_j&=q^2x_jx_i, &&\text{ if } i\geqslant 1, i+1<j\leqslant n\text{ and } j-i\text{ is odd},\\
x_ix_j&=q^{-2}x_jx_i, && \text{ if }i\geqslant 1, i+1<j< n\text{ and } j-i\text{ is even},\\
w_0w_{n+1}&=qw_1w_n+1.&&
\end{alignat*}
Here $w_n$ and $w_{n+1}$ are defined recursively using the formula $w_j=q^{-\frac{1}{2}}w_{j-1}x_{j-1}+q^{-1}w_{j-2}$, $j\geq 2$, and are linear combinations of standard monomials of the form $w_1^aw_0^bx_1^{d_1}x_2^{d_2}\dots x_n^{d_n}$, where $a=1$ and $b=0$ or $a=0$ and $b=1$ and each $d_i\leq 1$.
\end{enumerate}
\end{theorem}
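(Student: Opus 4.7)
\emph{Proof plan.}

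For (i) the plan is to realize $S$, the subalgebra of $T_q$ generated by $w_0,w_1,x_1,\dots,x_{n-1}$, as an iterated skew polynomial algebra in exactly the stated order. I will read off the automorphisms $\rho_0,\rho_1',\dots,\rho_{n-1}'$ and the derivations $\delta_1',\dots,\delta_{n-1}'$ from those relations in (vi) not involving $x_n$ (so, e.g., $\rho_0(w_0)=q^{-1}w_0$, while for $k\geq 1$ the derivation $\delta_k'$ is nonzero only when the new relation is of quantized-Weyl type). Existence at each step will be verified by the free-algebra device already used in Example~\ref{Lnq}: write the preceding algebra as $F/I$ for the free algebra $F$, and show the candidate $\rho_k'$ and $\delta_k'$ are induced from a $\K$-automorphism and $\K$-derivation of $F$ that preserve $I$. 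The embedding $S\hookrightarrow T_q$ then forces faithfulness and hence the PBW basis.

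For (ii) the plan is first to prescribe $\beta'$ and $\delta'$ on the generators of $S$ by demanding the remaining cyclic relations of (vi) (for instance $\beta'(w_0)=qw_0$, $\beta'(w_1)=q^{-1}w_1$, $\beta'(x_1)=q^2 x_1$, and $\delta'(w_0)=q^{-1/2}(1-q^2)w_1$, $\delta'(x_1)=1-q^2$, zero elsewhere), verify by the same free-algebra argument that these extend to an automorphism and $\beta'$-derivation of $S$, and set $U=S[x';\beta',\delta']$. Using the recursion of (vi) I will then produce $w_2,\dots,w_n\in S$ and $w_{n+1}\in U$ (linear in $x'$), and define $\Delta := w_0w_{n+1}-qw_1w_n-1$. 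The main computational task will be to prove that $\Delta$ commutes with every generator of $U$; this reduces to a bookkeeping exercise on cyclic monomials using Lemma~\ref{xrelations}. The homomorphism $\Gamma:U\to Q_q$ sending $w_0,w_1,x_1,\dots,x_{n-1},x'$ to $w_0,w_1,x_1,\dots,x_{n-1},x_n$ is well-defined because, by Lemma~\ref{xrelations}, $x_n$ satisfies in $Q_q$ precisely the relations defining $U$; surjectivity is Theorem~\ref{Qgen}, and $\Gamma(\Delta)=0$ follows from \eqref{w0wnplusone} combined with the recursive definition.

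For (iii)--(v) the plan is to exploit that the leading coefficient of $\Delta$ in $x'$ is a scalar multiple of the nonzero element $w_0w_n$ of the domain $S$; the criterion of \cite[Proposition 1]{normalels} used in Proposition~\ref{Omegacompprime} then yields (iii). For (iv) I will show the induced map $\bar\Gamma:U/\Delta U\to Q_q$ is injective by localizing at the Ore set $\mathcal{W}$ of monomials in $w_0,w_1,\dots,w_n$: modulo $\Delta U$ the element $w_0w_nx'$ lies in $S$, so the $\mathcal{W}$-localization of $U/\Delta U$ coincides with a localization of $S$, which embeds in $T_q$. A Proposition~\ref{LzL}-type argument then shows that no nonzero prime of $U/\Delta U$ meets $\mathcal{W}$, forcing $\bar\Gamma$ to be injective and giving $Q_q\simeq U/\Delta U$. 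Noetherianness in (v) is then immediate since $U$ is an iterated skew polynomial extension of $\K$, and simplicity follows because every nonzero two-sided ideal of $U/\Delta U$ must meet $\mathcal{W}$. Part (vi) is an assembly step: the relations are those defining $U$ together with $w_0w_{n+1}=qw_1w_n+1$ coming from $\Delta$, and the standard-monomial claim about $w_n,w_{n+1}$ is a short induction on the recursion. I expect the main obstacles to be the centrality of $\Delta$ and the injectivity of $\bar\Gamma$, both of which come down to careful manipulation of the cyclic relations established in Lemma~\ref{xrelations}.
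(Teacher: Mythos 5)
Parts (i)--(iii) of your plan are essentially viable, with two caveats. First, your prescription of $\delta^\prime$ is wrong at one generator: the relation $x_{n-1}x_n=q^2x_nx_{n-1}+1-q^2$ forces $\delta^\prime(x_{n-1})=1-q^{-2}\neq 0$, so ``zero elsewhere'' cannot be taken literally (otherwise $\Gamma$ is not well defined). Second, the free-algebra device only produces an \emph{abstract} iterated Ore extension; identifying it with the concrete subalgebra $S\subseteq T_q$ (equivalently, linear independence in $T_q$ of the standard monomials) is not automatic and needs an argument --- the paper sidesteps this by building the Ore structure inside $T_q$ itself, applying the change-of-variable Lemma~\ref{sprgenchange} to $R_i[w_{i+1};\rho_i]$, and it gets the centrality of $\Delta$ for free by exhibiting $\Delta$ as the Casimir element of a conformal ambiskew ring $R=R(A,\alpha,v,1)$ over the quantum torus $A$ in $w_1^{\pm1},\dots,w_n^{\pm1}$, with $U\subset R$; your direct ``bookkeeping'' computation of centrality is possible but substantially heavier. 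For (iii) your leading-coefficient remark is the right idea, but the criterion of \cite{normalels} also requires checking that the constant term $d=-q^{-1}w_0w_{n-1}-qw_1$ is regular modulo $w_0w_nS$ (note $w_0w_nS$ is not prime), which is the actual content of the paper's argument.

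The genuine gap is in (iv)--(v). Granting your localization set-up, the most it yields is: every nonzero two-sided ideal of $U/\Delta U$ meets $\mathcal{W}$ (and even this presupposes that $\mathcal{W}$, which contains $w_0$, is an Ore set in $U/\Delta U$ --- not obvious, and precisely why the paper inverts only $w_1,\dots,w_n$, where \cite[Lemma 1.4]{g} applies inside the ambiskew framework). But maximality of $\Delta U$, i.e.\ simplicity of $U/\Delta U\simeq Q_q$, requires the converse step: an ideal containing a monomial in $w_0,\dots,w_n$ must be the whole ring, and your ``Proposition~\ref{LzL}-type argument'' does not deliver it. The proof of Proposition~\ref{LzL} rests on two things that fail here: complete primeness of all primes (Goodearl--Letzter, which breaks down in the cyclic situation, as the paper notes for $C_n^q$), needed to pass from a monomial to a single generator, and the quasi-normal behaviour of the $z_i$ used for the descent; by contrast the $w_i$ are not normal in $U$ (e.g.\ $x^\prime w_0-qw_0x^\prime\in\K^*w_1$), and monomials in them are not units of $U/\Delta U$, so ``meets $\mathcal{W}$'' does not give ``equals the whole ring''. (Your two assertions --- ``no nonzero prime of $U/\Delta U$ meets $\mathcal{W}$'' and ``every nonzero two-sided ideal of $U/\Delta U$ must meet $\mathcal{W}$'' --- also contradict one another.) The paper closes exactly this hole by showing $R/\Delta R$ is a simple generalized Weyl algebra $W(A,\alpha,u)$ via \cite[Theorem 5.4]{ambiskew}, transferring along the localization $U\subset R$ with $\Delta R\cap U=\Delta U$, and then running a weight-minimality induction with the skew derivations (e.g.\ $\delta^\prime(w_n^k)=\qnum{k}{q^{-2}}\mu w_{n-1}w_n^{k-1}$) to drive any monomial lying in a proper ideal down to $1$. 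Without this, or some substitute for it, parts (iv) and (v) --- and hence the presentation in (vi) --- remain unproved, although your localization idea can still be salvaged to prove the isomorphism $Q_q\simeq U/\Delta U$ (injectivity of $\bar\Gamma$) once the Ore condition for $\mathcal{W}$ is established.
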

\begin{proof}
For $0\leq i\leq n$, let $T_q^{(i)}$ be the subalgebra of $T_q$ generated by $w_0^{\pm 1},w_1^{\pm 1},\dots, w_i^{\pm 1}$, let $R_i$ be the subalgebra generated by $w_0,w_1^{\pm 1},\dots, w_i^{\pm 1}$ and, for $i>1$, let $S_i$  be the subalgebra generated by $w_0, w_1, x_1,x_2,\dots x_{i-1}$. Thus, as $x_{i-1}=w_{i-1}^{-1}(q^{-\frac{1}{2}}w_{i-2}+q^{\frac{1}{2}}w_{i})$, we have $S_i\subset R_i\subset T_q^{(i)}$ for $i>1$. Also $T_q^{(n)}=T_q$ and, for $0\leq i\leq n-1$, $T_q^{(i+1)}$ and $R_{i+1}$ are, respectively, the skew Laurent polynomial rings $T_q^{i}[w_i^{\pm 1};\rho_{i}]$ and $R_i[w_i^{\pm 1};\rho_{i}]$, where the automorphism $\rho_i$ of $T_q^{i}$ or $R_i$, as appropriate, is such that, for $1\leq j\leq i$, $\rho_i(w_j)=w_j$ if $i+j$ is even and $\rho_i(w_j)=q^{-1}w_j$ if $i+j$ is odd. Here, and elsewhere in the proof, we abuse notation in using the same notation for $\rho_i$ and its restriction to a subalgebra.

(i) Let $1\leq i\leq n-1$. In the skew polynomial ring $R_i[w_{i+1};\rho_i]$, $x_i$ has degree one and invertible leading coefficient. It follows from Lemma~\ref{sprgenchange} that $R_i[w_{i+1};\rho_i]=
R_i[x_{i};\rho^\prime_i,\delta^\prime_i]$ for an appropriate automorphism $\rho^\prime_i$ of $R_i$ and $\rho^\prime_i$-derivation $\delta^\prime_i$.
By Lemma~\ref{xrelations}(i), for $1\leq j\leq i-1$, $\rho^\prime_i(x_j)=q^{\pm 2}x_j$ and $\delta^\prime_i(x_j)=0$ or $1-q^{-2}$. By Lemma~\ref{xrelations}(ii) and (iii), $\rho^\prime_i(w_0)=q^{(-1)^{i+1}} w_0$,  $\delta^\prime_i(w_0)=0$,  $\rho^\prime_i(w_1)=q^{(-1)^{i}}w_1$ and  $\delta^\prime_i(w_1)=0$ unless $i=1$ in which case, by \eqref{w0}, $\delta^\prime_1(w_1)=q^\frac{1}{2}(1-q^{-2})w_0$.
Thus $\rho^\prime_i(S_{i})=S_{i}$ and $\delta^\prime_i(S_{i})\subseteq S_{i}$ for $x\in \{w_0,w_1,x_1,\dots,x_{i-1}\}$ and it follows inductively that, for $1\leq i\leq n$,
\[S_{i}=\K[w_0][w_1,\rho_0][x_1;\rho^\prime_1,\delta^\prime_1]\cdots[x_{i-1};\rho^\prime_{i-1},\delta^\prime_{i-1}].\] In particular, this holds for $i=n$ where $S_{n-1}=S$.

(ii)
 Let $A$ be the quantum $n$-torus generated by $w_1^{\pm1},\dots,w_n^{\pm1}$ and let $\alpha$ be the $\K$-automorphism of $A$ such that $\alpha(w_i)=w_i$ if $i$ is even and
$\alpha(w_i)=qw_i$ if $i$ is odd. The $\K$-subalgebra $R_n$ of $T_n^q$ has the form $A[w_0;\alpha]$ while the $\K$-subalgebra generated by $w_1^{\pm1},\dots,w_n^{\pm1}$ and $w_{n+1}$ is  $A[w_{n+1};\alpha^{-1}]$. Observe that $w_1w_n$ is central in $A$ and that $\alpha(w_1w_n)=q^2w_1w_n$. Let $u=1+q^{-1}w_1w_n$ so that $v:=u-\alpha(u)=(q^{-1}-q)w_1w_n$. Form the conformal ambiskew polynomial ring $R=R(A,\alpha,v,1)$, with $w_0$ in the role of $y$. Thus $R=A[w_0;\alpha][x;\beta,\delta]$ where $\beta(w_0)=w_0$, $\delta(w_0)=(q^{-1}-q)w_1w_n$ and, for $1\leq i\leq n$,
$\beta(w_i)=\alpha^{-1}(w_i)$ and $\delta(w_i)=0$.  Note that $x_i\in A$ if $1<i\leq n-1$ and $x_1=w_1^{-1}(q^{\frac{-1}{2}}w_0+
q^{\frac{1}{2}}w_2)\in A[w_0;\alpha]$.
The Casimir element $\Delta:=xw_0-q^{-1}w_1w_n-1=xw_0-q^{-1}w_nw_1-1$ is central in $R$.

As $w_{n+1}w_j=\beta(w_j)w_{n+1}$ for $1\leq j\leq n$, and, by \eqref{w0wnplusone}, $w_{n+1}w_0-w_0w_{n+1}=(q-q^{-1})w_1w_n-1$,
there is a $\K$-algebra homomorphism $\Psi:R\rightarrow T_q$ such that $\Psi(w_i)=w_i$ for $0\leq i\leq n$, $\Psi(x)=w_{n+1}$ and $\Psi(\Delta)=0$. Note that $\Psi(x_i)=x_i$ for $1\leq i\leq n-1$.

Let $x^\prime=w_n^{-1}(q^{\frac{1}{2}}x+q^{\frac{-1}{2}}w_{n-1})$, so that $\Psi(x^\prime)=x_n$.  By Lemma~\ref{sprgenchange}, $R=A[w_0;\alpha][x^\prime;\beta^\prime,\delta^\prime]$ for an appropriate $\K$-automorphism $\beta^\prime$ of $A[w_0;\alpha]$ and an appropriate $\beta^\prime$-derivation $\delta^\prime$ of $A[w_0;\alpha]$.

The next step is to show that $\beta^\prime(S)=S$ and $\delta^\prime(S)\subseteq S$ so that $\beta^\prime$ and $\delta^\prime$ restrict, respectively, to an automorphism and $\beta^\prime$-derivation $\delta^\prime$ of $S$, which we also denote by $\beta^\prime$ and $\delta^\prime$, giving rise to a subalgebra $U$ of $R$ of the form $S[x^\prime;\beta^\prime,\delta^\prime]$.

Let $0\leq i\leq n$. Using the formulae in Lemma~\ref{sprgenchange}, we see that
$\beta^\prime(w_i)=q^{(-1)^i}w_i$ and
\[\delta^\prime(w_i)=q^\frac{1}{2}w_n^{-1}\delta(w_i)+q^{-\frac{1}{2}} w_n^{-1}w_{n-1}w_i-q^{-\frac{1}{2}}\beta^\prime(w_i)w_n^{-1}w_{n-1}.\]
As $\delta(w_i)=0$ if $i>0$ and $\delta(w_0)=(q^{-1}-q)w_1w_n$, it follows that
$\delta^\prime(w_0)=q^\frac{1}{2}(q^{-1}-q)w_1$, $\delta^\prime(w_i)=0$ for $1\leq i\leq n-1$  and
$\delta^\prime(w_n)=q^\frac{1}{2}(1-q^{-2})w_{n-1}$.

Now let $1\leq i\leq n-1$ and recall that $x_i=w_i^{-1}(q^{-\frac{1}{2}}w_{i-1}+q^\frac{1}{2})w_{i+1}$.
It follows that $\beta^\prime(x_i)=q^{2(-1)^{i-1}} x_i$. Also $\delta^\prime(x_i)=0$ for $2\leq i\leq n-2$ whereas
\[\delta^\prime(x_1)=q^{-\frac{1}{2}}\beta^\prime(w_1^{-1})\delta^\prime(w_0)=1-q^2\]
and
\[\delta^\prime(x_{n-1})=q^{\frac{1}{2}}\beta^\prime(w_{n-1}^{-1})\delta^\prime(w_n)=1-q^{-2}.\]

The above calculations establish that for each generator $g\in \{w_0,w_1,x_1,\dots,x_{n-1}\}$ of $S$, $g$ is an eigenvector of $\beta^\prime$, with non-zero eigenvalue, and $\delta^\prime(g)\in S$. Hence there is an iterated skew polynomial extension $U=S[x^\prime;\beta^\prime,\delta^\prime]\subseteq
R$ as described above. As $U$ is generated by $w_0,w_1,x_1,\dots,x_{n-1}$ and $x^\prime$, the image $\Psi(U)$ is generated by
$w_0,w_1,x_1,\dots,x_{n-1}$ and $x_n$. By Theorem~\ref{Qgen}, $\Psi(S)=Q_q$. Note that $x=q^{-\frac{1}{2}}w_nx^\prime-q^{-1}w_{n-1}\in U$ so $\Delta\in U$. As $\Delta$ is in central in $R$ and $\Delta\in U$,
$\Delta$ is central in $U$. So (ii) holds with $\Gamma$ being the restriction to $U$ of $\Psi$.

(iii) As $x=q^{-\frac{1}{2}}(w_nx^\prime-q^{\frac{-1}{2}}w_{n-1})$ and $\Delta=xw_0-q^{-1}w_1w_n-1=w_0x-qw_1w_n-1$,
 $\Delta=q^{-\frac{1}{2}}w_0w_nx^\prime+d$ where $d=-q^{-1}w_0w_{n-1}-qw_1$. Both $w_0$ and $w_n$ are normal in $S$,
with $S/w_0S$ and $S/w_nS$ isomorphic to
 skew polynomial rings over domains, so $w_0S$ and $w_nS$ are completely prime ideals. Note that $d\notin w_0S$ and $d\notin w_nS$ by easy degree arguments. If $r,s\in S$ are such that $rd=w_0w_ns$ then $r=w_0e$ for some $e\in S$, $ed\in w_nS$ and $e\in w_nS$, whence $r\in w_nw_0S$. Thus $d$ is regular modulo $w_0w_nS$. By \cite[Proposition 1]{normalels}, $\Delta U$ is a completely prime ideal of $U$.

(iv)
Note that  $R/\Delta R$  is the generalized Weyl algebra $W(A,\alpha,u)$.  We shall apply \cite[Theorem 5.4]{ambiskew} to see that $W(A,\alpha,u)$ is simple.  Criteria (i) and (ii) of that theorem hold because the skew Laurent polynomial ring $A[x^{\pm 1};\alpha]$ is simple, by \cite[Proposition 1.3]{McCP}, whence $A$ is $\alpha$-simple and $\alpha^m$ is outer for all $m\geq 1$. Criterion (iii), the regularity of $u$, is clear and (iv) holds because, for $m\geq 1$, $uA+\alpha^m(u)A$ contains the unit $(1-q^{2m})w_1w_n$. By \cite[Theorem 5.4]{ambiskew}, $W(A,\alpha,u)$ is simple.

Let $P$ be the subalgebra of $A$ generated by $w_1,\dots,w_n$. The automorphism $\alpha$ of $A$ restricts to an automorphism of $P$, which we also denote $\alpha$, and $v$ is central in $P$ so we can form the ambiskew polynomial ring $B:=R(P,\alpha,v,1)$. Thus $B\subset U\subset R$.
The multiplicatively closed set $\mathcal{S}$ generated by $\K^*\cup\{w_i:1\leq i\leq n\}$ is a right and left Ore set  of regular elements
in $P$, with ring of quotients $A$, and it follows from \cite[Lemma 1.4]{g}, that $\mathcal{S}$  is a right and left Ore set  of regular elements
in $B$, with ring of quotients $R$. As $B\subset U\subset R$, $R$ is also the ring of quotients of $U$ with respect to $\mathcal{S}$. Let $Q$ be the completely prime ideal $\Delta U$ of $U$. By degree, $w_i\notin Q$ for $1\leq i\leq n$ so  $\Delta R\cap U=\Delta U$, by \cite[Theorem 10.20]{GW} or \cite[Proposition 2.1.16]{McCR}.

 Let $J$ be a proper ideal of $U$ strictly containing $\Delta U$.  By \cite[Proposition 2.1.16]{McCR},
 $JR$ is an ideal of $R$ containing $\Delta R$. If $JR=\Delta R$ then $J\subseteq \Delta R\cap U=\Delta U$ so
$\Delta R\subset JR$.
By the simplicity of $R/\Delta R$, it follows that $JR=R$  and hence, by \cite[Proposition 2.1.16(vi,iv)]{McCR} $J$ contains a monomial $w_1^{m_1}w_2^{m_2}\dots w_n^{m_n}$ for some non-negative integers $m_1,m_2,\dots,m_n$
with at least one $m_i$ non-zero. Define the \emph{weight} of a monomial $w_1^{m_1}w_2^{m_2}\dots w_n^{m_n}$ to be $\sum_{i=1}^n im_i$ and let
$w=w_1^{m_1}w_2^{m_2}\dots w_n^{m_n}$ be a monomial of least weight in $J$. Let $i$, $1\leq i\leq n$, be maximal such that $m_i\neq 0$. Suppose that $i=n$. In the skew polynomial ring $U$, $\beta^\prime(w_n)=q^{-1}(w_n)$ and $\delta^\prime(w_n)=\mu w_{n-1}$, where $\mu=q^\frac{1}{2}(1-q^{-2})\in \K^*$. We claim that, for $k\geq 1$, $\delta^\prime(w_n^k)=\qnum{k}{q^{-2}}\mu w_{n-1}w_n^{k-1}$. This holds for $k=1$ and if it holds for $k=m-1$ then
\begin{eqnarray*}
\delta^\prime(w_n^m)&=&\beta^\prime(w_n^{m-1})\delta^\prime(w_n)+\delta^\prime(w_n^{m-1})w_n\\
&=&q^{1-m}w_n^{m-1}\mu w_{n-1}+\qnum{m-1}{q^{-2}}\mu w_{n-1}{w_n}^{m-2}w_n\\
&=&\mu(\qnum{m-1}{q^{-2}}+q^{-2(m-1)}) w_{n-1}{w_n}^{m-1}\\
&=&\mu \qnum{m}{q^{-2}} {w_{n-1}}{w_n}^{m-1}.
\end{eqnarray*}
By induction, it holds for all $m$.
With $w$ as above and $w^\prime=w_1^{m_1}w_2^{m_2}\dots w_{n-1}^{m_{n-1}}$, so that $w=w^\prime w_n^{m_n}$, $\beta^\prime(w^\prime)=q^c w^\prime$ for  some $c\in \Z$ and $\delta^\prime(w^\prime)=0$.
Hence $\delta^\prime(w)=q^cw^\prime\mu\qnum{m_n}{q^{-2}}{w_{n-1}}w_n^{m_n-1}$.
Now $\delta^\prime(w)=x^\prime w-\beta^\prime(w)x^\prime\in J$ as $\beta^\prime(w)=q^{c-m_n} w$.
But $q^c\mu\qnum{m_n}{q^{-2}}\neq 0$ and $w^\prime w_{n-1}w_n^{m_n-1}$ has weight one lower than that of $w$
so, by minimality of $w$, $m_n=0$. Repeating the argument with $U,x^\prime, \beta^\prime, \delta^\prime$ replaced by $S_i,x_i,\rho_i^\prime,\delta_i^\prime$, $i=n-1,\dots,2,1$, for which,  by \eqref{w0}, $\rho_i^\prime(w_i)=q^{-1}(w_i)$ and $\delta_i^\prime(w_i)=\mu w_{i-1}$, we see that $m_{n-1}=\dots=m_2=0$, so that $w=w_1^{m_1}$, and that $w_0w_1^{m_1-1}\in J$. Recall that $\Delta=xw_0-q^{-1}w_nw_1-1\in J$ from which it follows that
$xw_0w_1^{m_1-1}-q^{-1}w_nw_1^m-w_1^{m_1-1}\in J$ and hence that $w_1^{m_1-1}\in J$, contradicting the minimality of $w$. This contradiction shows that $\Delta U$ is a maximal ideal  of $U$. But $\Delta U\subseteq \ker \Gamma$ so $\Delta U=\ker \Gamma$  and $U/\Delta U\simeq \Gamma(U)=Q_q$.

(v) The simplicity of $Q_q$ is immediate from (iv) and the noetherian conditions follow from (ii), (iv) and Hilbert's Basis Theorem for skew polynomial rings  \cite[Theorem 2.6]{GW}.

(vi) Follows from (ii), (iv) and \cite[Proposition 1]{fddaj}.

\end{proof}

\begin{cor}\label{Qauto}
There  is a $\K$-automorphism $\theta$ of $Q_q$ such that, for $i\in\Z$,  $\theta(x_i)=x_{i+1}$,
$\theta^{-1}(x_i)=x_{i-1}$, $\theta(w_i)=w_{i+1}$ and $\theta^{-1}(w_i)=w_{i-1}$.
\end{cor}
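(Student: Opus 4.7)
The plan is to exploit the presentation of $Q_q$ in Theorem~\ref{Q}(vi) and show directly that the cyclic shift $i \mapsto i+1$ of all subscripts preserves the defining relations. I would define a $\K$-algebra endomorphism of $Q_q$ by setting $\theta(w_0)=w_1$, $\theta(w_1)=w_2$, and $\theta(x_i)=x_{i+1}$ for $1\leq i\leq n$, where $x_{n+1}:=x_1$ by Lemma~\ref{xrelations}(iv) and $w_2\in Q_q$ is supplied by the recursive formula from Theorem~\ref{Q}(vi); then extend using the universal property of the presentation once the shifted relations have been verified.

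Each relation in Theorem~\ref{Q}(vi) must be checked after shifting. The purely quasi-commutative relations among the $x_i$'s depend only on parities of index differences, which are invariant under a simultaneous shift; in particular, the $x_n x_1$ relation becomes the already-listed $x_1 x_2$ relation. The relation $w_0 w_1=qw_1 w_0$ shifts to $w_1 w_2=qw_2 w_1$, a quasi-commutation relation of the initial quantum torus $T_q$ (from \eqref{genwiwj}). The $x$-$w$ commutation and quantized Weyl type relations transfer to analogous identities at shifted arguments, all of which are instances of Lemma~\ref{xrelations}(ii)(iii). Most importantly, the exchange relation $w_0 w_{n+1}=1+qw_1 w_n$ shifts to $w_1 w_{n+2}=1+qw_2 w_{n+1}$, which is the next exchange relation in the sequence of source mutations described in Section~\ref{qca} and thus holds in $Q_q$.

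Having constructed the endomorphism $\theta$, surjectivity is immediate since the image contains $w_1, w_2, x_1,\dots,x_n$, and from $w_2=q^{-\frac{1}{2}}w_1 x_1+q^{-1}w_0$ one recovers $w_0=qw_2-q^{\frac{1}{2}}w_1 x_1$ in the image as well; by Theorem~\ref{Qgen} (and Theorem~\ref{Q}(vi)), these elements generate $Q_q$. Injectivity follows from the simplicity of $Q_q$ in Theorem~\ref{Q}(v). Hence $\theta\in\aut_\K(Q_q)$. The assertions $\theta(w_i)=w_{i+1}$ and $\theta^{-1}(w_i)=w_{i-1}$ for all $i\in\Z$ then follow by induction, running the recursive formula $w_j=q^{-\frac{1}{2}}w_{j-1}x_{j-1}+q^{-1}w_{j-2}$ forwards for $i\geq 2$ and backwards for $i\leq -1$; the corresponding statements for the $x_i$ follow from the periodicity $x_{i+n}=x_i$ of Lemma~\ref{xrelations}(iv).

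The main obstacle is really the bookkeeping in checking all the relations of (vi), particularly the boundary cases involving $w_0$, $w_1$, $w_n$, and $w_{n+1}$; one must keep careful track of which shifted relation is a quasi-commutation in $T_q$, which is an instance of \eqref{xiwi}--\eqref{w0}, and which is the next exchange relation of the cluster algebra. Once this case analysis is done, the simplicity of $Q_q$ does the remaining heavy lifting.
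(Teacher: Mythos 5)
Your proposal is correct and is essentially the paper's argument: the paper's proof is a one-line appeal to Theorem~\ref{Q}(vi) and Lemma~\ref{xrelations}, and your verification that the index shift preserves the presentation (with the shifted exchange relation $w_1w_{n+2}=1+qw_2w_{n+1}$ supplied by the mutation sequence), followed by surjectivity from Theorem~\ref{Qgen} and injectivity from the simplicity in Theorem~\ref{Q}(v), is exactly the intended fleshing-out of that citation.
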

\begin{proof}
This follows easily from Theorem~\ref{Q}(vi) and Lemma~\ref{xrelations}.
\end{proof}
\begin{rmk}
The automorphism $\theta$ in Corollary \ref{Qauto} lifts to  a $\K$-automorphism $\Theta$ of the iterated skew polynomial ring $U$ in Theorem~\ref{Q}(ii) with $\Theta(x_i)=x_{i+1}$ for $1\leq i\leq n-1$, $\Theta(x^\prime)=x_1$,
$\Theta(w_0)=w_1$, $\Theta(w_1)=q^{-\frac{1}{2}}(w_1x_1-q^{-\frac{1}{2}}w_0)$ and $\Theta(\Delta)=\Delta$. We leave the proof to the interested reader.

\end{rmk}

\begin{rmk} The cyclic connected quantized Weyl algebra $C_n^{q^2}$ embeds in $U$ in an obvious way with $x_n\mapsto x^\prime$. As $\Omega-\lambda$ cannot, by degree, be in $\Delta U$,   it follows from Theorem~\ref{Q} that this induces an embedding $C_n^{q^2}\hookrightarrow Q_q$. Also $C_n^{q^2}$ is a homomorphic image of $U$, being isomorphic to $U/I$, where $I=w_0U+w_1U$, which, by Theorem~\ref{Q}(vi), is an ideal of $U$.
\end{rmk}

\begin{rmk}

It will be shown in the PhD thesis of first author  that the set of quantum cluster variables in $Q_q$ is the union of $n$ $\theta$-orbits, namely the infinite orbit $\{w_i:i\in \Z\}$ and the $n-1$ finite $\theta$-orbits $\{\theta^{j}(z_k):0\leq j\leq n-1\}$, $1\leq k\leq n-1\}$, where the elements $z_k$ are as in  Section 5.
  \end{rmk}

\section{Poisson structures}\label{poisson} The connected quantized Weyl algebras $L_n^q$ and $C_n^q$ are quantizations, in the sense of \cite[III.5.4]{BGl},
of Poisson algebras. In other words there are Poisson brackets on the polynomial algebra $\K[x_1,\dots,x_n]$ that are semiclassical limits of the families $L_n^q$ and $C_n^q$. The Poisson algebra that is the semiclassical limit of $C_n^{q^2}$ was introduced by Fordy \cite{fordy} and  this sparked our interest in $C^q_n$. In this section we shall present results of an analysis of the Poisson prime spectrum of the semiclassical limits of $L_n^q$ and $C_n^q$. This analysis was carried out in parallel with that of the prime spectra of $L_n^q$ and $C_n^q$ and the methods in the two mirror each other.
Good references for Poisson algebras, Poisson ideals, Poisson prime ideals, Poisson cores and the Poisson centre include \cite{goodletzsemi}
and \cite{goodletzsemi}.

In the remainder of the paper, we assume, as before, that the field $\K$ is algebraically closed but also that it has characteristic $0$.
Let $n\in \N$ and let $F_n, H_n$ denote, respectively, the polynomial algebra $\K[x_1,\dots,x_n]$ and the Laurent polynomial algebra $\K[x_1^{\pm 1},\dots,x_n^{\pm 1}]$.

\begin{defn}\label{logcan}
Let $\Lambda=(\lambda_{ij})$ be an $n\times n$ skew-symmetric matrix over $\K$. On each of $F_n$ and $H_n$, there is a Poisson bracket, the \emph{log-canonical Poisson bracket}, such that, for $1\leq i,j\leq n$,
\[\{x_i,x_j\}=\lambda_{ij}x_ix_j.\]
Note that, for $m_1,\dots,m_n\in \Z$, \begin{equation}\label{monomialP}
\{x_i,x_1^{m_1}\dots x_n^{m_n}\}=(m_1\lambda_{i1}+\dots +m_n\lambda_{in})x_1^{m_1}\dots x_i^{m_i+1}\dots x_n^{m_n}.
\end{equation}
\end{defn}

The simplicity criterion for quantum tori given by \cite[Proposition 1.3]{McCP} has the  following Poisson analogue, where $\Pz(H_n)$ denotes the Poisson centre of $H_n$.
\begin{prop}\label{Psimple}
Let  $\Lambda=(\lambda_{ij})$ be an $n\times n$ skew-symmetric matrix over $\K$. Then, for the log-canonical Poisson bracket determined by $\Lambda$ on the Laurent polynomial algebra $H_n=\K[x_1^{\pm 1},\dots,x_n^{\pm 1}]$, the following are equivalent.
\begin{enumerate}
\item If $m_1,\dots,m_n\in \Z$ are such that $m_1\lambda_{i1}+\dots +m_n\lambda_{in}=0$ for $1\leq i\leq n$ then $m_i=0$ for all $i$.
\item $\Pz(H_n)=\K$.
\item $H_n$ is Poisson simple.
\end{enumerate}
\end{prop}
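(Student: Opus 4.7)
The plan is to prove the cyclic chain (i)$\Rightarrow$(ii)$\Rightarrow$(iii)$\Rightarrow$(i), with the log-canonical formula \eqref{monomialP} carrying essentially all of the weight.

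For (i)$\Rightarrow$(ii), I would expand a hypothetical Poisson central element $f=\sum_I c_I x_1^{I_1}\cdots x_n^{I_n}$ in the Laurent monomial basis of $H_n$ and apply \eqref{monomialP} term by term to obtain
\[\{x_i,f\}=\sum_I c_I\Bigl(\sum_{j} I_j\lambda_{ij}\Bigr) x_i\,x_1^{I_1}\cdots x_n^{I_n}.\]
The Laurent monomials appearing on the right are pairwise distinct (they are the distinct exponent vectors $I$ each shifted by $e_i$), so $\K$-linear independence forces $\sum_j I_j\lambda_{ij}=0$ for every $i$ and every $I$ with $c_I\ne 0$. Hypothesis (i) then eliminates every such $I$ except $I=0$, giving $f\in\K$.

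The implication (ii)$\Rightarrow$(iii) is where I expect the main work, and I would run a minimal-term argument modelled on the proof of \cite[Proposition 1.3]{McCP}. Given a nonzero Poisson ideal $J$, choose $0\ne f\in J$ with the smallest possible number of Laurent monomial summands, say $N$. Since $J$ is stable under multiplication by any monomial unit of $H_n$, after rescaling by a suitable $x_1^{-I_1}\cdots x_n^{-I_n}$ I may assume one of the summands of $f$ is a nonzero scalar. For each $i$, the formula \eqref{monomialP} then annihilates the constant summand while sending the remaining summands to pairwise distinct Laurent monomials, so $\{x_i,f\}$ is an element of $J$ with at most $N-1$ summands. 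Minimality forces $\{x_i,f\}=0$ for every $i$, so $f\in\Pz(H_n)=\K$ by (ii); hence $f$ is a unit and $J=H_n$.

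For (iii)$\Rightarrow$(i), I would argue by contraposition. If (i) fails, fix a nonzero $(m_1,\dots,m_n)\in\Z^n$ with $\sum_j m_j\lambda_{ij}=0$ for every $i$ and set $u:=x_1^{m_1}\cdots x_n^{m_n}$; the formula \eqref{monomialP} makes $u$ Poisson central, and therefore so is $u-1$. Poisson centrality implies that $(u-1)H_n$ is a Poisson ideal. Since the exponent vector is nonzero, $u-1$ is a genuine binomial rather than a Laurent monomial, so it is both nonzero and not a unit; hence $(u-1)H_n$ is a proper nonzero Poisson ideal, contradicting (iii). Apart from the normalization step in (ii)$\Rightarrow$(iii), the whole proof is essentially bookkeeping built on the identity \eqref{monomialP}.
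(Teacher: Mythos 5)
Your proof is correct, and it differs in route from the paper: the paper does not argue directly at all, but simply cites \cite[Lemma 1.2]{van} (Vancliff), noting that the proof there, though written over $\C$, works over any field of characteristic $0$. Your argument is a self-contained cyclic proof (i)$\Rightarrow$(ii)$\Rightarrow$(iii)$\Rightarrow$(i) built entirely on \eqref{monomialP}, and it is in effect the Poisson transcription of the quantum-torus simplicity criterion of \cite[Proposition 1.3]{McCP} that the paper invokes in Lemma~\ref{Tsimpleorpolysimple} -- which is exactly the analogy the paper is exploiting, so your proof makes that parallel explicit. Each step checks out: in (i)$\Rightarrow$(ii) the shifted exponent vectors $I+e_i$ are indeed pairwise distinct, so centrality forces $\sum_j I_j\lambda_{ij}=0$ for every exponent occurring; in (ii)$\Rightarrow$(iii) multiplying by a monomial unit preserves membership in the ideal and the number of summands, and the ``at most $N-1$ summands'' phrasing correctly absorbs the possibility that some coefficients $\sum_j I_j\lambda_{ij}$ vanish, so minimality kills $\{x_i,f\}$ and (ii) makes $f$ a nonzero scalar; in (iii)$\Rightarrow$(i) the element $u-1$ is a genuine two-term Laurent polynomial, hence a nonzero non-unit, and its Poisson centrality makes $(u-1)H_n$ a proper nonzero Poisson ideal. (You tacitly use that it suffices to test Poisson centrality against the generators $x_i$, which is justified because $\{-,f\}$ is a derivation, so it vanishes on all of $H_n$ once it vanishes on the $x_i^{\pm1}$; it would be worth one sentence.) What the paper's approach buys is brevity and an external guarantee of the characteristic-$0$ hypothesis being the only requirement; what yours buys is independence from the literature and a proof visibly parallel to the quantum case used elsewhere in the paper.
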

\begin{proof}
This follows from \cite[Lemma 1.2]{van}, the proof of which, although presented over $\C$, is valid for any base field of characteristic $0$.
\end{proof}

\begin{lemma}\label{simpleplust}
Let $S$ be a simple Poisson algebra over $\K$ with Poisson centre $\K$ and extend the Poisson bracket to the polynomial algebra $S[t]$ with $\{t,s\}=0$ for all $s\in S$. Then the non-zero Poisson prime ideals of $S[t]$ are the ideals of the form $(t-\lambda)S[t]$ for some $\lambda\in \K$.
\end{lemma}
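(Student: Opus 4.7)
The plan is to adapt the classification of primes in a polynomial ring over a field, with Poisson simplicity of $S$ playing the role normally played by being a field. Let $P$ be a nonzero Poisson prime of $S[t]$. Because $\{t,s\}=0$ for all $s\in S$, the element $t$ lies in the Poisson centre of $S[t]$; comparing coefficients of powers of $t$ in $\{f,s\}$ shows in fact $\Pz(S[t])=\K[t]$. The contraction $P\cap S$ is a Poisson ideal of $S$, so by the Poisson simplicity of $S$ it is either $0$ or $S$, and the latter is excluded since $P$ is proper. Hence $P\cap S=0$, so $S$ embeds in the domain $S[t]/P$ and is itself a domain.

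Next I would extract a monic element of $P$ of minimal $t$-degree. For each $k\geq 0$ let $I_k$ denote the ideal of $S$ consisting of the coefficients of $t^k$ in elements of $P$ of $t$-degree at most $k$ (including $0$); the identity $\{at^k+\text{lower},\,s\}=\{a,s\}t^k+\text{lower}$, which relies on $\{t,s\}=0$, shows that each $I_k$ is in fact a \emph{Poisson} ideal of $S$. We have $I_0=P\cap S=0$, while $I_m\neq 0$ for some $m\geq 1$ because $P\neq 0$; let $n$ be minimal with $I_n\neq 0$. By Poisson simplicity $I_n=S$, so a monic polynomial $f=t^n+a_{n-1}t^{n-1}+\cdots+a_0\in P$ exists. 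By the choice of $n$, no nonzero element of $P$ has $t$-degree less than $n$.

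For any $s\in S$ the bracket $\{f,s\}=\sum_{i<n}\{a_i,s\}t^i$ belongs to $P$ and has $t$-degree $<n$, so it must vanish; this forces each $a_i\in\Pz(S)=\K$, and hence $f\in\K[t]$. Since $\K$ is algebraically closed I would then factor $f=\prod_{j=1}^{n}(t-\lambda_j)$ with $\lambda_j\in\K$. Each $t-\lambda_j$ is Poisson central, so $(t-\lambda_j)S[t]$ is a Poisson ideal with quotient isomorphic to the Poisson-simple algebra $S$, making $(t-\lambda_j)S[t]$ a maximal Poisson ideal of $S[t]$. Since $P$ is prime and $f\in P$, at least one factor $t-\lambda_j$ lies in $P$, whence the inclusion $(t-\lambda_j)S[t]\subseteq P\subsetneq S[t]$ together with maximality yields $P=(t-\lambda_j)S[t]$. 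The main technical step is the Poisson closure of the leading-coefficient sets $I_k$; once that is in hand, Poisson simplicity, the centrality of $t$, and algebraic closure of $\K$ conspire to deliver the conclusion.
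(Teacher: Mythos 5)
Your classification argument (the forward direction) is correct and is essentially the paper's proof: the same leading-coefficient ideals, the same use of Poisson simplicity to extract a monic $f\in P$, the same bracket computation forcing the coefficients into $\Pz(S)=\K$, and the same use of algebraic closure plus Poisson simplicity of $S[t]/(t-\lambda)S[t]$ to conclude $P=(t-\lambda)S[t]$.

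However, the lemma asserts a set equality, and you never prove the other inclusion: that each ideal $(t-\lambda)S[t]$ really is Poisson \emph{prime}. It is visibly a Poisson ideal with quotient $\simeq S$, but primeness requires $S$ to be a domain, and this is not automatic for a Poisson simple algebra; the paper gets it from the characteristic-zero hypothesis via \cite[Lemma 6.2]{goodsemiclass} (the Poisson core of a maximal ideal of $S$ is prime, and by Poisson simplicity that core is $0$, so $0$ is prime). Your only route to the domain property is the remark that $S$ embeds in $S[t]/P$ for a nonzero Poisson prime $P$ -- an observation you in fact never use afterwards, and one which cannot serve here, since it presupposes the existence of a nonzero Poisson prime of $S[t]$ and hence cannot establish that the ideals $(t-\lambda)S[t]$ are prime in the first place. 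So you should add the unconditional argument that $S$ is a domain (as in the paper, using $\ch\K=0$), after which the reverse inclusion is immediate; the rest of your proof stands as written.
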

\begin{proof}
As $\ch \K=0$ it follows from
 \cite[Lemma 6.2]{goodsemiclass} that the Poisson core $I$ of any maximal ideal of $S$ is prime. By Poisson simplicity, $I=0$ so $S$ is an integral domain.
It is clear that the ideals $(t-\lambda)S[t]$ are Poisson prime, with  $S[t]/(t-\lambda)S[t]\simeq S$.
Let $P$ be a non-zero proper Poisson prime ideal of $S[t]$ and let $d$ be the minimal degree in $t$ of non-zero elements of $P$. Then $d>0$ as $P\cap S$ is a Poisson ideal of $S$ and must be $0$. It is easy to verify that
\[J:=\{s\in S:st^d+s_{d-1}t^{d-1}+\dots+s_0\in P\text{ for some }s_{d-1},\dots,s_0 \in S\}\]
is a Poisson ideal in $S$. By Poisson simplicity, $1\in J$ so there exist $s_{d-1},\dots,s_0 \in S$ such that \[f:=t^d+s_{d-1}t^{d-1}+\dots+s_0\in P.\]
For each $s\in S$, $\deg(\{s,f\})<d$ so $s_{d-1},\dots,s_0\in \Pz(S)=\K$. Thus the prime ideal $P\cap \K[t]$ is non-zero and, as $\K$ is algebraically closed,  $(t-\lambda)S[t]\subseteq P$ for some $\lambda\in \K$. By the Poisson simplicity of $S[t]/(t-\lambda)S[t]$,
$P=(t-\lambda)S[t]$.
\end{proof}

In Remark~\ref{semiclass}, we observed that the semiclassical limit of the relation $xy-qyx=1-q$ is $\{x,y\}=xy-1$. A similar discussion shows
that  the semiclassical limits of the relations $xy-qyx=0$ and $xy-q^{-1}yx=0$ are given by  $\{x,y\}=xy$ and $\{x,y\}=-xy$ respectively. The semiclassical limit of $L_n^q$ is the polynomial algebra $F_n$ with the Poisson bracket given by
\begin{alignat*}{2}
\{x_i,x_{i+1}\}&=x_ix_{i+1}-1,\quad &&\text{if }1\leqslant i\leqslant n-1,\\
\{x_i,x_j\}&=x_ix_j, \quad &&\text{if }i\geqslant 1, i+1<j\leqslant n\text{ and } j-i\text{ is odd},\\
\{x_i,x_j\}&=-x_ix_j, \quad &&\text{if }i\geqslant 1, i+1<j\leqslant n\text{ and } j-i\text{ is even}.
\end{alignat*}
This can be made formal by applying the quantization procedure described in \cite[2.1]{goodsemiclass} and \cite[III.5.4]{BGl} to the algebra obtained from $L_n^q$ on replacing the parameter $q$ by a central invertible indeterminate $Q$ and taking $h=Q-1$. The Poisson algebra obtained on equipping $F_n$ with this bracket will be denoted $F_n^L$.

Similarly, for odd $n\geq 3$,
the family $C_n^q$, $q\in \K^*$, has semiclassical limit $F_n$ with Poisson bracket given by
\begin{alignat*}{2}
\{x_i,x_{i+1}\}&=x_ix_{i+1}-1,\quad &&\text{if } 1\leqslant i\leqslant n-1,\\
\{x_i,x_j\}&=x_ix_j,\quad &&\text{if } i\geqslant 1, i+1<j\leqslant n, \text{ and }j-i\text{ is odd},\\
\{x_i,x_j\}&=-x_ix_j,\quad &&\text{if } i\geqslant 1, i+1<j< n, \text{ and }j-i\text{ is even},\\
\{x_n,x_{1}\}&=x_nx_{1}-1.&&
\end{alignat*}
The Poisson algebra obtained on equipping $F_n$ with this bracket will be denoted $F_n^C$.
There is a Poisson automorphism $\theta$ of $F_n^C$, analogous to the automorphism $\theta$ of $C_n^q$ in \ref{someautos}(iv), given by $\theta(x_i)=x_{i+1}$, where subscripts are taken modulo $n$ in $\{1,2,\dots,n\}$.

\begin{notn}\label{TandR}
We specify $n$ distinguished elements $z_1, z_2,\dots,z_n$ of $F_n^L$ by the same recurrence formula as in Section 3: $z_{-1}=0, z_0=1$ and, for $i\geq 0$, $z_{i+1}=z_ix_{i+1}-z_{i-1}$.
Note that, if $T_L$ is the localization of $F_n$ at the multiplicatively closed set generated by $z_1, z_2,\dots,z_n$ then
$T_L=\K[z_1^{\pm 1}, z_2^{\pm 1},\dots,z_n^{\pm 1}]$ because, for $1\leq j\leq n$, $x_j=(z_j+z_{j-2})z_{j-1}^{-1}$. It follows that
$z_1, z_2,\dots,z_n$ are algebraically independent. Let $S_L=\K[z_1, z_2,\dots,z_n]$ and $U=\K[z_1^{\pm 1}, z_2^{\pm 1},\dots,z_{n-1}^{\pm 1}]$ and note that $U[z_n]$ is the localization of $F_n$ at the multiplicatively closed set generated by $z_1, z_2,\dots,z_{n-1}$.
\end{notn}

The formulae listed in the following result can be deduced from Lemmas~\ref{altz}, \ref{normalelementsthm} and \ref{normalelementsthm2} and Corollary~\ref{znnormal} by passing to the semiclassical limit or by direct calculation.

\begin{lemma}\label{PformulaeL} In the Poisson algebra $F_n^L$, the following hold.
\begin{enumerate}
\item
For $1\leq i\leq n$,
$z_i = x_1\theta(z_{i-1}) - \theta^2(z_{i-2})$.
\item
For $1\leq i,j\leq n$,
\[\{x_i,z_j\}=\begin{cases}
(-1)^{i+1}x_iz_j& \text{ if }j\text{ is odd and }j<i-1,\\
0&\text{ if }j\text{ is even and }j<i-1,\\
z_{i-2}-z_{i-1}x_i=-z_i&\text{ if }j\text{ is odd and } j=i-1,\\
z_{i-2}&\text{ if }j\text{ is even  and } j=i-1,\\
0&\text{ if }j\text{ is odd and }j\geq i,\\
(-1)^{i-1}x_iz_j&\text{ if }j\text{ is even and }j\geq i.
\end{cases}\]
\item
For $1\leq i<j\leq n$,
\[\{z_i,z_j\}=\begin{cases}
0&\text{ if }j\text{ is odd or } i,j \text{ are both even},\\
z_iz_j&\text{ if }j\text{ is even and } i \text{ is odd.}
\end{cases}\]
Thus $S_L$ is a Poisson subalgebra of $F_n^L$ with the log-canonical Poisson bracket determined by the $n\times n$ skew symmetric matrix $\Lambda_n=(\lambda_{ij})$  such that, for $j>i$,
\[\lambda_{ij}=\begin{cases}1&\text{ if }j \text{ is even and }i\text{ is odd},\\
0&\text{ otherwise.}\end{cases}\]
\end{enumerate}

\end{lemma}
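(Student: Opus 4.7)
The plan is to deduce all three parts from the recursion $z_j = z_{j-1}x_j - z_{j-2}$, the initial conditions $z_{-1} = 0$ and $z_0 = 1$, and the defining Poisson brackets of $F_n^L$, by induction on $j$. Part (i) is a formal identity, while parts (ii) and (iii) follow by repeated application of the Leibniz rule.

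Part (i) is essentially free: the recursion defining $z_i$ and the homomorphism $\theta$ behave identically in $F_n^L$ and in $L_n^q$, and the argument of Lemma~\ref{altz} uses only these ingredients, not noncommutativity. The same three-line induction therefore carries over verbatim, the base cases $i = 1, 2$ being $z_1 = x_1$ and $z_2 = x_1x_2 - 1$.

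For part (ii), fix $i$ and induct on $j$ using the Leibniz expansion
\begin{equation*}
\{x_i, z_j\} = \{x_i, z_{j-1}\}\, x_j + z_{j-1}\{x_i, x_j\} - \{x_i, z_{j-2}\}.
\end{equation*}
The initial cases $j = 0, 1$ follow from $z_0 = 1$ and $z_1 = x_1$. In the inductive step the possibilities for $\{x_i, x_j\}$ force the same case split as in Lemma~\ref{normalelementsthm}, namely $j < i-1$, $j = i-1$, $j = i$, $j = i+1$, and $j \geq i+2$; in each case the inductive hypothesis applied to $z_{j-1}$ and $z_{j-2}$ collapses the right-hand side to the claimed expression after routine cancellation. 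The delicate cases are $j = i \pm 1$, where the constant term in $\{x_i, x_{i\pm 1}\} = x_i x_{i\pm 1} - 1$ is responsible for the $-z_i$ or $z_{i-2}$ correction, exactly as in the passage from Lemma~\ref{normalelementsthm} to Corollary~\ref{normalelementsthm2}. Part (iii) then follows by fixing $i$ and inducting on $j > i$ via the analogous expansion
\begin{equation*}
\{z_i, z_j\} = \{z_i, z_{j-1}\}\, x_j + z_{j-1}\{z_i, x_j\} - \{z_i, z_{j-2}\},
\end{equation*}
substituting part (ii) (with antisymmetry $\{z_i, x_j\} = -\{x_j, z_i\}$) for $\{z_i, x_j\}$ and the inductive hypothesis for the other two brackets; the log-canonical shape $\lambda_{ij} z_i z_j$ emerges from the collapse of the boundary terms according to the parity of $i$ and $j$.

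An attractive alternative, suggested by the remark preceding the lemma, is to replace $q$ by a central indeterminate $Q$ to obtain a $\K[Q, Q^{-1}]$-algebra $L_n^Q$ which is free on the standard monomials and satisfies $L_n^Q/(Q-1)L_n^Q \simeq F_n^L$ as Poisson algebras, with bracket $\{\bar a, \bar b\} = \overline{[a,b]/(Q-1)}$. Every identity in Lemma~\ref{normalelementsthm}, Corollary~\ref{normalelementsthm2}, and Corollary~\ref{znnormal} then rewrites as a statement about $[x_i, z_j]$ or $[z_i, z_j]$, and the scalars $(Q^c - 1)/(Q - 1)$ specialize to $c$ at $Q = 1$, producing exactly the $\pm 1$ coefficients and the $z_i$-corrections appearing in (ii) and (iii). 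The main obstacle in either approach is purely bookkeeping: tracking the parities of $i$ and $j$, and in particular handling the boundary $j = i \pm 1$ carefully so that the $-1$ in $\{x_i, x_{i\pm 1}\} = x_i x_{i\pm 1} - 1$ propagates correctly through the induction.
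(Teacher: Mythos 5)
Your proposal is correct and matches the paper's own (very brief) justification, which states exactly that the formulae follow from Lemmas~\ref{altz}, \ref{normalelementsthm}, \ref{normalelementsthm2} and Corollary~\ref{znnormal} either by passing to the semiclassical limit or by direct calculation; you simply flesh out both of these routes, and the inductive Leibniz-rule computation and the $(Q^c-1)/(Q-1)\to c$ specialization are each sound.
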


\begin{lemma}\label{PspecLprelim}
If $n$ is odd then the  ideals $(z_n-\lambda)F_n$, $\lambda\in \K$, are Poisson prime ideals of $F_n^L$.
If $n$ is even then $z_nF_n$ and the ideals $z_nF_n+(z_{n-1}-\lambda)F_n$, $\lambda\in \K^*$ are Poisson prime ideals of $F_n^L$.
\end{lemma}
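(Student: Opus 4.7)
My plan has two ingredients: verify the Poisson-ideal condition directly from Lemma~\ref{PformulaeL}(ii), and then establish primality by a coprimality/irreducibility argument together with an elimination step in the even case.

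For the Poisson condition, I invoke Lemma~\ref{PformulaeL}(ii) directly. When $n$ is odd, applying the lemma with $j=n$ (odd) and any $i\le n$ falls into the case ``$j$ odd and $j\ge i$'', so $\{x_i,z_n\}=0$; hence $z_n$ is Poisson central and $(z_n-\lambda)F_n$ is a Poisson ideal. When $n$ is even, the case ``$j$ even and $j\ge i$'' gives $\{x_i,z_n\}=(-1)^{i-1}x_iz_n\in z_nF_n$, so $z_nF_n$ is a Poisson ideal. Taking instead $j=n-1$ (odd) yields $\{x_i,z_{n-1}\}=0$ for $i\le n-1$ and $\{x_n,z_{n-1}\}=-z_n\in z_nF_n$, so $z_{n-1}$ is Poisson central modulo $z_nF_n$, and $z_nF_n+(z_{n-1}-\lambda)F_n$ is a Poisson ideal for every $\lambda\in\K$.

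For primality I first show, by induction on $i$, that $\gcd(z_i,z_{i-1})=1$ in $\K[x_1,\dots,x_i]$: the recursion $z_i=z_{i-1}x_i-z_{i-2}$ forces any common divisor of $z_i$ and $z_{i-1}$ to divide $z_{i-2}$, which contradicts the inductive hypothesis applied to the pair $(z_{i-1},z_{i-2})$. From this I deduce $\gcd(z_{n-1},z_{n-2}+\lambda)=1$ in $\K[x_1,\dots,x_{n-1}]$: since $z_{n-2}+\lambda$ is independent of $x_{n-1}$ while $z_{n-1}=z_{n-2}x_{n-1}-z_{n-3}$ has degree one in $x_{n-1}$, any common divisor lies in $\K[x_1,\dots,x_{n-2}]$ and must divide both $z_{n-2}$ and $z_{n-3}$, hence is a unit. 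Consequently $z_n-\lambda=z_{n-1}x_n-(z_{n-2}+\lambda)$, viewed as a polynomial of degree one in $x_n$ over $\K[x_1,\dots,x_{n-1}]$, is primitive and therefore irreducible in the UFD $F_n$, so $(z_n-\lambda)F_n$ is a prime ideal. In particular $z_nF_n$ is prime when $n$ is even (the case $\lambda=0$).

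For the remaining ideal $z_nF_n+(z_{n-1}-\lambda)F_n$ with $n$ even and $\lambda\in\K^*$, modulo this ideal we have $z_{n-1}=\lambda$ and $z_n=0$, so the relation $z_{n-1}x_n=z_n+z_{n-2}$ reduces to $\lambda x_n=z_{n-2}$ and allows $x_n$ to be eliminated as $\lambda^{-1}z_{n-2}$. This produces mutually inverse $\K$-algebra homomorphisms between $F_n/(z_nF_n+(z_{n-1}-\lambda)F_n)$ and $F_{n-1}/(z_{n-1}-\lambda)F_{n-1}$; since $n-1$ is odd, the preceding paragraph shows the latter is a domain, so the former is a domain and the ideal is prime. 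I expect the main obstacle to be the coprimality step $\gcd(z_{n-1},z_{n-2}+\lambda)=1$; the remaining verifications are routine.
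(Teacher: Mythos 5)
Your proposal is correct and follows essentially the same route as the paper: the Poisson-ideal property is read off from Lemma~\ref{PformulaeL}(ii), primality of $(z_n-\lambda)F_n$ comes from irreducibility of $z_{n-1}x_n-z_{n-2}-\lambda$, viewed as a degree-one polynomial in $x_n$ over the UFD $F_{n-1}$, and the even case with $\lambda\neq 0$ is handled by eliminating $x_n\equiv\lambda^{-1}z_{n-2}$ to identify the quotient with $F_{n-1}/(z_{n-1}-\lambda)F_{n-1}$. The only cosmetic difference is that you certify irreducibility via primitivity, using the coprimality of consecutive $z_i$'s, whereas the paper runs its induction on irreducibility together with the degree observation that $z_{n-2}+\lambda\notin z_{n-1}F_{n-1}$.
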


\begin{proof}
It follows easily from Lemma~\ref{PformulaeL}(ii) that the listed ideals are all Poisson.

As $z_n-\lambda=z_{n-1}x_n-z_{n-2}-\lambda$ and, by degree, $z_{n-2}-\lambda\notin z_{n-1}F_{n-1}$, it is readily checked, by induction, that
$z_n-\lambda$ is irreducible in $F_n$. It follows, as $F_n$ is a UFD, that $(z_n-\lambda)F_n$ is prime for all $\lambda$. Thus  $(z_n-\lambda)F_n$ is Poisson prime for all $\lambda$ if $n$ is odd and when $\lambda=0$ if $n$ is even.

Suppose that $n$ is even and that $\lambda\neq 0$. As $x_n\equiv \lambda^{-1}z_{n-2}\bmod (z_nF_n+(z_{n-1}-\lambda)F_n)$, there is a Poisson isomorphism between $F_n/(z_nF_n+(z_{n-1}-\lambda)F_n)$ and $F_{n-1}/(z_{n-1}-\lambda)F_{n-1}$ so $z_nF_n+(z_{n-1}-\lambda)F_n$ is Poisson prime.
\end{proof}

\begin{prop}\label{PspecL}
For $n\geq 2$, let $S_L$, $T_L$ and $U$ be as in Notation~\ref{TandR}.

\noindent {\rm (i)} If $n$ is even then $T_L$
is Poisson simple.

\noindent {\rm (ii)} If $n$ is odd then $T_L=U[z_n^{\pm 1}]$, $U$  is a Poisson simple
subalgebra of $T_L$ and $z_n$ is Poisson central.

\noindent {\rm (iii)} The non-zero proper Poisson prime ideals of $F_n^L$ are the ideals $(z_n-\lambda)F_n$, $\lambda\in \K$, if $n$ is odd,
and $z_n F_n$ and the ideals $z_nF_n+(z_{n-1}-\lambda)F_n$, $\lambda\in \K^*$, if $n$ is even.
\end{prop}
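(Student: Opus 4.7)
The plan is to treat parts (i) and (ii) as an application of Proposition~\ref{Psimple}, and then to reduce (iii) to a classification of Poisson primes of the localization at $\mathcal{Z}_{n-1}$ using a Poisson analogue of Proposition~\ref{LzL}. For (i) and (ii), I would begin by reading off the $n\times n$ skew-symmetric matrix $\Lambda_n$ from Lemma~\ref{PformulaeL}(iii) and verifying, for each parity of $n$, that the linear system $\sum_j m_j\lambda_{ij}=0$, $1\le i\le n$, forces every $m_j=0$. Running through the even-indexed rows $i$ picks off the odd $m_j$'s successively (from $i=2$ yielding $m_1=0$, $i=4$ yielding $m_3=0$, and so on), while the odd-indexed rows run the even $m_j$'s the other way (starting with the largest even index and working down). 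When $n$ is even this applies directly to all of $T_L$, giving (i). When $n$ is odd, the last row/column of $\Lambda_n$ is zero, so $z_n$ is Poisson central and $T_L=U[z_n^{\pm 1}]$ is immediate; the same rank computation then applies to the $(n-1)\times(n-1)$ submatrix (with $n-1$ even) to give Poisson simplicity of $U$.

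For (iii), the essential step is a Poisson analogue of Proposition~\ref{LzL}:

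\begin{lemma*}
Let $P$ be a proper Poisson ideal of $F_n^L$. Then $z_j\notin P$ for $1\le j\le n-1$.
\end{lemma*}

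I would prove this by a descending induction on $j$, exploiting the two ``exchange'' formulas in Lemma~\ref{PformulaeL}(ii): $\{x_{j+1},z_j\}=-z_{j+1}$ when $j$ is odd and $\{x_{j+1},z_j\}=z_{j-1}$ when $j$ is even. In the odd case, $z_j\in P$ implies $z_{j+1}\in P$, and the recurrence $z_{j+1}=z_jx_{j+1}-z_{j-1}$ then forces $z_{j-1}\in P$; the case $j=1$ then gives $z_2\in P$ and $z_0=z_1x_2-z_2=1\in P$. The even case reduces to the odd case after one step. (Note $j+1\le n$ is needed only for the use of the bracket, which is fine.)

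Granted this lemma, the rest is standard localization bookkeeping. Let $A$ denote the localization of $F_n$ at $\mathcal{Z}_{n-1}$, which equals $U[z_n]$ by the identity $x_j=(z_j+z_{j-2})z_{j-1}^{-1}$. The Poisson structure extends uniquely to $A$, and the Lemma guarantees that every Poisson prime of $F_n^L$ extends to a Poisson prime of $A$ via $P\mapsto PA$, with $PA\cap F_n=P$ by standard commutative localization. So (iii) reduces to classifying the Poisson primes of $A=U[z_n]$. When $n$ is odd, $z_n$ is Poisson central in $A$ and $U$ is Poisson simple with Poisson centre $\K$ by (ii), so Lemma~\ref{simpleplust} gives the Poisson primes of $A$ as $0$ and $(z_n-\lambda)A$, $\lambda\in\K$; contracting back gives the ideals $(z_n-\lambda)F_n$, already shown to be Poisson prime in Lemma~\ref{PspecLprelim}. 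When $n$ is even, $z_n$ is Poisson normal in $A$ (since $\{x_i,z_n\}\in z_n\,A$ by Lemma~\ref{PformulaeL}(ii)), and the Poisson prime $z_nA$ has $A/z_nA\simeq U$. Here $U=U'[z_{n-1}^{\pm 1}]$ where $U'=\K[z_1^{\pm 1},\dots,z_{n-2}^{\pm 1}]$ is Poisson simple by the even case of Proposition~\ref{Psimple} (applied to the $(n-2)\times(n-2)$ submatrix of $\Lambda_n$) and $z_{n-1}$ is Poisson central in $U$. A Laurent variant of Lemma~\ref{simpleplust} gives the Poisson primes of $U$ as $0$ and $(z_{n-1}-\lambda)U$, $\lambda\in\K^*$. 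Lifting these back through $A\to A/z_nA\simeq U$ and contracting to $F_n$ yields exactly $z_nF_n$ and $z_nF_n+(z_{n-1}-\lambda)F_n$, $\lambda\in\K^*$. Simplicity of $T_L$ from (i) also confirms that no further Poisson primes of $A$ (disjoint from $\{z_n^k\}$) appear beyond $0$ itself.

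The main obstacle is the Key Lemma: the Poisson brackets in Lemma~\ref{PformulaeL}(ii) only move the index on $z_j$ by one step at a time, and (unlike in the noncommutative case where an entire tower $z_0,\dots,z_{i-1}$ automatically lies in any prime containing $z_i$) here the descent requires combining each bracket step with the recurrence $z_{j+1}=z_jx_{j+1}-z_{j-1}$. A minor subsidiary point that also deserves care is the Laurent variant of Lemma~\ref{simpleplust} used in the even case, but this follows by the same argument as Lemma~\ref{simpleplust} once one notes that a non-zero element of least $|t|$-degree in a Poisson prime can be taken monic in $t$ after multiplying by a unit.
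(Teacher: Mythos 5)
Your proposal is correct and follows essentially the same route as the paper: parts (i) and (ii) via Proposition~\ref{Psimple} with the same order of elimination of the $m_j$'s, and part (iii) via the key observation from Lemma~\ref{PformulaeL}(ii) that a Poisson prime containing some $z_m$ with $m<n$ descends to contain $z_0=1$, followed by localization at $\mathcal{Z}_{n-1}$, Lemma~\ref{simpleplust} in the odd case, and Poisson simplicity of $T_L$ forcing $z_n\in PU[z_n]$ in the even case. The only cosmetic difference is that you handle the even case by a Laurent variant of Lemma~\ref{simpleplust} for $U=U'[z_{n-1}^{\pm 1}]$, where the paper simply reduces modulo $z_n$ and appeals to the already-established odd case; both are sound.
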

\begin{proof}
(i) This follows from Lemma~\ref{PformulaeL}(iii) and Proposition~\ref{Psimple}. In the application of the latter, which is analogous to the proof of Lemma~\ref{Tsimpleorpolysimple}, the rows of $\Lambda_n$ should be considered in the order
$2, 4, 6,\dots,n, n-1,n-3, n-5,\dots,1$.

(ii) If $n$ is odd, $z_n$ is Poisson central by Lemma~\ref{PformulaeL}(iii)  and $U$ is  Poisson simple by the even case.

(iii) By Lemma~\ref{PspecLprelim}, the listed ideals are Poisson prime.
Let $P$ be a non-zero Poisson prime ideal of $F_n$. By Lemma~\ref{PformulaeL}(ii), if $z_m\in P$ for some $m<n$ then  $z_j\in P$ for $0\leq j\leq m$ and, in particular $1=z_0\in P$. So $z_m\notin P$ for $m<n$ and $PU[z_n]$ is a Poisson prime ideal of $U[z_n]$, which, as observed in \ref{TandR}, is the localization of $F_n$ at the multiplicatively closed set generated by $z_1, z_2,\dots,z_{n-1}$.

Suppose that $n$ is odd. By (ii), $U$ is Poisson simple so, by
Lemma~\ref{simpleplust}, $PU[z_n]=(z_n-\lambda)U[z_n]$ for some $\lambda\in \K$. As $(z_n-\lambda)F_n$ is prime in $F_n$ it follows from standard localization theory, for example \cite[Theorem 5.32]{Sha}, that $P=(z_n-\lambda)F_n$.

Now suppose that $n$ is even. By (ii), $T_L$, which is the localization of $U[z_n]$ at the powers of $z_n$, is Poisson simple so $z_n\in PU[z_n]$.
If $PU[z_n]=z_nU[z_n]$ then $P=z_nF_n$ by \cite[Theorem 5.32]{Sha}, so we can assume that $PU[z_n]\supset z_nU[z_n]$. As $U[z_n]/z_nU[z_n]\simeq U$, it follows from the odd case that $PU[z_n]=z_nU[z_n]+(z_{n-1}-\lambda)U[z_n]$ for some $\lambda\in \K^*$. By \cite[Theorem 5.32]{Sha}, $P=z_nF_n+(z_{n-1}-\lambda)F_n$.
\end{proof}

We now turn our attention to the Poisson algebra $F_n^C$. Note that, for $1\leq i<n$, the Poisson subalgebra $\K[x_1,\dots,x_i]$  coincides with $F_i^L$. The Poisson brackets among the elements
$z_1,z_2,\dots,z_{n-1}$ and $x_1,x_2,\dots,x_{n-1}$ are as before.
The following Lemma can be deduced from Lemma~\ref{xzC} by passing to the semiclassical limit or by direct calculation.

\begin{lemma}\label{xzPC}
Let $n\geq 3$ be odd and let $\Omega=z_{n-1}x_n-z_{n-2}-\theta(z_{n-2})\in F_n$.
\begin{enumerate}
\item For $1\leq j\leq n-2$,
\[\{x_n,z_j\}=\begin{cases}
z_jx_n-\theta(z_{j-1})&\text{ if }j\text{ is odd},\\
-\theta(z_{j-1})&\text{ if }j\text{ is even}.
\end{cases}\]
\item $\{x_n,z_{n-1}\}=z_{n-2}-\theta(z_{n-2})$.
\item $\theta(\Omega)=\Omega$.
\item $\Omega$ is Poisson central in $F_n^C$.
\end{enumerate}
\end{lemma}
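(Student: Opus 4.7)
The plan is to mirror the quantum proof of Lemma~\ref{xzC}, either by passing to the semiclassical limit $q\to 1$ of that lemma (as in the procedure of Remark~\ref{semiclass}, treating $q-1$ as the deformation parameter) or by direct induction in $F_n^C$. I will sketch the direct route, as the translations between the two are routine.

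For (i) and (ii), I would proceed by induction on $j$ using the recursion $z_j = z_{j-1}x_j - z_{j-2}$ and the Leibniz rule
\[ \{x_n, z_j\} = \{x_n, z_{j-1}\}\,x_j + z_{j-1}\{x_n, x_j\} - \{x_n, z_{j-2}\}. \]
The base cases are immediate from $z_{-1}=0$, $z_0 = 1$, $z_1 = x_1$ and the cyclic bracket $\{x_n, x_1\} = x_nx_1 - 1$. The inductive step splits by the parity of $j$ and uses that $\{x_n, x_j\} = \pm\,x_nx_j$ for $1 < j \leq n-2$; the exceptional bracket $\{x_n, x_{n-1}\} = x_n x_{n-1} - 1$ is what produces the extra correction $z_{n-2} - \theta(z_{n-2})$ on the right-hand side of (ii) once $j = n-1$ is reached.

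For (iii), I would repeat the chain of substitutions used in Lemma~\ref{xzC}(iii): apply $\theta$ to the defining expression of $\Omega$, use (ii) to rewrite $\theta(x_n z_{n-1})$ in the form $x_1\,\theta(z_{n-1}) + \text{(lower terms)}$, and collapse back to $\Omega$ via the identity $z_n = x_1\theta(z_{n-1}) - \theta^2(z_{n-2})$ from Lemma~\ref{altz}. Each $q$ in the quantum calculation specialises to $1$, and the Poisson facts used are exactly (i), (ii) and the recursion for $z_{n-1}$.

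The main obstacle is (iv), Poisson centrality of $\Omega$. My plan is to first reduce the problem to the single bracket $\{x_n, \Omega\}$: the map $\theta$ is a Poisson automorphism of $F_n^C$ (evident from the cyclic symmetry of the defining brackets), it satisfies $\theta^i(x_n) = x_i$ for $1 \leq i \leq n$, and by (iii) it fixes $\Omega$; hence $\{x_i, \Omega\} = \theta^i(\{x_n, \Omega\})$. To evaluate $\{x_n, \Omega\}$ I would expand by Leibniz, substitute (i) at $j = n-2$ (which is odd) and (ii), and obtain a residual term $\{x_n, \theta(z_{n-2})\}$. Because $\theta$ is a Poisson automorphism, this equals $\theta(\{x_{n-1}, z_{n-2}\})$, which by Lemma~\ref{PformulaeL}(ii) equals $-\theta(z_{n-1})$. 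Expanding $\theta(z_{n-1}) = \theta(z_{n-2})\,x_n - \theta(z_{n-3})$ via the recursion, the residual terms telescope and the answer collapses to $0$. This last cancellation is actually cleaner than in the quantum case, since $F_n^C$ is commutative and the term $x_n\,\theta(z_{n-2}) - \theta(z_{n-2})\,x_n$ that would appear non-trivially in $C_n^q$ vanishes here automatically.
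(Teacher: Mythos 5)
Your proposal is correct and is essentially the paper's own route: the paper gives no separate argument for Lemma~\ref{xzPC}, saying only that it can be deduced from Lemma~\ref{xzC} by passing to the semiclassical limit or by direct calculation, and your induction for (i)--(ii) via the recursion $z_j=z_{j-1}x_j-z_{j-2}$, the collapse of (iii) via Lemma~\ref{PformulaeL}(i), and the evaluation of $\{x_n,\Omega\}$ using the Poisson automorphism $\theta$ together with Lemma~\ref{PformulaeL}(ii) (the semiclassical counterpart of the identity \eqref{xnthetaznminus2} in the quantum proof) constitute exactly that direct calculation. One small slip: the exceptional bracket is $\{x_{n-1},x_n\}=x_{n-1}x_n-1$, so $\{x_n,x_{n-1}\}=1-x_nx_{n-1}$ rather than $x_nx_{n-1}-1$ as you wrote; with the correct sign the $j=n-1$ step does produce $z_{n-2}-\theta(z_{n-2})$ as you claim, so the structure of your argument is unaffected.
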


One might expect that, by analogy with $F_n^L$ when $n$ is odd, the non-zero Poisson prime ideals of $F_n^C$ would be the ideals $(\Omega-\lambda)F_n$. However there are two exceptional non-zero Poisson primes $M_\lambda$, $\lambda=\pm 1$, such that
$F_n^C/M_\lambda\simeq F_{n-2}^L/(z_{n-2}-\lambda)F_{n-2}^L$. To establish the existence of these, we shall need to  calculate  $\{z_{n-3},\theta(z_{n-3})\}$.

\begin{lemma}\label{thetazn-3}
Let $n\geq 3$ be odd. The following hold in $F_n^C$.
\begin{enumerate}
\item $\{x_1,\theta(z_{n-3})\}=-\theta^2(z_{n-4})=-x_1\theta(z_{n-3})+z_{n-2}$.
\item Let $2\leq i\leq n-2$. Then $\{x_i,\theta(z_{n-3})\}=(-1)^i x_i \theta(z_{n-3})$.
\item Let $0\leq i\leq n-3$. Then
\[\{z_i,\theta(z_{n-3})\}=\begin{cases} -\theta^{i+1}(z_{n-i-3}) &\text{ if } i\text{ is odd},\\
z_i\theta(z_{n-3})-\theta^{i+1}(z_{n-i-3}) & \text{ if } i\text{ is even}.
\end{cases}\]
\item $\{z_{n-3},\theta(z_{n-3})\}=z_{n-3}\theta(z_{n-3})-1$.

\end{enumerate}
\end{lemma}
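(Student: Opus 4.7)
The key tool is that $\theta$ is a Poisson automorphism of $F_n^C$, so for any $r\in F_n^C$ and any index $i$,
\[
\{x_i,\theta(r)\}=\theta(\{\theta^{-1}(x_i),r\})=\theta(\{x_{i-1},r\}),
\]
where subscripts on the $x_j$ are read modulo $n$ in $\{1,2,\dots,n\}$ (so $x_0=x_n$). I would exploit this identity and Lemma~\ref{PformulaeL}(i) in the form $\theta^{i-1}(z_{n-i-1})=x_i\,\theta^i(z_{n-i-2})-\theta^{i+1}(z_{n-i-3})$ (obtained by applying $\theta^{i-1}$ to the identity $z_{n-i-1}=x_1\theta(z_{n-i-2})-\theta^2(z_{n-i-3})$), together with Lemmas~\ref{xzPC} and~\ref{PformulaeL}(ii).

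For (i), taking $r=z_{n-3}$ and $i=1$ gives $\{x_1,\theta(z_{n-3})\}=\theta(\{x_n,z_{n-3}\})$. Since $n$ is odd, $j=n-3$ is even, so Lemma~\ref{xzPC}(i) yields $\{x_n,z_{n-3}\}=-\theta(z_{n-4})$, whence $\{x_1,\theta(z_{n-3})\}=-\theta^2(z_{n-4})$. The second equality in (i) is just Lemma~\ref{PformulaeL}(i) taken at index $n-2$. For (ii), with $2\leq i\leq n-2$ the identity gives $\{x_i,\theta(z_{n-3})\}=\theta(\{x_{i-1},z_{n-3}\})$. Here $1\leq i-1\leq n-3$ and $n-3$ is even with $n-3\geq i-1$, so Lemma~\ref{PformulaeL}(ii) (the case $j$ even, $j\geq i$) yields $\{x_{i-1},z_{n-3}\}=(-1)^i x_{i-1}z_{n-3}$, and applying $\theta$ gives (ii).

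For (iii), I would induct on $i$. The base cases $i=0$ (trivial, since $z_0=1$) and $i=1$ (part (i)) are immediate. For the inductive step with $2\leq i\leq n-3$, the recursion $z_i=z_{i-1}x_i-z_{i-2}$ and the Leibniz rule give
\[
\{z_i,\theta(z_{n-3})\}=z_{i-1}\{x_i,\theta(z_{n-3})\}+\{z_{i-1},\theta(z_{n-3})\}x_i-\{z_{i-2},\theta(z_{n-3})\}.
\]
Substituting (ii) into the first bracket and the inductive hypothesis into the other two, the contributions proportional to $\theta(z_{n-3})$ collect as $(z_{i-1}x_i-z_{i-2})\theta(z_{n-3})=z_i\theta(z_{n-3})$ when $i$ is even, and cancel when $i$ is odd; the remaining terms combine into $\theta^{i-1}(z_{n-i-1})-\theta^i(z_{n-i-2})x_i$, which equals $-\theta^{i+1}(z_{n-i-3})$ by the displayed identity above (noting that $x_i$ commutes with $\theta^i(z_{n-i-2})$ in the polynomial algebra). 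Finally, (iv) is the case $i=n-3$ of (iii): since $n-3$ is even and $\theta^{n-2}(z_0)=1$, the even formula reduces to $z_{n-3}\theta(z_{n-3})-1$. The main obstacle is the careful index bookkeeping in the inductive step; there is no conceptual difficulty.
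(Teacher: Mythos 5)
Your proposal is correct and follows essentially the same route as the paper's proof: parts (i) and (ii) are obtained by applying the Poisson automorphism $\theta$ to the brackets supplied by Lemma~\ref{xzPC}(i) and Lemma~\ref{PformulaeL}(ii), part (iii) by induction on $i$ using the recursion $z_i=z_{i-1}x_i-z_{i-2}$ together with Lemma~\ref{PformulaeL}(i), and part (iv) as the case $i=n-3$. The only differences are cosmetic: you make the $\theta$-equivariance identity explicit and run the induction at index $i$ rather than $i+1$.
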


\begin{proof}
(i) By Lemma~\ref{xzPC}(i), $\{x_n,z_{n-3}\}=-\theta(z_{n-4})$. The result follows by applying $\theta$ and using Lemma~\ref{altz}.

(ii) By Lemma~\ref{PformulaeL}(ii), $\{x_{i-1},z_{n-3}\}=(-1)^ix_{i-1}z_{n-3}$ and the result again follows on applying $\theta$.

(iii) The result is true when $i=0$, in which case $z_i=1$, and, by (i), when $i=1$, in which case $z_i=x_1$.
Let $i\geq 1$ and suppose that the result holds for $i$ and for $i-1$. If $i$ is even then
\begin{alignat*}{2}
&\{z_{i+1},\theta(z_{n-3})\}&&\\
=&\{z_ix_{i+1}-z_{i-1},\theta(z_{n-3})\}&&\\
=&-z_ix_{i+1}\theta(z_{n-3})+z_ix_{i+1}\theta(z_{n-3})-\theta^{i+1}(z_{n-i-3})x_{i+1}+\theta^i(z_{n-i-2})&&\;\text{(by (ii) and induction)}\\
=&-\theta^i(\theta(z_{n-i-3})x_1-z_{n-i-2})&&\\
=&-\theta^i(\theta^2(z_{n-i-4}))&& \;\text{(by \ref{PformulaeL}(i))}\\
=&-\theta^{i+2}(z_{n-i-4}),&&
\end{alignat*}
which is the result for $i+1$ in this case. If $i$ is odd then
\begin{alignat*}{2}
&\{z_{i+1},\theta(z_{n-3})\}&&\\
=&\{z_ix_{i+1}-z_{i-1},\theta(z_{n-3})\}&&\\
=&-x_{i+1}\theta^{i+1}(z_{n-i-3})+z_ix_{i+1}\theta(z_{n-3})-z_{i-1}\theta(z_{n-3})+\theta^i(z_{n-i-2})\;&& \text{(by (ii) and induction)}\\
=&\theta^i(-x_1\theta(z_{n-i-3})+z_{n-i-2})+\theta(z_{n-3})(z_ix_{i+1}-z_{i-1})&&\\
=&-\theta^{i}(\theta^2(z_{n-i-4}))+\theta(z_{n-3})z_{i+1}&& \text{(by \ref{PformulaeL}(i))}\\
=&-\theta^{i+2}(z_{n-i-4})+\theta(z_{n-3})z_{i+1},&&
\end{alignat*}
which is again the result for $i+1$ in this case. The result follows by induction.

(iv) This is the special case of (iii) when $i=n-3$.
\end{proof}

\begin{lemma}\label{exceptions}
\begin{enumerate}
\item Let $\lambda=\pm 1$, let $\tau_\lambda:F_n\rightarrow F_{n-2}$ be the $\K$-algebra homomorphism such that $\tau_\lambda(x_i)=x_i$ for $1\leq i\leq n-2$, $\tau_\lambda(x_{n-1})=\lambda z_{n-3}$ and $\tau_\lambda(x_{n})=\lambda \theta(z_{n-3})$, let $\pi_\lambda: F_{n-2}\rightarrow F_{n-2}/(z_{n-2}-\lambda)F_{n-2}$ be the canonical epimorphism and let $\rho_\lambda=\pi_\lambda\circ \tau_\lambda$. Then $\rho_\lambda:F_n^C\rightarrow F_{n-2}^L/(z_{n-2}-\lambda)F_{n-2}^L$ is a Poisson homomorphism.
\item For $\lambda=\pm 1$, let $M_\lambda=\ker \rho_\lambda$. Then $M_\lambda$ is a Poisson prime ideal of $F_n^C$ and is maximal as a Poisson ideal of $F_n^C$. As an ideal of $F_n$, $M_\lambda$ is generated by $z_{n-2}-\lambda$, $x_{n-1}-\lambda z_{n-3}$ and $x_{n}-\lambda \theta z_{n-3}$. Also $z_{n-1}\in M_\lambda$, $\theta(z_{n-2})-\lambda\in M_\lambda$ and $\Omega+2\lambda\in M_\lambda$.
\end{enumerate}
\end{lemma}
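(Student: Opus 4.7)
The plan is to prove (i) by a direct case-by-case verification of the Poisson bracket condition on generators, and then deduce (ii) from the isomorphism $F_n^C/M_\lambda\simeq F_{n-2}^L/(z_{n-2}-\lambda)F_{n-2}^L$ induced by $\rho_\lambda$.

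For (i), it suffices to check $\tau_\lambda(\{x_i,x_j\})\equiv\{\tau_\lambda(x_i),\tau_\lambda(x_j)\}\pmod{(z_{n-2}-\lambda)F_{n-2}}$ on each pair of generators. Pairs with $i,j\leq n-2$ are immediate since $\tau_\lambda$ restricts to the identity. Pairs $(i,n-1)$ reduce to $\lambda\{x_i,z_{n-3}\}$ and are handled by Lemma~\ref{PformulaeL}(ii); the case $(n-2,n-1)$ additionally uses the recursion $z_{n-2}+z_{n-4}=z_{n-3}x_{n-2}$ together with $\lambda^2=1$. Pairs $(i,n)$ with $i\geq 2$ reduce to $\lambda\{x_i,\theta(z_{n-3})\}$ via Lemma~\ref{thetazn-3}(ii); the pair $(1,n)$ uses Lemma~\ref{thetazn-3}(i) combined with Lemma~\ref{PformulaeL}(i), which gives $x_1\theta(z_{n-3})=z_{n-2}+\theta^2(z_{n-4})$. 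The pivotal case is $(n-1,n)$, where Lemma~\ref{thetazn-3}(iv) provides $\{z_{n-3},\theta(z_{n-3})\}=z_{n-3}\theta(z_{n-3})-1$, which after using $\lambda^2=1$ matches $\tau_\lambda(x_{n-1}x_n-1)=z_{n-3}\theta(z_{n-3})-1$ on the nose.

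For (ii), $\rho_\lambda$ is surjective since $x_{n-1},x_n$ map into $F_{n-2}$, and its image is a domain because $z_{n-2}-\lambda$ is irreducible in $F_{n-2}$ (as noted in the proof of Lemma~\ref{PspecLprelim}); hence $M_\lambda$ is a Poisson prime with $F_n^C/M_\lambda\simeq F_{n-2}^L/(z_{n-2}-\lambda)F_{n-2}^L$. Maximality of $M_\lambda$ as a Poisson ideal reduces to Poisson simplicity of this quotient. Since $n-2$ is odd, Proposition~\ref{PspecL}(iii) identifies the proper nonzero Poisson primes of $F_{n-2}^L$ as the pairwise incomparable ideals $(z_{n-2}-\mu)F_{n-2}$, $\mu\in\K$; combined with the standard characteristic-zero fact that the radical of a Poisson ideal is Poisson (hence an intersection of Poisson primes), there is no proper Poisson ideal strictly between $(z_{n-2}-\lambda)F_{n-2}$ and $F_{n-2}^L$, yielding the desired Poisson simplicity. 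For the generating set, let $I$ be the ideal of $F_n$ generated by the three listed elements; clearly $I\subseteq M_\lambda$. The composition $F_{n-2}\hookrightarrow F_n\twoheadrightarrow F_n/I$ is surjective because $x_{n-1}$ and $x_n$ are congruent modulo $I$ to elements of $F_{n-2}$, and applying $\tau_\lambda$ to any witness of ideal membership shows its kernel is $(z_{n-2}-\lambda)F_{n-2}$, so $F_n/I\simeq F_{n-2}/(z_{n-2}-\lambda)F_{n-2}\simeq F_n/M_\lambda$, forcing $I=M_\lambda$.

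The remaining containments are direct computations. Applying $\tau_\lambda$ to $z_{n-1}=z_{n-2}x_{n-1}-z_{n-3}$ gives $z_{n-3}(\lambda z_{n-2}-1)\equiv z_{n-3}(\lambda^2-1)=0\pmod{M_\lambda}$, so $z_{n-1}\in M_\lambda$. Expanding $\theta(z_{n-2})=\theta(z_{n-3})x_{n-1}-\theta(z_{n-4})$, applying $\tau_\lambda$, and using the commutative specialization of Lemma~\ref{w} at $i=n-2$, namely the identity $z_{n-3}\theta(z_{n-3})-z_{n-2}\theta(z_{n-4})=1$ in $F_{n-2}$, yields $\rho_\lambda(\theta(z_{n-2}))=\lambda$, so $\theta(z_{n-2})-\lambda\in M_\lambda$. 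Substituting these into $\Omega=z_{n-1}x_n-z_{n-2}-\theta(z_{n-2})$ gives $\rho_\lambda(\Omega)=-2\lambda$, hence $\Omega+2\lambda\in M_\lambda$. The chief obstacle is establishing the Poisson simplicity of the quotient in (ii), since it leans on the characteristic-zero theorem that radicals of Poisson ideals are Poisson; everything else is bookkeeping and routine use of the formulas developed in earlier lemmas.
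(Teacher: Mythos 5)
Your proposal is correct and follows essentially the same route as the paper: the same generator-by-generator check for (i) using Lemmas~\ref{PformulaeL} and \ref{thetazn-3}, and for (ii) the same use of surjectivity of $\rho_\lambda$ together with Proposition~\ref{PspecL} for primality and maximality, plus the observation that $F_n$ is polynomial over $F_{n-2}$ in $x_{n-1}-\lambda z_{n-3}$ and $x_n-\lambda\theta(z_{n-3})$ for the generating set. The only deviation is that for $\theta(z_{n-2})-\lambda\in M_\lambda$ you invoke the $q=1$ form of Lemma~\ref{w} (which does hold, by the same induction in the commutative setting), whereas the paper gets it more directly from $\{x_n,z_{n-1}\}=z_{n-2}-\theta(z_{n-2})$ (Lemma~\ref{xzPC}(ii)) and the fact that $M_\lambda$ is a Poisson ideal containing $z_{n-1}$; both arguments are sound.
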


\begin{proof}
(i)   Write $\tau$, $\pi$ and $\rho$ for $\tau_\lambda$, $\pi_\lambda$ and $\rho_\lambda$ respectively. We need to show that $\rho(\{x_i,x_j\})=\{\rho(x_i),\rho(x_j)\}$ for $1\leq i<j\leq n$. This is clear when $j\leq n-2$. Let $j=n-1$.
If $i\leq n-3$ then
$\tau(\{x_i,x_{n-1}\})=(-1)^{i-1}\lambda x_iz_{n-3}$ and, by Lemma~\ref{PformulaeL}(ii),
\[\{\tau(x_i),\tau(x_{n-1})\}=\lambda\{x_i,z_{n-3}\}=(-1)^{i-1}\lambda x_iz_{n-3}.\] It follows immediately that $\rho(\{x_i,x_j\})=\{\rho(x_i),\rho(x_j)\}$.

Also \[\tau(\{x_{n-2},x_{n-1}\})=\tau(x_{n-2}x_{n-1}-1)=\lambda x_{n-2}z_{n-3}-1=\lambda(z_{n-2}+z_{n-4})-1\] whereas, by Lemma~\ref{PformulaeL}(ii)
\[\{\tau(x_{n-2}),\tau(x_{n-1})\}=\lambda\{x_{n-2},z_{n-3}\}=\lambda z_{n-4}.\]
As $\pi(\lambda(z_{n-2}+z_{n-4})-1-\lambda z_{n-4}=\lambda^2-1=0$, it follows, in this case also, that $\rho(\{x_i,x_j\})=\{\rho(x_i),\rho(x_j)\}$.

Now let $j=n$. If $2\leq i\leq n-2$ then a calculation similar to that in the case $j=n-1$, $i\leq n-3$,
but with Lemma~\ref{thetazn-3} rather than Lemma~\ref{PformulaeL}, shows that
$\tau(\{x_i,x_j\})=\{\tau(x_i),\tau(x_j)\}=(-1)^i\lambda x_j\theta(z_{n-3})$ and hence that
$\rho(\{x_i,x_j\})=\{\rho(x_i),\rho(x_j)\}$. This leaves the cases $i=1$ and $i=n-1$.
In the latter, $\tau(\{x_{n-1},x_n\})=\tau(x_{n-1}x_n-1)=\lambda^2z_{n-3}\theta(z_{n-3})-1=z_{n-3}\theta(z_{n-3})-1$, as $\lambda^2=1$,
and, by Lemma~\ref{thetazn-3}(iv), $\{\tau(x_{n-1}),\tau(x_n)\}=\lambda^2\{z_{n-3},\theta(z_{n-3})\}=z_{n-3}\theta(z_{n-3})-1.$ It follows that
$\rho(\{x_{n-1},x_n\})=\{\rho(x_{n-1}),\rho(x_n)\}$.

Finally,
$\tau(\{x_1,x_n\})=\tau(1-x_1x_n)=1-\lambda x_1\theta(z_{n-3})=1-\lambda(z_{n-2}+\theta^2(z_{n-4}))$, by \ref{PformulaeL}(i), whereas, by Lemma~\ref{xzPC}(i),
\[\{\tau(x_1),\tau(x_n)\}=\lambda\{x_1,\theta(z_{n-3})\}=\lambda\theta(\{x_n,z_{n-3}\})=-\lambda\theta^2(z_{n-4}).\]
As $\pi(1-\lambda(z_{n-2}+\theta^2(z_{n-4}))+\lambda\theta^2(z_{n-4}))=1-\lambda^2=0$, it follows again that $\rho(\{x_i,x_j\})=\{\rho(x_i),\rho(x_j)\}$.

(ii) It is clear that $\rho_\lambda$ is surjective and hence, by Lemma~\ref{PspecL} and the First Isomorphism Theorem for Poisson algebras, that $M_\lambda$ is
Poisson prime and maximal as a Poisson ideal. Clearly $z_{n-2}-\lambda\in M_\lambda$, $x_{n-1}-\lambda z_{n-3}\in M_\lambda$ and $x_{n}-\lambda \theta z_{n-3}\in M_\lambda$. Also, $M_\lambda\cap F_{n-2}$ is Poisson prime in $F_{n-2}^L$ and, by Lemma~\ref{PspecL}, must be $(z_{n-2}-\lambda)F_{n-2}$. Let $w=x_{n-1}-\lambda z_{n-3}$ and $y=x_{n}-\lambda \theta z_{n-3}$. Then $F_n=F_{n-2}[w,y]$ and so $M_\lambda=M_\lambda\cap F_{n-2}+wF_{n-2}+yF_{n-2}=(z_{n-2}-\lambda)F_{n-2}+wF_{n-2}+yF_{n-2}$. Also \[z_{n-1}=z_{n-2}x_{n-1}-z_{n-3}=(z_{n-2}-\lambda)x_{n-1}+\lambda x_{n-1}-z_{n-3}\in M_\lambda,\]
\[\theta(z_{n-2})-\lambda=z_{n-2}-\lambda-\{x_n,z_{n-1}\}\in M_\lambda,\]
by Lemma~\ref{xzPC}(ii), and
\[\Omega+2\lambda=z_{n-1}x_n-(z_{n-2}-\lambda)-(\theta(z_{n-2}-\lambda)\in M_\lambda.\]
\end{proof}

\begin{prop}\label{PspecC}
Let $n\geq 3$ be odd and let $S_C$ be the polynomial algebra $\K[z_1,\dots,z_{n-1},\Omega]$ and $T_C$ be the Laurent polynomial algebra $\K[z_1^{\pm 1},\dots,z_{n-1}^{\pm 1},\Omega^{\pm 1}]$.

\noindent {\rm (i)} $S_C$ is a Poisson subalgebra of $F_n^C$ and the Poisson brackets on $S_C$ and $T_C$ are the log-canonical Poisson brackets determined by $\Lambda_n$.

\noindent {\rm (ii)} $T_C=U[\Omega^{\pm 1}]$ where $U=\K[z_1^{\pm 1}, z_2^{\pm 1},\dots,z_{n-1}^{\pm 1}]$ is a Poisson simple
subalgebra of $T_C$ and $\Omega$ is Poisson central.

\noindent {\rm (iii)} The non-zero proper Poisson prime ideals of $F_n^C$ are the ideals $(\Omega-\lambda)F_n$, $\lambda\in \K$, and the two ideals $M_1$ and $M_{-1}$ from Lemma~\ref{exceptions}.

\noindent {\rm (iv)} For $\mu\in \K$, the Poisson algebra  $F_n^C/(\Omega-\mu)F_n^C$ is Poisson simple if and only if $\mu\neq \pm 2$.
\end{prop}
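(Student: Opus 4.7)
For part (i), the brackets among $z_1,\dots,z_{n-1}$ inside the Poisson subalgebra $\K[x_1,\dots,x_{n-1}]\cong F_{n-1}^L$ are log-canonical by Lemma~\ref{PformulaeL}(iii), and $\Omega$ is Poisson central by Lemma~\ref{xzPC}(iv). Since $n$ is odd, $\lambda_{i,n}=0$ for every $i$ in $\Lambda_n$ (the definition requires $j$ even when $j>i$), matching the centrality of $\Omega$; hence $S_C$, and therefore $T_C$ inside the Poisson fraction field of $F_n^C$, carries precisely the log-canonical bracket determined by $\Lambda_n$. Part (ii) follows: Poisson centrality of $\Omega$ gives $T_C=U[\Omega^{\pm 1}]$ as Poisson algebras, and $U$ inherits the log-canonical bracket from the $(n-1)\times(n-1)$ principal submatrix of $\Lambda_n$. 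Since $n-1$ is even, Poisson simplicity of $U$ is the even case of Proposition~\ref{PspecL}(i) applied with $n$ replaced by $n-1$.

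For part (iii), let $P$ be a non-zero proper Poisson prime of $F_n^C$. The key sublemma is that $z_m\in P$ with $1\le m\le n-2$ forces $z_{m-1}\in P$: by Lemma~\ref{PformulaeL}(ii), $\{x_{m+1},z_m\}$ equals $-z_{m+1}$ when $m$ is odd and $z_{m-1}$ when $m$ is even, and in the odd case the recurrence $z_{m+1}=z_m x_{m+1}-z_{m-1}$ together with $z_m,z_{m+1}\in P$ still yields $z_{m-1}\in P$. Iterating forces $z_0=1\in P$, a contradiction, so either $P\cap\mathcal{Z}_{n-1}=\emptyset$ or $z_{n-1}\in P$. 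In the disjoint case, the Poisson localization of $F_n^C$ at $\mathcal{Z}_{n-1}$ equals $U[\Omega]$ (write $x_j=z_{j-1}^{-1}(z_j+z_{j-2})$ for $j\le n-1$ and $x_n=z_{n-1}^{-1}(\Omega+z_{n-2}+\theta(z_{n-2}))$; observe $\theta(z_{n-2})$ is a polynomial in $x_2,\dots,x_{n-1}$ and hence lies in $U$ after localization). By (ii) and Lemma~\ref{simpleplust}, its non-zero Poisson primes are $(\Omega-\lambda)U[\Omega]$, $\lambda\in\K$; contracting gives $P=(\Omega-\lambda)F_n^C$, and each such ideal is Poisson prime since $\Omega-\lambda$ is Poisson central and irreducible in $F_n$ (degree one in $x_n$ with leading coefficient $z_{n-1}$).

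The remaining case $z_{n-1}\in P$ is the crux. Here Lemma~\ref{xzPC}(ii) gives $z_{n-2}-\theta(z_{n-2})=\{x_n,z_{n-1}\}\in P$ and the definition of $\Omega$ gives $\Omega+2z_{n-2}\in P$, so $z_{n-2}$ is Poisson central modulo $P$; the sublemma further forces $z_{n-2}\notin P$. Localize $F_n^C/P$ at the nonzero image of $z_{n-2}$: from $z_{n-1}\equiv0$ we obtain $\overline{x_{n-1}}=z_{n-3}/z_{n-2}$, and from Lemma~\ref{xzPC}(i) with $j=n-2$ together with Poisson centrality of $z_{n-2}$ modulo $P$ we obtain $\overline{x_n}=\theta(z_{n-3})/z_{n-2}$. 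Compute $\{x_n,x_{n-1}\}$ two ways: directly in $F_n^C$ it equals $1-x_{n-1}x_n$, while in the localization the Leibniz rule (with $z_{n-2}$ central modulo $P$) together with Lemma~\ref{thetazn-3}(iv) gives $z_{n-2}^{-2}(1-z_{n-3}\theta(z_{n-3}))$. Equating and clearing denominators forces $z_{n-2}^2\equiv1\pmod P$, hence $z_{n-2}-\lambda\in P$ for some $\lambda\in\{1,-1\}$ by primality. Multiplying $z_{n-1}\equiv0$ and $z_{n-2}x_n\equiv\theta(z_{n-3})\pmod P$ by $\lambda$ then shows $x_{n-1}-\lambda z_{n-3}\in P$ and $x_n-\lambda\theta(z_{n-3})\in P$, so $P$ contains all the generators of $M_\lambda$ listed in Lemma~\ref{exceptions}(ii); Poisson maximality of $M_\lambda$ forces $P=M_\lambda$. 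This two-way computation of $\{x_n,x_{n-1}\}$ is the main obstacle; the rest is bookkeeping with the explicit Poisson brackets in Lemmas~\ref{PformulaeL}, \ref{xzPC}, and \ref{thetazn-3}.

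Finally, part (iv) is immediate from (iii): a Poisson prime of $F_n^C$ properly contains $(\Omega-\mu)F_n^C$ iff it is some $M_\lambda$ with $\Omega-\mu\in M_\lambda$, and by Lemma~\ref{exceptions}(ii) this happens iff $\mu=-2\lambda$, i.e.\ iff $\mu=\pm2$. Hence $F_n^C/(\Omega-\mu)F_n^C$ is Poisson simple exactly when $\mu\ne\pm2$.
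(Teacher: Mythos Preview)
Your proof is correct and follows the same overall architecture as the paper's: parts (i) and (ii) are handled exactly as the paper does (analogous to Proposition~\ref{PspecL}), and for (iii) you make the same case split on whether $z_{n-1}\in P$, with the disjoint case reduced to Lemma~\ref{simpleplust} via the localization $U[\Omega]$.

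The one tactical difference, in the case $z_{n-1}\in P$, is worth noting. The paper first observes that $P\cap F_{n-1}$ is a Poisson prime of $F_{n-1}^L$ and invokes the classification in Proposition~\ref{PspecL}(iii) (the even case) to conclude $z_{n-2}-\mu\in P$ for some $\mu\in\K^*$; only after setting $\lambda=\mu^{-1}$ and deriving $x_{n-1}-\lambda z_{n-3},\;x_n-\lambda\theta(z_{n-3})\in P$ does the bracket computation with Lemma~\ref{thetazn-3}(iv) force $\lambda^2=1$. You instead bypass the appeal to Proposition~\ref{PspecL} by noting directly that $\Omega+2z_{n-2}\in P$, so $z_{n-2}$ is Poisson central modulo $P$; this lets you localize at $\overline{z_{n-2}}$ and run the same bracket computation before knowing that $z_{n-2}$ is congruent to a scalar, yielding $z_{n-2}^2\equiv 1\pmod P$ in one stroke. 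Your route is slightly more self-contained (it does not rely on the full even-case classification of Poisson primes in $F_{n-1}^L$), while the paper's route makes the structural parallel with the linear case more explicit. Both arguments ultimately hinge on the same identity $\{z_{n-3},\theta(z_{n-3})\}=z_{n-3}\theta(z_{n-3})-1$ from Lemma~\ref{thetazn-3}(iv).
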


\begin{proof}
The proofs of (i) and (ii) are completely analogous to those of the odd part of Proposition~\ref{PspecL}, with $\Omega$ replacing $z_n$.

(iii) The ideals $M_{\pm 1}$ are Poisson prime by Lemma~\ref{exceptions}. The ideal $(\Omega-\lambda)F_n^C$ is Poisson, as $\Omega$ is Poisson central by Lemma~\ref{xzPC}(iv) and prime, as for $(z_n-\lambda)F_n$ in the odd part of Proposition~\ref{PspecL}. So all the ideals listed are Poisson prime.

Let $P$ be a non-zero Poisson prime ideal of $F_n^C$. By Lemma~\ref{PformulaeL}(ii), if $z_m\in P$ for some $m<n-1$ then  $z_j\in P$ for $0\leq j\leq m$ and, in particular $1=z_0\in P$. So $z_m\notin P$ for $m<n-1$. If also $z_{n-1}\notin P$ then $P=(\Omega-\lambda)F_n^C$ for some $\lambda\in \K$ as in the proof of the odd part of Proposition~\ref{PspecL}(iii), with $\Omega$ replacing $z_n$. So we may assume that $z_{n-1}\in P$.

By Lemma~\ref{xzPC}(ii), $z_{n-2}-\theta(z_{n-2})=\{x_n,z_{n-1}\}\in P$. So $z_{n-2}-\theta(z_{n-2})\in P\cap F_{n-1}$. By total degree,
$z_{n-2}-\theta(z_{n-2})\notin z_{n-1}F_{n-1}$ so, by  Proposition~\ref{PspecL}(iii), $P\cap F_{n-1}$, which is Poisson prime in $F_{n-1}^L$, must have the form $z_{n-1}F_n+(z_{n-2}-\mu)F_{n-1}$ for some $\mu\in \K^*$. As $z_{n-2}-\theta(z_{n-2})\in P$, we also have that
$\theta(z_{n-2})-\mu \in P$.

Let $\lambda=\mu^{-1}$.  Note that   $z_{n-2}x_{n-1}-z_{n-3}=z_{n-1}\in P$ so $x_{n-1}-\lambda z_{n-3}\in P$.
Also $\{x_n,\lambda z_{n-2}-1\}\in P$ so, by Proposition~\ref{xzPC}(i), $\lambda(z_{n-2}x_n-\theta(z_{n-3}))\in P$. Hence $x_n-\lambda\theta(z_{n-3})\in P$.
Therefore $\{x_{n-1},x_n-\lambda\theta(z_{n-3})\}\in P$ and, as $x_{n-1}\equiv \lambda z_{n-3}\bmod P$,
\[\{x_{n-1},x_n\}-\lambda^2\{z_{n-3},\theta(z_{n-3})\}\in P.\]
Using Lemma~\ref{thetazn-3}(iv),
\[x_{n-1}x_n-1-\lambda^2(z_{n-3}\theta(z_{n-3})-1)\in P. \eqno{(*)}\]
But we also have that $x_{n-1}(x_n-\lambda\theta(z_{n-3}))\in P$ so $x_{n-1}x_n-\lambda x_{n-1}\theta(z_{n-3})\in P$ and
$x_{n-1}x_n-\lambda^2 z_{n-3}\theta(z_{n-3})\in P$. Combining this with (*), $\lambda^2-1\in P$
so we must have $\lambda=\pm 1$ and $\lambda=\mu$. We now know that $P$ contains $z-\lambda$, $x_{n-1}-\lambda z_{n-3}$ and $x_n-\lambda\theta(z_{n-3})$ and it then follows from Lemma~\ref{exceptions} that $M_\lambda\subseteq P$. By the maximality of $M_\lambda$,
$P=M_\lambda$.

(iv) This is immediate from (iii) and the fact that, by Lemma~\ref{exceptions}(ii), $\Omega+2\lambda\in M_\lambda$.
\end{proof}

\section{Commutative cluster algebras with Poisson structure} Let $n\geq 3$ be odd.
Let $n\geq 3$ be odd.
In this section we aim to present the commutative cluster algebras $\mathcal{A}$ of the quivers $A_{n-1}$ and $P_{n+1}^{(1)}$ considered in Section~\ref{qca} as Poisson simple algebras $J/\Delta J$, where $J$ is a polynomial algebra with a Poisson bracket and $\Delta$ is a Poisson central element of $J$.

We first consider $P_{n+1}^{(1)}$.
If $w_0, w_1,\dots,w_n$ are the initial cluster variables then $\mathcal{A}$ is a Poisson subalgebra of the Laurent polynomial algebra $R:=\K[w_0^{\pm 1},w_1^{\pm1},\dots,w_n^{\pm 1}]$ with the log-canonical bracket such that, for $0\leq i,j\leq n$, $\{w_i,w_j\}=\lambda_{ij}w_iw_j$ where, if $j\geq i$,
\begin{equation}\lambda_{ij}=-\lambda_{ji}=\begin{cases}1&\text{ if }j-i\text{ is odd},\\
0&\text{ if }j-i\text{ is even.}
\end{cases}\label{Pww}\end{equation}
With the matrices $B$ and $\Lambda$ as in Section~\ref{qca}, \cite[Theorem 1.4]{gsv} ensures that the above Poisson bracket $\{-,-\}$  is compatible with the cluster algebra $\mathcal{A}$, in other words, for each seed $\{y_1,y_2,\dots,y_n\}$ and for $0\leq i,j\leq n$, $\{y_i,y_j\}=\lambda^\prime_{ij}y_iy_j$ for some antisymmetric $({n+1})\times({n+1})$ matrix $\Lambda^\prime=(\lambda^\prime_{ij})$.

\begin{lemma}\label{Dsimple}
The Poisson algebra $R$ is Poisson simple.
\end{lemma}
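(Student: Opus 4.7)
The plan is to invoke Proposition~\ref{Psimple} directly. Since $R$ is a Laurent polynomial algebra in $n+1$ variables equipped with a log-canonical Poisson bracket, the proposition reduces Poisson simplicity to a purely linear-algebraic condition: we need to check that the only integer vector $(m_0, m_1, \dots, m_n)$ satisfying $\sum_{j=0}^{n} m_j \lambda_{ij} = 0$ for every $i$ is the zero vector. Thus the entire argument boils down to a nondegeneracy computation for the skew-symmetric matrix $\Lambda = (\lambda_{ij})$ defined just before the lemma.

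The decisive structural observation is that $\lambda_{ij} = 0$ whenever $i$ and $j$ have the same parity. Consequently the linear system decouples along parity: the equations indexed by even $i$ involve only the odd-indexed coordinates $m_1, m_3, \dots, m_n$, and the equations indexed by odd $i$ involve only the even-indexed coordinates. Since $n$ is odd, $n+1$ is even and each of the two subsystems is square of size $(n+1)/2$. It will suffice to solve one of them, as the other is entirely analogous (exchanging the roles of the parities corresponds to a reflection of the quiver).

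To handle the subsystem for even $i = 2s$, I would write the equation in the compact form $\sum_{t \geq s} m_{2t+1} - \sum_{t < s} m_{2t+1} = 0$. Subtracting the equation for $i = 2s$ from the one for $i = 2s+2$ instantly produces $-2\, m_{2s+1} = 0$, and running this through $s = 0, 1, \dots, (n-3)/2$ kills every odd-indexed $m_j$ except possibly $m_n$; substituting back into the equation at $i = 0$ eliminates $m_n$ as well. The mirror argument disposes of the even-indexed coordinates.

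I do not anticipate any genuine obstacle. The computation is the Poisson counterpart of the quantum-torus simplicity calculation carried out in Lemma~\ref{Tsimpleorpolysimple}, and only the parity-based block decomposition together with the elementary elimination outlined above is required; most of the care needed is bookkeeping of signs and summation ranges.
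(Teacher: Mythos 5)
Your proposal is correct and follows essentially the same route as the paper: both invoke Proposition~\ref{Psimple} and reduce the lemma to showing that the integer kernel of the skew-symmetric matrix in \eqref{Pww} is trivial by elementary elimination. The only difference is cosmetic — you exploit the parity decoupling and subtract consecutive equations $i$ and $i+2$ to isolate $-2m_{i+1}$, whereas the paper pairs the equations at $i=0,\,n-1$ and $i=1,\,n$ and peels off indices from both ends — and your sign/range bookkeeping checks out.
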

\begin{proof}
Let $m_0,m_1,\dots m_n\in \Z$ be such that $m_0\lambda_{i0}+\dots +m_n\lambda_{in}=0$ for $0\leq i\leq n$. From the cases $i=0$ and $i=n-1$,
\[m_1+m_3+\dots+m_{n-2}+m_n=0=-m_1-m_3\dots-m_{n-2}+m_n,\]
whence $m_n=0$. Similarly the cases $i=1$ and $i=n$ give
\[-m_0+m_2+\dots+m_{n-1}=0=-m_0-m_2+\dots-m_{n-1},\] and
$m_0=0$. Repeating the argument, deleting the first and last columns at each stage, gives $0=m_{n-1}=m_1=m_{n-2}=m_2=\dots=m_{(n+1)/2}$.
By Proposition~\ref{Psimple}, $R$ is Poisson simple.
\end{proof}

As in the quantum case, there are new cluster variables $w_i$ and $x_i$, $i\in \Z$,
such that, for $i>n$,
\[w_{i}=w_{i-n-1}^{-1}(1+w_{i-n}w_{i-1})\]
for $i<0$,
\[w_{i}=w_{i+n+1}^{-1}(1+w_{i+n}w_{i+1}),\]
and, for $i\in \Z$,
\[x_i=w_i^{-1}(w_{i-1}+w_{i+1}).\]

The following can be deduced from the quantum counterpart, Lemma~\ref{xrelations}, by taking the semiclassical limit or directly from \eqref{Pww}.
Parts (i) and (iv) were observed by Fordy in \cite{fordy}.
\begin{lemma}\label{xPrelations} With $x_1, x_2,\dots, x_n$ as specified above,
\begin{enumerate}
\item For all $i\in \Z$ and $k>0$,\\
{\rm (a)} $\{x_i,x_{i+1}\}=2(x_ix_{i+1}-1)$,\\
{\rm (b)} $\{x_i,x_{i+2k}\}=-2x_{i+2k}x_i$,\\
{\rm (c)} $\{x_i,x_{i+2k+1}\}=2x_{i+2k+1}x_i$.
\item For $i\in \Z$,
\begin{equation}\label{Pxiwi}
w_ix_i=w_{i-1}+w_{i+1}
\end{equation}
and
\begin{equation}
\{x_i,w_i\}=x_iw_i-2w_{i+1}=2w_{i+1}-x_iw_i\label{Pwn}
\end{equation}
\item
For $1\leq i\leq n$ and $0\leq j\leq n$ with $j\neq i$,
\[\{x_i,w_j\}=\begin{cases}
x_iw_j&\text{ if }i<j\text{ and }i+j\text{ is even or }i>j\text{ and }i+j\text{ is odd,}\\
-x_iw_j&\text{ if }i<j\text{ and }i+j\text{ is odd or }i>j\text{ and }i+j\text{ is even.}\\
\end{cases}\]
\item For all $i\in \Z$, $x_{n+i}=x_i$.
\end{enumerate}
\end{lemma}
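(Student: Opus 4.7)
The plan is to follow one of the two routes indicated by the author: direct computation in the log-canonical Laurent polynomial algebra $R$, or passage to the semiclassical limit of Lemma~\ref{xrelations}. The semiclassical-limit route is the most economical, since it reuses the quantum calculations with only a transparent bookkeeping step. Treating $q$ as a formal parameter near $1$ in the family of algebras $Q_q$ (per \cite[III.5.4]{BGl}), the Poisson bracket on $R$ is the leading part of the commutator divided by $q-1$, and the log-canonical relations \eqref{Pww} arise as the limit of the quantum relations $w_iw_j = q^{\lambda_{ij}}w_jw_i$. Since each cluster variable $x_i = w_i^{-1}(q^{-1/2}w_{i-1}+q^{1/2}w_{i+1})$ specializes at $q=1$ to $w_i^{-1}(w_{i-1}+w_{i+1})$, the $x_i$'s of the present section are precisely the semiclassical specializations of the quantum $x_i$'s of Section~\ref{qca}.

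Parts (ii) and (iii) are then immediate. The identity $w_ix_i = w_{i-1}+w_{i+1}$ in \eqref{Pxiwi} is the $q=1$ specialization of \eqref{xiwi}, while \eqref{Pwn} is obtained by rewriting $x_iw_i - qw_ix_i = q^{1/2}(q^{-1}-q)w_{i+1}$ as $x_iw_i-w_ix_i = (q-1)w_ix_i + q^{1/2}(q^{-1}-q)w_{i+1}$, dividing by $q-1$, and letting $q\to 1$; the companion identity \eqref{w0} yields the second equality in \eqref{Pwn}. Part (iii) follows by the same procedure applied to Lemma~\ref{xrelations}(iii), since $q-1$ and $q^{-1}-1$ have opposite signs to leading order.

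Part (i) requires attention to the factor of $2$ that appears. Starting from $x_ix_{i+1}-q^2x_{i+1}x_i = 1-q^2$, rewrite as $x_ix_{i+1}-x_{i+1}x_i = (q^2-1)(x_{i+1}x_i-1)$ and use $q^2-1 = 2(q-1)+O((q-1)^2)$ to obtain $\{x_i,x_{i+1}\} = 2(x_ix_{i+1}-1)$ in the limit. The same device applied to parts (b) and (c) of Lemma~\ref{xrelations}(i) yields the Poisson identities with coefficients $-2$ and $2$, respectively. Finally, (iv) is the $q=1$ specialization of the proof of Lemma~\ref{xrelations}(iv): the recurrences for $w_{-1}$ and $w_{n+1}$ reduce to $w_{-1} = w_n^{-1}(1+w_0w_{n-1})$ and $w_{n+1} = w_0^{-1}(1+w_1w_n)$, and the algebraic simplification that shows $x_0 = x_n$ goes through verbatim (indeed more easily, as the $q$-powers all collapse to $1$).

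The main obstacle is tracking the factor $q^2-1$ correctly against $q-1$: the scalar $2$ in Lemma~\ref{xPrelations}(i) and in \eqref{Pwn} is precisely the source of the factor-of-$2$ ambiguity flagged immediately after \eqref{padj} in the introduction, and it reflects the fact that the $x_i$ satisfy $C_n^{q^2}$-type relations while the ambient $w_i$ satisfy $q$-commutation. Once that is accounted for, every identity follows mechanically from its quantum counterpart, and no genuinely new computation is needed.
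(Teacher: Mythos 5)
Your proposal is correct and takes essentially the same route as the paper: the paper offers no separate proof, saying only that the lemma follows from Lemma~\ref{xrelations} by passing to the semiclassical limit or directly from \eqref{Pww}, and your argument is a careful execution of the first of these routes, with the factor $2$ correctly traced to $q^2-1=(q+1)(q-1)$. Note only that your derivation from \eqref{w0} in fact yields $\{x_i,w_i\}=2w_{i-1}-x_iw_i$, which agrees with the display in Lemma~\ref{BPoisson} and indicates that the second expression in \eqref{Pwn} as printed, namely $2w_{i+1}-x_iw_i$, is a typo.
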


\begin{lemma}\label{ww}
For $i\leq j\leq i+n$,
\begin{equation}
\{w_i,w_j\}=\begin{cases} 0 &\text{ if }i+j\text{ is even},\\
w_iw_j&\text{ if }i+j\text{ is odd}\end{cases}
\end{equation}
and
\begin{equation}\label{Pgenw0wnplusone}
\{w_0,w_{n+1}\}=
2w_1w_{n}.
\end{equation}
\end{lemma}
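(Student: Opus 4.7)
My plan is to prove both parts directly from the defining formulas for $w_i$ together with the Leibniz rule and the initial log-canonical relations \eqref{Pww}; no appeal to the semiclassical limit of the quantum identities will be needed, though the calculations mirror those for \eqref{genwiwj}.

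For part~two I start from the mutation formula $w_{n+1} = w_0^{-1}(1+w_1w_n)$ (the semiclassical analogue of \eqref{wnplusone}). The Leibniz rule gives
\[
\{w_0, w_{n+1}\} \;=\; \{w_0, w_0^{-1}\}(1+w_1w_n) + w_0^{-1}\bigl(\{w_0, w_1\}w_n + w_1 \{w_0, w_n\}\bigr),
\]
and the first term vanishes. Because $n$ is odd, both $1-0$ and $n-0$ are odd, so by \eqref{Pww} we have $\{w_0,w_1\}=w_0w_1$ and $\{w_0,w_n\}=w_0w_n$; commutativity of $R$ then collapses the remaining expression to $2w_1w_n$.

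For part~one I argue by induction, extending the formula outward from the initial window $\{w_0,\ldots,w_n\}$, where it coincides with \eqref{Pww}. To pass from the window $W_i := \{w_i,\ldots,w_{i+n}\}$ to $W_{i+1}$, I use
\[
w_{i+n+1} \;=\; w_i^{-1}\bigl(1+w_{i+1}w_{i+n}\bigr)
\]
and the Leibniz rule to express $\{w_j,w_{i+n+1}\}$, for $i+1\leq j\leq i+n$, in terms of the three brackets $\{w_j,w_i\}$, $\{w_j,w_{i+1}\}$ and $\{w_j,w_{i+n}\}$, which are all covered by the inductive hypothesis on $W_i$. The analogous downward step, using $w_{i-1}=w_{i+n}^{-1}(1+w_iw_{i+n-1})$, extends the formula to negative indices.

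The real work, and the only potential obstacle, is a parity bookkeeping in the inductive step: one must check that for each parity of $j-(i+n+1)$ the three brackets conspire correctly. When $j-(i+n+1)$ is even, all three base brackets vanish or the two that survive cancel because $R$ is commutative; when $j-(i+n+1)$ is odd, exactly one of them is nonzero and, after the Leibniz rule, reassembles into $w_j w_{i+n+1}$. The hypothesis that $n$ is odd is essential: it makes $i$ and $i+n+1$ have opposite parity, which breaks the symmetry between $w_{i+1}$ and $w_{i+n}$ in the only way that yields the stated log-canonical relation.
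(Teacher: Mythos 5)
Your proposal is sound, and since the paper simply omits this proof as a ``straightforward calculation,'' you are in effect supplying the omitted verification: the direct Leibniz computation for $\{w_0,w_{n+1}\}$ is exactly right (the term $\{w_0,w_0^{-1}\}$ vanishes and, $n$ being odd, both $\{w_0,w_1\}=w_0w_1$ and $\{w_0,w_n\}=w_0w_n$ contribute, giving $2w_1w_n$), and the sliding-window induction for the log-canonical relations works as you describe: for $i+1\leq j\leq i+n$, writing $w_{i+n+1}=w_i^{-1}(1+w_{i+1}w_{i+n})$ reduces $\{w_j,w_{i+n+1}\}$ to the three brackets $\{w_j,w_i\}$, $\{w_j,w_{i+1}\}$, $\{w_j,w_{i+n}\}$, and the case analysis you state (even case: the bracket with $w_i$ vanishes and the other two cancel; odd case: only the bracket with $w_i$ survives and, via $\{w_j,w_i^{-1}\}=w_i^{-1}w_j$, reassembles into $w_jw_{i+n+1}$) is the correct one, with the symmetric downward step handling negative indices. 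The only flaw is your closing explanation of the role of the hypothesis: since $n$ is odd, $i$ and $i+n+1$ have the \emph{same} parity, and likewise $w_{i+1}$ and $w_{i+n}$ lie in the same parity class; it is precisely this sameness (not a broken symmetry) that makes the two surviving brackets cancel in the even case and vanish simultaneously in the odd case. This is a slip in a commentary sentence, not in the argument itself, since the case analysis you actually invoke is consistent with the correct parities.
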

\begin{proof}
These are straightforward calculations and are omitted.
\end{proof}

By \cite[Corollary 1.21]{BerFomZel}, the  cluster algebra $\mathcal{A}$ is the subalgebra of  $R$ generated by the cluster variables $w_{-1}, w_0, w_1,\dots w_n, w_{n+1}, x_1,x_2,\dots x_n$.
By Lemma~\ref{xPrelations}(iv), $x_0=x_n$ so, by \eqref{Pxiwi}, $w_{-1}=w_0x_n-w_1$ and, for $j\geq 2$, $w_j=w_{j-1}x_{j-1}-w_{j-2}$. Hence  the list of generators can be reduced  to  $w_0, w_1, x_1,x_2,\dots x_n$. By Lemmas~\ref{xPrelations}(iv) and \ref{ww}, $\mathcal{A}$ is a Poisson subalgebra of $R$.

\begin{notn}
\label{Kplus}
Let $D=\K[W_0,W_1,\dots,W_{n+1}]$ be a polynomial algebra in $n+2$ variables and
let $E=\K[W_0^{\pm 1},W_1^{\pm 1},\dots,W_n^{\pm 1},W_{n+1}]$.  Let $J$ be the subalgebra of $E$ generated by $W_0, W_1, X_1, X_2,\dots, X_n$, where, for $1\leq i \leq n$, $X_i=W_i^{-1}(W_{i-1}+W_{i+1})$.
Observe that $W_{i+1}=W_iX_i-W_{i-1}$ from which it follows that $D\subseteq J\subseteq E$ and hence that $J$ is a polynomial algebra in $n+2$ indeterminates $W_0, W_1, X_1, X_2,\dots, X_n$.
\end{notn}

\begin{lemma}\label{BPoisson}
There is a Poisson bracket on $D$, and hence on $E$, such that, for $0\leq i<j\leq n+1$,
\begin{equation*}\{W_i,W_j\}=\begin{cases} W_iW_j &\text{ if } j-i \text{ is odd},\\
0&\text{ if } j-i \text{ is even and }j-i<n+1,\\
2W_{1}W_{n} &\text{ if } i=0\text{ and }j=n+1.\end{cases}\end{equation*}
Also $J$ is a Poisson subalgebra of $D$.
\end{lemma}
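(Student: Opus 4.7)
The plan is to construct the Poisson bracket on $D$ by prescribing its values on generators, verify the Jacobi identity on triples, extend uniquely to the localization $E$, and then show that $J\subseteq E$ is closed under the bracket.

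Since $D=\K[W_0,\ldots,W_{n+1}]$ is a polynomial algebra, any $\K$-bilinear antisymmetric assignment of brackets on the generators extends uniquely to an antisymmetric biderivation on $D$ via the Leibniz rule, and this biderivation is a Poisson bracket if and only if the Jacobi identity holds on every triple of generators. For a triple $(W_i,W_j,W_k)$ with $0\leq i<j<k\leq n+1$ that is not of the form $(0,k,n+1)$, all three pairwise brackets are log-canonical of the form $\lambda_{ab}W_aW_b$, so the Jacobi identity is the standard log-canonical identity on a polynomial algebra and holds automatically. For the remaining triples $(0,k,n+1)$, $1\leq k\leq n$, I would split on the parity of $k$. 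When $k$ is odd, the two log-canonical Jacobi contributions
\[\{\{W_0,W_k\},W_{n+1}\}=W_0W_kW_{n+1}+2W_1W_nW_k\quad\text{and}\quad\{\{W_k,W_{n+1}\},W_0\}=-W_0W_kW_{n+1}-2W_1W_nW_k\]
cancel, while $\{\{W_{n+1},W_0\},W_k\}=-2\{W_1W_n,W_k\}$ vanishes because both $k-1$ and $n-k$ are even and strictly less than $n+1$, forcing $\{W_1,W_k\}=\{W_n,W_k\}=0$. When $k$ is even, the first two terms vanish and the third is $-2(W_1(-W_kW_n)+(W_1W_k)W_n)=0$ by direct cancellation. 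Hence the Poisson bracket on $D$ exists, and extends uniquely to $E$ by the standard formula $\{a^{-1},b\}=-a^{-2}\{a,b\}$.

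For the closure of $J$, the relation $W_{i+1}=W_iX_i-W_{i-1}$ gives inductively $W_0,W_1,\ldots,W_{n+1}\in J$, so $D\subseteq J$ and all $\{W_i,W_j\}\in J$. It remains to check $\{W_0,X_i\}$, $\{W_1,X_i\}$, and $\{X_i,X_j\}$ for $1\leq i,j\leq n$. Applying the Leibniz rule to $X_i=W_i^{-1}(W_{i-1}+W_{i+1})$ and using that $W_{i-1}$ and $W_{i+1}$ share parity, the cases that avoid the singular pair $(0,n+1)$ collapse, via the identity $W_iX_i=W_{i-1}+W_{i+1}$, to expressions such as $\pm W_jX_i$, $\pm 2X_iX_j$ for $|i-j|\geq 2$, or $2(X_iX_{i+1}-1)$ when $|i-j|=1$, in close parallel with Lemmas~\ref{xPrelations} and~\ref{ww}. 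The only genuinely new contributions from the singular bracket $\{W_0,W_{n+1}\}=2W_1W_n$ occur in $\{W_0,X_n\}=2W_1-W_0X_n$, (symmetrically) $\{W_{n+1},X_1\}=-2W_n+W_{n+1}X_1$, and $\{X_1,X_n\}=2(1-X_1X_n)$, and all three lie in $J$.

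The main obstacle is the closure computation for $\{X_i,X_j\}$: the Leibniz expansion initially produces apparently non-polynomial rational combinations in $E$, and reducing them to elements of $J$ relies repeatedly on pairing each $W_k^{-1}$ factor against a matching $W_{k-1}+W_{k+1}$ sum via $W_kX_k=W_{k-1}+W_{k+1}$. The edge case $\{X_1,X_n\}$ must be handled individually, and the resulting formulas matching those in Lemma~\ref{xPrelations} for the cluster-theoretic analogue provides a reassuring external consistency check.
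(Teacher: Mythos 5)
Your proposal is correct and follows essentially the same route as the paper: the Jacobi identity is reduced to the exceptional triples $(W_0,W_k,W_{n+1})$ and checked by parity of $k$ (the remaining triples being purely log-canonical), the bracket is extended to the localization $E$, and closure of $J$ is obtained by computing the brackets of the generators $W_0,W_1,X_1,\dots,X_n$, yielding the same formulas, e.g.\ $\{X_n,W_0\}=X_nW_0-2W_1$ and $\{X_n,X_1\}=2(X_nX_1-1)$, that the paper lists. The only cosmetic differences are that you justify the extension to $E$ by the standard localization formula where the paper cites Kaledin, and you check Jacobi on generator triples via the biderivation argument where the paper phrases it through derivations $\{W_i,-\}$.
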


\begin{proof}

Note that the given rules for $\{W_i,W_m\}$ determine log-canonical Poisson brackets on each of $\K[W_1,\dots,W_{n+1}]$ and $\K[W_0,W_1,\dots,W_{n}]$.
To establish the Jacobi identity on $D$, it suffices to check it on the triples $(W_0,W_i,W_{n+1})$, $0<i<n+1$.
If $i$ is odd then $\{\{W_0,W_i\},W_{n+1}\}=W_0W_iW_{n+1}+2W_1W_iW_n$, $\{\{W_i,W_{n+1}\},W_0\}=-W_0W_iW_{n+1}-2W_1W_iW_n$ and
$\{\{W_{n+1},W_0\},W_{i}\}=-2\{W_1W_n,W_i\}=0$ so the Jacobi identity holds.
If $i$ is even then $\{\{W_0,W_i\},W_{n+1}\}=0=\{\{W_i,W_{n+1}\},W_0\}$ and
$\{\{W_{n+1},W_0\},W_{i}\}=-2\{W_1W_n,W_i\}=0$ so the Jacobi identity holds again.  Also, for $1\leq i\leq n$, $\{W_i,-\}$ is a derivation on $D$ and $\{W_{n+1},-\}$ is the restriction to $D$ of the derivation $\sum_{i=0}^{n+1} f_i\partial_i$,
where each $\partial_i=\frac{\partial}{\partial W_i}$, $f_0=-2W_1W_{n}$, $f_i=0$ if $i>0$ is even and $f_i=-W_1W_{n+1}$ if $i$ is odd.
As $\{r+s,-\}=\{r,-\}+\{s,-\}$ and $\{rs,-\}=r\{s,-\}+s\{r,-\}$ for, $\{p,-\}$ is a derivation for all $p\in D$. So we have a Poisson bracket on $D$ and hence, by \cite[Lemma 1.3]{kaledin}, on $E$.

The following can be deduced from the Poisson bracket on $D$.
\begin{alignat}{3}
\{X_i,X_{i+1}\}&&=&\;2(X_iX_{i+1}-1)&& \text{ if }i<n,\label{a}\\
\{X_n,X_{1}\}&&=&\;2(X_nX_{1}-1),&&\label{b}\\
\{X_i,X_{i+2k}\}&&=&\;{-2X_{i+2k}X_i}&&\text{ if }1\leq i\leq i+2k\leq n,\label{c}\\
\{X_i,X_{i+2k+1}\}&&=&\;2X_{i+2k+1}X_i&&\text{ if }1\leq i\leq i+2k+1\leq n,\label{d}\\
\{X_i,W_i\}&&=&\;{W_{i-1}-W_{i+1}}&&\label{e1}\\
&&=&\;{X_iW_i-2W_{i+1}}&&\label{e2}\\
&&=&\;{2W_{i-1}-X_iW_i},&&\label{e3} \\
\{X_n,W_0\}&&=&\;{X_nW_0-2W_{1}},&&\label{f}\\
\{X_i,W_j\}&&=&\;X_iW_j&& \text{ if } i+j\text{ is odd and }j=0\text{ or }j=1,\\
\{X_i,W_j\}&&=&\;{-X_iW_j}&& \text{ if } i+j\text{ is even, }j\neq i\text{ and }j=0\text{ or }j=1.
\label{g}
\end{alignat}

It follows that $J$ is a Poisson subalgebra of $D$.

\end{proof}

\begin{prop}\label{central}
Let $\Delta=W_0W_{n+1}-W_1W_n-1$. Then $\Delta$ is Poisson central in $D$, $E$ and $J$.  Also $\Delta D, \Delta E$ and  $\Delta J$ are Poisson prime ideals of $D, E$ and $J$ respectively and both $E/\Delta E$ and $J/\Delta J$ are Poisson simple.
\end{prop}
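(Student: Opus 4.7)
The plan is to establish Poisson centrality, then the two properties (primality and simplicity) first for the localization $E/\Delta E$, where Poisson simplicity is engine, and then to descend to $D$ and $J$.

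\textbf{Step 1: Poisson centrality of $\Delta$.} First I would verify $\{\Delta, W_i\} = 0$ for each $0 \le i \le n+1$ directly from the brackets given in Lemma~\ref{BPoisson}. For interior $i$ with $1 \le i \le n$, a parity case analysis (using $n$ odd, so $n+1$ even) shows that $\{W_0, W_i\}$ and $\{W_{n+1}, W_i\}$ are simultaneously nonzero precisely when $i$ is odd, and in that case the two Leibniz contributions $W_0 W_i W_{n+1}$ and $-W_0 W_i W_{n+1}$ to $\{W_0 W_{n+1}, W_i\}$ cancel, while $\{W_1 W_n, W_i\}$ vanishes by a parallel parity argument. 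For $i = 0$ and $i = n+1$, the special identity $\{W_0, W_{n+1}\} = 2 W_1 W_n$ produces matching contributions $\pm 2 W_0 W_1 W_n$ and $\pm 2 W_1 W_n W_{n+1}$ that cancel. Centrality in $D$ then propagates to $J$ and to the localization $E$.

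\textbf{Step 2: Identifying $E/\Delta E$.} Since $\Delta = 0$ rewrites as $W_{n+1} = W_0^{-1}(W_1 W_n + 1)$ in $E$, I would construct a Poisson algebra homomorphism $\Phi\colon E \to R := \K[w_0^{\pm 1}, \ldots, w_n^{\pm 1}]$ by $\Phi(W_i) = w_i$ for $0 \le i \le n$ and $\Phi(W_{n+1}) = w_0^{-1}(w_1 w_n + 1)$. The only nontrivial Poisson-compatibility check is
\[ \{w_0,\, w_0^{-1}(w_1 w_n + 1)\} = w_0^{-1}\bigl(\{w_0,w_1\}w_n + w_1\{w_0,w_n\}\bigr) = 2 w_1 w_n, \]
which matches $\Phi(\{W_0, W_{n+1}\}) = 2 w_1 w_n$; surjectivity and $\ker \Phi = \Delta E$ are clear since modulo $\Delta$ the generator $W_{n+1}$ is redundant. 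Hence $E/\Delta E \cong R$ as Poisson algebras. By Lemma~\ref{Dsimple}, $R$ is Poisson simple, so $\Delta E$ is a maximal Poisson ideal of $E$ and, being the kernel of a Poisson map to a domain, is also prime; it is therefore Poisson prime, and $E/\Delta E$ is Poisson simple.

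\textbf{Step 3: Primality of $\Delta D$ and $\Delta J$.} In the UFD $D = \K[W_0, \ldots, W_{n+1}]$, $\Delta$ is linear in $W_{n+1}$ with leading coefficient $W_0$, and $W_0$ does not divide $W_1 W_n + 1$, so $\Delta$ is irreducible and $\Delta D$ is prime (Poisson by Step~1). For $J$, I would view it as the polynomial ring $K[X_n]$ over $K := \K[W_0, W_1, X_1, \ldots, X_{n-1}]$ and substitute $W_{n+1} = W_n X_n - W_{n-1}$ to rewrite $\Delta = W_0 W_n X_n - (W_0 W_{n-1} + W_1 W_n + 1)$, degree one in $X_n$ with leading coefficient $W_0 W_n \in K$. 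An induction on $i$, using the recursion $W_{i+1} = W_i X_i - W_{i-1}$ and the specialization $X_1 = \cdots = X_{i-2} = 0$ (under which the $W_j$ cycle through $\pm W_0, \pm W_1$ with period four), shows each $W_i$ is irreducible in $\K[W_0, W_1, X_1, \ldots, X_{i-1}]$; the same specialization confirms neither $W_0$ nor $W_n$ divides $W_0 W_{n-1} + W_1 W_n + 1$. Thus the leading and constant coefficients of $\Delta$ in $X_n$ are coprime in $K$, so $\Delta$ is irreducible in $J$ and $\Delta J$ is prime, Poisson prime.

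\textbf{Step 4: Poisson simplicity of $J/\Delta J$, and the main obstacle.} Since each $W_i$ ($0 \le i \le n$) is irreducible in $J$ and non-associate to $\Delta$, primality of $\Delta J$ forces $\Delta E \cap J = \Delta J$, giving a Poisson embedding $J/\Delta J \hookrightarrow E/\Delta E \cong R$. Given a Poisson ideal $I$ of $J$ with $I \supsetneq \Delta J$, the Poisson ideal $IE$ of $E$ strictly contains $\Delta E$, so Poisson simplicity of $E/\Delta E$ forces $IE = E$; clearing denominators yields a monomial $w = W_0^{a_0} \cdots W_n^{a_n} \in I$. Taking $w$ of minimum weight $\sum_{i=1}^{n} i\,a_i$, I would run the Poisson analog of the weight-reduction argument of Theorem~\ref{Q}(iv): the bracket $\{X_n, W_n\} = X_n W_n - 2 W_{n+1}$ combined with $W_{n+1} = W_n X_n - W_{n-1}$ and $X_n w \in I$ produces $w W_{n-1}/W_n \in I$, of weight one less, whenever $a_n \ne 0$, forcing $a_n = 0$; analogous use of $\{X_k, W_k\} = W_{k-1} - W_{k+1}$ for $k = n-1, \ldots, 2$ eliminates $a_{n-1}, \ldots, a_2$, reducing $w$ to a form $W_0^{a_0} W_1^{a_1}$; the endpoint bracket $\{X_n, W_0\} = X_n W_0 - 2W_1$ together with $\Delta \in I$ iteratively yields $1 \in I$, so $I = J$. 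The hard part is the irreducibility argument in Step~3, since $W_n$ is itself a non-constant polynomial in the generators of $J$, making the coprimality check in $K$ considerably more delicate than the corresponding check in $D$; the weight-reduction argument of Step~4 is intricate but proceeds in direct analogy with the quantum case, with the Poisson bracket $\{X_n,-\}$ in place of the derivation $\delta^\prime$.
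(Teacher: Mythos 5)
Your Steps 1--3 are sound and essentially equivalent to the paper's argument: centrality is the same parity computation; your map $\Phi\colon E\to R$ (kernel $\Delta E$, since $E$ is polynomial in $W_{n+1}$, equivalently in $\Delta$, over the Laurent subalgebra) is just the inverse of the paper's map $R\to E/\Delta E$; and your coprimality argument for irreducibility of $\Delta$ in $J$ is an acceptable substitute for the paper's route via irreducibility in $E$ and primeness of the $W_i$ in $J$.

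The genuine gap is in Step 4. Your weight-reduction is transplanted from Theorem~\ref{Q}(iv), but there the monomial produced by $JR=R$ lies in $w_1,\dots,w_n$ only, because the Ore set used there is generated by $w_1,\dots,w_n$; here $E$ is the localization of $J$ at the set generated by $W_0,W_1,\dots,W_n$, so your monomial $w=W_0^{a_0}W_1^{a_1}\cdots W_n^{a_n}\in I$ may have $a_0\neq 0$. In that case the computation of $\{X_n,w\}$ picks up \emph{two} non--log-canonical contributions, from $\{X_n,W_n\}=2W_{n-1}-X_nW_n$ and from $\{X_n,W_0\}=X_nW_0-2W_1$, and after discarding the multiple of $X_nw\in I$ you are left with the binomial
\[2a_n\,(w/W_n)W_{n-1}-2a_0\,(w/W_0)W_1\in I,\]
not with the single monomial $wW_{n-1}/W_n$. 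Since $I$ is only assumed to be a Poisson ideal (not prime), a binomial in $I$ does not split into its monomial constituents, so the minimality-of-weight argument stalls at the very first step whenever $a_0\neq 0$ and $a_n\neq 0$; no choice of weight function fixes this, because the obstruction is the failure of primeness, not the bookkeeping. The paper sidesteps the whole induction: if $J/\Delta J$ is not Poisson simple, then by \cite[3.3(ii)]{dajsqoh2} there is a Poisson \emph{primitive}, hence Poisson \emph{prime}, ideal $Q$ with $\Delta J\subset Q\subset J$; then $QE=E$ gives a monomial in $Q$, primeness gives a single $W_i\in Q$, and the brackets $\{X_i,W_i\}=X_iW_i-2W_{i+1}=2W_{i-1}-X_iW_i$ together with $\{X_n,W_0\}=X_nW_0-2W_1$ propagate to $W_j\in Q$ for all $0\leq j\leq n+1$, so $\Delta\in Q$ forces $1\in Q$. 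You could repair your Step 4 the same way (replace the arbitrary Poisson ideal $I$ by a Poisson prime ideal strictly containing $\Delta J$, which requires the cited fact that such an ideal exists in characteristic $0$), after which no weight induction is needed at all.
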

\begin{proof}
For $1\leq i\leq n$ either $i$ is even and $\{W_i,W_{n+1}\}=0=\{W_i,W_0\}$ and  $\{W_i,W_{n}\}W_{1}=-\{W_i,W_{1}\}W_{n}$ or $i$ is odd and
$\{W_i,W_{n+1}\}W_{0}=-\{W_i,W_{0}\}W_{n+1}$ and  $\{W_i,W_{n}\}=0=\{W_i,W_{1}\}$. In both cases, $\{W_i,\Delta\}=0$. Also
\begin{eqnarray*}
\{W_{n+1},\Delta\}&=&W_{n+1}(-2W_{n}W_{1})+2W_{n+1}W_{n}W_{1}=0\text{ and }\\
\{W_{0},\Delta\}&=&W_{0}(2W_{n}W_{1})-2W_{0}W_{n}W_{1}=0.
\end{eqnarray*}
Hence $\Delta\in \Pz(D)$. It follows that $\Delta\in \Pz(E)$ and  $\Delta\in \Pz(J)$.

As $\Delta$ is irreducible and Poisson central in $D$, $\Delta D$ is a Poisson prime ideal of $D$. Observe that $E$ is the localization of $D$, and also of the intermediate ring $J$, at the multiplicatively closed set $\mathcal{W}$ generated by $W_0, W_1,\dots,W_n$ and that $\mathcal{W}\cap \Delta D=\emptyset$. It follows that $\Delta E$ is a Poisson prime ideal of $E$ so that $\Delta$ is irreducible in $E$. As $W_0,W_1,\dots,W_n$ are, by an easy induction using the formula $W_{i+1}=W_iX_i-W_{i-1}$, prime elements of $J$, it follows that $\Delta$ is irreducible in the UFD $J$. Hence
$\Delta J$ is a Poisson prime ideal of $J$.

There is a Poisson algebra homomorphism $\phi$ from $R$ to $E/\Delta E$ given by $w_i\mapsto W_i+\Delta E$ for $0\leq i\leq n$. As
$W_{n-1}+\Delta E=\phi(w_0^{-1}w_1w_n)$, $\phi$ is surjective and as $R$ is Poisson simple, by Lemma~\ref{Dsimple}, $\phi$ is an isomorphism. Thus
$E/\Delta E$ is Poisson simple.

Suppose that $J/\Delta J$ is not Poisson simple. By \cite[3.3(ii)]{dajsqoh2}, $J$ has a Poisson primitive, and hence Poisson prime, ideal $Q$ such that $\Delta J\subset Q\subset J$. Recall that $E$ is the localization of $J$ at $\mathcal{W}$ so $QE$ is a Poisson prime ideal of $E$ strictly containing
$\Delta E$. By the Poisson simplicity of $E/\Delta E$, $QE=E$ and therefore $Q\cap \mathcal{W}\neq\emptyset$. As $Q$ is prime, $W_i\in Q$ for some $i$ with $0\leq i\leq n$. By \eqref{f}, $\{X_n,W_0\}=X_nW_0-2W_{1}$ and if $i>0$ then, by \eqref{e2} and \eqref{e3}, $\{X_i,W_i\}=X_iW_i-2W_{i+1}=2W_{i-1}-X_iW_i$. It follows that $W_j\in Q$ for $0\leq j\leq n+1$. But $W_0W_{n+1}-W_1W_n-1=\Delta\in Q$ so $1\in Q$, contradicting the fact that $Q$ is proper. Thus $J/\Delta J$ is Poisson simple.
\end{proof}

\begin{prop}
The cluster algebra $\mathcal{A}$ is isomorphic to the simple Poisson algebra $J/\Delta J$.
\end{prop}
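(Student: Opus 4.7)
The plan is to construct a Poisson isomorphism $J/\Delta J \to \mathcal{A}$ by building a surjective Poisson homomorphism $\Phi: J \to \mathcal{A}$ whose kernel contains $\Delta$, and then invoking Poisson simplicity of $J/\Delta J$.

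First I would define $\Phi$ as the unique $\K$-algebra homomorphism from the polynomial algebra $J = \K[W_0,W_1,X_1,\ldots,X_n]$ to $\mathcal{A}$ determined by $W_0 \mapsto w_0$, $W_1 \mapsto w_1$ and $X_i \mapsto x_i$ for $1 \le i \le n$. Since $J$ is a genuine polynomial algebra in these $n+2$ variables (as observed in Notation~\ref{Kplus}), this is unambiguous. Using the recurrence $W_{i+1} = W_iX_i - W_{i-1}$ inside $J$ and the matching identity $w_{i+1} = w_ix_i - w_{i-1}$ in $\mathcal{A}$ (which in turn follows from \eqref{Pxiwi}), an induction shows that $\Phi(W_j) = w_j$ for all $0 \le j \le n+1$. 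In particular,
\[
\Phi(\Delta) = \Phi(W_0W_{n+1} - W_1W_n - 1) = w_0w_{n+1} - w_1w_n - 1,
\]
and the classical mutation relation $w_0w_{n+1} = 1 + w_1w_n$ (the semiclassical counterpart of \eqref{w0wnplusone}) gives $\Phi(\Delta) = 0$, so $\Delta J \subseteq \ker \Phi$. Surjectivity of $\Phi$ is immediate since, as noted just before Notation~\ref{Kplus}, $\mathcal{A}$ is generated by $w_0, w_1, x_1, \ldots, x_n$.

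The key step will be verifying that $\Phi$ is a Poisson homomorphism. It suffices to check this on all pairs from the generating set $\{W_0, W_1, X_1, \ldots, X_n\}$ of $J$. The brackets among the $W_0, W_1$ and $X_i$'s in $J$ were recorded in the proof of Lemma~\ref{BPoisson}: they are precisely the formulas \eqref{a}--\eqref{g} together with $\{W_0,W_1\} = W_0W_1$. Matching these against the brackets among $w_0, w_1, x_1, \ldots, x_n$ in $\mathcal{A}$ supplied by Lemma~\ref{ww} and Lemma~\ref{xPrelations}(i)--(iii), one sees that the two sets of formulas coincide term for term (the formulas for $\{X_i,W_j\}$ when $j \in \{0,1\}$ and $j \ne i$ match Lemma~\ref{xPrelations}(iii), and the bracket $\{X_n,W_0\}$ matches \eqref{Pwn} via $x_0 = x_n$ and the identification $w_0 = w_n$'s neighbour from Lemma~\ref{xPrelations}(iv)). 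Hence $\Phi$ preserves the Poisson bracket on each pair of generators, and therefore on all of $J$.

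To finish, I would factor $\Phi$ through the quotient, obtaining a surjective Poisson homomorphism $\overline{\Phi}: J/\Delta J \to \mathcal{A}$. Its kernel is a Poisson ideal of $J/\Delta J$; by Proposition~\ref{central}, $J/\Delta J$ is Poisson simple, so $\ker\overline{\Phi}$ is either $0$ or all of $J/\Delta J$. Since $\mathcal{A} \ne 0$, we conclude $\ker \overline{\Phi} = 0$ and $\overline{\Phi}$ is an isomorphism of Poisson algebras. The only potential obstacle, apart from the bookkeeping in the Poisson-bracket verification, is confirming that no extra relations arise on the $\mathcal{A}$ side beyond $\Delta = 0$; this is exactly what Poisson simplicity of $J/\Delta J$ rules out.
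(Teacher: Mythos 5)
Your argument is correct, but it runs in the opposite direction to the paper's. The paper reuses the Poisson isomorphism $\phi\colon R\to E/\Delta E$ already constructed in the proof of Proposition~\ref{central} (whose injectivity came from the Poisson simplicity of $R$, Lemma~\ref{Dsimple}), restricts it to the Poisson subalgebra $\mathcal{A}\subset R$, and identifies $\phi(\mathcal{A})$ with $J/\Delta J$ by embedding $J/\Delta J$ into $E/\Delta E$ via the localization fact $J\cap\Delta E=\Delta J$; on that route the correspondence of generators is automatic and no bracket computations are repeated, but the localization step is essential. You instead define $\Phi\colon J\to\mathcal{A}$ on the polynomial generators, prove $\Phi(W_j)=w_j$ by the recurrences, check by hand that the bracket tables match (\eqref{a}--\eqref{g} together with $\{W_0,W_1\}=W_0W_1$ against Lemmas~\ref{xPrelations} and~\ref{ww}, using $x_0=x_n$ for $\{X_n,W_0\}$ -- note the second equality in \eqref{Pwn} should read $2w_{i-1}-x_iw_i$, but you only use the first expression, so nothing is affected), kill $\Delta$ via the exchange relation $w_0w_{n+1}=1+w_1w_n$, and then obtain injectivity from the Poisson simplicity of $J/\Delta J$, i.e.\ from the other half of Proposition~\ref{central}. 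The cost of your route is the term-by-term verification that $\Phi$ is Poisson (genuine bookkeeping, though every formula needed is recorded in the paper, and checking on algebra generators does suffice since the bracket is a biderivation); what it buys is that you never need $J\cap\Delta E=\Delta J$ nor the isomorphism $R\simeq E/\Delta E$, only the simplicity of $J/\Delta J$ and the fact that $\mathcal{A}\neq 0$. Both arguments are sound and of comparable length.
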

\begin{proof} Recall the Poisson isomorphism $\phi$ from $R$ to $E/\Delta E$ such that $w_i\mapsto W_i+\Delta E$ and note that $x_i\mapsto X_i+\Delta E$. We have observed that $\mathcal{A}$ is the subalgebra of  $R$ generated by $w_0, w_1,x_1,x_2,\dots x_n$ and is a Poisson subalgebra of $R$.
 Hence $\mathcal{A}\simeq \phi(\mathcal{A})$ which is generated by
$W_0+\Delta E$, $W_1+\Delta E$ and the $n$ elements $X_i+\Delta E$, $1\leq i\leq n$. As $J\cap \Delta E=\Delta J$ by standard localization theory, $J/\Delta J$ embeds in $E/\Delta E$ by $b+\Delta J \mapsto  b+\Delta E$ and $J/\Delta J$ is generated by $W_0+\Delta J$, $W_1+\Delta J$ and the $n$ elements $X_i+\Delta J$, $1\leq i\leq n$ so $J/\Delta J\simeq \phi(A)$.  Thus $\mathcal{A}\simeq J/\Delta J$.
\end{proof}

\begin{rmks}
In accordance with the quantum case, $\mathcal{A}$ has a Poisson automorphism $\theta$ such that $\theta(w_i)=w_{i+1}$, giving rise to an infinite orbit $\{w_i\}_{i\in \Z}$, and $\theta(x_i)=x_{i+1}$, giving rise to a finite orbit $\{w_i\}_{1\leq i\leq n}$. The full set of cluster variables in $\mathcal{A}$ is the union of $n$ $\theta$-orbits, namely the infinite orbit $\{w_i:i\in \Z\}$ and the $n-1$ finite $\theta$-orbits $\{\theta^{j}(z_k):0\leq j\leq n-1\}$, $1\leq k\leq n-1\}$, where the elements $z_k$ are as in  Section 7.

The Poisson analogue of Theorem~\ref{Qgen}, where $w_0$ could be omitted from the generators,  is more subtle. Here
$w_0=(\{x_1,w_1\}+x_1w_1)/2$ so the omission of $w_0$ from the generators would require an appropriate definition of generators of a Poisson algebra.
\end{rmks}

In the case of the Dynkin quivers $A_{n-1}$ we have the following result. The proof is omitted, as is that of the subsequent analogue of Corollary~\ref{simnoeth}. It is parallel to that of Proposition~\ref{Anqca}. The change of initial cluster variables from $z_i$ to $y_i$ is redundant and Poisson simplicity of $F^L/(z_n-1)F^L$, from  Proposition~\ref{PspecL}(iii), replaces simplicity of $L_{n}^q/(z_{n}-q^{(2-n)/2})L_{n}^q$.

\begin{prop} Let $n\geq 3$ be odd and let $F^L$ be the polynomial algebra $\K[x_1,\dots,x_{n}]$ equipped with the Poisson  bracket corresponding to $L_n^q$.
The cluster algebra $\mathcal{A}_{n-1}$  of the Dynkin quiver of type $A_{n-1}$ is isomorphic, as a Poisson algebra, to $F^L/(z_{n}-1)F^L$.
\end{prop}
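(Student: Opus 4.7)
The plan is to parallel the proof of Proposition~\ref{Anqca}, using Poisson simplicity of $F^L/(z_n-1)F^L$, provided by the odd case of Proposition~\ref{PspecL}(iii), in place of the ring-theoretic simplicity invoked in the quantum setting.

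First, I would use Notation~\ref{TandR} and Proposition~\ref{PspecL} to see that $F^L$ embeds, as a Poisson algebra, into $U[z_n]$, where $U = \K[z_1^{\pm 1},\dots,z_{n-1}^{\pm 1}]$ carries the log-canonical bracket of Lemma~\ref{PformulaeL}(iii) and $z_n$ is Poisson central over $U$. The principal ideal $(z_n-1)U[z_n]$ is therefore a Poisson ideal and the quotient is canonically isomorphic to $U$ as a Poisson algebra, giving a Poisson homomorphism
\[
\varphi : F^L \longrightarrow U
\]
such that $\varphi(z_i) = z_i$ for $1\leq i\leq n-1$, $\varphi(z_n) = 1$, $\varphi(x_i) = z_{i-1}^{-1}(z_{i-2}+z_i)$ for $1\leq i\leq n-1$ (with $z_0 = 1$ and $z_{-1} = 0$), and $\varphi(x_n) = z_{n-1}^{-1}(z_{n-2}+1)$.

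Next, I would identify $\varphi(F^L)$ with $\mathcal{A}_{n-1}$ sitting inside $U$. Since $A_{n-1}$ is acyclic, the classical form of \cite[Theorem 7.6]{BerZel} says that $\mathcal{A}_{n-1}$ is generated by the initial cluster $z_1,\dots,z_{n-1}$ together with the mutations $w_i$ at vertex $i-1$ for $2\leq i\leq n$. The classical exchange relation at an interior vertex $i-1$ (with $2 \leq i \leq n-1$) gives $w_i \cdot z_{i-1} = z_{i-2} + z_i$, so $w_i = z_{i-1}^{-1}(z_{i-2}+z_i) = \varphi(x_i)$ by the recurrence defining $z_i$, while mutation at the sink $n-1$ yields $w_n = z_{n-1}^{-1}(z_{n-2}+1) = \varphi(x_n)$. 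Since $z_1 = \varphi(x_1)$ and each $z_i$ ($2 \leq i \leq n-1$) is a $\K$-linear combination of $z_{i-1}w_i$ and $z_{i-2}$, it follows that $\mathcal{A}_{n-1}$ is generated by $\varphi(x_1), \varphi(x_2), \ldots, \varphi(x_n)$, and hence $\mathcal{A}_{n-1} = \varphi(F^L)$.

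Finally, $z_n - 1 \in \ker\varphi$, so $\varphi$ factors through $F^L/(z_n-1)F^L$. Since the latter is Poisson simple by Proposition~\ref{PspecL}(iii), the induced surjective Poisson homomorphism $F^L/(z_n-1)F^L \twoheadrightarrow \mathcal{A}_{n-1}$ has trivial kernel and is thus a Poisson isomorphism. The main technical step, analogous to the $q^{1/2}$-normalisation in the quantum case, is to confirm that $\Lambda_{n-1}$ is compatible with the exchange matrix $B$ of $A_{n-1}$ in the sense of Gekhtman--Shapiro--Vainshtein, so that $\mathcal{A}_{n-1}$ is genuinely a Poisson subalgebra of $U$ with respect to the log-canonical bracket; this follows from the relation $\Lambda_{n-1} = (B^T)^{-1}$ observed in Section~\ref{qca}, together with \cite[Theorem 1.4]{gsv}.
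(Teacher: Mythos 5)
Your proposal is correct and follows exactly the route the paper indicates for this result: the paper omits the proof but states that it is parallel to Proposition~\ref{Anqca}, with the rescaling of the initial cluster (the $y_i$'s) now redundant and with Poisson simplicity of $F^L/(z_n-1)F^L$ from Proposition~\ref{PspecL}(iii) replacing simplicity of the quantum factor, which is precisely what you do. Your closing remark on compatibility of the log-canonical bracket with $B$ via \cite[Theorem 1.4]{gsv} matches how the paper handles the Poisson structure in Section 8, so nothing is missing.
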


\begin{cor}\label{Psimnoeth} Let $n\geq 3$ be odd.
The  cluster algebra $\mathcal{A}_{n-1}$  is a simple Poisson algebra.
\end{cor}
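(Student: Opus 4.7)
The strategy is to mirror the proof of Corollary~\ref{simnoeth}. By the preceding proposition, there is a Poisson isomorphism $\mathcal{A}_{n-1}\simeq F^L/(z_n-1)F^L$, so it suffices to show that $(z_n-1)F^L$ is a maximal Poisson ideal of $F^L$. Since $n$ is odd, Proposition~\ref{PspecL}(iii) provides the complete classification of proper Poisson prime ideals of $F^L$: these are $0$ together with the principal ideals $(z_n-\lambda)F^L$, $\lambda\in\K$. In particular, $(z_n-1)F^L$ is itself a Poisson prime ideal.

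Suppose, for contradiction, that $F^L/(z_n-1)F^L$ fails to be Poisson simple. Then I would invoke \cite[3.3(ii)]{dajsqoh2} exactly as in the proof of Proposition~\ref{central}: this produces a Poisson primitive, and in particular Poisson prime, ideal $Q$ of $F^L$ with
\[
(z_n-1)F^L\;\subsetneq\;Q\;\subsetneq\;F^L.
\]
By Proposition~\ref{PspecL}(iii), $Q$ must have the form $(z_n-\lambda)F^L$ for some $\lambda\in\K$. The inclusion $(z_n-1)F^L\subseteq(z_n-\lambda)F^L$ forces $z_n-1=f\,(z_n-\lambda)$ for some $f\in F^L$; comparing total degrees in $F^L$ (both $z_n-1$ and $z_n-\lambda$ are of total degree $n$ with the same leading monomial $x_1x_2\cdots x_n$) shows that $f\in\K^*$, and then $\lambda=1$, contradicting the strictness of the containment.

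The only point requiring attention is bookkeeping of the scalar $\lambda$, namely verifying that the preceding proposition indeed picks out the Poisson prime $(z_n-1)F^L$ as the relevant one (this is the Poisson counterpart of the normalization $z_n=q^{(n-1)/4}$ that appears in Proposition~\ref{Anqca}, and corresponds here to the semiclassical value $z_n=1$). Once that is fixed, the classification in Proposition~\ref{PspecL}(iii) reduces Poisson simplicity to a one-line degree comparison, and there is no further obstacle.
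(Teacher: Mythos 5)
Your argument is correct and is essentially the route the paper intends: it omits the proof, noting only that the result follows from the isomorphism $\mathcal{A}_{n-1}\simeq F^L/(z_n-1)F^L$ of the preceding proposition together with the classification of Poisson primes in Proposition~\ref{PspecL}(iii). Your use of \cite[3.3(ii)]{dajsqoh2} to pass from a hypothetical proper Poisson ideal to a Poisson prime strictly containing $(z_n-1)F^L$ is exactly the bridge the paper itself employs in the proof of Proposition~\ref{central}, so nothing is missing.
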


\end{document}